\newenvironment{aligne}[1]{ \everymath={\displaystyle}\begin{array}{#1}}{\end{array}}
\numberwithin{equation}{section} 
\newtheorem{theorem}{Theorem}
\newtheorem{Claim}{Claim}
\newtheorem{prop}{Proposition}[section]
\newtheorem{corollary}[prop]{Corollary}
\newtheorem{lemma}[prop]{Lemma}
\newtheorem{remark}{Remark}[section]\theoremstyle{theorem}
\newtheorem{definition}{Definition}[section]\theoremstyle{theorem}
\newtheorem{assumption}[prop]{Assumption}
\renewcommand{\paragraph}[1]{\noindent\textbf{#1}}
\newcolumntype{C}[1]{>{\centering}m{#1}}%pour avoir des colones centrées de tailles fixe
\newcommand{\gr}[1]{{\mathbf{#1}}}
\newcommand{\ds}[1]{{\mathsf{#1}}}
\newcommand{\dr}[1]{{\rm #1}}
\newcommand{\ind}{\mathbbm{1}}
\newcommand{\ui}{{\underline{i}}}
\newcommand{\N}{\mathcal{N}}
\newcommand{\so}{\overset{\circ}{\sim}}
\newcommand{\e}{\varepsilon}
\newcommand{\Id}{{\rm Id}}
\newcommand{\R}{\mathbb{R}}
\newcommand{\ud}{\,\mathrm{d}}
\newcommand{\KK}{{\mathcal K}}
\newcommand{\DDelta}{\Delta\hspace{-6pt}\Delta}
\newcommand{\CCirc}{O\hspace{-5pt}O}
\renewcommand{\epsilon}{{\e_{\rm eff}}}
\DeclareMathOperator{\tr}{\dr{tr}}
\newcommand{\red}[1]{%{\color{red}#1}
}
\newcommand{\blue}[1]{#1
%	{\color{blue}#1}
}
\title[Long time validity of the linearized Landau and uncut-off  Boltzmann equations]{Long time validity of the linearized Boltzmann uncut-off and the linearized Landau equations from the Newton Law}
\begin{document}

	\author{Corentin Le Bihan}
	\email{corentin.le.bihan@ulb.be}
	
	\begin{abstract}
		We provide a rigorous justification of the linearized Boltzmann and Landau equations for interacting particle systems with long-range interaction. The result shows that for a system of $N$ Hamiltonian particles governed by truncated power law potentials  of the form $\mathcal{U}_\epsilon(r)\sim |r/\epsilon|^{-s}$ near $r \approx 0$ (\blue{with $\epsilon$ the effective radius of the particles}),  \blue{the covariance of the equilibrium fluctuations } converges to solutions of kinetic equations in appropriate scaling limits $\e_{\rm eff} \rightarrow 0$ and $N\rightarrow \infty$, \blue{corresponding to a low density regime}. We prove that \blue{in dimension $3$}, for $s\in \red{[0,1)} \blue{(1,\infty)}$, the limiting system approaches the uncutoff linearized Boltzmann equation \red{ or the linearized Landau equation, depending on the scaling limit} \blue{for the scaling $N\epsilon^2=1$}. The Coulomb singularity $s=1$ appears as a threshold value. Kinetic scaling limits with $s\in (0,1]$ universally converge to the linearized Landau equation, and we prove the onset of the Coulomb logarithm for $s=1$. \red{To the best of our knowledge, this is the first result on the derivation of kinetic equations from interacting particle systems with long-range power-law interaction.}
	\end{abstract}

	\keywords{Low density, Boltzmann-Grad limit, Grazing collision limit, Boltzmann equation, Landau equation, Kinetic theory}
	\maketitle
	\tableofcontents
	
	\section{Introduction}
		In kinetic theory, a gas of particles can be modeled by a large system of $N$ classical particles interacting {\it via} a potential $\mathcal{U}(\cdot/\epsilon)$\blue{, with $\epsilon$ the \emph{effective radius} of a particle}. \blue{The dynamics is Hamiltonian and obeys the Newton laws with assigned potential $\mathcal{U}$.} In dimension $3$, the power laws $\mathcal{U}_s(r):=r^{-s}$, $s\geq 1$ play a fundamental role, in particular the Coulomb case $s= 1$. One of the goals of kinetic theory is the description of such a gas in the limit $N\to\infty$, $\epsilon\to 0$. Of course, \blue{the asymptotic behavior} depends on the relation between $\epsilon$ and $N$. Collisional kinetic theory concerns low density scalings, where the occupied volume $N\epsilon^3$ goes to $0$. \blue{The good scaling limit looks to be the Boltzmann--Grad limit $N\epsilon^2=1$, at least for $s>1$ (see \cite{Grad}). It corresponds to a regime such that on a time interval of length $1$, any particle passes nearby another particle (at distance less than $\epsilon$, see Remark \ref{rem:disclamer} for a comment on this scaling).}
	
	In the case $s>1$, if at time 0 the particles are "sufficiently independent", the distribution of a typical particle $f(t,x,v)$ is a solution of the Boltzmann equation (introduced by Maxwell in 1867 \cite{Maxwell} and Boltzmann in 1871 \cite{Boltzmann})
	\begin{equation}\label{eq:Boltzmann}\begin{aligne}{c}
	\partial_t f + v\cdot\nabla_x f = Q_s(f,f)\\
	Q_s(f,h)(v):=\int_{\mathbb{R}^3\times\mathbb{S}^2}\left(f(v')h(v_*')-f(v)h(v_*)\right)b_s(v-v_*,\eta)\ud{v_*}\ud{\eta},\\
	v' = \frac{v+v_*}{2}+\frac{|v-v_*|}{2}\eta,~v_*' = \frac{v+v_*}{2}-\frac{|v-v_*|}{2}\eta,
	\end{aligne}\end{equation}
	where the kernel $b_s$ depends on the potential $\mathcal{U}_s(\cdot)$. The collision operator $Q$ can be interpreted  as a jump operator for the velocities. \blue{The rigorous proof of such a theorem for long range interactions is now an open problem, and this paper can be interpreted as an intermediate result.}
	
	For a power law $\mathcal{U}_s$ with $s\geq1$, the kernel $b_s$ is equal to 
	\begin{equation}\label{eq:noyau pour loi de puissance}
		b_s(z,\eta) = |z|^{\frac{s-4}{s}}q_s(z\cdot \eta), ~{\rm with}~ q_s(\cos \theta)\underset{\theta \sim 0}\sim K \theta^{-\frac{2+s}{s}}
	\end{equation}
	for some constant $K$. Hence, the collision kernel is not integrable near the singularity $\eta\cdot\tfrac{v-v_*}{|v-v_*|}\simeq 1$ (when the collisions are grazing). We say that the Boltzmann kernel has no cutoff. However, the Boltzmann operator $Q_s$ can be defined \blue{(everywhere) for $s>1$} if the functions $f$ and $h$ are differentiable (see \cite{Goudon,Villani1}). 
	
	\blue{In the Coulombian case $s=1$, the singularity near $\theta \sim 0$ is too large to define the collision operator, even for smooth functions. In 1936, Landau proposed in \cite{Landau} an operator that describes the collision between Coulombian particles. His analysis was based on cutting the long range interaction, responsible for the "collisions" with small deviation angle $\theta$. His argument was that in a neutral Coulomb gas, the interactions between ions (big positively charged particles) are screened by electrons (small negatively charged particles). Hence, the particles interact {\it via} the effective potential $\tfrac{\chi(|x|/R)}{|x|}$ where $\chi \simeq \ind_{[0,1]}$ is a cutting function, and $R$ is called the Debye radius\footnote{In the Physics literature the constant $R$ is typically interpreted as a normalisation constant, to be estimated}. In the present paper we study the limit $R\to \infty$. ($R\gg 1$).}
	\blue{
		Landau finally obtained the collision operator $c(\log R) Q_L(f,f)$ where $c$ is a diffusion coefficient, $\log R$ is the \emph{Coulomb logarithm} coming from the singularity, and $Q_L$ is defined by
		\begin{equation}
			Q_L(f,h)(v):=2\pi\nabla_v\cdot\int_{\mathbb{R}^3}\frac{|v-v_*|^2{\rm Id}-(v-v_*)^{\otimes  2}}{|v-v_*|^3}\big(\nabla f(v)h(v_*)- f(v)\nabla h(v_*)\big)\ud{v_*}.
	\end{equation}}
	The Landau equation is 
	\begin{equation}
		\partial_t f + v\cdot\nabla_x f = cQ_L(f,f).
		\end{equation}
		
	\blue{\begin{remark}\label{rem:disclamer}
		It is not clear if, in the case $s\leq 2$, the Boltzmann equation is really the good description in the Boltzmann-Grad limit $N\epsilon^2= 1$ of the particle system with interaction through the long range potential $\mathcal{U}_{s}(\tfrac x\epsilon)=\tfrac{\epsilon^s}{|x|^s}$.
		
		Indeed, it can be compared with the mean-field regime $N\epsilon^2 = \epsilon^{2-s}\ll1$ (the interaction potential is $\tfrac{1}{N}\mathcal{U}_{s}(x)$), which is much more dilute. Then, the mean-field effects should dominate the collisional effects. One can  expect that, because of some "screening effect" (similar to that in the Coulomb case), particles interact via an effective potential with faster decay at infinity.
		
		For hard potentials $s>2$, the Boltzmann-Grad scaling is dominant, and the Boltzmann equation should be the true asymptotic description of the system. A similar discussion has been performed in \cite{SW} for the definition of the equilibrium measure for particles interacting through the potential $\mathcal{U}_s(\tfrac{\cdot}{\epsilon})$.
	\end{remark}
		
	In this paper we will work with microscopically truncated interactions and therefore avoid the discussion of screening properties. Our purpose, however, is to obtain uncutoff equations by removing the truncation. }
	\medskip
	
	\blue{The first rigorous derivation of the  Boltzmann equation was performed by Lanford for hard-spheres \cite{Lanford} and by King \cite{King} for regular potentials with compact support.}
	
	Our strategy for deriving the Boltzmann (or Landau) equation associated with the potential $\mathcal{U}_s$ is to split the problem into two steps. First, we consider \blue{a system of particles interacting {\it via}} the \blue{screened} potential \blue{\[\mathcal{U}_{s,R}(\tfrac{x}{\epsilon}):=\chi(\tfrac{|x|}{R\epsilon})\tfrac{\epsilon^s}{|x|^s},\] where $R$ is a cutoff radius (which will go to infinity),} $\chi(r):\mathbb{R}^+\to[0,1]$ is a smooth, decreasing cutoff function:
	\begin{equation*}
		\chi(0)=1,~\chi([1,\infty[)=\{0\},~\chi'\leq 0.
	\end{equation*}
		
	\blue{Taking the Boltzmann-Grad limit $N\to\infty$, $N\epsilon^2=1$ ($N\epsilon^2=(\log R)^{-1}$ in the Coulomb case $s=1$), we recover the cut-off Boltzmann equation. Defining a "collision" between two particles as the moment when they get closer than the effective radius $\epsilon$, the Boltzmann-Grad scaling is chosen such that a particle has on average one collision per unite of time.
	
	In a second time, we take the grazing collision limit $R\to \infty$ to pass from the cut-off Boltzmann equation to the Boltzmann equation associated with $\mathcal{U}_s$ if $s>1$ (respectively, the Landau equation if $s=1$).
	
	If we obtain some quantitative estimates for the two regimes, one can hope to take the limits $R\to\infty,N\to\infty$ simultaneously, assuming that $R$ grows much slower than $N$ (we will need $R$ of order $O((\log\log N)^{1/6})$).}
	
	The big difficulty is that we need the validity of the cut-off Boltzmann equation on a large time interval of order $O(1)$ (to be compared to the validity time $O(R^{-2})$ obtained by King \cite{King} for interaction potentials supported in a ball of radius $R$, see the next section). In order to get a long-time result, we look at a linearized version of the system near the thermodynamic equilibrium (or Gibbs state). This equilibrium can be defined as the probability law with density
	\begin{equation}\label{eq:measure de Gibbs canonique}
		M^N_{\epsilon,R}(X_N,V_N):=\frac{1}{\mathcal{Z}^N_{\epsilon,R}}\exp\left(-\sum_{i= 1}^N\frac{|v_i|^2}{2}-\sum_{1\leq i<j\leq N}\mathcal{U}_{s,R}\left(\frac{x_i-x_j}{\epsilon}\right)\right)
	\end{equation}
	at positions $X_N=(x_1,\cdots,x_N)$ and velocities $V_N=(v_1,\cdots,v_N)$. The term $\mathcal{Z}^N_{\epsilon,R}$ is a normalization constant such that
	\[\int M_{\epsilon,R}^N\ud X_N\ud V_N =1.\]
	
	We want to understand the fluctuation field $\zeta_\epsilon^t$ around the equilibrium: for a test function $g$, we define
	\begin{equation}\zeta_\epsilon^t(g):= \sqrt{N}\left(\frac{1}{N}\sum_{i=1}^N g(\gr{x}^\epsilon_i(t),\gr{v}^\epsilon_i(t))-\mathbb{E}_\epsilon\left[\frac{1}{N}\sum_{i=1}^N g(\gr{x}^\epsilon_i(t),\gr{v}^\epsilon_i(t))\right]\right).\end{equation}
	In the previous equality, $(\gr{x}^\epsilon_i(t),\gr{v}^\epsilon_i(t))$ denotes the position and velocity of the $i$-th particle at time $t$, and the expectation is taken with respect to the Gibbs measure $M_{\epsilon,R}^N\ud{X_N}\ud{V_N}$.	\blue{The fluctuation field $\zeta_\epsilon^t$ has been studied by Bodineau {\it et al} in the hard sphere system. They have shown $\zeta_\epsilon^t$ verifies a central limit theorem, and converges to a Gaussian field $\zeta_0^t$ described by the linearized Boltzmann equation (see \cite{BGS,BGSS1,BGSS2,LeBihan2}). Our study is focused on the description of the covariance for more general interaction potential.}
	
	One can now write a first informal version of the theorem proved in the present paper (a rigorous version is written in Theorem \ref{thm:derivation Bolzmann sans cut-off}\footnote{\blue{The vague Claims \ref{thm:proto} and \ref{thm:proto bis} are written for simplicity of the presentation in the \emph{canonical} setting, {\it id est} the number of particles is a constant $N$ that goes to infinity. In fact, we will rather work in the \emph{grand canonical} setting, where the number of particles is a random variable $\N$, with $\mathbb{E}\N$ going to infinity. The system becomes less rigid, and it avoids the apparition of additional error terms. However, one can expect that the canonical and grand canonical systems behave asymptotically in the same way (although the proof is missing).}}).
	\begin{Claim}\label{thm:proto}
		Consider a system of $N$ particles evolving with respect to Newton's laws, interacting through the pairwise potential $\mathcal{U}_{s,R}(\cdot/\epsilon)$, \blue{with $s\geq 1$}. At time zero, the particles are distributed according to the Gibbs equilibrium measure.
		Parameters $N,\epsilon,R$ tuned as
		\[N\to \infty,~R\to \infty,~ R=o\left((\log\log N)^{1/6}\right),~{\rm and}~N\epsilon^2
		=\left\{\begin{array}{cl}
		%\red{c_{s,\chi}R^{-2(1-s)}}&\red{~{\rm if~}s\in(0,1)}\\
		(\log R)^{-1}&~{\rm if}~s=1,
		\\ 1&~\blue{ {\rm  if~} s>1}.
		\end{array}\right.\]
		
		Fix $g$ and $h$ two test functions. Then denoting $M(v):=\frac{e^{-\frac{|v|^2}{2}}}{(2\pi)^{3/2}}$, we have that 
		\begin{equation}
		\mathbb{E}_{\blue{\epsilon}}\left[\zeta_\epsilon^t(g)\zeta_\epsilon^0(h)\right]\underset{\e\to0}\longrightarrow\int \gr{g}(t,x,v) h(x,v) M(v) \ud{x}\ud{v}
		\end{equation}
		with $\gr{g}(t,x,v)$ the solution of the linearized equation
		\begin{equation}\left\{\begin{split}
		\partial_t \gr{g} + v\cdot\nabla_v\gr{g} &= \mathcal{L}_\infty \gr{g},\\
		\gr{g}(t=0,x,v)&=g(x,v)		
		\end{split}\right. ~{\rm where}~
		\mathcal{L}_\infty \gr{g} := \left\{\begin{split} \frac{1}{M}\left(Q_L(Mg,M)+Q_L(M,Mg)\right)&~{\rm if}~s=1,\\ \frac{1}{M}\left(Q_s(Mg,M)+Q_s(M,Mg)\right)&~\blue{{\rm if}~s>1}.\end{split}\right.\end{equation}
	\end{Claim}
	
		\subsection{Modification of the scaling parameters \blue{and the sub-Coulomb case ($s\in[0,1)$)}}
	For a fix $s\geq1$ and a cut-off function $\chi:\mathbb{R}^+\to[0,1]$, we define the \blue{\emph{interaction radius} $\e:= {R}\,{\epsilon}$,  the \emph{coupling constant} $\alpha=R^{-s}$ } and the interaction potential
	\[\mathcal{V}(x):=\frac{\chi(|x|)}{|x|^s}. \]
	
	Hence, we have the equality $\mathcal{U}_{s,R}(x/\epsilon)=\alpha\mathcal{V}(x/\e)$.
	
	\blue{The scaling parameters $(\e,\alpha)$ and $(\epsilon,R)$ are two different parameterizations of the same system, and taking 
		\[N\to \infty,~R\to \infty,~ R=o\left((\log\log N)^{1/6}\right),~{\rm and}~N\epsilon^2
		=\left\{\begin{array}{cl}
			%\red{c_{s,\chi}R^{-2(1-s)}}&\red{~{\rm if~}s\in(0,1)}\\
			(\log R)^{-1}&~{\rm if}~s=1
			\\ 1&~\blue{ {\rm  if~} s>1}
		\end{array}\right.\]
		is equivalent to taking
		\[N\to \infty,~\alpha\to 0,~ \frac{1}{\alpha}=o\left((\log\log N)^{s/12}\right),~{\rm and}~N\e^2
		=\left\{\begin{array}{cl}
			%\red{c_{s,\chi}R^{-2(1-s)}}&\red{~{\rm if~}s\in(0,1)}\\
			\alpha^{-2}|\log \alpha|^{-1}&~{\rm if}~s=1
			\\ \alpha^{-2/s}&~\blue{ {\rm  if~} s>1}
		\end{array}\right.\]
		
		In the core of the proof (from Section \ref{sec: def of the system} to the end), we prefer using  $(\e,\alpha)$ as it will simplify the notation. However, the couple $(\epsilon,R)$ may be more natural to describe a gas interacting {\it via} a power law $\mathcal{U}_s$, $s\geq 1$.}
	\medskip
	
	\blue{Another advantage of taking the parameters $(\e,\alpha)$ is that it allows a natural generalization of the result to potentials in the sub-Coulomb range $s\in[0,1)$ which is the microscopic interpretation of the so-called \emph{grazing collision limit.}
		
	We look at a system of $N$ particles interacting pairwise {\it via} the potential $\alpha\mathcal{V}(\cdot/\e)$. At time $t=0$, they are distributed with respect to the canonical Gibbs measure
	\[M^N_{\e,\alpha}(X_N,V_N):=\frac{1}{\mathcal{Z}_{N,\e,\alpha}}\exp\left(-\sum_{i= 1}^N\frac{|v_i|^2}{2}-\alpha\sum_{1\leq i<j\leq N}\mathcal{V}\left(\frac{x_i-x_j}{\e}\right)\right),\]
	where $\mathcal{Z}_{N,\e,\alpha}$ is a normalization constant, and $\zeta_\e$ the fluctuation field (defined in the same way as $\zeta_\epsilon$). Then we can write an informal theorem (with the rigorous version written in Theorem \ref{thm:derivation Bolzmann sans cut-off}):	}	
	\blue{\begin{Claim}\label{thm:proto bis}
		Consider a system of $N$ particles evolving according to Newton's laws, interacting through the pairwise potential $\alpha\mathcal{\mathcal{V}}(\cdot/\e)$, where $s\in [0,1)$. At time zero, the particles are distributed with respect to the equilibrium measure.
		Parameters $N,\e,\alpha$ are tuned as
		\[N\to \infty,~\alpha\to 0,~ \tfrac1\alpha=o((\log\log N)^{1/12}),~{\rm and}~N\e^2=\alpha^{-2}.\] 
		
		Fix $g$ and $h$ two test functions. Then, denoting $M(v):=\frac{e^{-\frac{|v|^2}{2}}}{(2\pi)^{3/2}}$, 
		\begin{equation}
			\mathbb{E}_\e\left[\zeta_\e^t(g)\zeta_\e^0(h)\right]\underset{\e\to0}\longrightarrow\int \gr{g}(t,x,v) h(x,v) M(v) \ud{x}\ud{v}
		\end{equation}
		with $\gr{g}(t,x,v)$ the solution of the linearized equation
		\[\left\{\begin{array}{c}
			\partial_t \gr{g} + v\cdot\nabla_v\gr{g} =\frac{c_{\mathcal{V}}}{M}\Big(Q_L(Mg,M)+Q_L(M,Mg)\Big),\\
			\gr{g}(t=0,x,v)=g(x,v),
		\end{array}\right.\]
		where $c_{\mathcal{V}}$ is a diffusion constant defined by 
		\begin{equation}
			\displaystyle c_{\mathcal{V}} = \frac1{16\pi^2}\int_{\mathbb{R}^3} \delta(k\cdot \vec{e}_1) |k|^2 |\hat{{\mathcal{V}}}(k)|^2 \ud{k}
		\end{equation}
		where $\vec{e}_1\in\mathbb{S}^2$ an unitary vector and $\hat{{\mathcal{V}}}(k)$ is the Fourier transform of ${\mathcal{V}}$ using the convention
		\[\hat{{\mathcal{V}}}(k)= \int \mathcal{V}(x)e^{-ik\cdot x} \ud x.\]
	\end{Claim}}
		
	\blue{
	\begin{remark}\label{rem: lien notre scaling et le scaling des collisions rasantes}
		The Landau equation is usually derived from interacting particle systems in the \emph{weak coupling limit}: one fixes
		\[\alpha = \e^{1/2},~~N\e^3 = 1.\]
		For example, Bobylev, Pulvirenti, and Saffirio provided in \cite{BPS} a consistency result (a result at  time $0$) for smooth interaction in this scaling.
		
		In the present article, we are only able to treat the cases 
		\[\alpha^{-1} = o(\log|\log\e|)^{1/12}\ll \e^{-1/2},~~N\e^2\alpha^2 = 1,\]
		which are far from the weak coupling limit. The same Landau equation is, however, expected to hold in all intermediate regimes connecting Boltzmann-Grad and weak-coupling scalings (see on that subject \cite{NSV,NVW1,NVW,PS3}).
	\end{remark}
	}

	\subsection{State of the art}\label{subsec:State of the art}
	
	Now we recall some results about the derivation of the Boltzmann and Landau equation \blue{from a particle system}.
	
	In the nonlinear setting, the only results hold for potentials $\mathcal{U}(\cdot)$ supported in a ball $\{x\in\mathbb{R}^3,\,|x|\leq R\}$. In the Boltzmann-Grad scaling $N\epsilon^2=1$, the distribution of a typical particle follows the Boltzmann equation up to a time $O(1/R^2)$. The first derivation was performed by Lanford \cite{Lanford} for hard spheres ({\it i.e.} $\exp(-\mathcal{U}_{\rm hs}(r))=\ind_{r>1}$) and King \cite{King} for more general compactly supported potentials (see also \cite{GST,PSS,Delinger,BGSS4}). The previous results have two defects. They are valid only up to a small time (for the atmosphere at temperature $20^\circ\!{\rm C}$ and pressure $10^5{\rm Pa}$, the validity time is $10^{-9}s$), and the results apply only to a compactly supported interaction potential. A first long-time result out of equilibrium is \cite{IP}, in a setting where the dispersive effects are dominant. More recently, Deng, Hani and Ma provide a long time result out of equilibrium \cite{DHM} for more general initial datum $f_0$. Their result holds up to the minimum between the existing time of $f(t)$, the solution of the Boltzmann equation with initial data $f_0$, and a time $O(\log | \log N |^\alpha)$ for some $\alpha\in (0,1)$ given by a sampling time strategy.
	
	For the Landau equation, the unique results hold only at time $0$ (see \cite{BPS,Winter,Duerinckx}): the authors obtain the equality
	\[(\partial_t f)_{|t=0} = -v\cdot \nabla_x f_0 +Q_L(f_0,f_0).\]
	\blue{Note that in \cite{BPS,Winter}, the authors do not look at the real particle system but at a simplified version (they truncate the BBGKY hierarchy). In \cite{Duerinckx}, Duerinckx proves the consistency of the Lenard--Balescue equation, which can be understood as a modification of the Landau equation. However, the scaling is far from the collisional scaling that we are treating here. }
	
	\blue{This is not the first attempt to derive a linear version of the Boltzmann or Landau equations.
		
	A linear equation can be obtained from the study of the \emph{Lorentz gas}: one fixes a background of obstacles distributed with respect to the Poisson measure of parameter $N$. One follows a unique particle that interacts pairwise with the obstacles {\it via} the potential $\mathcal{U}(\cdot/\epsilon)$ with $N\e^2=1$. In the limit $N\to \infty$, the density of the tagged particle $f(t,x,v)$ follows the linear Boltzmann equation
	\begin{equation}
		\partial_t f(t,x,v) + v\cdot\nabla_x f(t,x,v) = \int_{\mathbb{S}^2} \left(f(t,x,|v|\eta)-f(t,x,v)\right) b(\eta\cdot\tfrac{v}{|v|})\ud \eta 
	\end{equation}
	where $b$ is a collision kernel. It does not depend on the norm of $v$ as the obstacles are fixed. This system was first described by Gallavotti \cite{Gallavotti} in the case of the hard spheres, and later adapted for the derivation of the linear Boltzmann equation without cutoff \cite{DP} and linear Landau equation \cite{DR}. Note that in \cite{DP}, the authors used, in the same way as this paper, interaction through a screened potential $\tilde{\mathcal{U}}(x) = \ind_{|x|\leq R}\left(|x|^{-s}-R^{-s}\right)$ for $s>1$.}
	
	\blue{A second possibility is the treatment of the \emph{linear particle setting}. We look at a system of $N$ interacting particles, initially distributed with respect to the Gibbs measure \eqref{eq:measure de Gibbs canonique}. Then, one wants to follow a tagged particle of the system, {id est} compute the covariance (for $g$, $h$ two test functions)
	\begin{equation}
		\lim_{N\to \infty}\mathbb{E}_\epsilon\left[h(\gr{x}_1^\epsilon(t),\gr{v}_1^\epsilon(t))g(\gr{x}_1^\epsilon(0),\gr{v}_1^\epsilon(0))\right] = \int h(x,v) \gr{g}(t,x,v) M(v) \ud v \ud x
	\end{equation}
	where $\gr{g}(t,x,v)$ is solution of the linear Boltzmann equation
	\begin{equation}
			\partial_t g(t,x,v) + v\cdot\nabla_x g(t,x,v) = \frac{1}{M(v)}Q_{\mathcal{U}}(Mg,M),
	\end{equation}
	where $Q_{\mathcal{U}}$ is the Boltzmann operator associated with the potential $\mathcal{U}$. The first long-time result on such a system was obtained in \cite{vBHLS}. Later, Bodineau {\it et al.} \cite{BGS} provided a quantitative proof in the hard sphere setting. This proof has been adapted to other potentials in order to derive the linear Landau equation \cite{Catapano1} or the linear Boltzmann equation without cutoff \cite{Ayi}. In her paper, Ayi does not consider interactions through the cut-off potential $\mathcal{U}_{s,R}$, but directly a long-range potential $\mathcal{U}$ with fast decay at infinity (she needs $\mathcal{U}(r)\leq O(\exp(-\exp\exp|x|^4))$). Up to our knowledge, it is the unique result where the particles interact through genuine infinite potential.}
	
	\blue{While the linear setting is a $O(1)$ perturbation of equilibrium, the linearized setting (which is treated in the present paper) is a $O(N)$ perturbation of equilibrium. In \cite{Spohn,Spohn2}, Spohn showed that the fluctuation field $\zeta_\epsilon$ formally verifies a Central Limit Theorem in the limit $N\to\infty$. The first step to prove such a theorem is to compute the covariance of the fluctuation field. This has been performed in the hard sphere setting by Bodineau {\it et al.} in \cite{BGS} in dimension 2 and \cite{BGSS1} in any dimension bigger than $3$ (note also \cite{LeBihan2}). Then, they proved that the limit is Gaussian in \cite{BGSS2}.}

\red{	\subsection{Main step of the proof}
	
	We present now the main step of the proof. As explained before, the proof can be split into two pieces: first, we derive the linearized cut-off Boltzmann equation from the particle system, and second, we pass from the cut-off linearized Boltzmann equations to the linearized Landau equation (or the uncut-off Boltzmann). The second step has already been treated by Raphael Winter and the author \cite{LW}. The main contribution of the present paper is the treatment of the first step, {\it i.e.} the derivation of the cut-off linearized Boltzmann equation.
	
	This problem was already solved by Bodineau {\it et al} in the hard sphere setting \cite{BGSS1}. However, they need a refined result of Billard theory \cite{BFK} to control the dynamical memory effect (called recollision). It is an explicit bound on the number of collisions that can occur between a fixed number of particles. Such a result cannot be easily generalised to other interaction potentials, first because the "number of collision" is not well defined (particles can overlap). In \cite{LeBihan2}, the author provided a proof avoiding the result of \cite{BFK}. It is based on subtle conditioning of the initial data, allowing to control locally the number of recollisions.
	
	In the present work, we have simplified the strategy by using a  and adapted it to particles interacting through a general compactly, supported interaction potential (see Assumption \ref{ass: potentiels pour la dérivation de Boltzmann}).}
	
	\section{Definition of the system and strategy of the proof}\label{sec: def of the system}
	\subsection{The Hamiltonian dynamics}
	Let ${\mathbb{T}}:=\mathbb{R}^d/\mathbb{Z}^d$ (with $d\geq2$) be the domain. We denote $\mathbb{D}={\mathbb{T}}\times\mathbb{R}^d$ its tangent bundle and $\mathbb{D}^n$ the $n$-particle canonical phase space. In the following, we use the notation 
	\[X_n= (x_1,\cdots,x_n)\in\mathbb{T}^n,~V_n=(v_1,\cdots,v_n)\in\mathbb{R}^{nd},~\text{and}~z_i=(x_i,v_i)\in\mathbb{{D}}.\]
	
	On each $\mathbb{D}^n$, we construct the Hamiltonian dynamics associated with the Energy
	\blue{\begin{gather}
			\mathcal{H}_n(Z_n):= \frac{1}{2}|V_n|^2+\mathcal{V}_n(X_n),~~\mathcal{V}_n(X_n):=\sum_{1\leq i<j\leq n} \alpha\mathcal{V}\left(\frac{|x_i-x_j|}{\e}\right),\\[-10pt]
			\forall i\in[1,n],\Bigg\{{\begin{split}\\[3pt]&\tfrac{\ud}{\ud t}x_i=\nabla_{v_i}\mathcal{H}_n(Z_n(t))=v_i,\\[-3pt]
					&\tfrac{\ud}{\ud{t}}v_i=-\nabla_{x_i}\mathcal{H}_n(Z_n(t))= \frac{\alpha}{\e}\sum_{\substack{j=1\\j\neq i}}^n\nabla\mathcal{V}\left(\frac{x_i-x_j}{\e}\right).\end{split}}
	\end{gather}}
	
	We impose the following condition on the interaction potential
	\begin{assumption}\label{ass: potentiels pour la dérivation de Boltzmann}
		There exists a constant $s\in [0,\infty)$ and a decreasing cut-off function $\chi\in\mathcal{C}([0,\infty))\cap\mathcal{C}^2([0,1))$ such that 
		\begin{equation}
		\mathcal{V}(x):= \frac{\chi(|x|)}{|x|^s},~\chi(0)= 1,~\chi([1,\infty))=\{0\}.
		\end{equation}
	\end{assumption}
	This dynamics is well defined for all times, almost everywhere in $\mathbb{D}^n$ with respect to the Lebesgue measure.
	
	\subsection{Grand-canonical ensemble and stationary measure}
	
	\blue{In the following, we choose not to fix the number of particles $\N$, and suppose that $\N$ is a random variable with $\mathbb{E}_\e[\N]$ going to infinity when $\e$ goes to $0$ (we say that we consider the \emph{grand canonical ensemble}). It will simplify a lot of computations\footnote{The treatment of the canonical setting remains an open problem.}.}
	
	We denote $\mathcal{D}:=\bigsqcup_{n\geq 0} \mathbb{D}^n$ the grand canonical phase space. We can then extend the Hamiltonian dynamics to $\mathcal{D}$ and denote $\gr{Z}_{\N}(t)$ the realization (defined almost surely) of the Hamiltonian flow on $\mathcal{D}$ with random initial data $\gr{Z}_{\mathcal{N}}(0)$: for $\N = n$, $\gr{Z}_{\N}(t)$ follows the Hamiltonian dynamics on $\mathbb{D}^n$.	
	
	The initial data are sampled according to the stationary measure introduced now. The \emph{grand canonical Gibbs measure} $\mathbb{P}_\e$ (and its expectation $\mathbb{E}_\e$) are defined on $\mathcal{D}$ as follows: an application $H:\mathcal{D}\rightarrow \mathbb{R}$ is a test function if there exists a sequence $(h_n)_{n\geq 0}$ with $h_n\in L^\infty(\mathbb{D}^n)$ and
	\[\text{if}~\mathcal{N}=n,~ \gr{Z}_\N=(\gr{z}_1,\cdots,\gr{z}_n),~ H(\gr{Z}_{\N}):=h_n(\gr{z}_1,\cdots,\gr{z}_n).\]
	Fixing $\mu>0$ the \emph{chemical potential}\footnote{It takes the role of the number of particle $N$ has in the  canonical setting.}, we define $\mathbb{E}_\e$ as
	\begin{equation}
	\mathbb{E}_\e[H(\gr{Z}_{\N})]:=\frac{1}{\mathcal{Z}}\sum_{n\geq0} \frac{\mu^n}{n!}\int_{\mathbb{D}^n}h_n(Z_n)\,\frac{e^{-\mathcal{H}_n(Z_n)}}{(2\pi)^{nd/2}}\ud{Z_n},
	\end{equation}
	where $\mathcal{Z}$ is a normalisation constant such that $\mathbb{E}_\e[1]=1$. \blue{The \emph{mean free path} $\mathfrak{d}$ is defined by
	 \begin{equation}
	\mathfrak{d}= \frac{1}{\mu\,\e^{d-1}}.
	\end{equation} 
	It can be interpreted as the typical distance crossed by a particle between two collisions.}
	
	\begin{figure}[h]\label{fig: libre parcourt moyen}
		\centering
		\includegraphics[scale = 0.35]{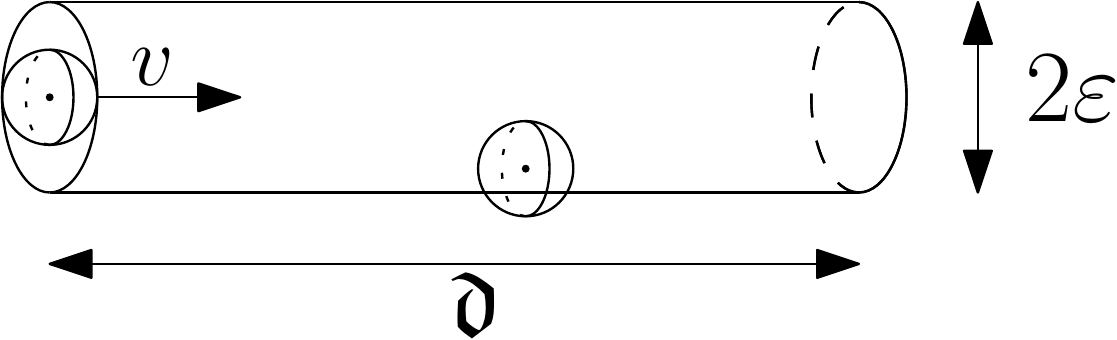}
		\caption{The first particle will meet the second one. Here $v$ is of order $1$.}
	\end{figure}
	
	\blue{We will consider only $\e$ small and $\mathfrak{d}\underset{\e\to0}{=}O(1)$, for which the series defining the Gibbs measure converges absolutely.}
	
	The empirical distribution at time $t$ is defined as the average configuration of particles at time $t$: for any $g$ test function on $\mathbb{D}$,
	\begin{equation}\label{eq:emperical measure}
	\pi_t^\e(g):=\frac{1}{\mu}\sum_{i= 1}^{\N} g(\gr{z}_i(t)).
	\end{equation}
	
	At equilibrium, we have the following law of large numbers. Denote
	\begin{equation}
	M(v) := \frac{e^{-\frac{|v|^2}{2}}}{(2\pi)^{\frac{d}{2}}}.
	\end{equation}
	
	\begin{prop}
		For any continuous and bounded test function $g:{\mathbb{T}}\times\mathbb{R}^d\to \mathbb{R}$, for all $t\in \mathbb{R}$ and for any $\delta>0$, 
		\begin{equation}
		\lim_{\e\to 0}\mathbb{P}_\e\left[\left|\pi_\e^t(g)-\int g(z) M(v) \ud{z}\right|\geq \delta\right]=0.
		\end{equation}
	\end{prop}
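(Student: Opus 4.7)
The plan is to combine the stationarity of $\mathbb{P}_\e$ under the Hamiltonian flow with Chebyshev's inequality. Since the grand canonical Gibbs measure is invariant on each sector $\mathbb{D}^n$ and $\N$ is conserved by the dynamics, the law of $\pi_\e^t(g)$ coincides with that of $\pi_\e^0(g)$, so we may take $t=0$. It then suffices to establish
\begin{equation*}
\mathbb{E}_\e[\pi_\e^0(g)]\longrightarrow\int_\mathbb{D} g(z)M(v)\ud z \quad\text{and}\quad \bbV_\e[\pi_\e^0(g)]\longrightarrow 0.
\end{equation*}

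For the mean, I would exploit the fact that the Gibbs weight factorises into Gaussians in the velocities and a spatial part $e^{-\mathcal{V}_n(X_n)}$ depending only on the differences $x_i-x_j$. By translation invariance on $\mathbb{T}^d$ and exchangeability, the one-point density of particles is $\rho_1(x,v)=\mathbb{E}_\e[\N]\,M(v)$, uniform in $x$, whence $\mathbb{E}_\e[\pi_\e^0(g)]=\tfrac{\mathbb{E}_\e[\N]}{\mu}\int g(x,v)M(v)\ud x\ud v$. A standard cluster expansion of $\mathcal{Z}=\sum_n \mu^n Q_n/n!$, with $Q_n:=\int_{\mathbb{T}^{nd}}e^{-\mathcal{V}_n}\ud X_n$ and $0\le e^{-\mathcal{V}_n}\le 1$, shows that $\mathbb{E}_\e[\N]/\mu=1+O(\mu\e^d)$, and $\mu\e^d=\e/\mathfrak{d}\to 0$ under the Boltzmann-Grad scaling.

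For the variance, I would decompose
\begin{equation*}
\bbV_\e\Big[\sum_i g(\gr z_i)\Big]=\int g(z)^2\rho_1(z)\ud z + \int g(z_1)g(z_2)\bigl(\rho_2(z_1,z_2)-\rho_1(z_1)\rho_1(z_2)\bigr)\ud z_1\ud z_2.
\end{equation*}
The diagonal term equals $\mathbb{E}_\e[\N]\int g^2 M = O(\mu)$, contributing $O(1/\mu)$ to $\bbV_\e[\pi_\e^0(g)]$. For the off-diagonal term, a Mayer expansion of the truncated two-point function gives, at leading order, $\rho_2(z_1,z_2)-\rho_1(z_1)\rho_1(z_2)\approx \mu^2 M(v_1)M(v_2)\bigl(e^{-\alpha\mathcal{V}((x_1-x_2)/\e)}-1\bigr)$, with higher-order corrections of relative size $O(\mu\e^d)$. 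Since $|e^{-\alpha\mathcal{V}}-1|\le 1$ is supported in $|x_1-x_2|\le\e$, the resulting contribution to $\bbV_\e[\pi_\e^0(g)]$ is bounded by $\|g\|_\infty^2\cdot O(\e^d)\to 0$, and Chebyshev's inequality concludes the proof.

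The main obstacle is the uniform control of the cluster expansion for $\mathcal{Z}$ and for the truncated two-point function when the potential is singular at the origin (case $s>0$). The non-negativity of $\mathcal V$ constrains every Boltzmann factor to $[0,1]$, and absolute convergence of the Mayer series follows from the smallness of $\mu\e^d$; these are classical inputs of equilibrium statistical mechanics which will in any case be developed elsewhere in the paper to handle the main fluctuation results.
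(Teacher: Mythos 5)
Your proof is correct, but it takes a genuinely different route from the paper: the paper disposes of this proposition in a one-line remark, invoking the invariance of the Gibbs measure together with Lanford's theorem, whereas you replace the dynamical input by a purely static second-moment argument. The common ingredient is the reduction to $t=0$ via invariance of $\mathbb{P}_\e$ under the flow (Liouville plus conservation of $\mathcal{H}_n$ and of $\N$), which is valid since the dynamics is defined almost everywhere. After that, your computation of the mean via the one-point density and of the variance via the truncated two-point function, controlled by the Penrose/Mayer bounds $0\le e^{-\mathcal{V}_n}\le 1$ and $|e^{-\alpha\mathcal{V}}-1|\le \ind_{|x_1-x_2|\le\e}$, is exactly the content of the quasi-orthogonality estimates of Section~\ref{Quasi-orthogonality estimates} specialised to $n=m=1$: the bound \eqref{quasi covariance} with $g_1=h_1=g$ gives $\mathbb{E}_\e[\mu\hat g\hat g]=O(1)$, i.e.\ a variance of order $1/\mu$ for $\pi_\e^0(g)$, and Chebyshev concludes. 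What your approach buys is self-containedness and elementarity (no appeal to the dynamical convergence result of Lanford, which in any case is only needed here in its trivial instance where the limiting Boltzmann solution is the stationary Maxwellian); what the paper's reference buys is brevity and the observation that the statement is subsumed by known results. The only point worth making explicit in your write-up is that the relative corrections to $\rho_1$ and $\rho_2^T$ coming from the higher orders of the Mayer series are uniformly summable precisely because $\mu\e^d=\e/\mathfrak{d}\to 0$ under the scaling $\mu\e^{d-1}\mathfrak{d}=1$ with $\mathfrak{d}\gg\e$, which is the same convergence condition used in the proof of \eqref{borne espérence}.
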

	\begin{remark}
		\blue{The previous result is a simple corollary of the Lanford theorem and of the stationarity in time of the measure (see \cite{Lanford,King}). From Proposition \ref{theoreme de quasi orthogonalite}, we can deduce the $L^2$ counterpart of this law of large numbers.}
	\end{remark}

	The aim of this article is to investigate the next order, namely the \emph{fluctuation field}\footnote{As the Gibbs measure is stationary in time, for all $t\in\mathbb{R}$, $\mathbb{E}_\e[\pi_0^\e(g)] = \mathbb{E}_\e[\pi^t_\e(g)]$.}
	\begin{equation}
	\zeta_\e^t(g):=\mu^{\frac{1}{2}}\Big(\pi_\e^t(g)-\mathbb{E}_\e[\pi_0^\e(g)]\Big).
	\end{equation}	
	
	\subsection{Binary collision, scattering,   and definition of the linearized Boltzmann operator}
	Interactions involving more than two particles become negligible in the Boltzmann-Grad limit.
		
	The present section is dedicated to describing the map between pre-collisional and post-collisional velocities. It is called the \emph{scattering map} (see Chapter 8 of \cite{GST} for a more detailed discussion).
	
	Consider two interacting particles $1$ and $2$ following the Hamiltonian dynamic associated with $\mathcal{H}_2$. At time $0$, particles have coordinates $(X_2(0),V_2(0))$ with
		\[x_1(0)=\e\nu, ~x_2(0)=0 ,~v_1 = v ~{\rm and}~ v_2(0)=v_*\]
	where $\nu\in\mathbb{S}^{d-1}$ and $(v-v_*)\cdot\nu>0$.
		
	The particles will interact on a finite interval $[0,[\tau]]$ with $[\tau]$ the infimum of $\{\tau>0,\,|x_2(\tau)-x_1(\tau)|>\e\}$. The time $[\tau]$ is finite and bounded by \blue{$  \frac{\e|v_1-v_2|}{|\nu\times (v_1-v_2)|^2}$}, with $\times$ the cross product (see Lemma \ref{lemma:borne temps collision}). We define $(\nu',v',v_*')$ as 
	\[\nu' := \frac{x_2(\tau)-x_1(\tau)}{\e} {\rm ~and~}(v',v_*'):=\big(v_1([\tau]),v_2([\tau])\big).\]
		
		In addition, the scattering conserves both momentum, kinetic energy, and angular momentum:
		\begin{equation}v+v_* = v'+v_*',~|v|^2+|v_*|^2=|v'|^2+|v'_*|^2{\rm ~and~}(v-v_*)\times\nu=(v'-v'_*)\times\nu'
		\end{equation}
		We deduce that 
		\begin{equation}
		|(v-v_*)\cdot\nu|=|(v'-v'_*)\cdot\nu'|.
		\end{equation}
		
	\begin{definition}\label{def: le scatering, sa vie son oeuvre}
		The \emph{scattering application} defined by 
		\begin{equation}\label{eq:def de l'application scattering}
		\xi_\alpha:(\nu,v,v_*)\mapsto(\nu',v',v'_*)
		\end{equation}
		is a local diffeomorphism which sends the measure $\ud{v}\!\ud{v_*}\!\ud\nu$ to $\ud{v'}\!\ud{v'_*}\!\ud\nu'$. \blue{In addition, it does not depend on the particle radius $\e$.}
	\end{definition}
	
	\begin{figure}[h!]
		\centering		\includegraphics[width=5cm]{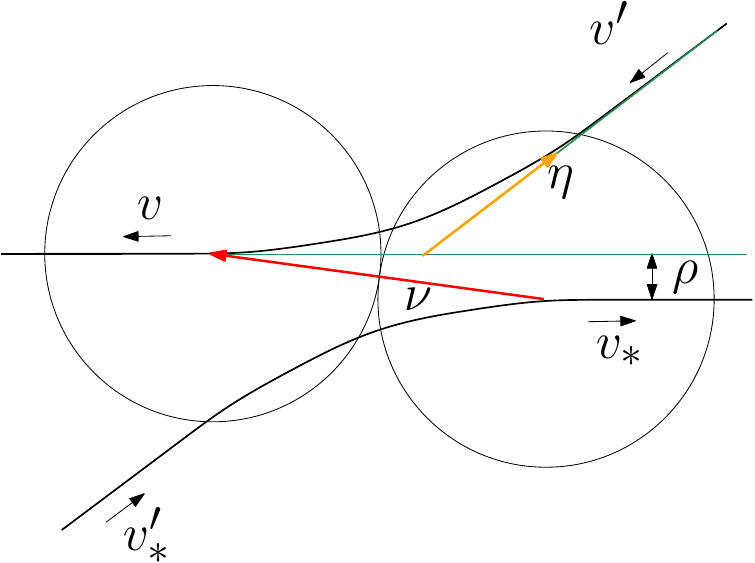}
		\caption{The scattering between two particles.}
	\end{figure}
	
	We define the linearized Bolzmann operator in the King's form:
	\begin{equation}\label{linearized Boltzmann}
	\mathcal{L}_{\mathcal{U}} g(v) := \int_{\mathbb{S}\times\mathbb{R}^d} \big(g(v')+g(v_*')-g(v)-g(v_*)\big)((v-v_*)\cdot\nu)_+M(v_*)\ud\nu\,\ud{v_*},
	\end{equation}
	where we apply the scattering with interaction potential $\mathcal{U}(\cdot)$, and $\mathcal{L}_\alpha := \mathcal{L}_{\alpha\mathcal{V}}$.
	
	This operator describes the variation of mass \blue{in a gas} due to changes of velocity of colliding particles. \blue{It is well known} that the operator $\mathcal{L}_\alpha$ is a self-adjoint non-positive operator on $L^2(M(v)\ud{z})$.
	
	\begin{remark}\label{rmk:pourquoi sans cut-off}
		We say that the Boltzmann operator $\mathcal{L}_{\mathcal{U}}$ has a cutoff because we truncate the long range interaction. 
		
		There is another interpretation of this property. For parameters $(v,v_*,\nu)$, we can define the vector $\eta$ such that 
		\[v' = \frac{v+v_*}{2}+\frac{|v-v_*|}{2}\eta,~v_*' = \frac{v+v_*}{2}-\frac{|v-v_*|}{2}\eta,\]
		and $b_\alpha(v-v_*,\eta)$ (called the \underline{collision kernel}) the Jacobian of the application $\nu\mapsto\eta$:
		\[((v-v_*)\cdot\nu)_+\ud\nu\to b_\alpha(v-v_*,\eta)\blue{\ud}\eta.\]
		
		We say that the Boltzmann operator has a cutoff because for any $v-v_*$, the following bound holds
		\[\int b_\alpha(v-v_*,\eta)\ud\eta<\infty.\]
		
		\blue{See Chapter 8 of \cite{GST}, Appendix of \cite{PSS} and Proposition 2.3.2 of \cite{LeBihan3} for explicit estimations of  the collision kernel $b_\alpha$ for certain class of interaction potential. }
	\end{remark}
	
	\subsection{Convergence to the linearized Boltzmann equation with a cut-off}	
	
	We recall that we have divided the proof of Claim \ref{thm:proto} into two steps. The first step is the Boltzmann-Grad limit $\mu\to \infty$. As we want to take the \emph{grazing collision} limit $\alpha\to0$ in a second step, we need a quantitative rate of convergence. 
	
	We define the norm
	\begin{equation}
	\|g\|_0:= \sup_{(x,v)\in\mathbb{D}} \left| M^{-1}(v) g(x,v)\right|~{\rm and}~\|g\|_k:=\sum_{|\alpha|\leq k}\|\nabla^\alpha g\|_0.
	\end{equation}
	
	\begin{theorem}\label{thm:Borne quantitativeè}
		Let $g$ and $h$ be two test functions $\mathcal{C}^1(\mathbb{D})$, with $\|g\|_1,\|h\|_1<\infty$. Then there exist three constants $C>1$, $C'>1$ and $\mathfrak{a}\in(0,1)$ depending only on the dimension such that for any $\e$ small enough, $T>1$, $\theta<\tfrac{1}{C'T^2}$, $\alpha,\mathfrak{d}\in(\log|\log\e|^{-1},1)$,
		\begin{equation}\label{Borne principale}
		\sup_{t\in[0,T]}\bigg|\mathbb{E}_\e\Big[\zeta^t_\e(h)\zeta^0_\e(g)\Big] - \int h(z)\gr{g}_\alpha(t,z)M(z)\ud z\bigg|
		\blue{\leq C \left( \frac{\theta T^2}{\mathfrak{d}^{3}}+\e^{\mathfrak{a}} \left(\frac{CT}{\mathfrak{d}}\right)^{2^{\frac{T}{\theta}+1}}\right)\|g\|_1\|h\|_1,}
		\end{equation}
		where $\gr{g}_\alpha(t,z)$ is the solution \blue{in $L^\infty(\ud t,L^2(M(v)\ud z))$} of the linearized Boltzmann equation
		\begin{equation}\label{eq:Linearized Boltzmann equation}\begin{split}
		\partial_t\gr{g}_\alpha(t)+v\cdot\nabla_x\gr{g}_\alpha(t) &= \frac{1}{\mathfrak{d}}\mathcal{L}_\alpha \gr{g}_\alpha(t),\\
		\gr{g}_\alpha(t=0) &= g
		\end{split}\end{equation}
	\end{theorem}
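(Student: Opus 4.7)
The plan is to expand the covariance $\mathbb{E}_\e[\zeta^t_\e(h)\zeta^0_\e(g)]$ via the Duhamel/cluster representation of the grand canonical flow, and to compare it term-by-term with the Duhamel series of the linearized Boltzmann semigroup $e^{t(-v\cdot\nabla_x + \mathfrak{d}^{-1}\mathcal{L}_\alpha)}$ applied to $g$. Each Duhamel iteration corresponds, on the microscopic side, to the addition of a fresh particle drawn from the conditional Gibbs measure to an existing \emph{pseudo-trajectory} tree rooted at a tagged particle; on the limiting side, to a binary collision encoded by the scattering map $\xi_\alpha$ from \eqref{eq:def de l'application scattering} in the King form \eqref{linearized Boltzmann} of $\mathcal{L}_\alpha$. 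The tagged particle carries the test function $h$ at time $t$, and its pseudo-trajectory is run backwards down to time $0$, where $g$ is integrated against the (almost) tensorised Maxwellian weight.

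Because iterating Duhamel on the whole interval $[0,T]$ produces a factorial blow-up of trees, the argument proceeds by a time-partition of $[0,T]$ into $T/\theta$ slices of length $\theta$. On each slice a finite expansion of depth $\sim 2^{T/\theta}$ is performed, and the slices are glued through a pseudo-semigroup identity that doubles the tree-type count at each junction; this accounts for the factors $2^{T^2/\theta^2}$ and $(CT/\mathfrak{d})^{2^{T/\theta}}$ in the estimate. The per-slice truncation error, controlled by the $L^2(\mathbb{P}_\e)$-norm of $\zeta^t_\e(\cdot)$ followed by a Cauchy--Schwarz inequality, scales like $\theta^{3/2}/\mathfrak{d}^2$, and summing it over the $T/\theta$ slices yields the additive loss $T^{3/2}\theta^{1/2}/\mathfrak{d}^2$.

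The main obstacle, and the genuine novelty compared with the hard-sphere analysis of \cite{BGSS1}, is the control of \emph{recollisions} in the absence of an analogue of the billiard bound of \cite{BFK}. For a smooth extended potential $\alpha\mathcal{V}$ the very notion of number of collisions is ill-defined, and three particles whose interaction zones overlap can remain coupled on a time scale comparable to $\e$. Following and simplifying the conditioning strategy of \cite{LeBihan2}, I would introduce a good event on the initial Gibbs configuration that enforces a cluster-free picture at scale $\e$ around each particle of the pseudo-trajectory, together with a quantitative lower bound on relative velocities and on the sine of each scheduled incidence angle. On this good event, Assumption \ref{ass: potentiels pour la dérivation de Boltzmann} and the fact that $\xi_\alpha$ is a local diffeomorphism sending $\ud v\ud v_*\ud\nu$ onto $\ud v'\ud v'_*\ud\nu'$ allow one to parametrise recollisional configurations by a transverse impact parameter whose Lebesgue measure is bounded by $O(\e^{\mathfrak{a}})$; propagation through Cauchy--Schwarz produces the residual factor $\e^{\mathfrak{a}/2}$.

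Once recollisional trees are discarded, every surviving microscopic pseudo-trajectory coincides with a Boltzmann pseudo-trajectory up to a spatial shift of order $\e$ at each binary interaction, and integration of these shifts against the $\mathcal{C}^1$ test functions costs $O(\e\|g\|_1\|h\|_1)$, absorbed into $\e^{\mathfrak{a}/2}$. Assembling the short-time Duhamel expansion, the recollision bound on the good event, and the iteration over the $T/\theta$ slices then yields the announced estimate and completes the proof of Theorem \ref{thm:Borne quantitativeè}. The delicate step, where essentially all the technical work will be concentrated, is the quantitative recollision bound under Assumption \ref{ass: potentiels pour la dérivation de Boltzmann}: it is what forces the specific form of the three-parameter trade-off $(T,\theta,\mathfrak{a})$ appearing on the right-hand side.
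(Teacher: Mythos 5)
Your outline reproduces the overall architecture of the proof (time sampling of $[0,T]$ into $\theta$-slices with a dyadic bound $2^{T/\theta}$ on the number of particles, $L^2(\mathbb{P}_\e)$ control of the stopped terms via Cauchy--Schwarz and the invariance of the Gibbs measure, a conditioning of the initial data to tame recollisions, a geometric $O(\e^{\mathfrak{a}})$ estimate on recollisional configurations, and finally the identification of the surviving terms with the Duhamel series of \eqref{eq:Linearized Boltzmann equation}). However, there is one point where the route you describe would actually fail for the potentials of Assumption \ref{ass: potentiels pour la dérivation de Boltzmann}. You describe the expansion as a tree rooted at the tagged particle in which ``each Duhamel iteration corresponds to the addition of a fresh particle'', i.e.\ the BBGKY/Lanford--King hierarchy expansion. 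For a smooth extended potential this hierarchy is precisely what the paper avoids: particles can overlap, interactions are not instantaneous, more than two particles can be coupled simultaneously, and there is no well-defined ``collision'' at which a particle is added or removed. The paper instead uses a \emph{dynamical cluster expansion} (Definitions \ref{def:DDelta} and \ref{def:CCirc}): an exact combinatorial identity on the $\N$-particle flow itself, written in terms of the cumulants $\CCirc_l$ of the overlap indicators and the cluster indicators $\DDelta$, in which particles never disappear and the pseudotrajectory has a graph rather than a tree structure. Gluing the slices then requires genuine work (the semi-tree condition and the third definition of pseudotrajectory), because the graph development has no natural semigroup property, unlike the tree one you implicitly assume.

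The second gap is combinatorial. You collapse the recollision control into a single ``good event'' on the initial data, but the conditioning $\Upsilon_\e$ only bounds the size of \emph{possible clusters} on windows of the \emph{second, finer} sampling scale $\delta=\e^{1/12}$, which you omit entirely. This two-scale structure is what makes the number of histories summable: without it, a pseudotrajectory containing a recollision cannot be reconstructed from $O(n)$ binary parameters, and the sum over histories is not bounded by $C^n$ — which is essential because the signs $\sigma(\mathrm{history})$ are not exploited and everything is estimated in absolute value. The paper accordingly splits the recollision error into a non-pathological part (collision graph with a single cycle, handled by the geometric estimate of Lemma \ref{lem:estimation des reco}, contributing $G^{\mathrm{rec},1}_\e$) and a pathological local part (handled by the conditioning and the possible-cluster partition, contributing $G^{\mathrm{rec},2}_\e$); your single-event argument does not account for the first of these, nor for the quasi-orthogonality estimates of Theorem \ref{theoreme de quasi orthogonalite} (static cluster expansion and Penrose's tree inequality) which are what actually convert the $L^1$-type bounds on the functionals into the $L^2(\mathbb{P}_\e)$ bounds your Cauchy--Schwarz step requires.
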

	
	\blue{\begin{remark}
		 It is classical that there is a unique solution to the linearized Boltzmann equation, which is bounded globally in time in $L^2(M(v)\ud z)$ (see e.g., Section 7 in \cite{CIP}).
	\end{remark}}
	
	The theorem is valid in any dimension $d\geq 2$. Its proof is the main purpose of the present article. We conclude the proof of Theorem \ref{thm:Borne quantitativeè} by Estimation \eqref{eq:Borne principale bis}, and we outlined the main step of the proof in Section \ref{subsec:Strategy of the proof}.
		
	\subsection{Derivation of the linearized Landau equation and Boltzmann equation without cut-off}\label{subsec:Derivation of the linearized Landau equation and Boltzmann equation without cut-off}
	
	\blue{In this section, we discuss the second step, namely the grazing collision limit. We fix the dimension at $d=3$ as it is the physical case.} We only state the main results, as the proof can be found in the joint article \cite{LW}.
	
	\paragraph{The case where the singularity $\tfrac{1}{r^s}$ of the potential is stronger than the Coulomb singularity ($s>1$).}
	In the limit $\alpha\to 0$, we will only see the effects of the singularity at the origin. We define the power law potential $\mathcal{U}_s(r):=1/r^s$. It is natural to guess that one has convergence of the Boltzmann operators
	\begin{equation}
	\alpha^{-\frac{2}{s}}\mathcal{L}_\alpha\to \mathcal{L}_{\mathcal{U}_s}\end{equation}
	which is a linearized Boltzmann operator without cutoff (see Appendix \ref{sec:The Linearized Boltzmann operator without cut-off} for a rigorous definition of $\mathcal{L}_{\mathcal{U}_s}$ and a justification of the scaling $\mathfrak{d}=\alpha^{2/s}$).
	
	\begin{remark}
		We say that the Boltzmann operator $\mathcal{L}_{\mathcal{U}_s}$ has no cutoff because particles can interact at long range, and the collision kernel $b_s(v-v_*,\eta)$ associated to the potential $1/r^s$ (defined in Remark \ref{rmk:pourquoi sans cut-off}) is not integrable in the $\eta$ variable (see \eqref{eq:noyau pour loi de puissance}).		
	\end{remark}
	
	\paragraph{The Coulomb case $s=1$.} It is not possible to define the Boltzmann operator for the Coulomb potential. However, we can prove (see \cite{LW}) that for $g$ a test function smooth enough, 
	\begin{equation}
	\frac{1}{\alpha^2|\log{\alpha}|}\mathcal{L}_\alpha g \underset{\alpha\to0}{\longrightarrow} c_{\mathcal{V}}\mathcal{K}g
	\end{equation}
	where $c_{\mathcal{V}}=1$ is a diffusion constant and $\mathcal{K}$ is the linearized Landau operator
	\begin{equation}\label{eq:linearized landau equation}
	\KK g (v) = \frac{2\pi}{M(v)} \nabla_v\cdot  \left(\int_{\R^3}  \frac{P^\perp_{v-v_*}}{|v-v_*|} (\nabla g(v)-\nabla g(v_*)) M(v)M(v_*) \ud{v_*}\right).
	\end{equation}
	
	\paragraph{Treat now the weak singularity $s\in[0,1)$.} For these potentials, the scaling and the diffusion constant change:
	\begin{align}\label{equiv:cPhi}
	\mathfrak{d} = \alpha^2,~~2\pi c_{\mathcal{V}} = \frac1{8\pi}\int_{\mathbb{R}^3} \delta(k\cdot \vec{e}_1) |k|^2 |\hat{{\mathcal{V}}}(k)|^2 \ud{k},
	\end{align}
	where $\vec{e}_1$ is a unit vector, and we use the convention $\hat{{\mathcal{V}}}(k)= \int_{\mathbb{R}^3} e^{-ik\cdot x} {\mathcal{V}}(x)\ud{x}$ for the Fourier transform of ${\mathcal{V}}$. Then
	\begin{equation}
	\frac{1}{\alpha^2}\mathcal{L}_\alpha g \underset{\alpha\to0}{\longrightarrow} c_{\mathcal{V}}\mathcal{K}g
	\end{equation}
	
	The previous discussion can be summarized by the following theorem:
	\begin{theorem}[\blue{L.B.-Winter, \cite{LW}}]\label{thm:LW}
		For $g:\mathbb{D}\to\mathbb{R}$ smooth and $\mathcal{V}$ respecting Assumption \ref{ass: potentiels pour la dérivation de Boltzmann}, there exists a positive constant $C$ such that
		\begin{equation}
		\left\|\mathfrak{d}_{s,\alpha}^{-1}\mathcal{L}_\alpha g -\mathcal{L}_\infty g\right\|_{L^2(M(v)\ud{z})}\leq \frac{C}{|\log\alpha|}\|g\|_3,
		\end{equation}
		where $\mathcal{L}_\infty$ and $\mathfrak{d}_{s,\alpha}$ are given by 		
		\begin{center}			
		\begin{tabular}{|C{2cm}|C{3cm}|C{6cm}|}
			\hline Singularity&Mean free-path&Limiting operator\tabularnewline
			\hline\hline $s>1$&$\mathfrak{d}_{s,\alpha} := \alpha^{2/s}$ &$\mathcal{L}_\infty=\mathcal{L}_{\mathcal{U}_s}$\tabularnewline
			\hline $s=1$&$\mathfrak{d}_{s,\alpha} := \alpha^{2}|\log\alpha|$ &$\mathcal{L}_\infty=\mathcal{K}$\tabularnewline
			\hline $0\leq s<1$&$\mathfrak{d}_{s,\alpha} := \alpha^{2}$ &$\mathcal{L}_\infty=c_{\mathcal{V}}\mathcal{K}$,\\
			$\displaystyle c_{\mathcal{V}} = \frac1{16\pi^2}\int_{\mathbb{R}^3} \delta(k\cdot \vec{e}_1) |k|^2 |\hat{{\mathcal{V}}}(k)|^2 \ud{k}$\\ \tabularnewline\hline
		\end{tabular}
		\end{center}
		\smallskip 		
		In addition, defining $\gr{g}_\infty(t)$ the solution of 
		\begin{equation}\label{eq:Linearized Boltzmann sans cut-off equation}\begin{split}
		\partial_t\gr{g}_\infty(t)+v\cdot\nabla_x\gr{g}_\infty(t) &=\mathcal{L}_\infty \gr{g}_\infty(t),\\
		\gr{g}_\infty(t=0) &= g
		\end{split}\end{equation}
		and $\gr{g}_\alpha$ the solution of \eqref{eq:Linearized Boltzmann equation} with $\mathfrak{d}:=\mathfrak{d}_{s,\alpha}$, the following convergence holds 
		\begin{equation}
		\gr{g}_\alpha \underset{\alpha\to0}{\overset{*}{\rightharpoonup}}\gr{g}_\infty ~{\rm in}~L^{\infty}_t(\mathbb{R}^+(L^2(M(v)\ud{z})).
		\end{equation}
	\end{theorem}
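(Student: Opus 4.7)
The plan is to establish the operator convergence $\mathfrak{d}_{s,\alpha}^{-1}\mathcal{L}_\alpha g \to \mathcal{L}_\infty g$ first, treating the three regimes $s>1$, $s=1$, $s<1$ separately since the dominant scale of impact parameters is different in each case; the weak-$*$ convergence of the solutions is then a soft consequence via duality.

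The first step is to re-parametrize the scattering. In the two-body reduced problem with relative velocity $w = v-v_*$, the deflection angle $\Theta_\alpha(b,|w|)$ at impact parameter $b$ is determined implicitly by the Hamiltonian flow under the potential $\alpha\mathcal{V}$. Changing variables from $\nu\in\mathbb{S}^{d-1}$ to cylindrical coordinates $(b,\phi)\in\R^+\times\mathbb{S}^{d-2}$ adapted to $w$, the factor $((v-v_*)\cdot\nu)_+\ud\nu$ in \eqref{linearized Boltzmann} becomes $|w|\,b^{d-2}\,\ud b\,\ud\phi$, reducing the asymptotic analysis of $\mathcal{L}_\alpha$ to an understanding of $\Theta_\alpha(b,|w|)$ as $\alpha\to 0$.

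For $s>1$ I would rescale $b = \alpha^{1/s}\tilde b$: the trajectory in the potential $\alpha\mathcal{V}$ converges for each fixed $\tilde b$ to the trajectory in the pure power law $|x|^{-s}$, the cutoff $\chi$ becomes irrelevant, and the Jacobian $b\,\ud b = \alpha^{2/s}\tilde b\,\ud\tilde b$ in $d=3$ reproduces exactly $\mathfrak{d}_{s,\alpha} = \alpha^{2/s}$ with limiting operator $\mathcal{L}_{\mathcal{U}_s}$. For $s\le 1$ every collision becomes grazing and the key tool is the \emph{impulse approximation}
\[
v'-v \;\approx\; -\alpha \int_{\R} \nabla\mathcal{V}\bigl(b\,\vec{e}_\perp + tw\bigr)\,\ud t + O(\alpha^2),
\]
the integration being along the straight line. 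Taylor-expanding
\[
g(v')+g(v'_*)-g(v)-g(v_*) = \alpha\,\nabla g\cdot(\text{impulse}) + \tfrac{\alpha^2}{2}(\text{impulse})^{\otimes 2}\!:\nabla^2 g + O(\alpha^3\|g\|_3),
\]
the first-order term vanishes after symmetrization in $v\leftrightarrow v_*$ against the Maxwellian; the second-order term, after integrating $(\text{impulse})^{\otimes 2}$ in $b$, reconstructs the Landau kernel $P^\perp_{v-v_*}/|v-v_*|$. For $s<1$ the $b$-integral is finite and computable by Parseval, giving the constant $c_{\mathcal{V}}$ of \eqref{equiv:cPhi}; for $s=1$ the same integral diverges logarithmically and is regularized below by $b\sim\alpha$ (the scale at which the impulse approximation breaks down) and above by $b\sim 1$ (from $\chi$), producing the Coulomb logarithm.

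The main obstacle is quantifying the $O(1/|\log\alpha|)$ error in the strong norm $\|\cdot\|_{L^2(M(v)\ud z)}$. This requires (i) a uniform Lipschitz estimate on the remainder $v'-v - (\text{impulse})$ in the scattering map, obtained by comparing the exact and straight-line trajectories through Gronwall; (ii) a careful matching between the grazing region where the impulse expansion is valid and the close-encounter region $b\lesssim \alpha^{1/s}$, whose contribution must be absorbed into the Coulomb logarithm for $s=1$ and shown to be of strictly smaller order for $s<1$. These two estimates together produce the rate $1/|\log\alpha|$, and the $\|g\|_3$ norm arises from controlling the Taylor remainder and from integrating by parts twice in the reconstruction of $\KK$. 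Once the operator convergence is in hand, the weak-$*$ convergence $\gr{g}_\alpha \rightharpoonup^* \gr{g}_\infty$ follows by a standard compactness argument: both $\mathcal{L}_\alpha$ and $\mathcal{L}_\infty$ are non-positive self-adjoint on $L^2(M(v)\ud z)$, so the dissipation identity gives $\|\gr{g}_\alpha(t)\|_{L^2(M)} \le \|g\|_{L^2(M)}$ uniformly in $\alpha$; extracting a weak-$*$ limit and testing \eqref{eq:Linearized Boltzmann equation} against a smooth $\phi$, the operator bound transfers to $\phi$ and identifies any accumulation point with the unique solution $\gr{g}_\infty$ of \eqref{eq:Linearized Boltzmann sans cut-off equation}.
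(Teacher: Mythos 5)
The paper does not actually prove Theorem \ref{thm:LW} — it is imported from \cite{LW} — and the only material given here is the heuristic of Section \ref{subsec:Derivation of the linearized Landau equation and Boltzmann equation without cut-off} and Appendix \ref{sec:The Linearized Boltzmann operator without cut-off}: the impact-parameter rescaling $\vec\rho\mapsto\alpha^{-1/s}\vec\rho$ identifying $\alpha^{2/s}\mathcal{L}_{\alpha\mathcal V}$ with $\mathcal{L}_{\mathcal U_\alpha}$ for $s>1$, and the Fourier formula for $c_{\mathcal V}$ coming from a Born/impulse computation for $s\le 1$. Your outline (impact-parameter coordinates, rescaling for $s>1$, impulse approximation with second-order Taylor expansion and azimuthal/Maxwellian cancellation of the first-order term for $s\le1$, Plancherel for $c_{\mathcal V}$, logarithmic matching for $s=1$, and weak-$*$ compactness for the solutions) is exactly the strategy of the cited proof and is consistent with these heuristics, so it is essentially the same approach.
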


	Combining it with Theorem \ref{thm:Borne quantitativeè} we obtain the main theorem:
	
	\begin{theorem}\label{thm:derivation Bolzmann sans cut-off}
		Let $f,g\in L^2(M(v)\ud{z})$ be two test functions.
		
		Consider a potential $\mathcal{V}$ such that the Assumptions \ref{ass: potentiels pour la dérivation de Boltzmann} are verified and $\mathcal{V}(r)\underset{r\to 0^+}\sim\frac{1}{r^s}$, \blue{ $s\geq 0$}.
		
		Fix the scaling $\mu\e^2\mathfrak{d}_{s,\alpha}=1$.
		Then we have the following convergence result: for all $t\geq 0$,
		\[\mathbb{E}_\e\left[\zeta_\e^t(h)\zeta_\e^0(g)\right]\underset{\substack{\e\rightarrow0\\\alpha\to0\\\blue{\alpha\log|\log \e|^{\frac{\tilde{s}}6}\to\infty}}}{\longrightarrow} \int h(z)\gr{g}_\infty(t,z)M(z)\ud z \]
		where $\tilde{s} = \max(s,1)$ and $\gr{g}_\infty(t)$ is the solution of the equation \eqref{eq:Linearized Boltzmann sans cut-off equation}.
	\end{theorem}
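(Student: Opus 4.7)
The approach I would take decomposes the error by the triangle inequality, so that each piece is handled by one of the two main results in the excerpt. Writing $I_\alpha(t) := \int h(z)\gr{g}_\alpha(t,z)M(z)\ud z$ and similarly $I_\infty(t)$, I would bound
\[
\bigl|\mathbb{E}_\e[\zeta_\e^t(h)\zeta_\e^0(g)] - I_\infty(t)\bigr|
\leq \bigl|\mathbb{E}_\e[\zeta_\e^t(h)\zeta_\e^0(g)] - I_\alpha(t)\bigr| + \bigl|I_\alpha(t) - I_\infty(t)\bigr|,
\]
calling the first summand $A$ (controlled by Theorem \ref{thm:Borne quantitativeè}) and the second $B$ (controlled by Theorem \ref{thm:LW}).

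For $A$, I would plug $T := t$ and an auxiliary time-step $\theta = \theta(\e,\alpha)$ into \eqref{Borne principale}. Under the hypothesis $\alpha > (\log|\log\e|)^{-1/4}$ and the scaling $\mu\e^2\mathfrak{d}_{s,\alpha}=1$, the table for $\mathfrak{d}_{s,\alpha}$ in Theorem \ref{thm:LW} shows $\mathfrak{d}_{s,\alpha} \ge c(\log|\log\e|)^{-1/2}$ in all three singularity regimes, so that $1/\mathfrak{d}^2$ remains a polylog in $|\log\e|$. The plan is then to pick $\theta$ satisfying simultaneously $\theta \ll \mathfrak{d}^4/T^3$ (which kills the first summand $T^{3/2}\theta^{1/2}/\mathfrak{d}^2$) and $T/\theta \lesssim \log_2(|\log\e|/|\log\mathfrak{d}|)$ (so that the factor $\e^{\mathfrak{a}/2}$ defeats the iterated-exponential factor $2^{T^2/\theta^2}(CT/\mathfrak{d})^{2^{T/\theta}}$). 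The threshold on $\alpha$ is calibrated precisely so that this window for $\theta$ is nonempty.

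For $B$, I would use the quantitative operator estimate $\|\mathfrak{d}_{s,\alpha}^{-1}\mathcal{L}_\alpha g - \mathcal{L}_\infty g\|_{L^2(M\ud z)} \leq C|\log\alpha|^{-1}\|g\|_3$ extracted from Theorem \ref{thm:LW}. A Duhamel representation for $\gr{g}_\alpha(t)-\gr{g}_\infty(t)$, combined with Gronwall along the transport--collision semigroup and the standard regularity of $\gr{g}_\infty$ for the limiting equation, yields
\[
\|\gr{g}_\alpha(t) - \gr{g}_\infty(t)\|_{L^2(M\ud z)} \leq C(t)|\log\alpha|^{-1},
\]
and pairing with $h$ via Cauchy--Schwarz gives $B \to 0$ as $\alpha\to 0$. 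If the required regularity of $\gr{g}_\infty$ is unavailable, one may fall back on the qualitative weak-$*$ convergence $\gr{g}_\alpha \rightharpoonup^* \gr{g}_\infty$ in $L^\infty_t(L^2(M\ud z))$ from Theorem \ref{thm:LW}, combined with equicontinuity in $t$ inherited from the linear equations.

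The principal obstacle is the parameter tuning in $A$: the right-hand side of \eqref{Borne principale} depends doubly exponentially on $T/\theta$ through $(CT/\mathfrak{d})^{2^{T/\theta}}$, while $\theta$ is simultaneously forced below $\mathfrak{d}^4/T^3$. A nonempty window for $\theta$ opens only when $\mathfrak{d}$ decays no faster than a slow polylog of $|\log\e|$, which is exactly what the hypothesis $\alpha > (\log|\log\e|)^{-1/4}$ secures. Once a valid $\theta = \theta(\e,\alpha)$ is fixed, the joint limit along admissible sequences yields both $A\to 0$ and $B\to 0$, proving the theorem.
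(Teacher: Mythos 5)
Your decomposition is exactly the one in the paper: the paper controls your term $A$ by applying Theorem \ref{thm:Borne quantitativeè} with $T=\max(1,t)$ and $\theta=(\beta\log|\log\e|)^{-1}$ for $\beta$ small, and controls your term $B$ by the weak-$*$ convergence $\gr{g}_\alpha\rightharpoonup^*\gr{g}_\infty$ stated in Theorem \ref{thm:LW} --- i.e.\ your ``fallback'' option for $B$ is what is actually used, and no quantitative Duhamel--Gronwall estimate (nor regularity of $\gr{g}_\infty$) is needed for the qualitative limit. The one substantive step you skip is the reduction from $h,g\in L^2(M(v)\ud z)$ to $\|h\|_1,\|g\|_1<\infty$: Theorem \ref{thm:Borne quantitativeè} only applies to $\mathcal{C}^1$ data with finite weighted norms, and the paper passes to general $L^2$ data by density of $\{\|g\|_1<\infty\}$ combined with the $L^2$-continuity, uniform in $\e$, of both bilinear forms $(h,g)\mapsto\mathbb{E}_\e[\zeta_\e^t(h)\zeta_\e^0(g)]$ and $(h,g)\mapsto\int h\,\gr{g}_\infty\,M\ud z$; without that remark your argument proves the statement only for smooth test functions. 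Otherwise your tuning of $\theta$ is the intended one, and is in fact spelled out more explicitly than in the paper, where the compatibility of the two constraints you isolate (roughly $\theta\ll\mathfrak{d}^4/T^3$ and $T/\theta\lesssim\log|\log\e|$) with $\mathfrak{d}_{s,\alpha}=\alpha^{2/s}$ and the hypothesis $\alpha>(\log|\log\e|)^{-1/4}$ is asserted rather than verified --- this compatibility check is the one point of your plan that genuinely needs to be carried out rather than alluded to.
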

	
	\blue{\begin{remark}
			We recall that in the sup-Coulomb case ($s>1$), taking the limit 
			\[\e\to0,~\alpha\to0, ~\mu\,\e^2 = \mathfrak{d}^{-1}_{s,\alpha}=\alpha^{-2/s}\text{ and }\alpha\log|\log \e|^{\frac s6}\to\infty\]
			is equivalent to taking the limit
			\[\epsilon\to0,~R\to\infty, ~\mu\,\epsilon^2 = 1 \text{ and }\frac{\log|\log \e|^{\frac16}}{R}\to\infty.\]
	\end{remark}}

	\begin{proof}[Proof of Theorem \ref{thm:derivation Bolzmann sans cut-off}]
		First, the space  $E:=\{g:\mathbb{D}\to\mathbb{R},~\left\|g\right\|_1<\infty\}$ is dense in $L^2(M(v\ud{z}))$.
		
		Since the two bilinear operators
		\begin{align*}
		(h,g)\mapsto \mathbb{E}_\e\left[\zeta_\e^t(h)\zeta_\e^0(g)\right],~(h,g)\mapsto \int h(z)\gr{g}_\infty(t,z)M(z)\ud z
		\end{align*}
		are  continuous on $L^2(M(v)\ud{z})$ (see \cite{BGSS1}), it is sufficient to take $g,h\in E$. 
		
		Set $T:= \max(1,t)$. Fixing $\theta :=\frac{1}{\beta\log|\log\e|}$ for $\beta\in(0,1)$ small enough, 
		\[C \left( C \frac{T^2\theta}{\mathfrak{d}_{s,\alpha}^3} + \frac{T}{\theta}2^{T^2/\theta^2}\left(\frac{CT}{\mathfrak{d}_{s,\alpha}}\right)^{2^{T/\theta}}\e^{\mathfrak{a}/2}\right)= \blue{o(1)}\]
		
		Hence Theorem \ref{thm:Borne quantitativeè} provides 
		\[\mathbb{E}_\e\left[\zeta_\e^t(g)\zeta_\e^0(h)\right] = \int h(z)\gr{g}_\alpha(t,z)M(v)\ud{z}+o(\|g\|_1\|h\|_1).\]
		
		Theorem \ref{thm:LW} provides the convergence
		\[\int h(z)\gr{g}_\alpha(t,z)M(v)\ud{z}\to\int h(z)\gr{g}_\infty(t,z)M(v)\ud{z}.\]
		
		This concludes the proof.
	\end{proof}

\red{	 \subsection{Central Limit Theorem for the fluctuation field}
	
	One can ask if it is possible to go further in the description of the fluctuation field. In the hard spheres setting, it is possible to prove a Central Limit Theorem for $\zeta_\e^t$.
	
	At time $0$, one can show that $\zeta_\e^0$ converges in law to the Gaussian field $\zeta^0$ define by 
	\begin{definition}
		Let $\zeta^0$ the Gaussian field on $\mathbb{D}$ of covariance
		\begin{equation}
		\forall f,g \in L(M(v)dz),~\left\{\begin{split}
		\mathbb{E}\left[\zeta^0(h)\zeta^0(g)\right]&= \int h(z)g(z) M(v)dz,\\
		\mathbb{E}\left[\zeta^0(g)\right]&=0.
		\end{split}\right.
		\end{equation}
	\end{definition}
	
	It is possible to generalize this result to the time dependent process (see \cite{Spohn,Spohn2,BGSS} for short time result and \cite{BGSS2} for long time result).
	\begin{theorem}[Bodineau, Gallagher, Saint-Raymond, Simonella, \cite{BGSS2}]
		Consider the hard spheres system in $d$-dimensional torus $\mathbb{T}^d$ ($d$ bigger than $3$) and fix the Boltzmann-Grad scaling $\mu\e^{d-1}=1$. The fluctuation field $(\zeta_\e)_{t\geq0}$ converges for al time to $\zeta^t$, the Gaussian field solution of the fluctuating Boltzmann equation 
		\begin{equation}
		\left\{\begin{split}
		&d\zeta^t=\mathcal{L}_B\zeta^t dt+d\xi^t\\
		&\zeta^{t=0}=\zeta^0
		\end{split}\right..
		\end{equation}
		The field $\xi^t$ is the mean free Gaussian field of covariance
		\begin{equation}
		\begin{split}
		\mathbb{E}\Bigg[\int_0^T h(z_1)\xi^{\tau_1}(dz_1)d\tau_1&\int_0^T h(z_2)\xi^{\tau_2}(dz_2)d\tau_2\Bigg]\\
		&:=\frac{1}{2}\int_0^Td\tau\int d\mu(z_1,z_2,\eta)M(v_1)M(v_2)\gr{\Delta} h \,\gr{\Delta} g,
		\end{split}
		\end{equation}
		where
		\[d\mu(z_1,z_2,\eta) := \delta_{x_1=x_2}b((v_1-v_2),\eta)dz_1dz_2d\eta,\]
		\[\gr{\Delta} h := h(v_1')+h(v_2')-h(v_1)-h(v_2),\]
		and $b((v_1-v_2),\eta)$ the hard sphere collision kernel.
	\end{theorem}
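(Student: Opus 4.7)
The strategy is to prove convergence of $(\zeta_\e^t)_{t\ge 0}$ as a c\`adl\`ag process in a suitable weak topology, by establishing (i) convergence of finite-dimensional marginals to the announced Gaussian distribution and (ii) tightness, and then identifying the limit as the unique solution of the fluctuating Boltzmann equation via a martingale problem.

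For the finite-dimensional marginals I would apply the method of moments. The two-point covariance already converges for all times by the hard sphere analogue of Theorem \ref{thm:derivation Bolzmann sans cut-off} (see \cite{BGSS1}). It remains to show that every joint cumulant of order $m\ge 3$ of $(\zeta_\e^{t_1}(h_1),\dots,\zeta_\e^{t_m}(h_m))$ decays strictly faster than $\mu^{-(m-2)/2}$, so that after the $\mu^{m/2}$ normalisation built into $\zeta_\e^{t_1}(h_1)\cdots \zeta_\e^{t_m}(h_m)$ only Wick pairings of the anchor points survive in the limit. Operationally, this means expanding each product of empirical moments into backwards pseudo-trajectories anchored at the $m$ observation points and showing that only connected tree structures containing exactly two anchors per component contribute at the relevant order; all other trees either have more anchors than leaves (Lanford-type trees producing higher-order Boltzmann corrections absorbed into the drift) or are suppressed by the smallness of the Gibbs cumulant.

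To uncover the stochastic differential structure, I would introduce for each test function $h$ the process $M_\e^t(h) := \zeta_\e^t(h) - \zeta_\e^0(h) - \int_0^t \zeta_\e^s(\mathcal{L}_B^\ast h)\,ds$. Applied to the hard sphere jump semigroup, Dynkin's formula shows that $M_\e^t(h)$ is a martingale whose increments are localised at collision instants. The mean of each increment, after averaging the impact parameter against the Maxwellian $M(v_*)$ of the background, reconstructs $\mathcal{L}_B$; the centred part is a purely discontinuous martingale whose predictable quadratic variation converges to the deterministic integral of $\gr{\Delta} h\,\gr{\Delta} g$ against the measure $d\mu(z_1,z_2,\eta)$ of the statement. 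Tightness in Skorokhod space then follows from an Aldous-type criterion applied to these martingale increments together with uniform $L^2$ bounds on $\zeta_\e^t(h)$.

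The main obstacle, as already in the two-point case, is the dynamical control of recollisions over long times. The conditioning strategy developed in the present paper and in \cite{LeBihan2} avoids the heavy billiard bounds of \cite{BFK}, but at the three-point cumulant level and beyond one must iterate it into a multi-scale scheme where each pair of anchors carries its own conditioning tower. Concretely, the exponential factor $2^{T^2/\theta^2}$ visible in \eqref{Borne principale} must be tamed simultaneously for all $\binom{m}{2}$ pairings of anchor points, and the associated combinatorial bookkeeping is the delicate step. Once this multi-point recollision control is in place, the Gaussian limit and the identification of both $\mathcal{L}_B$ and $\xi^t$ follow essentially from the machinery already assembled for Theorem \ref{thm:derivation Bolzmann sans cut-off}.
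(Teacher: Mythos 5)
This theorem is quoted from \cite{BGSS2}; the present paper gives no proof of it, and the only indication of the actual argument is the remark that the Gaussian property is obtained ``by checking asymptotically the Wick's law for the limiting field''. Your first step --- convergence of finite-dimensional marginals by showing that all joint cumulants of order $m\geq 3$ vanish after the $\mu^{1/2}$ rescaling, via pseudo-trajectory expansions anchored at the observation points, with only pairings surviving --- is indeed the backbone of that proof, and your diagnosis that the delicate point is the uniform control of recollisions and of the clustering combinatorics for many anchors is accurate.

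The genuine gap is in your second step. You propose to identify the drift and the noise by applying Dynkin's formula to ``the hard sphere jump semigroup'' and computing the predictable quadratic variation of $M_\e^t(h):=\zeta_\e^t(h)-\zeta_\e^0(h)-\int_0^t\zeta_\e^s(\mathcal{L}_B^\ast h)\,ds$. But the microscopic dynamics is a deterministic, (a.e.) invertible billiard flow: the filtration generated by the configuration satisfies $\mathcal{F}_t=\mathcal{F}_0$ for $t>0$, so every adapted martingale is constant, and $M_\e^t(h)$ is a martingale only if it vanishes identically --- which it does not, since the exact microscopic evolution of $\zeta_\e^t(h)$ is governed by the full interacting dynamics and not by $\mathcal{L}_B^\ast$. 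The collision increments are not centred conditionally on the past, there is no predictable quadratic variation to compute, and the white noise $\xi^t$ is an emergent feature of the limit; this is exactly the point of the remark in the text that the particle dynamics ``has a memory'' and does not preserve the Gaussian structure. Consequently neither the identification of the covariance of $\xi$ nor an Aldous-type tightness criterion can be run on microscopic martingale increments. In \cite{BGSS2} the limit is instead identified by computing all limiting time-correlation functions from the dynamical cumulant expansion, verifying Wick's rule so that the limit is Gaussian and hence determined by its two-time covariance, and checking that this covariance coincides with that of the stationary Ornstein--Uhlenbeck-type solution of the fluctuating Boltzmann equation (the covariance of $\xi$ being read off from the evolution of the limiting covariance, not from a microscopic bracket); tightness is obtained from uniform moment and continuity estimates supplied by the same expansions.
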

	
	As the particle dynamics has a memory (it is a purely deterministic process), it does not preserve the initial Gaussian structure. In \cite{BGSS2}, the Gaussian property is proved by checking asymptotically the Wick's law for the limiting field.
	
	One can asks if such a theorem still applied in a system of particle interacting through a more general potential $\mathcal{V}$. For the moment the question is still open.c
	\bigskip }
	
	\noindent\textbf{Notations.} For $\omega\subset\mathbb{N}$ a finite subset and $r\leq |\omega|$, we denote $\mathcal{P}^r_\omega$ the set of the unordered partitions $(\rho_1,\cdots,\rho_r)$ of the set $\omega$.
	
	For $m<n$ two integers, we denote $[m,n] := \{m,m+1,\cdots,n\}$ and $[n]:=[1,n]$.
	
	For $Z_n\in\mathbb{D}^n$, and $\omega\subset[n]$, we denote 
	\[Z_{\omega}:=(z_{\omega(1)},\cdots, z_{\omega(|\omega|)})\]
	where $\omega(i)$ is the $i$-th element of $\omega$ counted in increasing order.
	
	Given a family of particles indices $\{i_1,\cdots,i_n\}$, the notation $(i_1,\cdots,i_n)$ indicates the ordered sequence in which $\forall k\neq l$, $i_k\neq i_l$. In addition,
	\begin{itemize}
		\item $\ui_n := (i_1,\cdots,i_n)$,
		\item for $m\leq n$, $\ui_m=(i_1,\cdots,i_m)$, and more generally for $\omega \subset [1,n]$, $\ui_\omega := (i_{\min \omega}, \cdots,i_{\max \omega})$,
		\item for $0\leq m< n$ and $(i_1,\cdots,i_m)$, $\underset{(i_{m+1},\cdots,i_n)}{\sum}$ denotes the sum over every family $(i_{m+1},\cdots,i_n)$ such that for $1\leq k<l\leq n$, $i_k\neq i_l$, and \[\underset{\ui_n}{\sum}=\underset{(i_{1},\cdots,i_n)}{\sum},\]
		\item $\gr{Z}_{\ui_n}:= (\gr{z}_{i_1},\cdots,\gr{z}_{i_n})$, as an ordered sequence.
	\end{itemize}
	
	We also precise the Landau\footnote{from Edmund Landau and not Lev Landau.} notation: $A=B+O(D)$ means that there exists a constant $C$ depending only on the dimension such that $|A-B|< C \,D$. We denote $A \lesssim B$ if $A=O(B)$.
	
	When we perform estimations, $C$ is a positive constant depending only on the dimension (which can change from one line to another), and the final time $t$ is supposed to be bigger than $1$ (in general, we prefer to denote $\tau$ any intermediate time).
	
	Finally, let $h_n$ be a function on $\mathbb{D}^n$. We denote (by a slight abuse of notation)
	\[\mathbb{E}_\e\big[h_n\big]:=\mathbb{E}_\e\Bigg[\frac{1}{\mu^n}\sum_{(i_1,\cdots,i_n)}h_n\big(\gr{Z}_{\ui_n}\big)\Bigg]\]
	and the associated centered function defined on $\mathcal{D}_\e$
	\[\hat{h}_n(\gr{Z}_{\N}):=\frac{1}{\mu^n}\sum_{(i_1,\cdots,i_n)}h_n\big(\gr{Z}_{\ui_n}\big)-\mathbb{E}_\e\big[h_n\big].\]

	\subsection{Strategy of the proof of Theorem \ref{thm:Borne quantitativeè}}\label{subsec:Strategy of the proof}
	\blue{The general strategy of proof of Theorem \ref{thm:Borne quantitativeè} is inspired by \cite{BGSS1,LeBihan2,LeBihan3}, in which the hard sphere system is studied. It is known, however, that the treatment of smooth potentials in the low-density limit leads to a number of delicate complications and methodology. This is already true for the law of large numbers, see eg \cite{GST,PSS} where the proof of \cite{King} was completed and extended. The difficulties are due to the fact that interactions are not instantaneous. On the other hand, the extension to a long time of \eqref{Borne principale} for hard spheres, achieved in \cite{LeBihan2} (the short-time version was known long ago in \cite{Spohn2}) involves a different class of difficulty and technology. The main task of this paper is to push much further all the above-mentioned techniques in such a way as to allow the grazing collision limit $\alpha\to0$. This combination is nontrivial, and we outline the main features and novelties involved to achieve the result.}
	
	As $\zeta^0_\e(g)$ is a mean-free random variable on $\mathcal{D}$, we can write 
	\begin{equation}
	\mathbb{E}_\e\left[\zeta^t_\e(h)\zeta^0_\e(g)\right] = \frac{1}{\sqrt{\mu}}\mathbb{E}_\e\left[\sum_{i=1}^{\N}h(\gr{z}_i(t))\zeta_\e^0(g)\right].
	\end{equation}
	
	We see that the function $h$ is evaluated at time $t$, whereas the function $g$ is evaluated at time $0$. The first step of the proof is the construction of a family of functionals $(\Phi_{1,n})_{1\leq n}$, $\Phi_{1,n}^t:L^\infty(\mathbb{D})\to L^\infty(\mathbb{D}^n)$ such that for any initial configuration $\gr{Z}_\N\in\mathcal{D}$,
	\begin{equation}\label{eq: decomposition sur les pseudotrajectoires}
	\sum_{i= 1}^\N h(\gr{z}_i(t)) = \sum_{n\geq 1}\sum_{\ui_n}\Phi_{1,n}^t[h](\gr{Z}_{\ui_n}(0)).
	\end{equation}
	
	The first part of Section \ref{sec:Development along pseudotrajectories and time sampling} is dedicated to giving an explicit expression to the $\Phi_{1,n}^t$\blue{, {\it via} a \emph{dynamical cluster development} (introduced by Sinai \cite{Sinai} in a different setting and later by Bodineau {\it et al.} for hard spheres in Boltzmann-Grad scaling (see \cite{BGSS5})). If the particle $1$ does not meet any other particles, it moves along a straight line. We deduce that 
	\[\Phi_{1,1}^t[h](x_1,v_1) = h(x_1+t v_1,v_1).\]}
	\blue{If the particle $1$ meets only the particle $2$, then the trajectory of $1$ (denoted $\ds{z}_1(t,\text{$1$ and $2$ interact})$) is governed by the Hamiltonian equation associated to the energy 
	\[\frac{|\ds{v}_1|^2}{2}+\frac{|\ds{v}_2|^2}{2}+\frac{\alpha}{2}\mathcal{V}\left(\frac{\ds{x}_1-\ds{x}_2}{\e}\right),\]
	with initial conditions $(\ds{z}_1(0),\ds{z}_2(0))=(z_1,z_2)$. In order to write $\Phi_{1,2}^t[h](z_1,z_2)$, one also needs to compensate for the term $\Phi_{1,}^t[h](z_1)$ which appears now to be an error. Note that in this error term, the particles $1$ and $2$ do not interact. One writes
	\[(\ds{z}_1,\ds{z}_2)(t,\text{$1$ and $2$ do not interact}) := (x_1+t v_1,v_1, x_2+tv_2,v_2). \]
	We deduce that 
	\[\Phi_{1,2}^t[h](z_1,z_2) = \left[h(\ds{z}_1(t,\text{$1$ and $2$ interact})-  h(\ds{z}_1(t,\text{$1$ and $2$ do not interact}\right])\ind_{\text{$1$ and $2$ meet}}.\]
	In order to generalize this construction, we define the pseudotrajectory $\ds{Z}_n(t,\text{history})$ (for the moment in a vague sense, see Definition \ref{def:pseutrajectory1} for a proper definition) as a trajectory of $n$ particles where some couple of particles interacts and some couple of particles ignores each other. The "histories" are combinatorial parameters describing which couple of particle interacts (see Remark \ref{rem: racontage des histoires}). We denote $\mathfrak{H}_n$ the set of histories (it will be precisely defined in Remark \ref{rem: racontage des histoires}). We finally obtain 
	\begin{equation} 
		\Phi_{1,n}^t[h](Z_n):=\frac{1}{(n-1)!}\sum_{{\rm history}\in\mathfrak{H}_n} h(\ds{z}_1(t,Z_n,{\rm history}))\ind_{\rm history}\sigma({\rm history})
	\end{equation}
	In the preceding formula, $\ind_{\rm history}$ requires that the pseudotrajectory is possible (if we have all the needed collisions), and $\sigma({\rm history}) = \pm1$ is a sign.}
	\begin{figure}[h!]
		\includegraphics[width=10cm]{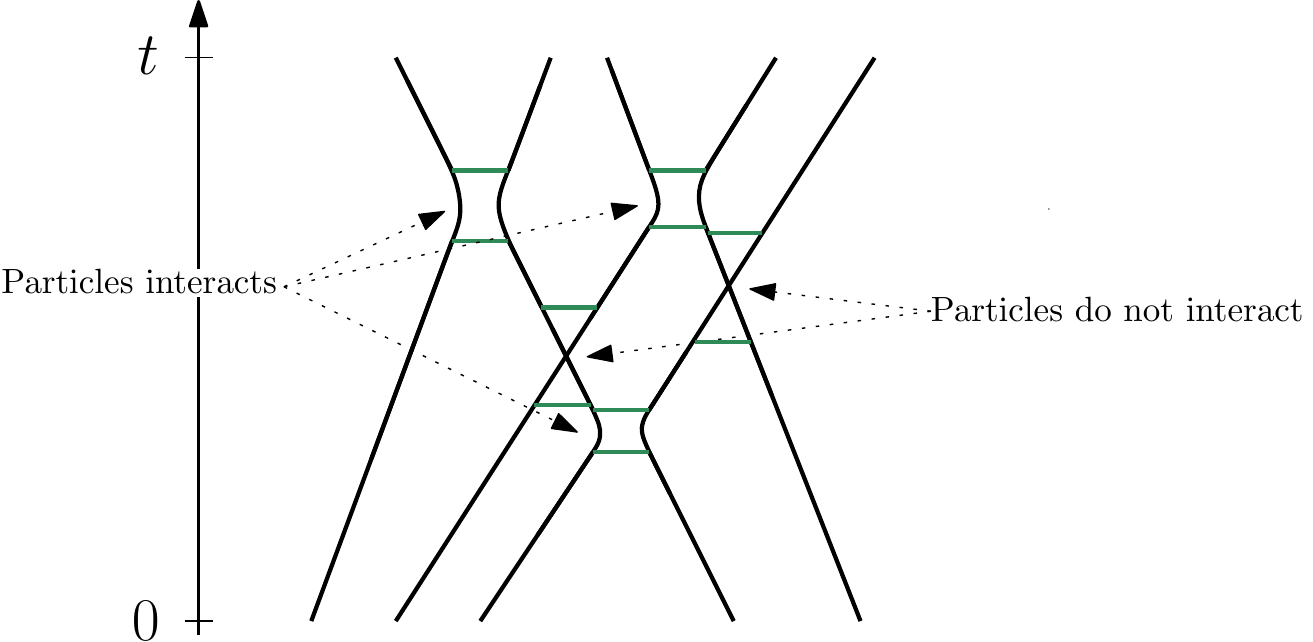}
		\caption{Exemple of pseudotrajectory for four particles.}
	\end{figure}
	
	\blue{\begin{remark}
			 This representation differs from the one used in \cite{King,GST, PSS}, which is based on the Grad's representation of the BBGKY hierarchy \cite{Grad}. It leads to a drastic improvement to the argument in \cite{LeBihan2} (see below and Remark \ref{rem:Comparaison avec le vieux papier}).
	\end{remark}}

	Following the classical derivation of the Boltzmann equation (and here of the linearized Boltzmann equation), there will be two main steps. First, we need to prove that each term of the expansion in \eqref{eq: decomposition sur les pseudotrajectoires} 
	\begin{equation}
	\mathbb{E}_\e\left[\mu^{-\frac{1}{2}}\sum_{\ui_n}\Phi_{1,n}^t[h](\gr{Z}_{\ui_n}(0))\zeta^0_\e(g)\right]
	\end{equation}
	converges to its formal limit. In the limit, the dynamics loses its Hamiltonian character, and particles become punctual (see Section \ref{sec:Treatment of the main part}).
	
	The main obstacles to this convergence are \emph{multiple \blue{encounters} }(interactions between more than three particles) and \emph{recollisions}. A recollision can be defined \blue{(a more proper definition will be given in Definition \ref{def:pathology})} as a meeting (with interaction or with overlap) between two particles $q$ and $\bar{q}$, beginning at time $\tau$ and such that we can find \blue{a sequence of couples of particles $(q=q_1,\bar{q}_1),(q_2,\bar{q}_2)\cdots (q_r,\bar{q}_r=\bar{q})$, such that $q_{i+1}\in\{q_i,\bar{q}_i\}$ and that $q_i$ and $\bar{q}_i$ meet before time $\tau$.} Recollisions and multiple \blue{encounters} become rare in the limit $\e\to0$ (quantitative estimations are performed in Section \ref{subsec:estimation reco}).
	
	The second step is an {\it a priori} bound of the terms of the series \eqref{eq: decomposition sur les pseudotrajectoires}. To improve estimates to a longer time interval than the one obtained in \cite{GST,PSS}, it is convenient to consider $L^2$ estimates (see \cite{BGS,BGSS1,BGSS2,LeBihan2}). Indeed, because $\zeta^0_\e(g)$ is a mean-free random variable, for any intermediate time $t_s\in[0,t]$,
	\begin{equation*}
	\begin{split}
	\Bigg|\mathbb{E}_\e\Bigg[\mu^{-\frac{1}{2}}\sum_{\ui_n}\Phi_{1,n}^{t-t_s}[h](\gr{Z}_{\ui_n}(t_s))\zeta^0_\e(g)\Bigg]\Bigg| &= \Big|\mathbb{E}_\e\Big[\mu^{n-\frac{1}{2}}\hat{\Phi}_{1,n}^{t-t_s}[h](\gr{Z}_{\N}(t_s))\zeta^0_\e(g)\Big]\Big|\\[-7pt]
	&\leq \mathbb{E}_\e\Big[\mu^{2n-1}\left(\hat{\Phi}_{1,n}^{t-t_s}[h](\gr{Z}_{\N}(t_s))\right)^2\Big]^{\frac{1}{2}}\mathbb{E}_\e\Big[\left(\zeta_\e^0(g)\right)^2\Big]^{\frac{1}{2}}\\
	&\leq \mathbb{E}_\e\Big[\mu^{2n-1}\left(\hat{\Phi}_{1,n}^{t-t_s}[h](\gr{Z}_{\N}(0))\right)^2\Big]^{\frac{1}{2}}\mathbb{E}_\e\Big[\left(\zeta_\e^0(g)\right)^2\Big]^{\frac{1}{2}},
	\end{split}
	\end{equation*}
	using a Cauchy-Schwartz inequality and the invariance of the Gibbs measure. Hence, it is possible to start a development along pseudotrajectories and stop at time $t_s$ when they become "pathological". \blue{We say that pseudotrajectories become pathological when their number explodes (this notion will be made precise after \eqref{eq: flemme de donner un nom} and is linked to recollisions and multiple encounters).} Then we can ignore what happens in the time interval $[0,t_s]$.
	
	\blue{We need to bound the $\mathbb{E}_\e\big[\big(\hat{\Phi}_{1,n}^{t}[h]\big)^2\big]$, which are based on the estimations of terms of type  
	\begin{equation}\label{eq: machin a borne}
		\sup_{l\leq n}~ \frac{n^l}{\mu^{l-1}}\int \left|{\Phi}_{1,n}^{t-t_s}[h](Z_n){\Phi}_{1,n}^{t-t_s}[h](Z_{[n-l,2n-l]})\right|e^{-\mathcal{H}_{2n-l}(Z_{2n-m})}\ud{Z_{2n-m}},
	\end{equation}
	see Proposition \ref{theoreme de quasi orthogonalite}.}
	
	\blue{\begin{remark}
		 An $L^\infty\to L^1$ estimation is used in the classical derivation of the Boltzmann equation (see \cite{Lanford,King,GST,PSS}). It is valid only for short times. The linear version of the problem (one tagged particle followed in a background initially at equilibrium) is only a $O(1)$ perturbation of equilibrium in $L^\infty$. Thus, the $L^1$ bounds are valid for all time (see \cite{vBHLS,BGS2,Ayi,Catapano1}). The linearized setting is a $O(\mu)$ perturbation of the equilibrium, and $L^1$ bounds are no longer sufficient to reach long time out of equilibrium (Spohn used them to describe the fluctuations on short time in \cite{Spohn}).
	\end{remark}}
	
	\blue{Unfortunately, we do not know how to efficiently take into account the sign $\sigma({\rm history})$ in the bound of $\Phi_{1,n}^{t-t_s}$. Thus, we resort to using the naive bound
	\begin{equation}
		\left|\Phi_{1,n}^{t}[h]\right| \lesssim \frac{\|h\|}{(n-1)!}\sum_{{\rm history}\in\mathfrak{H}'_n} \ind_{\rm history}.
	\end{equation}
	where $\mathfrak{H}_n'\subset \mathfrak{H}_n$ is a set of parameters that is a little bit smaller than $\mathfrak{H}_n$ (it will be made precise in Remark \ref{rem: racontage des histoires}). Two particles with initial coordinates $(x_1,v_1)$ and $(x_2,v_2)$ can collide on the time interval $[0,t]$ if and only if $x_2$ is in a cylinder of radius $\e$ and length $t|v_1-v_2|$. We deduce that 
	\begin{equation}
		\int \ind_{\text{$1$ and $2$ collide on $[0,t]$}} e^{-\mathcal{H}_2}\ud Z_2 \lesssim t\e^{d-1} \lesssim \frac{t}{\mu\mathfrak{d}}.
	\end{equation}}

	\blue{For a pseudotrajectory involving $n$ particles, there are at least $n-1$ collisions (all particles are linked by a chain of collisions). Using a generalization of the preceding argument, we deduce the following $L^1$ bound
	\begin{equation}\label{eq:truc a borne}
		\int   \sum_{{\rm history}\in\mathfrak{H}_n'}\ind_{{\rm history}\in\mathfrak{H}_n'} e^{-\mathcal{H}_n}\ud Z_n \lesssim n^n \left(\frac{t}{\mu\mathfrak{d}}\right)^{n-1} 
	\end{equation}
	for some $C$ (a rigorous proof is provided in Proposition II.6 of \cite{BGSS5}\footnote{The estimation is provided in \cite{BGSS5} in the hard sphere setting but should be generalized to more general interaction. We will not need this precise estimation, and we will limit ourselves to proving for a given history \[\int  \ind_{\rm history} e^{-\mathcal{H}_n}\ud Z_n \lesssim \left(\tfrac{Cnt}{\mu\mathfrak{d}}\right)^{n-1}.\] }). As we perform $L^2$ estimates, we will encounter terms of the form $\ind_{{\rm history}_1}(Z_n)\ind_{{\rm history}_2}(Z_n)$ where $({\rm history}_1,{\rm history}_2)\in(\mathfrak{H}_n')^2$ are two different histories. The problem is that the geometry of the set $\{Z_n|\ind_{{\rm history}_1}(Z_n)=1\}$ is quite complicated and we do not know a better estimate than the naive bound
	\[\ind_{{\rm history}_1}(Z_n)\ind_{{\rm history}_2}(Z_n)\leq \ind_{{\rm history}_1}(Z_n).\]
	From this, we may obtain the bound on \eqref{eq: machin a borne} bigger than
	\begin{equation}\label{eq: flemme de donner un nom}
		\frac{n^n}{\mu^{n-1}}\int \left|\Phi_{1,n}^{t}[h]\right|^2e^{-\mathcal{H}_n} \ud Z_n \lesssim {\|h\|}\left(\frac{Ct}{\mu^2\mathfrak{d}}\right)^{n-1} \left|\mathfrak{H}_n'\right|.
	\end{equation}}
	
	\blue{Thus, we are led to counting the number of "histories", {\it id est}, the number of pseudotrajectories involving $n$ particles. The set $|\mathfrak{H}_n'|$ is too large (it is of order $O((Cn)^n)$ for some constant $C>0$, see Remark \ref{rem: racontage des histoires}) to allow the series $\sum_n \mathbb{E}\left[(\hat{\Phi}_{1,n}^t)^2\right]^{\frac{1}{2}}$ to converge. We would need that $|\mathfrak{H}_n'|$ is of order $O(C^n)$ for some constant $C>0$. To overcome this problem, we will perform the pseudotrajectory decomposition only until an intermediate time $t_s$, such that the set of collision parameters needed $\mathfrak{H}_n''$ remains controllable. We observe that there are two reasons for the set of parameters to explode: the \emph{pathology} (recollisions or multiple meetings) and an uncontrolled number of particles $n$.}
	
	We now introduce two samplings, one to control regular collisions and one to control recollisions and multiple \blue{encounters}.
	
	The first sampling has a relatively large step $\theta:=\tfrac{1}{\beta\log|\log\e|}$ (for some constant $\beta$ large enough). We stop the pseudotrajectory development at time $t-k\theta$ if there are more than $2^k$ particles involved in the pseudotrajectory. Hence, the number of particles at time $0$ remains controlled. \blue{This sampling follows \cite{BGS2,BGS}\footnote{Note that in \cite{Fougeres}, the author improved the sampling strategy in the linear setting in order to obtain a better convergence rate than \eqref{Borne principale}.}.}
	
	The second sampling has a shorter step, $\delta:=\e^{1/12}$. We stop the expansion at time $t_s:=t-k\delta$ if the pseudotrajectory has at least one recollision on $[t_s,t]$ (but no recollision on $[t_s+\delta,t]$). Imposing recollisions creates an additional geometric condition, and thus, an extra-smallness gain.
	
	Unfortunately, we still have too many possible histories. In order to reduce their number, we \blue{follow the idea of \cite{LeBihan2} and} separate the pseudotrajectories into two categories. In \emph{non-pathological} pseudotrajectories, \blue{there is no recollision nor multiple \blue{encounters} on the time interval $[t_s,t_s+\delta]$.} We are in a setting close to the case without recollision, and we only need $C^n$ parameters ($C$ a fixed constant) to describe the histories.
	
	We explain now how to treat the pathological recollisions part. The initial data $\gr{{Z}}_\N(0)$ is conditioned such that on each interval $[k\delta,(k+1)\delta]$ a particle can \emph{encounter}\footnote{The meaning of \emph{encounter} will be precise in Definition \ref{def:possible cluster}.} with only a finite number of particles $\gamma$ (we will take $\gamma := 12d$). Hence, for a pseudotrajectory $\dr{z}_1(t,Z_n,{\rm history})$, the history has to describe first a partition of $[n]$ into small clusters of particles that interact together on $[0,\delta]$ and how they really interact. As the size of each cluster is uniformly bounded, the number of histories is at most of order $C^n$ for some $C>1$.
	
	\begin{figure}[h!]
		\includegraphics[width=13cm]{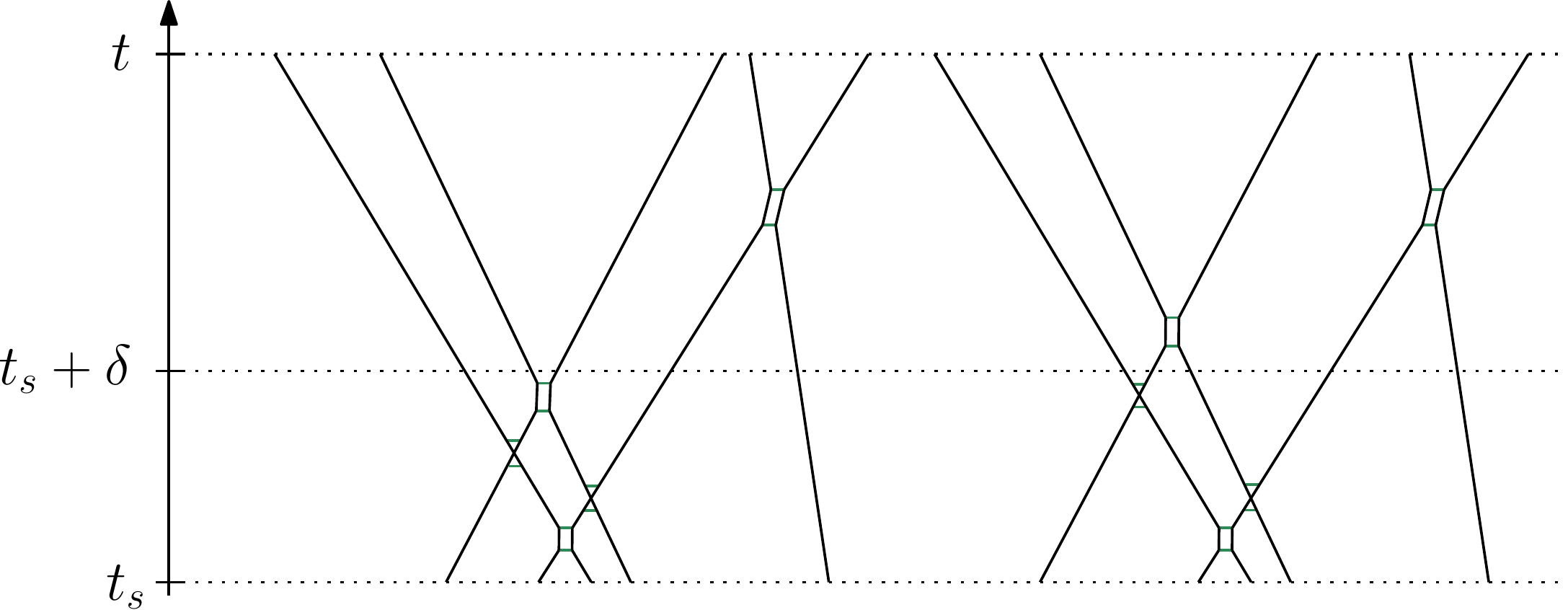}
		\caption{An example of one pathological pseudotrajectory (on the left) and a non-pathological one (on the right)}
	\end{figure}
	
	\blue{The paper is organised as follows:
	\begin{itemize}
		\item In Section \ref{sec:Development along pseudotrajectories and time sampling}, we give a proper definition of histories, and we use it to construct the functionals $\Phi_{1,n}^t$. Then we implement the two samplings. This allows us to decompose $\mathbb{E}_\e\left[\zeta_\e^t(h)\zeta_\e^0(g)\right]$ into a main term, plus error terms.
		
		\item In Section \ref{Quasi-orthogonality estimates}, we introduce  standard $L^2(\mathbb{P}_\e)$ estimates that are necessary to bound terms of the form $\mathbb{E}_\e\big|\hat{\Psi}\big|^2$ (for some symmetric function $\Psi_n:\mathbb{D}^n\to\mathbb{R}$) by the integrals
		\[\forall m \leq n,~ \int \left|{\Psi}_n(Z_n){\Psi}_{n}(Z_{[n-m,2n-m]})\right|e^{-\mathcal{H}_{2n-m}(Z_{2n-m})}\ud{Z_{2n-m}}.\]
		The proof is an adaptation of Section 3 of \cite{BGSS1}, and is based on static cumulant decompositions.
		
		\item Section \ref{Clustering estimations} is dedicated to the bound of the pseudotrajectory development $\Phi_{1,n}^0$ in the case where recollisions and multiple meetings are forbidden. This allows us to treat the pseudotrajectory with many particles.
		
		\item Section \ref{sec:Treatment of the main part} is dedicated to the convergence of the main term. 
	
		\item Section \ref{Estimation of the long range recollisions.} is dedicated to the treatment of the non-pathological recollisions.
		
		\item  Section \ref{$L^2$ estimation of the local recollision part} is dedicated to the treatment of pathological recollisions and of multiple meetings. 
		
		\item  Annex \ref{sec: Geometric estimates} is dedicated to the analyses of trajectories leading to recollisions of multiple \blue{encounters}. We use a strategy similar to the one of \cite{PSS}. It has the advantage of giving estimations independent of the form of the potential $\mathcal{V}$ (supposing it respects Assumption \ref{ass: potentiels pour la dérivation de Boltzmann}).
	\end{itemize}}

	\section{Development along pseudotrajectories and time sampling}\label{sec:Development along pseudotrajectories and time sampling}
	\subsection{Dynamical cluster development}
	For any test functions  $h$ and $g:\mathbb{D}\to\mathbb{R}$, we want to compute
	\[\mathbb{E}_\e\left[\zeta^t_\e(h)\zeta_\e^0(g)\right] = \frac{1}{\mu}\mathbb{E}_\e\left[\sum_{i=1}^{\N}h(\gr{z}_i(t))\sum_{j=1}^{\N}g(\gr{z}_j(0))\right].\]
	
	We have a sum evaluated at time $t$ and a sum evaluated at time $0$. In order to compute it, we have to pull back the second sum to time $0$: we want to construct a family of applications $\Phi_{1,n}^t:L^{\infty}{(\mathbb{D})}\to L^\infty(\mathbb{D}^n)$ such that for almost all initial data $\gr{Z}_{\N}(0)\in\mathcal{D}$
	\[h(\gr{z}_{i_1}(t))= \sum_{n\geq1}\sum_{(i_2,\cdots,i_n)}\Phi_{1,n}^t[h](\gr{Z}_{\ui_n}(0)).\]
	More generally, we will construct a family of functional $\Phi_{m,n}^t:L^{\infty}(\mathbb{D}^m)\to L^\infty(\mathbb{D}^n)$ (with $m<n$) such that for any test functions $h_m\in L^\infty(\mathbb{D}^m)$,
	\[h_m(\gr{Z}_{\ui_m}(t))= \sum_{n\geq1}\sum_{(i_{m+1},\cdots,i_n)}\Phi_{m,n}^t[h_m](\gr{Z}_{\ui_n}(0)).\]

	\begin{remark}[Comparison with the hard sphere setting]
		In the hard spheres setting, a tree pseudotrajectories development is used as it comes directly from the BBGKY hierarchy (see, for example, \cite{Lanford, PS, BGSS1, LeBihan2}). We begin at time $0$ with $n$ particles, and at each collision, we can remove or not remove  one particle to end at time $t$ with $m$ particles. However, in the case of physical potential, writing the BBGKY hierarchy is difficult as particles can overlap, and there can be interaction between more than three particles (see \cite{Grad,King,GST,PSS} for a description of the BBGKY hierarchy). Hence, we will use a different kind of pseudotrajectory development called ``dynamical cluster development'' (see \cite{Sinai,PSW, BGSS5}, from which we take inspiration).
		\begin{figure}[h!]
			\centering
			\includegraphics[width=10cm]{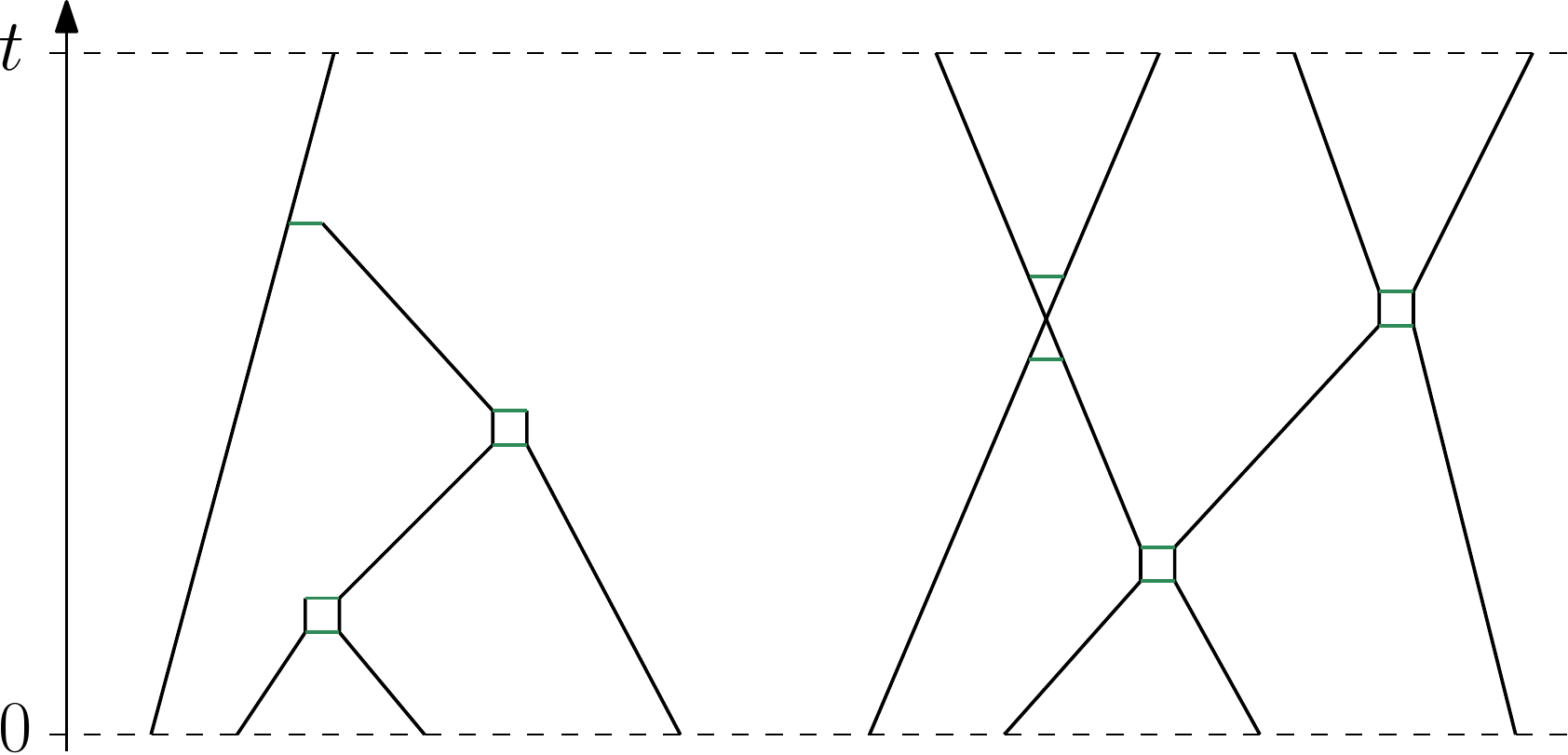}
			\caption{On the left a tree pseudotrajectory, on the right a graph pseudotrajectory.\label{fig: tree vs graph}}
		\end{figure}
	\end{remark}
	
	Fix $\lambda\subset \mathbb{N}$ a finite set of particles. We denote by $\ds{Z}^\lambda(\tau) = (\mathsf{X}^\lambda(\tau),\mathsf{V}^\lambda(\tau))$ the Hamiltonian trajectory, linked to the energy
	\[\mathcal{H}_\lambda(Z_\lambda) :=\sum_{q\in\lambda}\frac{|v_q|^2}{2}+\frac{\alpha}{2}\sum_{\substack{q,\bar{q}\in\lambda\\q\neq \bar{q}}} \mathcal{V}\left(\frac{x_q-x_{\bar{q}}}{\e}\right)\]
	of the particles $\lambda$ (isolated of the other particles) with initial data $\ds{Z}^\lambda(0)=Z_\lambda$. For any subset $\lambda'\subset\lambda$, we denote $\ds{Z}_{\lambda'}^\lambda(\tau)$ the trajectory of particles $\lambda'$ in $\ds{Z}^\lambda(\tau)$.
	
	\begin{definition}\label{def:DDelta}
		Given $Z_\lambda\in \mathbb{D}^{|\lambda|}$, we construct the graph $G$ with vertex $\lambda$ and $(q,\bar{q})\in\lambda^2$ is an edge if and only if $q<\bar{q}$ and if there exists a time $\tau \in[0,t]$ such that 
		\[\exists \tau\in[0,t],~\left|\ds{x}^\lambda_q(\tau)-\ds{x}^\lambda_{\bar{q}}(\tau)\right|\leq \e.\]
		
		We say that $\ds{Z}^\lambda(\tau)$ forms a \emph{dynamical cluster} if the graph $G$ is connected. We denote $\DDelta_{|\lambda|}(Z_\lambda)$ the indicator function that the trajectory $\ds{Z}^\lambda(\tau)$ forms a dynamical cluster.
		
		In the same way, for $\omega\subset\lambda$, we say that $\ds{Z}^\lambda(\tau)$ forms a \emph{$\omega$-cluster} if, in the collision of $\ds{Z}^\lambda(\tau)$, all the particles are in the same connected components of $G$ that one of the particles of $\omega$. The function $\DDelta_{|\lambda|}^{\omega}(Z_\lambda)$ is equal to $1$ if $\ds{Z}^\lambda(\tau)$ is a $\omega$-cluster, $0$ else.
	\end{definition}

	\begin{remark}
		In the following, we consider that all the graphs are unoriented.
	\end{remark}

	\begin{definition}\label{def:CCirc}
		We say that trajectories $\ds{Z}^{\lambda}(\tau)$ and $\ds{Z}^{\lambda'}(\tau)$ (with $\lambda\cap\lambda'=\emptyset$) have a \emph{dynamical overlap} if there exists a couple of particles $(q,{q}')\in\lambda\times\lambda'$ and some time $\tau\in[0,t]$, such that $|\mathsf{x}_q^\lambda(\tau)-\mathsf{x}_{{q}'}^{\lambda'}(\tau)|\leq \e$. Then we denote $\lambda\so\lambda'$.

	For $(Z_{\lambda_1},\cdots,Z_{\lambda_{\gr{l}}})\in \prod_{i=1}^{\gr{l}}\mathbb{D}^{|\lambda_i|}$ initial data, we look at the indicator function that for any $i\neq j$, $\ds{Z}^{\lambda_i}(\tau)$ and $\ds{Z}^{\lambda_j}(\tau)$ have no dynamical overlap. We can expand it as	
	\begin{equation}\begin{split}
	\prod_{1\leq i<j\leq \gr{l}}\big(1-\ind_{\lambda_i\so \lambda_j}\big)=\sum_{\substack{\omega\subset[1,l]\\1\in\omega}}&\underset{:=\CCirc_{|\omega|}(Z_{\lambda_1},Z_{\lambda_{\omega(2)}},\cdots,Z_{\lambda_{\omega(|\omega|)}})}{\underbrace{\sum_{C\in\mathcal{C}(\omega)}\prod_{(i,j)\in E(C)}-\ind_{\lambda_i\so \lambda_j}}}\prod_{\substack{(i,j)\in([\gr{l}]\setminus \omega)^2\\i\neq j}}\big(1-\ind_{\lambda_i\so \lambda_j}\big).\\
	&
	\end{split}\end{equation}
	We have defined $(\CCirc_l)_l$ as the cumulants of the dynamical overlap indicator functions.
	\end{definition}

	We make a partition of $\mathcal{D}$ depending on the way particles interact during the time interval $[0,t]$: fixing $\N\in\mathbb{N}$ and $\ui_m$, (we recall that $\mathcal{P}_\omega^r$ is the set of partitions into  $r$ subsets of $\omega$)
	\begin{multline*}
	h_m(\gr{Z}_{\ui_m}(t))=\sum_{\gr{l}=1}^\N \sum_{\substack{\ui_m\subset\lambda_1\\ (\lambda_2,\cdots,\lambda_{\gr{l}})\in\mathcal{P}_{[\N]\setminus\lambda_1}^{\gr{l}-1}}}h_m(\gr{Z}_{\ui_m}(t))\DDelta^{\ui_m}_{\lambda_1}(\gr{Z}_{\lambda_1})\prod_{i=2}^{\gr{l}}\DDelta_{|\lambda_i|}(\gr{Z}_{\lambda_i})\prod_{1\leq i<j\leq \gr{l}}\big(1-\ind_{\lambda_i\so \lambda_j}\big)\\
	=\sum_{\gr{l}=1}^\N \sum_{\substack{\ui_m\subset\lambda_1\\ (\lambda_2,\cdots,\lambda_{\gr{l}})\in\mathcal{P}_{[\N]\setminus\lambda_1}^{\gr{l}-1}}}h_m(\gr{Z}_{\ui_m}(t))\DDelta^{\ui_m}_{\lambda_1}(\gr{Z}_{\lambda_1})\prod_{i=2}^{\gr{l}}\DDelta_{|\lambda_i|}(\gr{Z}_{\lambda_i})\sum_{\substack{\omega\subset[\gr{l}]\\1\in\omega}}\CCirc_{|\omega|}(\gr{Z}_{\underline{\lambda}_\omega})\\[-5pt]
	\times\prod_{\substack{(i,j)\in(\omega\setminus[\gr{l}])^2\\i\neq j}}\big(1-\ind_{\lambda_i\so \lambda_j}\big).
	\end{multline*}
	
	We make the change of variables
	\[\left(\gr{l},\left(\lambda_1,\cdots,\lambda_{\gr{l}}\right),\omega\right)\mapsto\left(\rho,\gr{l}_1,\left(\bar{\lambda}_1,\cdots,\bar{\lambda}_{\gr{l}_1}\right),\gr{l}_2,\left(\tilde{\lambda}_1,\cdots,\tilde{\lambda}_{\gr{l}_2}\right)\right)\]
	where
	\begin{gather*}
		\rho:=\bigcup_{i\in\omega}\lambda_i,\,\gr{l}_1:=|\omega|,\,\gr{l}_2 := \gr{l}-|\omega|,\,\bar{\lambda}_1 := \lambda_1,\\
		\left(\bar{\lambda}_2,\cdots,\bar{\lambda}_{\gr{l}_1}\right):=(\lambda_j)_{j\in \omega\setminus\{1\}} {\rm\,and\,}\left(\tilde{\lambda}_1,\cdots,\tilde{\lambda}_{\gr{l}_2}\right):=(\lambda_j)_{j\in[\gr l]\setminus \omega}.\end{gather*}
	
	The set $\rho$ is the set of particles linked to $\ui_m$ \textit{via} a chain of interactions or overlaps. We get that $	h_m(\gr{Z}_{\ui_m}(t))$ is equal to
	\begin{multline*}
			\sum_{\rho\supset\ui_m}\sum_{\gr{l}_1=1}^{|\rho|} \sum_{\substack{\ui_m\subset\bar{\lambda}_1\subset \rho\\ (\bar{\lambda}_2,\cdots,\bar{\lambda}_{\gr{l}_1})\in\mathcal{P}_{{\rho\setminus\bar{\lambda}_1}}^{\gr{l}_1-1}}}h_m\left(\ds{Z}^{\bar{\lambda}_1}_{\ui_m}(t)\right)\DDelta^{\ui_m}_{\bar{\lambda}_1}\left(\gr{Z}_{\bar{\lambda}_1}\right)\prod_{i=2}^{\gr{l}_1}\DDelta_{|\bar{\lambda}_i|}\left(\gr{Z}_{\bar{\lambda}_i}\right)\CCirc_{\gr{l}_1}\!\left(\gr{Z}_{\bar{\lambda}_1},\cdots,\gr{Z}_{\bar{\lambda}_{\gr{l}_1}}\right)\\[-10pt]
			\times\sum_{\gr{l}_2=1}^{\N-|\rho|}\sum_{(\tilde{\lambda}_1,\cdots,\tilde{\lambda}_{\gr{l}_2})\in\mathcal{P}^{\gr{l}_2}_{[\N]\setminus\rho}}\prod_{i=1}^{\gr{l}_2}\DDelta_{|\tilde{\lambda}_i|}(\gr{Z}_{\tilde{\lambda}_i})\!\!\prod_{\substack{(i,j)\in([\N]\setminus\omega)^2\\i\neq j}}\!\!\big(1-\ind_{\tilde{\lambda}_i\so \tilde{\lambda}_j}\big).
	\end{multline*}
	The second line is the sum over all possible partitions $(\tilde{\lambda}_1,\cdots,\tilde{\lambda}_{\gr{l}_2})$ of $[\N]\setminus\rho$ of the indicator function that they are effectively the dynamical cluster of the initial data. Hence, it is equal to one. We identify the $n$-th \emph{dynamical cumulant} as
	\begin{multline}
	\Phi^t_{m, n}[h_m](Z_n):=\frac{1}{(n-m)!}\sum_{{\gr{l}}=1}^n \sum_{\substack{ [m] \subset \lambda_1\subset [n]}}\sum_{\substack{(\lambda_2,\cdots,\lambda_\gr{l})\\ \in\mathcal{P}^{\gr{l}-1}_{[n]\setminus\lambda_1}}} h_m(\ds{Z}^{\lambda_1}_{[m]}(t))\DDelta_{|\lambda_1|}^{[m]}(Z_{\lambda_1})\prod_{i=2}^{\gr{l}}\DDelta_{|\lambda_i|}(Z_{\lambda_i})\\[-10pt]
	\times \CCirc_l(Z_{\lambda_1},\cdots,Z_{\lambda_{\gr{l}}}),
	\end{multline}
	and we can now write the dynamical cluster expansion:	
	\begin{prop}
		Fix a family of particle $\ui_m$. For almost all $\gr{Z}_\N\in\mathcal{D}$, we have 
		\begin{equation}
			h_m\left(\gr{Z}_{\ui_m}(t)\right) = \sum_{n\geq m} \sum_{(i_{m+1},\cdots,i_{n})} \Phi^t_{m, n}[h_m]\left(\gr{Z}_{\ui_n}(0)\right).
		\end{equation}
	\end{prop}
	
	\begin{figure}[h]
		\centering
		\label{fig:exemple-dynamical-cumulant}
		\includegraphics[width=13cm]{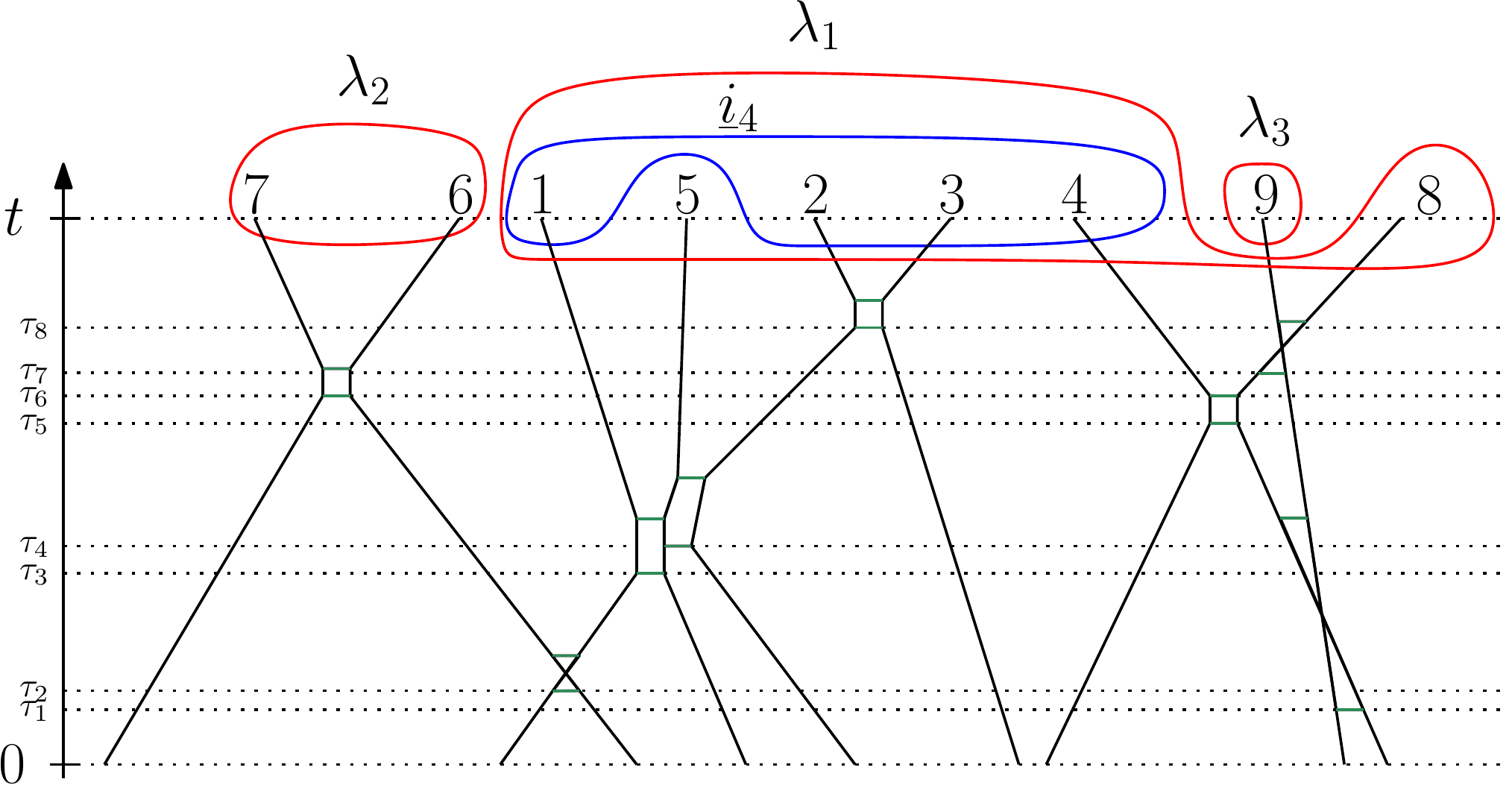}
		\caption{Example of trajectory in a dynamical cumulant. We want to follow the particles $\{1,2,3,4\}$.}
	\end{figure}

	\begin{definition}[First type of pseudotrajectory]\label{def:pseutrajectory1}
		In the following, for a given  $m\leq n$,   $\underline{\lambda}=(\lambda_1,\cdots,\lambda_{\gr{l}})$ a partition of $[n]$, we denote $\ds{Z}(t,Z_n,\underline{\lambda})$ the trajectory of the $n$ particles following the Hamiltonian dynamics linked to
	\[\mathcal{H}_{\underline{\lambda}}(Z_n) := \sum_{\ell=1}^{\gr{l}}\mathcal{H}_{\lambda_\ell}(Z_{\lambda_\ell}).\]
	\end{definition}

	We define now the notion of collision graph:	
	\begin{definition}[collision graph]
		Fix $m\leq n$, collision parameters  $\underline{\lambda}:=(\lambda_1,\cdots,\lambda_{\gr{l}})$ and an initial position $Z_n\in\mathbb{D}^n$.
		
		We construct the \emph{collision graph} with vertex $[n]$ and with labeled edges of the form $(i,j)_{\tau,s}$, $\tau\in[0,t]$, $s\in\{\pm1\}$. The edges $(i,j)_{\tau,s}$ is in the graph if 
		\begin{itemize}
			\item $\tau\in(0,t)$, $\left\vert\ds{x}_i(\tau,\underline{\lambda})-\ds{x}_j(\tau,\underline{\lambda})\right\vert=\e$, $\left(\ds{x}_i(\tau,\underline{\lambda})-\ds{x}_j(\tau,\underline{\lambda})\right)\cdot\left(\ds{v}_i(\tau,\underline{\lambda})-\ds{v}_j(\tau,\underline{\lambda})\right)>0$,
			\item or $\tau=0$, $\left\vert\ds{x}_i(0,\bar{\lambda})-\ds{x}_j(0,\bar{\lambda})\right\vert<\e$,
			\item $s= 1$ if $i$ and $j$ are in the same $\lambda_k$, $s=-1$ else.
		\end{itemize}
	\end{definition}
	
	\begin{figure}[h!]
		\centering
		\includegraphics[width=10cm]{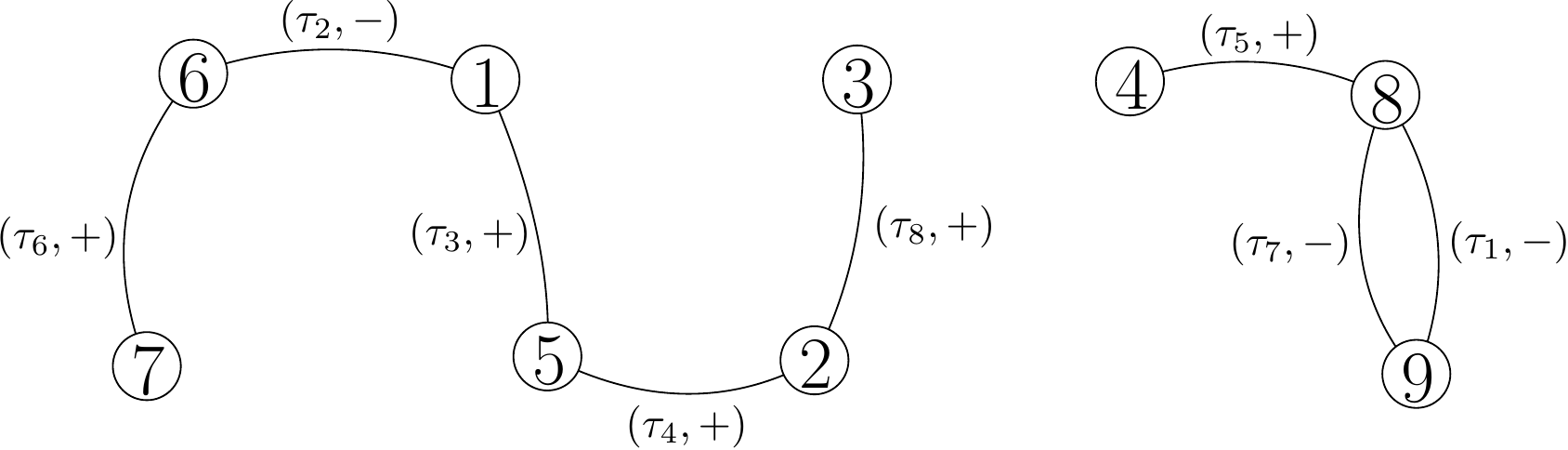}
		\caption{The collision graph associated with the pseudotrajectory of Figure \ref{fig:exemple-dynamical-cumulant}.}
	\end{figure}
	
	\begin{remark}
		Fix  $(\lambda_1,\cdots,\lambda_\ell)$ a partition of $[n]$. Using Penrose's tree inequality (see \cite{Penrose,BGSS, Jansen}), the cumulant function $\CCirc_{n}(Z_{\lambda_1},\cdots,Z_{\lambda_n})$ is bounded by 
		\begin{equation}\label{eq: Penrose's tree inequality}
		\left|\CCirc_{n}(Z_{\lambda_1},\cdots,Z_{\lambda_n})\right|\leq \sum_{T\in\mathcal{T}([\ell])} \prod_{(i,j)\in E(T)}\ind_{\lambda_i\so\lambda_j}
		\end{equation}
		where $\mathcal{T}([\ell])$ is the set of simply connected graph on $[\ell]$. The case of equality is reached, so we cannot expect a good $L^\infty$ bound of $\CCirc_n$.
	\end{remark}
	
	\blue{\begin{remark}\label{rem: racontage des histoires}
			We will describe the set $\{\text{history}\}$ that has been used in	 Section \ref{subsec:Strategy of the proof} for the decomposition of $\Phi_{1,n}^t[h]$: ${\rm history}$ is a triplet $(\gr{l},(\lambda_2,\cdots,\lambda_{\gr{l}}),C)\in\mathfrak{H}_n $, where the set of history $\mathfrak{H}_n$ is defined by
			\begin{gather*}
				\mathfrak{H}_n := \left\{(\gr{l},(\lambda_2,\cdots,\lambda_{\gr{l}}),C)\big| \gr{l}\in[1,n]\,;\, \{1\}\subset\lambda_1\subset[n]\,;\, (\lambda_2,\cdots,\lambda_{\gr{l}})\in \mathcal{P}_{\lambda_1^c}^{\gr{l}-1}\,;\,C\in\mathcal{C}([\gr{l}])\right\},\\
				\ind_{\rm history} := \prod_{i=1}^{\gr{l}}\DDelta_{|\lambda_i|}(\gr{Z}_{\lambda_i})\prod_{(i,j)\in E(C)}\ind_{\lambda_i\so \lambda_j}\\
				\sigma({\rm history}) := (-1)^{|E(C)|},
			\end{gather*}
			where $|E(C)|$ is the number of edges of the graph $C$. One has 
			\[\Phi_{1,n}^t[h](Z_n):=\frac{1}{(n-1)!}\sum_{{\rm history}\in\mathfrak{H}_n} h(\ds{z}^{\lambda_1}_1(t,Z_{\lambda_1})\ind_{\rm history}\sigma({\rm history}).\]			
			
			Using the Penrose's tree inequality \eqref{eq: Penrose's tree inequality}, one obtains
			\[\left|\Phi_{1,n}^t[h](Z_n)\right|\leq \frac{1}{(n-1)!}\sum_{{\rm history}\in\mathfrak{H}'_n} h(\ds{z}^{\lambda_1}_1(t,Z_{\lambda_1})\ind_{\rm history}\sigma({\rm history})\]
			where $\mathfrak{H}'_n\subset\mathfrak{H}_n$ is a smaller set of parameters
			\[\mathfrak{H}'_n := \left\{(\gr{l},(\lambda_2,\cdots,\lambda_{\gr{l}}),C)\big| \gr{l}\in[1,n]\,;\, \{1\}\subset\lambda_1\subset[n]\,;\, (\lambda_2,\cdots,\lambda_{\gr{l}})\in \mathcal{P}_{[n]\setminus\lambda_1}^{\gr{l}-1}\,;\,C\in\mathcal{T}([\gr{l}])\right\}.\]
			
			Using that $|\cup_{\gr l}\mathcal{P}^{\gr l}_{[n]}|\leq e^n$, $|\mathcal{T}([\gr l])|=\gr l ^{\gr l -2}$,
			\begin{align*}
				|\mathfrak{H}'_n| &=  \sum_{{\gr{l}}=1}^n \left|\left\{(\gr{l},(\lambda_2,\cdots,\lambda_{\gr{l}}))\big| \gr{l}\in[1,n]\,;\, \{1\}\subset\lambda_1\subset[n]\,;\, (\lambda_2,\cdots,\lambda_{\gr{l}})\in \mathcal{P}_{[n]\setminus\lambda_1}^{\gr{l}-1}\right\}\right|\times \left|\mathcal{T}([\gr{l}])\right|,\\
				&\leq \sum_{{\gr{l}}=1}^n C^n \times \gr{l}^{\gr l -2} \leq C^n n^n
			\end{align*}
			for some constant $C>0$, which is faster in $n$ than any geometric sequence. This shows that estimations of the form \eqref{eq: flemme de donner un nom} cannot be useful yet.
	\end{remark}}
	
	\subsection{Conditioning}\label{Conditioning}
	We describe now the conditioning used to control the pathological recollisions, that were introduced in Section \ref{subsec:Strategy of the proof}.
	
	\begin{definition}[Possible cluster]\label{def:possible cluster}
		Let $Z_r\in\mathbb{D}^r$ an initial configuration. Consider $\omega_1,\cdots,\omega_p$ a family of subsets of $[r]$ such that
		\[\bigcup_{i= 1}^p\omega_i = [r],\]
		and $(\underline{\lambda}_i)_{i\leq p} = (\lambda_i^1,\cdots,\lambda_i^{l_i})_{i\leq p}$ where each $\underline{\lambda}_i$ is a partition of the corresponding $\omega_i$. We denote $\mathcal{G}_i$ the collision graph of the pseudotrajectory $\ds{Z}(\tau,Z_{\omega_i},\underline{\lambda}_i)$ on the time interval $[0,\delta]$. The graph $\mathcal{G}$ is the merge of all the $\mathcal{G}_i$.
		
		We say that $Z_r$ forms a \emph{possible cluster} if there exists a couple $((\omega_i)_i$, $(\underline{\lambda}_i)_i)$ such that the graph $\mathcal{G}$ is connected.
	\end{definition}
	
	\begin{definition}[Definition of the set $\Upsilon_\e$]\label{def: possible cluster}
		Let $\gamma>0$ be a fixed integer, $\delta>0$ a time scale, and $\mathbb{V}>0$ a velocity bound. We construct $\Upsilon_\e\subset\mathcal{D}$ the set of particle configurations such that for any time $\tau\in\{0,\delta,2\delta,\cdots,$ $t\}$, there is no possible cluster of size bigger than $\gamma$ at time $\tau$, and inside any subset of particles $\omega\subset[1,\N]$ with less than $\gamma$ elements, $\tfrac{1}{2}\|\gr{V}_\omega(\tau)\|^2$ is bounded by $\tfrac{1}{2}\mathbb{V}^2$.
	\end{definition} We have the following bound on the measure of the complement of $\Upsilon_\e$:
	\begin{prop}
		There exists a constant $C_\gamma$ depending only on $\gamma$ and on the dimension such that
		\begin{equation}
		\mathbb{P}_\e\left(\Upsilon^c_\e\right)\leq C_\gamma\frac{t}{\delta}\left(\mu\delta^\gamma+\mu^\gamma e^{-\mathbb{V}^2/4}\right).
		\end{equation}
	\end{prop}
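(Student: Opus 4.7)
The plan is to decompose the bad event along the roughly $t/\delta$ sampled times and along the two ways a configuration can fail to lie in $\Upsilon_\e$—either it contains a possible cluster of size $>\gamma$, or it contains a subset of at most $\gamma$ particles whose total kinetic energy exceeds $\mathbb{V}^2/2$. Writing
\begin{equation*}
\Upsilon^c_\e \subset \bigcup_{k=0}^{\lfloor t/\delta\rfloor} \bigl(A_{k\delta} \cup B_{k\delta}\bigr),
\end{equation*}
and invoking the stationarity of $\mathbb{P}_\e$ under the Hamiltonian flow on $\mathcal{D}$, each $\mathbb{P}_\e(A_{k\delta})$ and $\mathbb{P}_\e(B_{k\delta})$ equals its time-$0$ counterpart. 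The problem thus reduces to proving
\begin{equation*}
\mathbb{P}_\e(A_0) \leq C_\gamma\, \mu\delta^\gamma \quad\text{and}\quad \mathbb{P}_\e(B_0) \leq C_\gamma\, \mu^\gamma e^{-\mathbb{V}^2/4},
\end{equation*}
after which a union bound with the $1+\lfloor t/\delta\rfloor \leq 2t/\delta$ prefactor yields the claim.

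For $B_0$, I would exploit the fact that under the grand canonical Gibbs measure the velocities are i.i.d.\ standard Gaussians independent of the positions, since $\mathcal{H}_n$ splits as $\tfrac12|V_n|^2 + \mathcal{V}_n(X_n)$. A Markov inequality applied to the number of offending subsets, combined with the Chernoff bound
\begin{equation*}
\mathbb{P}\bigl(\|V_\omega\|^2>\mathbb{V}^2\bigr) \leq e^{-\mathbb{V}^2/4}\prod_{i\in\omega}\mathbb{E}\bigl[e^{|v_i|^2/4}\bigr] \leq C^{|\omega|} e^{-\mathbb{V}^2/4},
\end{equation*}
and the dilute-regime factorial moment estimate $\mathbb{E}_\e[\N(\N-1)\cdots(\N-k+1)] \leq C^k \mu^k$ (a direct consequence of the Mayer expansion of the Gibbs partition function), will produce the desired $\mu^\gamma e^{-\mathbb{V}^2/4}$ bound.

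For $A_0$, I would again pass to the expectation of the number of possible clusters of size exactly $\gamma+1$, since any larger cluster contains a spanning configuration of that size. For a fixed tagged $(\gamma+1)$-tuple $\omega$, the condition that $Z_\omega$ be a possible cluster demands the existence of a covering $(\omega_i)_i$ of $\omega$ and partitions $\underline{\lambda}_i$ of each $\omega_i$ such that the merged collision graph of the pseudotrajectories $\ds{Z}(\cdot,Z_{\omega_i},\underline{\lambda}_i)$ on $[0,\delta]$ is connected. The number of admissible combinatorial data $(\omega_i,\underline{\lambda}_i)$ is bounded by a factor depending only on $\gamma$, and any connected graph on $\gamma+1$ vertices carries a spanning tree with $\gamma$ edges. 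For each such edge $(q,\bar q)$, the appendix's geometric estimate constrains the initial data of the pair to a collision tube of measure at most $C\e^{d-1}\delta$—uniformly in the data of the remaining particles—so that
\begin{equation*}
\mathbb{P}_\e\bigl(\omega\text{ is a possible cluster}\bigr) \leq C_\gamma\,(\e^{d-1}\delta)^\gamma.
\end{equation*}
Summing over the $\mathbb{E}_\e[\N^{\gamma+1}]/(\gamma+1)! \leq C_\gamma \mu^{\gamma+1}$ choices of tagged tuple and using the Boltzmann-Grad scaling $\mu\e^{d-1}\mathfrak{d}=1$ converts this into $C_\gamma\,\mu\delta^\gamma$, after absorbing the residual $\mathfrak{d}$-dependence into the constant.

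The hard part will be the per-edge geometric estimate. The tube bound $C\e^{d-1}\delta$ is elementary for two particles moving along straight lines, but in a pseudotrajectory driven by the partition $\underline{\lambda}_i$ the pair $(q,\bar q)$ may be deflected by collisions with the other particles of its block. Showing that the tube measure survives uniformly under such deflections—essentially, that the scattering maps involved have controlled Jacobians and the Gaussian velocity weight absorbs large relative velocities—is exactly what is carried out in the geometric estimates appendix. Once that input is granted, the remainder is a clean Markov/Penrose tree argument combined with the factorial-moment identity of the grand canonical ensemble.
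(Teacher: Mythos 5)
Your overall architecture (union bound over the $t/\delta$ sampled times, stationarity of the Gibbs measure under the flow, separate first-moment estimates for the velocity event and for the cluster event) is exactly the paper's, and your treatment of the velocity part is correct. The genuine gap is in the reduction of the cluster event to tuples of size exactly $\gamma+1$. You assert that ``any larger cluster contains a spanning configuration of that size'', but the notion of \emph{possible cluster} is not hereditary under passing to subsets: it is defined through the connectivity of the merged collision graph of the pseudotrajectories $\ds{Z}(\cdot,Z_{\omega_i},\underline{\lambda}_i)$, and if you discard particles the surviving ones follow \emph{different} pseudotrajectories (they are no longer deflected by the discarded ones), so the collisions realizing the connectivity of an induced subgraph on $\gamma+1$ vertices need not occur in the reduced dynamics. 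A connected induced subgraph of the full collision graph therefore does not certify that the corresponding sub-configuration is itself a possible cluster. The paper circumvents this with a splitting argument: it locates the first collision whose removal (together with all later ones) still leaves the graph connected, observes that the graph of strictly earlier collisions has two connected components $\varpi_1,\varpi_2$, and that each $Z_{\varpi_j}$ \emph{is} a genuine possible cluster (up to that time, particles in different components have never come within distance $\e$, so the reduced dynamics reproduces the connecting collisions). Iterating this only lands on a possible cluster of size in $[\gamma+1,2(\gamma+1)]$, not exactly $\gamma+1$, which is why the paper's estimate sums over $n$ from $\gamma+1$ to $2(\gamma+1)$. Your version needs either this splitting lemma or a per-edge tube estimate uniform over the influence of the untagged particles; the latter is a substantially harder statement than the self-contained clustering bound of Lemma \ref{lem:preuve du lemme a la con}, and is not what the appendix provides.

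A smaller point: your final bookkeeping gives $\mu^{\gamma+1}(\e^{d-1}\delta)^{\gamma}=\mu\,\delta^{\gamma}\,\mathfrak{d}^{-\gamma}$, and $\mathfrak{d}$ is allowed to tend to $0$ (down to $(\log|\log\e|)^{-1}$), so the ``residual $\mathfrak{d}$-dependence'' is not literally a constant. The discrepancy is harmless for the final $O(\e^d)$ conclusion because $\mathfrak{d}^{-\gamma}$ grows only like a power of $\log|\log\e|$, but this should be said explicitly rather than absorbing a divergent factor silently.
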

	\begin{proof}
		We take the notation of the definition \ref{def:possible cluster}.
		
		\blue{Suppose that $Z_r$ (with $r>\gamma$) forms a possible cluster. We want to show that there exists a subset $\varpi\subset[r]$ such that $Z_\varpi$ forms a possible cluster and $\gamma+1\leq|\varpi|\leq 2\gamma+2$. As $Z_r$ forms a possible cluster, there exists a couple $((\omega_i)_i,(\underline{\lambda}_i)_i)$ such that the graph $\mathcal{G}$ defined in Definition \ref{def: possible cluster} is connected. We define $\mathcal{G}_\tau$ as the subgraph of $\mathcal{G}$ with edges
		\[\left\{(q,\bar{q})_{(\tau',\sigma)}\in E(\mathcal{G}), \tau'<\tau \right\}.\]
		We consider $\tau$ the infimum of $\{\tau'>0,\mathcal{G}_{\tau'}\text{ is connected}\}$. Then, the graph $\mathcal{G}_\tau$ has exactly two connected components. One of them, denoted $\varpi$, verifies $\lceil\tfrac{r}{2}\rceil\leq |\varpi|\leq r-1$. Iterating the procedure, we obtain the expected result.
		
		We deduce that}
		\begin{align*}
		\mathbb{P}_\e\Big(&\Upsilon_{\e}^c\Big)\leq \sum_{k=0}^{t/\delta}\mathbb{E}_\e\left[\sum_{n=\gamma+1}^{2(\gamma+1)}\frac{1}{n!}\sum_{(i_1,\cdots,i_{n})}\ind_{\substack{\gr{Z}_{\ui_{n}}(k\delta)\text{\,form\,a}\\{\rm possible\,cluster}}}+\sum_{n= 1}^\gamma\frac{1}{n!}\sum_{(i_1,\cdots,i_{n})}\ind_{\|\gr{V}_{\ui_{n}}(k\delta)\|\geq\mathbb{V}}\right]\\
		&\leq \frac{t}{\delta}\left(\sum_{n=\gamma+1}^{2(\gamma+1)}\frac{1}{n!}\mu^{n}\int\ind_{\substack{Z_{n}\text{\,form\,a~~~}\\ {\rm possible\,cluster}}}M^{\otimes n}\ud Z_{n}+\sum_{n= 1}^\gamma\frac{1}{n!}\mu^n\int\ind_{\|V_{k'}\|\geq\mathbb{V}}\,M^{\otimes n}\ud Z_{n}\right).
		\end{align*}
		
		Using that (see Lemma \ref{lem:preuve du lemme a la con})
		\begin{equation}\label{eq:estimation a la con}
		\int\ind_{\substack{Z_{n}\text{\,form\,a~~~}\\ {\rm possible\,cluster}}}M^{\otimes n}\ud Z_{n}\leq C_\gamma \mu^{-n+1}\delta^{n-1},
		\end{equation}
		\begin{equation}
		\int\ind_{\|V_{k'}\|\geq\mathbb{V}}\,M^{\otimes n}\ud Z_{n}\leq C_n e^{-\frac{\mathbb{V}^2}{4}}
		\end{equation}
		we obtain the expected result.
		
		We used that the Gibbs measure is time invariant.
	\end{proof}
	
	Hence, if we fix $\delta:= \e^{1/12}$, $\mathbb{V}:=|\log\e|$ and  fix $\gamma=24d$, $\mathbb{P}_\e(\Upsilon_{\e}^c)$ is $O(\e^d)$.

	\subsection{The main part of the cumulant}\label{subsec:The main part of the cumulant}
	
	We define three kinds of pathology for the pseudotrajectories.
	
	\begin{definition}\label{def:pathology}
		Fix $m\leq n$, collision parameters  $(\lambda_1,\cdots,\lambda_{\gr{l}})$ and an initial position $Z_n\in\mathbb{D}^n$.
		\begin{itemize}
		\item There is an \emph{overlap} if there are two particles $q,q'$ and a time $\tau\in\delta\mathbb{Z}\cap[0,t]$ such that $|\ds{x}_q(\tau)-\ds	{x}_{q'}(\tau)|\leq\e$.
		 
		\item Fix a time $\tau$ and particles $i_i,\cdots i_k$. We define a graph $G^\tau$ with vertex $\{i_1,\cdots i_k\}$, and where $(i_a,i_b)$ is an edge if and only if 
		\[\Big|\ds{x}_{i_a}(\tau,\underline{\lambda})-\ds{x}_{i_b}(\tau,\underline{\lambda})\Big|\leq \e.\]
		
		There is a \emph{multiple \blue{encounter}} between $i_i,\cdots i_k$ at time $\tau$ if $G^\tau$ is connected.
		
		\item Fix $Z_n\in\mathbb{D}^n$ such that there is not a multiple \blue{encounters} during $[0,t]$.
		
		We say that there is a \emph{recollision} if the collision graph has a cycle.
		\end{itemize}
	\end{definition}
	
	\blue{These pathological terms will be considered as error terms, and we forbid them in the main part of our development:} we define $\Phi^{0,t}_{m, n}$ as the part of  $\Phi^{t}_{m, n}$ with only non-pathological pseudotrajectories
	\[\begin{aligne}{r@{}l}
	\Phi^{0,t}_{m, n}[h_m](Z_n)&:=\frac{1}{(n-m)!}\sum_{{\gr{l}}=1}^n \sum_{\substack{ [m] \subset \lambda_1\subset [n]}}\sum_{\substack{(\lambda_2,\cdots,\lambda_\gr{l})\\ \in\mathcal{P}^{\gr{l}-1}_{[n]\setminus\lambda_1}}} h_m(\ds{Z}_{[m]}(t,\underline{\lambda})) \CCirc_l(Z_{\lambda_1},\cdots,Z_{\lambda_{\gr{l}}})\\
	\multicolumn{2}{r}{\times\DDelta_{|\lambda_1|}^{[m]}(Z_{\lambda_1})\prod_{i=2}^{\gr{l}}\DDelta_{|\lambda_i|}(Z_{\lambda_i})\ind_{\substack{\text{no~pathology}}}}.
	\end{aligne}\]
	
	Forgetting the pathological cases allows us to consider a simpler parametrization of the pseudotrajectory. We denote $G$ as the collision graph. The graph $G'$ is constructed by removing the edges $(i,j)_{\tau,s}$ where $i$ and $j$ are in $[m]$. The edges of $G'$ can be ordered: $(i_k,j_k)_{\tau_k,s_k}$ with $\tau_1< \tau_2< \cdots< \tau_{n-m}$ (the $\tau_i$ are disjoint for almost all initial data).
	
	We can completely reconstruct the pseudotrajectory by considering only the sequence $s_1,\cdots,$ $s_{k-m}$ and the set of tagged particles $[m]$. This is performed in the following definition.
	
	\begin{definition}[Second definition of a pseudotrajectory]\label{def:pseudotrajectory2}
	Fix $m\leq n$, an initial position $Z_n$ and parameters $(s_k)_{k\leq n-m}\in\{\pm 1\}^{n-m}$ and $\omega\subset[n]$ with $|\omega|=m$. In order to construct the pseudotrajectory $\ds{Z}(\tau,Z_n,\omega,(s_k)_k)$, we need an auxiliary function $\iota:[0,t]\to\mathbb{N}$, which is increasing, \blue{piecewise constant}, and left-continuous function.
	
	At $\tau=0$, we set $\ds{Z}(0,Z_n,\omega,(s_k)_k):=Z_n$ and $\iota(0):=1$.
	
	Suppose that the pseudo trajectory $\ds{Z}(\cdot,Z_n,\omega,(s_k)_k)$ and $\iota(\cdot)$ are constructed in the time interval $[0,\tau]$. At time $\tau$ particles $i$ and $j$ meet, \textit{i.e.}
	\[\left\vert\ds{x}_i(\tau)-\ds{x}_j(\tau)\right\vert=\e,~\left(\ds{x}_i(\tau)-\ds{x}_j(\tau)\right)\left(\ds{v}_i(\tau)-\ds{v}_j(\tau)\right)>0.\]
	
	If $(i,j)\in\omega^2$, the two particles interact and we fix $\underset{\tau'\searrow \tau}\lim\iota(\tau'):=\iota(\tau)$.
	
	Otherwise, we fix $\underset{\tau'\searrow \tau}\lim\iota(\tau'):=\iota(\tau)+1$ and we look at $s_{\iota(\tau)}$. If $s_{\iota(\tau)} = 1$ the two particles interact: as long as $|\ds{x}_i-\ds{x}_j<\e$, they follow the dynamic 
	\[\left\{\begin{split}
	\dot{\ds{x}}_i = \ds{v}_i,&~\dot{\ds{v}}_i = \frac{\blue{\alpha}}{\e}\nabla\mathcal{V}\left(\frac{\ds{x}_i-\ds{x}_j}{\e}\right),~\\
	\dot{\ds{x}}_j = \ds{v}_j,&~\dot{\ds{v}}_j = \frac{-\blue{\alpha}}{\e}\nabla\mathcal{V}\left(\frac{\ds{x}_i-\ds{x}_j}{\e}\right).
	\end{split}\right.\]
	If $s_{\iota(\tau)} = -1$ the two particles ignore each other: as long as $|\ds{x}_i-\ds{x}_j|<\e$,
	\[\left\{\begin{split}
	\dot{\ds{x}}_i = \ds{v}_i,&~\dot{\ds{v}}_i = 0,\\
	\dot{\ds{x}}_j = \ds{v}_j,&~\dot{\ds{v}}_j = 0.
	\end{split}\right.\]
	In both cases, we define $\tau^+>\tau$ as the first time bigger than $\tau$ such that
	\[\left\vert\ds{x}_i(\tau^+)-\ds{x}_j(\tau^+)\right\vert=\e,~\left(\ds{x}_i(\tau^+)-\ds{x}_j(\tau^+)\right)\left(\ds{v}_i(\tau^+)-\ds{v}_j(\tau^+)\right)<0.\]
		
	We denote $\mathcal{R}^t_{\omega,(s_k)_{k}}\subset \mathbb{D}^n$ the set of initial parameters such that the pseudotrajectory has a connected collision graph and has neither multiple \blue{encounters}, nor recollision or nor overlaps. Hence, on $\mathcal{R}^t_{\omega,(s_k)_{k}}\subset \mathbb{D}^n$, the previous construction has no ambiguity.
	\end{definition}
	
	We can reconstruct the partition $(\lambda_1,\cdots,\lambda_{\gr{l}})$ for given $(s_i)_{i\leq n-m}$. We define the graph $G$ as a subgraph of the collision graph $\mathcal{G}$ by removing the edges of the form $(i,j)_{\tau,-1}$ (we keep only the interactions). The cluster $\lambda_1$ is the union of the connected components in $G$ of the particles $[m]$. The $(\lambda_2,\cdots,\lambda_{\gr{l}})$ are the other connected components.
	
	We have the following equality
	\begin{equation}\begin{aligne}{r@{}l}
	\Phi^{0,t}_{m, n}[h_m](Z_n)=\frac{1}{(n-m)!}\sum_{(s_k)_{k\leq n-m}}\prod_{i=1}^{n-m}s_i~ \ind_{\mathcal{R}^t_{[m],(s_k)_{k}}}h_m(\ds{Z}_{[m]}(\tau,Z_n,[m],(s_k)_k))
	\end{aligne}\end{equation}
	We denote
	\begin{equation}
	\Phi^{>,t}_{m,n} := \Phi^{t}_{m,n}-\Phi^{0,t}_{m,n}.
	\end{equation}

	\subsection{Iteration of the pseudotrajectory development}
	The present section is dedicated to the implementation of the short-time sampling (of step $\delta$).
	
	The construction of Section \ref{subsec:The main part of the cumulant} is efficient on a short time interval. To raise a long-time result, we need to iterate these kinds of pseudotrajectory representations and to compute sums of the form
	\[\sum_{\ui_{n_2}} \Phi^{0,\delta}_{n_1,n_2}\circ\Phi^{0,\delta}_{n_0,n_1}[h_{n_0}](\gr{Z}_{\ui_{n_2}}),\]
	where $n_0\leq n_1\leq n_2$ are three integers.
	
	\begin{remark}\label{rem: tree versu graph}
		In the usual framework, the pseudotrajectories \blue{are trees} (see for example \cite{BGSS1,BGSS2,LeBihan2}): there are more and more particles as we go backward in time. Hence, the development has naturally a semi-group structure, and it is straightforward to continue the development.
		
		In the present discussion, the pseudotrajectories have a graph structure: particles do not disappear. Hence, we need to work to iterate the process \blue{(see Figure \ref{fig: tree vs graph})}.
	\end{remark}
	
	\blue{We introduce the \emph{semi-tree} condition in order to recover some semi-group structure.
	\begin{definition}[semi-tree condition]
		Fix $\omega_1\subset\omega_2$ two finite sets, $t$ and $\delta$ two positive real numbers such that $K:=\tfrac{t}{\delta}\in \mathbb{N}^*$. Fix $G$ a collision graph such that
		\[E(G)\subset \left\{(i,j)_{(\tau,\sigma)}\Big\vert i,j\in\omega_2\,;\,\sigma\in\{\pm 1\}\,;\,\tau\in (0,t)\right\},\]
		and such that $G$ is connected and simply-connected.
		
		We define the decreasing sequence of sets $(\varpi_k)_{k\in[0,K]}$, where $\varpi_k$ is the connected components of $\omega_1$ in the graph of edges 
		\[\{(i,j)\vert \exists (\sigma,\tau)\in \{\pm 1\}\times [k\delta,t]~\text{such that}~(i,j)_{\tau,\sigma}\in G\}.\]
		
		The graph $G$ verifies the semi-tree condition with respect to $\omega_1$ if for any edges $(i,j)_{\tau,\sigma}\in G$ with $\tau$ in $[k\delta, (k+1)\delta[$, one has 
		\[(i,j)\in\varpi_{k+1}\times(\varpi_{k}\setminus\varpi_{k+1})\cup(\varpi_{k}\setminus\varpi_{k+1})\times\varpi_{k+1}.\]
		
		Fix $n\in\mathbb{N}^*$, $\omega\subset [n]$ and collision parameters $(s_k)_{k\leq n-|\omega|}\in \{\pm 1\}^{n-|\omega|}$. We define $\mathcal{R}^{0,t}_{\omega, (s_k)_k}\subset\mathcal{R}^{0,t}_{\omega, (s_k)_k}$ as the set of initial parameters $Z_n$ such that the collision graph of the pseudotrajectory $\ds{Z}(\cdot,Z_n,\omega,(s_k)_k)$ verifies the semi-tree condition with respect to $\omega$.
		
		\begin{figure}[h!]\label{fig: semi tree condition}
			\centering
			\includegraphics[width=7cm]{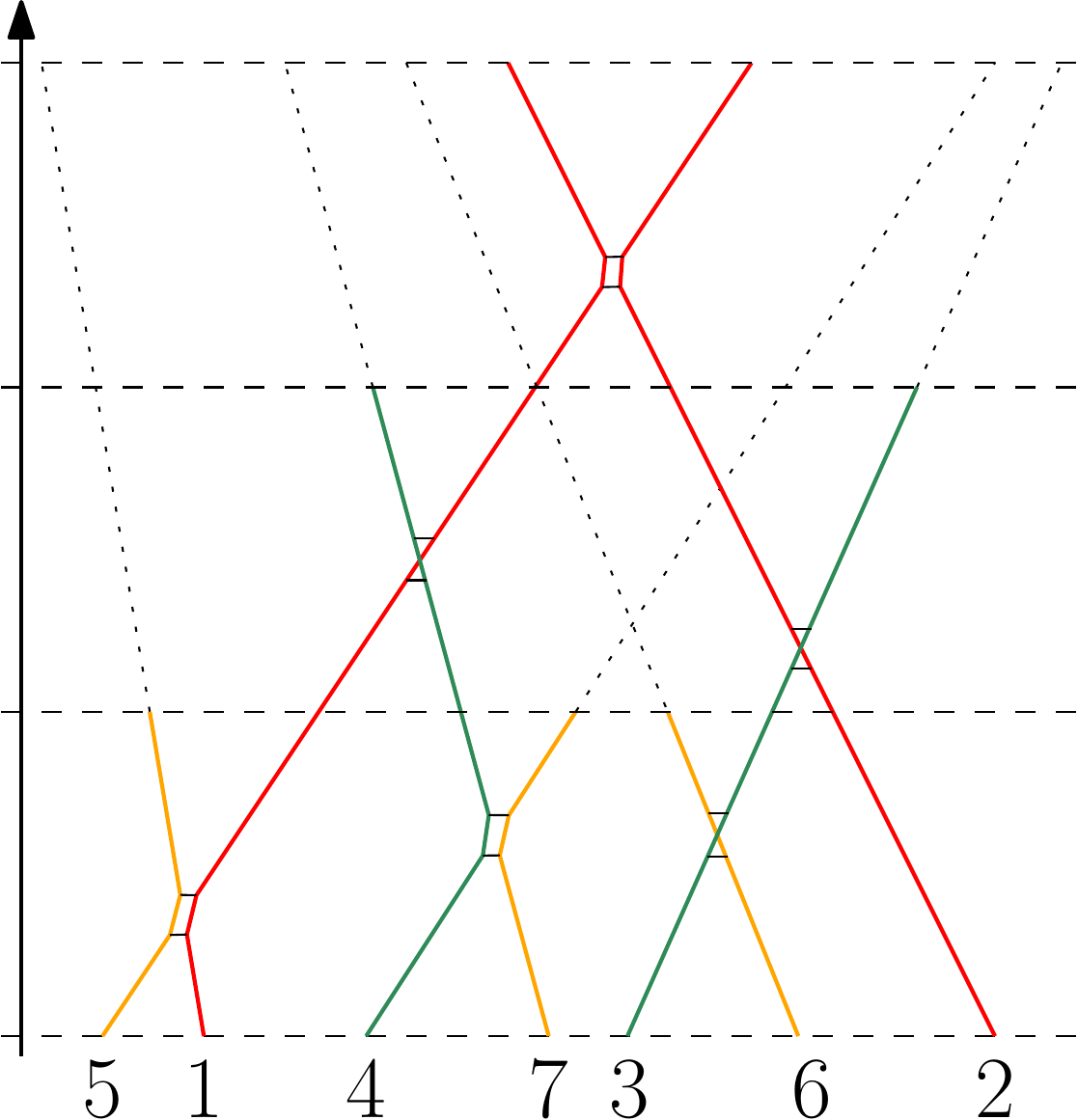}
			\caption{Here the pseudotrajectory checks the semi-tree condition, with $\omega_1 =\{1\}$, $\varpi_1=\{1,2\}$, $\varpi_2 = \{1,2,3,4\}$ and $\varpi_3 = \{1,2,3,4,5,6\}$. In the picture, when the trajectory of a particle is a dotted line, it does not overlap nor interact with any other particle \blue{(a crossing with a dotted line is an artifact of the one-dimensional drawing).}}
		\end{figure}
 	\end{definition}}
	
	We need a new definition of pseudotrajectory:
	\begin{definition}[Third definition of pseudotrajectory]\label{def:pseudotrajectory3}
		Fix $m\leq n$. For a family of parameters $(\omega_1,\omega_2,(s_k)_{k\leq n-m})$ with $\omega_1\subset\omega_2\subset[n]$, $|\omega_1|=m$ and $(s_k)_{k\leq n-m}\in\{\pm 1\}^{n-m}$, we define $\dr{Z}(\tau,Z_n,\omega_1,\omega_2,(s_k)_k)$ as 
		\begin{itemize}
			\item for $\tau\leq\delta,$
			\[\dr{Z}(\tau,Z_n,\omega_1,\omega_2,(s_k)_{k\leq n-m}) := \ds{Z}(\tau,Z_n,\omega_2,(s_k)_{k\leq n-|\omega_2|}),\]
			\item for $\tau> \delta$,
			\[\dr{Z}_{\omega_2}(\tau,Z_n,\omega_1,\omega_2,(s_k)_{k\leq n-m}):= \ds{Z}(\tau-\delta,\dr{Z}_{\omega_2}(\delta),\omega_1,(s_k)_{n-|\omega_2|<k\leq n-m}),\]
			and for all $i\in [n]\setminus\omega_2$
			\[\dr{z}_i(\tau) := (\dr{x}_i(\delta)+(\tau-\delta)\dr{v}_i(\delta),\dr{v}_i(\delta)).\]
		\end{itemize}
	\end{definition}

	\begin{remark}
		Note that the particles in $[n_2]\setminus\omega_2$ are virtual since time $\delta$: they do not interact with any other particle.
	\end{remark}
	
	\blue{\begin{definition}
		We define the collision graph $\mathcal{G}$ of the pseudotrajectory  $\dr{Z}(\cdot,Z_n,\omega_1,\omega_2,(s_k)_k)$. We define  the graph $G'$ as the subgraph of $\mathcal{G}$ with edges
		\[\big\{(i,j)_{\tau,\sigma}\in E(\mathcal{G}),~\tau\in[\delta,t],~(i,j)\in\omega_2^2\big\}.\]
		
		Fix $\omega_1\subset\omega_2\subset [n]$ and collision parameters $(s_k)_{k\leq n-|\omega_1|}$. We define the set  
		\[\mathcal{R}^t_{\omega_1,\omega_2,(s_k)_k}\subset \left\{Z_n\in \mathcal{R}^\delta_{\omega_2,(s_k)_{k\leq n-|\omega_2|}}\Big\vert \dr{Z}(\cdot,Z_n,\omega_2,(s_k)_{k\leq n-|\omega_1|})\in \mathcal{R}^\delta_{\omega_1,(s_k)_{0\leq k-|\omega_2|\leq n-|\omega_1|}} \right\}\]
		such that $G'$ has no cycle and verifies the semi-tree condition with respect to $\omega_1$.
	\end{definition}}

	\blue{Fix $m\leq n$, $\omega$ such that $[m] \subset \omega\subset [n]$ and $(s_k)_{k\leq n-m}$ some collision parameters.} We split $\mathcal{R}^t_{[m],\omega,(s_k)_k}$ into two pieces: $\mathcal{R}^{>,t}_{[m],\omega,(s_k)_k}$ where the collision graph has at least one cycle and $\mathcal{R}^{0,t}_{[m],\omega,(s_k)_k}$ where the collision graph  has no cycle.
	
	For $Z_{n}\in \mathcal{R}^{0,t}_{[m],\omega,(s_k)_k}$, there are exactly $n-m$ collisions in the collision graph, and $\omega$ is not needed to reconstruct the pseudotrajectory. In addition, its collision graph checks the semi-tree conditions. Fixing the parameters $(s_k)_k$, the sets $(\mathcal{R}^{0,t}_{[m],\omega,(s_k)_k})_{\omega}$ are disjoint, as $\omega$ is the union of the connected components of the vertices $1,\cdots,m$  in  the graph $G'$. \blue{We deduce that}
	\[\mathcal{R}^{0,t}_{[m],(s_k)_k}=\bigsqcup_{[m]\subset\omega\subset[n]}\mathcal{R}^{0,t}_{[m],\omega,(s_k)_k}.\]
	
	We introduce now the functionals
	\begin{gather}
		\Psi^{0,t}_{n_0,n_1}[h_{n_0}]:=\frac{1}{(n_1-n_0)!}\sum_{(s_k)_{k\leq n_1-n_0}}\prod_{k=1}^{n_1-n_0}\!\!s_k\, h_{n_0}\left(\ds{Z}_{[n_0]}(t,\cdot,[n_0],(s_k)_k)\right)\ind_{\mathcal{R}^{0,t}_{[n_0],(s_k)_k}},\\
		\Psi^{>,t}_{n_0,n_1}[h_{n_0}]:=\frac{1}{(n_1-n_0)!}\sum_{\substack{[n_0]\subset \omega \subset [n_1] \\(s_k)_{k\leq n_1-n_0}}}\prod_{k=1}^{n_1-n_0}\!\!s_k\, h_{n_0}\left(\dr{Z}_{[n_0]}(t,\cdot,[n_0],\omega,(s_k)_k)\right)\ind_{\mathcal{R}^{>,t}_{[n_0],\omega,(s_k)_k}}.
	\end{gather}
	
	Fix $n_0\leq n_1\leq n_2$ and a test function $h_{n_0}$. We obtain directly
	\begin{multline*}
	\Phi^{0,\delta}_{n_1,n_2}\circ\Psi^{0,t}_{n_0,n_1}[h_{n_0}](Z_{n_2}) \\
	= \frac{1}{(n_2-n_1)!(n_1-n_0)!}\sum_{(s_k)_{k\leq n_2-n_0}}\prod_{k=1}^{n_2-n_0}\!\!s_k\, h_{n_0}\left(\dr{Z}_{[n_0]}(t+\delta,Z_{n_2},[n_0],[n_1],(s_k)_k)\right)\ind_{\mathcal{R}^{t+\delta}_{[n_0],[n_1],(s_k)_k}}.
	\end{multline*}
	Then, summing on $n_1$ and $\omega$,
	\begin{multline*}
	\sum_{n_1 =n_0}^{n_2}\frac{(n_2-n_1)!(n_1-n_0)!}{(n_2-n_0)!}\sum_{\substack{[n_0]\subset \omega \subset [n_2]\\ |\omega|= n_1}}\Phi^{0,\delta}_{n_1,n_2}\circ\Psi^{0,t}_{n_0,n_1}[h_{n_0}](Z_{[n_0]},Z_{\omega\setminus[n_0]},Z_{[n_2]\setminus \omega})\\
	\begin{split}
		&=\frac{1}{(n_2-n_0)!}\sum_{\substack{[n_0]\subset \omega \subset [n_2] \\(s_k)_{k\leq n_2-n_0}}}\prod_{k=1}^{n_2-n_0}\!\!s_k\, h_{n_0}\left(\dr{Z}_{[n_0]}(t+\delta,Z_{n_2},[n_0],\omega,(s_k)_k)\right)\ind_{\mathcal{R}^{t+\delta}_{[n_0],\omega,(s_k)_k}}\\
		&=\Psi^{0,t+\delta}_{n_0,n_2}[h_{n_0}](Z_{n_2})+\Psi^{>,t+\delta}_{n_0,n_2}[h_{n_0}](Z_{n_2}).
	\end{split}
	\end{multline*}

	\blue{Using the symmetry of the summation set,
	\begin{multline}
	 \sum_{n_1 =n_0}^{n_2}\sum_{\ui_{n_2}}\Phi^{0,\delta}_{n_1,n_2}\circ\Psi^{0,t}_{n_0,n_1}[h_{n_0}](\gr{Z}_{\ui_{n_2}})\\
	\begin{split}
		&= \sum_{n_1 =n_0}^{n_2}\sum_{\ui_{n_2}}\binom{n_2-n_0}{n_1-n_0}^{-1}\sum_{\substack{[n_0]\subset \omega \subset [n_2]\\ |\omega|= n_1}}\Phi^{0,\delta}_{n_1,n_2}\circ\Psi^{0,t}_{n_0,n_1}[h_{n_0}](\gr{Z}_{\ui_{[n_0]}},\gr{Z}_{\ui_{\omega\setminus[n_0]}},\gr{Z}_{\ui_{[n_2]\setminus \omega}})\\
		&=\sum_{\ui_{n_2}} \Psi^{0,t+\delta}_{n_0,n_2}[h_{n_0}](\gr{Z}_{\ui_{n_2}}) +\sum_{\ui_{n_2}} \Psi^{>,t+\delta}_{n_0,n_2}[h_{n_0}](\gr{Z}_{\ui_{n_2}}).
	\end{split}
	\end{multline}}
	
	The functional $\Psi^{0,t}_{m,n}$ are introduced to implement the sampling: for $t>2\delta$ and $\gr{Z}_\N\in\Upsilon_{\e}$ (note that between the first and second lines, we use that by definition $\Phi_{1,n}^{0,\delta}=\Psi_{1,n}^{0,\delta}$)
	\begin{multline*}
	\begin{split}
		\sum_{i=1}^\N h(\gr{z}_i(t)) &=\sum_{n\geq 1}\sum_{\ui_n} \Phi_{1,n}^{0,\delta}[h](\gr{Z}_{\ui_n}(t-\delta))+\sum_{n\geq 1}\sum_{\ui_n} \Phi_{1,n}^{>,\delta}[h](\gr{Z}_{\ui_n}(t-\delta))\\
	&=\sum_{n'\geq n\geq 0}\sum_{\ui_{n'}} \left(\Phi^{0,\delta}_{n,n'}\Psi_{1,n}^{0,\delta}[h](\gr{Z}_{\ui_{n'}}(t-2\delta)) +\Phi^{>,\delta}_{n,n'}\Psi^{0,\delta}_{1,n}[h](\gr{Z}_{\ui_{n'}}(t-k\delta))\right)
	\end{split}\\
	+\sum_{n\geq 1}\sum_{\ui_n} \Phi_{1,n}^{>,\delta}[h](\gr{Z}_{\ui_n}(t-\delta)).
	\end{multline*}
	
	This can be rewritten as
	\begin{multline*}
	\sum_{i=1}^\N h(\gr{z}_i(t))=\sum_{n \geq 1}\sum_{\ui_n}\Psi_{1,n}^{0,t}[h](\gr{Z}_{\ui_n}(0)) +\sum_{k=1}^2 \sum_{n\geq 1} \sum_{\ui_n} \Psi^{>,k\delta}_{1,n}[h](\gr{Z}_{\ui_n}(t-k\delta)) \\
	+\sum_{k=1}^2\sum_{1\leq n\leq n'} \sum_{\ui_{n'}} \Phi^{>,\delta}_{n,n'}\circ\Psi^{0,(k-1)\delta}_{1,n}[h](\gr{Z}_{\ui_{n'}}(t-k\delta)).
	\end{multline*}
	
	The preceding computation can be iterated: for some time $t$, $\theta<t$ and $\delta$ such that $\theta/\delta = K\in\mathbb{N}$, and any initial data $\gr{Z}_\N\in \Upsilon_{\e}$
	\begin{multline}
	\sum_{i=1}^\N h(\gr{z}_i(t)) =\sum_{n \geq 1}\sum_{\ui_n}\Psi_{1,n}^{0,\theta}[h](\gr{Z}_{\ui_n}(t-\theta))+\sum_{k=1}^K \sum_{n\geq 1} \sum_{\ui_n} \Psi^{>,k\delta}_{1,n}[h](\gr{Z}_{\ui_n}(t-k\delta)) \\
	+\sum_{k=1}^K\sum_{1\leq n\leq n'} \sum_{\ui_{n'}} \Phi^{>,\delta}_{n,n'}\circ\Psi^{0,(k-1)\delta}_{1,n}[h](\gr{Z}_{\ui_{n'}}(t-k\delta)).
	\end{multline}

	\subsection{The decomposition of the covariance}
	
	The final ingredient is a second sampling on a longer time scale $\theta \simeq 1/\beta\log|\log\e|$, which controls the growth of the number of collisions.
	
	\begin{definition}[Number of particles at time $\tau$]\label{def: remaining particles}
		Fix $t$ and $\delta$ such that $t/\delta= K\in\mathbb{N}^*$, parameters $(\{1\},\omega_2,(s_k)_{k\leq n-m})$ and admissible initial data $Z_{n_2}\in \mathcal{R}^{0,t}_{\{1\},\omega_2,(s_k)_k}$. We denote $\mathcal{G}$ the collision graph. For $\tau = k\delta$, the \emph{number of particles at time $\tau$}, $\mathfrak{n}(\tau)$, is defined as the size of the connected component of $\{1\}$ in the graph with edges
		\[\{(i,j)\vert \exists (\sigma,\tau)\in \{\pm 1\}\times [k\delta,t]~\text{such that}~(i,j)_{\tau,\sigma}\in \mathcal{G}\}.\]
	\end{definition}
	We want to control the number of particles $(\mathfrak{n}(t-k\theta))_k$ such that it grows slower than the geometric sequence $2^k$.
	
	\blue{Fix $1\leq n_1\leq\cdots \leq n_{l}$. We denote $\underline{n}_l:=(n_i)_{i\leq l}$. For $t \in((l-1)\theta,l\theta]$
	\begin{equation}
	\Psi_{\underline{n}_l}^{0,t}[h] := \frac{1}{(n_l-1)!}\sum_{(s_k)_{k\leq n_l-1}}\prod_{k=1}^{n_l-1}s_k h(\dr{z}_{1}(t,\cdot,\{1\},(s_k)_k))\ind_{\mathcal{R}^{0,t}_{\{1\},(s_k)_k}}\prod_{i=1}^{\lfloor t/\theta\rfloor}\ind_{\mathfrak{n}(t-i\theta)=n_i},\\
	\end{equation}
	and for $t\in [(l-2)\theta,(l-1)\theta]$
	\begin{equation}
	\Psi_{\underline{n}_{l}}^{>,t}[h] := \frac{1}{(n_{l}-1)!}\sum_{\substack{1\in\omega\subset[n_{l}]\\(s_k)_{k\leq n_{l}-1}}}\!\!\prod_{k=1}^{n_{l}-1}s_k\, h(\dr{z}_{1}(t,\cdot,\{1\},\omega,(s_k)_k))\ind_{\mathcal{R}^{>,t}_{\{1\},\omega,(s_k)_k}}\prod_{i=1}^{l-2}\ind_{\mathfrak{n}(t-i\theta)=n_i}.
	\end{equation}}
	
	We can iterate the preceding decomposition of $\sum_{i= 1}^\N h(\gr{z}_i(t))$. The decomposition is performed until reaching the time $0$: denoting $K:=t/\theta\in\mathbb{N}$, $K':=\theta/\delta\in\mathbb{N}$, $\theta \simeq 1/\beta\log|\log\e|$ (for $\beta$ small enough), and $\delta\simeq \e^{1/12}$, for almost any initial data $\gr{Z}_\N(0)\in\mathcal{D}$,	
	\begin{align}
	\sum_{i=1}^\N h(\gr{z}_i(t)) =&\sum_{\substack{(n_j)_{j\leq K}\\0\leq n_j-n_{j-1}\leq 2^j}} \sum_{\ui_{n_k}}\Psi^{0,t}_{\underline{n}_k}[h]\left(\gr{Z}_{\ui_{n_k}}(t)\right)\\
	\label{eq:partie avec trop de coll}&+\sum_{1\leq k\leq K}\sum_{\substack{(n_j)_{j\leq k-1}\\0\leq n_j-n_{j-1}\leq 2^j}}\sum_{n_k\geq 2^k+n_{k-1}}\sum_{\ui_{n_k}}\Psi^{0,k\theta}_{\underline{n}_k}[h]\left(\gr{Z}_{\ui_{n_k}}(t-k\theta)\right)\\
	&\label{eq:partie des reco pas patho}+\sum_{\substack{0\leq k\leq K-1\\1\leq k'\leq K'}}\sum_{\substack{(n_j)_{j\leq k}\\0\leq n_j-n_{j-1}\leq 2^j}}\sum_{n_{k+2}\geq n_{k+1}\geq n_k}\sum_{\ui_{n_k}}\Psi^{>,t-t_s}_{\underline{n}_{k+1}}[h]\left(\gr{Z}_{\ui_{n_k}}(t_s)\right)\\
	\label{eq:partie des reco patho}&+\sum_{\substack{0\leq k\leq K-1\\1\leq k'\leq K'}}\sum_{\substack{(n_j)_{j\leq k}\\0\leq n_j-n_{j-1}\leq 2^j}}\sum_{\substack{n_{k+1}\geq n_k\\n_{k+2}\geq n_{k+1}}}\sum_{\ui_{n_{k+2}}}\Phi^{>,\delta}_{n_{k+1},n_{k+2}}\Psi^{0,t_s-\delta}_{\underline{n}_{k+1}}[h]\left(\gr{Z}_{\ui_{n_{k+2}}}(t_s)\right)
	\end{align}
	where $t_s:=t-k\theta-k'\delta$.
	
	Finally, the covariance is split  into five parts
	\begin{equation}
	\mathbb{E}_\e\left[\zeta^t_\e(h)\zeta^0(g)\right] = G^{\text{main}}_\e(t)+G^{\text{clus}}_\e(t)+G^{\text{exp}}_\e(t)+G^{\text{rec},1}_\e(t)+G^{\text{rec},2}_\e(t),
	\end{equation}
	where we have separated 
	\begin{itemize}
	\item the main part,
	\begin{equation}
	G^{\text{main}}_\e(t):=\sum_{\substack{(n_j)_{j\leq K}\\0\leq n_j-n_{j-1}\leq 2^j}}\mathbb{E}_\e\left[\frac{1}{\sqrt{\mu}}\sum_{\ui_{n_K}}\Psi^{0,t}_{\underline{n}_K}[h]\Big(\gr{Z}_{\ui_{n_K}}(0)\Big)\zeta_\e^0(g)\right],
	\end{equation}
	
	\item the first error due to the symmetric conditioning and the suppression of the overlaps,	
	\begin{equation}
	G^{\text{clus}}_\e(t):=\mathbb{E}_\e\Big[\zeta^t_\e(h)\zeta_\e^0(g)\ind_{\Upsilon^c_\e}\Big]-\sum_{\substack{(n_j)_{j\leq K}\\0\leq n_j-n_{j-1}\leq 2^j}}\mathbb{E}_\e\left[\frac{1}{\sqrt{\mu}}\sum_{\ui_{n_K}}\Psi^{0,t}_{\underline{n}_K}[h]\left(\gr{Z}_{\ui_{n_K}}(0)\right)\zeta_\e^0(g)\ind_{\Upsilon^c_\e}\right],
	\end{equation}
	\item  the part controlling the growth of the number of particles,
	\begin{equation}
	G^{\text{exp}}_\e(t):=\mathbb{E}_\e\left[\eqref{eq:partie avec trop de coll}\times\frac{1}{\sqrt{\mu}}\zeta_\e^0(g)\ind_{\Upsilon_\e}\right],
	\end{equation}
	\item the part corresponding to non-local recollision,
	\begin{equation}
	G^{\text{rec},1}_\e(t):=\mathbb{E}_\e\left[\eqref{eq:partie des reco pas patho}\times\frac{1}{\sqrt{\mu}}\zeta_\e^0(g)\ind_{\Upsilon_\e}\right],
	\end{equation}
	\item and the part corresponding to local recollision,
	\begin{equation}
	G^{\text{rec},2}_\e(t):=\mathbb{E}_\e\left[\eqref{eq:partie des reco patho}\times\frac{1}{\sqrt{\mu}}\zeta_\e^0(g)\ind_{\Upsilon_\e}\right].
	\end{equation}
	\end{itemize}
	
	The parts $G^{\text{clus}}_\e(t)$ and $G^{\text{exp}}_\e(t)$ are estimated by \eqref{Estimation morceau 1}:
	\[\left|G^{\text{exp}}_\e(t)+G^{\text{rec}}_\e(t)\right| \leq C\|g\|_0\|h\|_0\left(\e^{1/3}(C\tfrac{t}{\mathfrak{d}})^{2^{K}}+~\tfrac{\theta t^2}{\mathfrak{d}^{3}}\right),\]
	the part $G^{\text{rec},1}_\e(t)$ is estimated by \eqref{Estimation morceau 3}: for some $\mathfrak{a}>0$ depending only on the dimension,
	\[\left|G_\e^{\text{rec},1}(t)\right|\leq  \|g\|_0\|h\|_0  (Ct)^{2^{K}+d+9}\e^{{\mathfrak{a}}/2},\]
	the part $G^{\text{rec},2}_\e(t)$ is bounded at \eqref{Estimation morceau 4}:
	\[\Big|G_\e^{\text{rec},2}(t)\Big|\leq C\|h\|_0 \|g\|_0  (C\tfrac{t}{\mathfrak{d}})^{2^{K+1}}\e^{\frac{{\mathfrak{a}}}{2}},\]
	and the convergence of $G^{\text{main}}_\e(t)$ is given by \eqref{Estimation morceau 6}:
	\[G^{\rm main}_\e(t)= \int_{\mathbb{D}}h(z)\gr{g}_\alpha(t,z)M(z)dz+O\left( \left(C\tfrac{\theta t}{\mathfrak{d}^2}+\e^{\mathfrak{a}} K2^{K^2} (\tfrac{Ct}{\mathfrak{d}})^{2^{K+1}}\right)\|h\|_1\|g\|_1\right)\]
	where $\gr{g}_\alpha(t,z)$ is the solution of the linearized Boltzmann equation \eqref{eq:Linearized Boltzmann equation}. Combining these four estimations and that $K\leq T/\theta$, we obtain the expected bound \eqref{Borne principale}
	\begin{equation}\label{eq:Borne principale bis}
	\mathbb{E}_\e\left[\zeta_\e^t(g)\zeta_\e^0(h)\right]= \int_{\mathbb{D}}h(z)\gr{g}_\alpha(t,z)M(z)dz+O\left( \left(C\tfrac{\theta T^2}{\mathfrak{d}^{3}}+\e^{\mathfrak{a}} (\tfrac{CT}{\mathfrak{d}})^{2^{\frac{T}{\theta}+1}}\right)\|h\|_1\|g\|_1\right)
	\end{equation}
	
	\begin{remark}
		In this section, we have defined three different pseudotrajectories:
		\begin{itemize}
			\item in Definition \ref{def:pseutrajectory1} we have defined the general definition of pseudotrajectory, which is used in the estimation of pathological recollision $G_\e^{\text{rec},1}(t)$,
			\item the pseudotrajectories of Definition \ref{def:pseudotrajectory2} have no recollision and are used to treat $G_\e^{\text{main}}(t)$, $G_\e^{\text{clust}}(t)$ and $G_\e^{\text{exp}}(t)$,
			\item Definition \ref{def:pseudotrajectory3} describes pseudotrajectories with non-pathological recollision. They are used to bound $G_\e^{\text{rec},2}(t)$.
		\end{itemize}
	\end{remark}
	
	\blue{\begin{remark}[Comparison with the strategy of \cite{LeBihan2,LeBihan3}]\label{rem:Comparaison avec le vieux papier}
		As explained in Remark \ref{rem: tree versu graph}, we used in \cite{LeBihan2} a \emph{tree pseudotrajectory} representation, while we use in the present article the graph pseudotrajectories. There are three advantages to this construction:
		\begin{itemize}
			\item The tree pseudotrajectories have already been used in the description of gas interacting {\it via} compactly supported potential \cite{Grad,King,GST,PSS,Ayi,Catapano1}. One of the encountered difficulties  is the treatment of multiple interactions, which have to be treated separately from binary interactions even before imposing some conditioning. Our presentation avoids this difficulty.
			
			\item In \cite{LeBihan2}, we needed two conditionings (see Section 2.3 of \cite{LeBihan2}). The first one is symmetric, and looks like $\Upsilon_\e$ (we denote it $\tilde{\Upsilon}_{\e}$) : we impose the same condition on all the particles. We also needed a second conditioning, which is asymmetric: we construct an indicator function $\mathcal{X}_{\ui_n}:\mathcal{D}\to\{0,1\}$ to forbid a recollision on a small slice $[k\delta,(k+1)\delta]$ that involves one of the particles of $\ui_n$. Then we made the decomposition
			\[\sum_{\ui_n} \Psi_n(\gr{Z}_{\ui_n}) = \sum_{\ui_n} \Psi_n(\gr{Z}_{\ui_n})\mathcal{X}_{\ui_n}(\gr{Z}_\N) +\sum_{\ui_n} \Psi_n(\gr{Z}_{\ui_n})(1-\mathcal{X}_{\ui_n}(\gr{Z}_\N)).\]
			
			The use of the graph pseudotrajectories allows us to avoid this second conditioning.
			
			\item In \cite{LeBihan2}, we used the following conditioning:
				let $Z_n\in\mathbb{D}^n$ be a particle configuration. We consider the  graph $G$ of vertices $\{1,\cdots,n\}$ and of edges 
				\[\{(i,j)\in[1,m]^2,~d(x_i,x_j)<2\delta\mathbb{D}\}.\]
				The coordinates $Z_n$ form a \emph{distance cluster} if the graph $G$ is connected.
				
				We construct the conditioning $\tilde{\Upsilon}_\e$ as we have constructed $\Upsilon_\e$, by replacing the notion of \emph{possible cluster} by the distance clusters (we suppose that there is not a distance cluster larger than some integer $\gamma$).
				
				The conditioning $\tilde{\Upsilon}_\e$ is more constraining than ${\Upsilon}_\e$: we obtained in Proposition 2.3 of \cite{LeBihan2}
				\[\mathbb{P}_\e\left({\tilde{\Upsilon}_\e}^c\right)\leq C_\gamma\frac{t}{\delta}\left(\mu_\e\delta^{-1}\left(\mu_\e\delta^d\mathbb{V}^d\right)^\gamma+\mu_\e^\gamma e^{-\mathbb{V}^2/4}\right),\]
				which is much larger than our estimation on $\mathbb{P}_\e\left(\tilde{\Upsilon}^c_\e\right)$.
				
				The conditioning $\tilde{\Upsilon}_\e$ is only useful if we take $\delta$ small enough (of order $\e^{1-\frac{1}{2d}}$ for example). The problem is that the gain of smallness due to recollisions has to be smaller than $\delta$. If we restrict ourselves to the hard sphere setting, there is no problem, as one can obtain a gain of order $\e|\log\e|$ (see the Appendix of \cite{BGSS1}). In Assumption 2.3.1 of \cite{LeBihan3}, we proposed a family of interaction potentials for which the same estimation holds.
				
				The cluster decomposition of this paper allows us to take a much larger $\delta = \e^{\frac{1}{12}}$ and then use the strategy of \cite{PSS} to bound the recollisions. This improvement is possible because we used the graph pseudo-trajectory. 
		\end{itemize} 
	\end{remark}}
	\section{Quasi-orthogonality estimates}\label{Quasi-orthogonality estimates}
	
	The different error terms obtained in the previous section are of the form
	\begin{equation*}
	\mathbb{E}_\e\left[\sum_{\ui_n}\Phi_{n}[h](\gr{Z}_{\ui_n}(t_{s}))\zeta_\e^0(g)\ind_{\Upsilon_\e}\right]
	\end{equation*}
	where the $\Phi_n: L^\infty(\mathbb{D})\to L^\infty(\mathbb{D}^n)$ are continuous functionals. In order to bound the errors, we need an $L^2(\mathbb{P}_\e)$ bound of \[\hat{\Phi}_n=\frac{1}{\mu^n}\sum_{\ui_n}\Phi_{n}[h](\gr{Z}_{\ui_n})-\mathbb{E}[\Phi_n].\] 
	
	The following section is dedicated to the derivation of such an estimate, using detailed estimations on the functionals $\Phi_n[h]$. We will use, in particular, that we can bound the $\Phi_n[h](Z_n)$ by looking only at the relative positions of particles inside $Z_n$.
	
	\begin{definition}
	We denote for $y\in{\mathbb{T}}$ the translation operator
	\begin{equation}
	\tr_y:\left\{\begin{array}{r@{~}c@{~}l} \mathbb{D}^n~~&\rightarrow&~~\mathbb{D}^n\\
	(X_n,V_n)&\mapsto& (x_1+y,\cdots,x_n+y,V_n)
	\end{array}\right..
	\end{equation} 
	
	Fix $n,m$ two integers, $g_n:\mathbb{D}^n\to \mathbb{R}$, $h_m:\mathbb{D}^m\to \mathbb{R}$ two functions, and $l\in[0,\min(n,m)]$. We define the multiplication on $l$ variable $\circledast_l$ as
	\begin{multline}
	g_n\circledast_lh_{m}(Z_{n+m-l})\\
	:=\frac{1}{(n+m-l)!n!m!}\sum_{\substack{\sigma,\sigma',\sigma''\in\mathfrak{S}([n+m-l])\\ \sigma'_{\vert [1,n]^c}=\Id\\ \sigma''_{\vert [n-l,2n-l]^c}=\Id}} g_n(Z_{\sigma\sigma'([1
	,n])})h_{m}(Z_{\sigma\sigma''([n+1-l,n+m-l])}).
	\end{multline}
	where $\mathfrak{S}(\omega)$ is the group of permutation of $\omega$.
	\end{definition}

	\begin{prop}\label{theoreme de quasi orthogonalite}
		Fix $m<n$ two positive integers, and $g_n:\mathbb{D}^n\to \mathbb{R}$, $h_{m}\mathbb{D}^m\to \mathbb{R}$ two functions such that there exists a finite sequence $(c_0,c_0',c_1,\cdots,c_n)\in\mathbb{R}^{n+2}_+$  bounding $g_n,\,h_{m}$ in the following way:
		\begin{equation}\label{borne sur g_n}
		\int\limits_{x_1=0}\sup_{y\in{\mathbb{T}}}\big|g_n\big(\tr_yZ_n\big)\big|\frac{e^{-\mathcal{H}_{n}(Z_{n})}}{(2\pi)^{\frac{nd}{2}}}\ud X_{2,n}\ud V_n\leq c_0,
		\end{equation}
		\begin{equation}\label{borne sur h_m}
		\int\limits_{x_1=0}\sup_{y\in{\mathbb{T}}}\big|h_{m}\big(\tr_yZ_{m}\big)\big|\frac{e^{-\mathcal{H}_{m}(Z_{m})}}{(2\pi)^{\frac{md}{2}}}\ud X_{2,m}\ud V_{m}\leq c'_0
		\end{equation}
		and for all $l \in [1,m]$
		\begin{equation}
		\begin{split}
		\int\limits_{x_1=0}\sup_{y\in{\mathbb{T}}} \big|g_n\circledast_lh_{m}\big(\tr_yZ_{n+m-l}\big)\big|\frac{e^{-\mathcal{H}_{n+m-l}(Z_{n+m-l})}}{(2\pi)^{\frac{(n+m-l)d}{2}}}\ud X_{2,n+m-l}\ud V_{n+m-l}
		\leq \frac{\mu^{l-1}}{n^l}c_l.
		\end{split}
		\end{equation}
		There exists a constant $C>0$ depending only on the dimension such that
		\blue{\begin{equation}\label{borne espérence}
				\mathbb{E}_\e\big[g_n\big|= \int g_n(Z_n)\frac{e^{-\mathcal{H_n}(Z_n)}}{(2\pi)^{nd/2}}\ud{Z_n}+ O(C^nc_0\tfrac{\e}{\mathfrak{d}})
		\end{equation}}
		and 
		\blue{\begin{multline}\label{quasi covariance}
		\mathbb{E}_\e\Big[\mu\hat{g}_n\hat{h}_{m}\Big]=\sum_{l=1}^{m}\binom{n}{l}\binom{m}{l}\frac{l!}{\mu^{l-1}}\int g_n\circledast_lh_{m}(Z_{n+m-l})\frac{e^{-\mathcal{H}_{n+m-l}(Z_{n+m-l})}}{(2\pi)^{\frac{(n+m-l)d}{2}}}\ud  Z_{n+m-l}	\\
		+\,O\big(\big( c_0c'_0 +\max_{1\leq l\leq m}c_l\big)\,C^{n+m}\tfrac{\e}{\mathfrak{d}}\,\big).\end{multline}}
		
		In particular
		\begin{equation}\label{borne variance}
		\big|\mathbb{E}_\e\big[g_n\big]\big|=  O(C^nc_0),~~\left|\mathbb{E}_\e\big[\mu\hat{g}_n\hat{h}_{m}\big]\right|\leq C^{n+m} \left(\max_{1\leq l\leq m}c_l+c_0c_0'\,\frac{\e}{\mathfrak{d}}\right).
		\end{equation}
	\end{prop}
	\blue{\begin{remark}
		This theorem is the counterpart of Proposition 3.1 of \cite{BGSS1}.
	\end{remark}}
	
	\begin{proof}[Proof of Proposition \ref{theoreme de quasi orthogonalite}]~
		
		$\bullet~~$We begin with the proof of \eqref{borne espérence}.
		
		Using invariance of $\mathcal{H}_p$ under permutation, 
		\[\begin{split}
		\mathbb{E}_\e[g_n] &=\frac{1}{\mathcal{Z}\mu^n}\sum_{p\geq n}\frac{\mu^p}{p!}\int \sum_{\substack{(i_1,\cdots i_n)\\\forall k, i_k\leq p}}g_n(Z_{\ui_n})e^{-\mathcal{H}_p(Z_p)}\frac{\ud Z_p}{(2\pi)^{dp/2}}\\
		&= \frac{1}{\mathcal{Z}\mu^n}\sum_{p\geq n}\frac{\mu^p}{p!}\frac{p!}{(n-p!)}\int g_n(Z_n)e^{-\mathcal{H}_p(Z_p)}\frac{\ud Z_p}{(2\pi)^{dp/2}}\\
		&= \frac{1}{\mathcal{Z}}\sum_{p\geq 0}\frac{\mu^p}{p!}\int g_n(Z_n)e^{-\mathcal{V}_{n+p}(X_n,\underline{X}_p)}M^{\otimes n}\ud Z_n \ud \underline{X}_p.
		\end{split}\]
		
		We recall the notation
		\[\mathcal{V}_n(X_n):=\alpha\sum_{1\leq i<j\leq n}\mathcal{V}\left(\frac{x_i-x_j}{\e}\right)\]
		and we denote in the following $\Omega:=\{X_n,\underline{x}_1,\cdots,\underline{x}_p\}$ and for $X,Y\in\Omega$, 
		\begin{equation}\label{eq:def des varphi}\varphi(\underline{x}_i,\underline{x}_j):= \exp\left(-\alpha\mathcal{V}\left(\frac{\underline{x}_i-\underline{x}_j}{\e}\right)\right)-1,~\varphi(X_N,\underline{x}_j):= \exp\left(-\alpha\sum_{i=1}^N\mathcal{V}\left(\frac{{x}_i-\underline{x}_j}{\e}\right)\right)-1.\end{equation}
		Defining $d((x_1,\cdots,x_n),(y_1,\cdots,y_m))$ as the minimum of the $|x_i-y_j|$, we can bound $\varphi$ by 
		\[ -\ind_{d(X,Y)<\e} \leq \varphi(X,Y) \leq0.\]
		
		We decompose $\exp\left({-\mathcal{V}_{n+p}(X_{n+1},\underline{X}_p)}\right)$
		\[\begin{split}
		e^{-\mathcal{V}_{n+p}(X_{n+1},\underline{X}_p)}
		&= e^{-\mathcal{V}_{n}^\e(X_{n})}\prod_{\substack{(X,Y)\in\Omega^2\\X\neq Y}}\left(1+\varphi(X,Y)\right)=e^{-\mathcal{V}_{n}(X_{n})}\sum_{G\in \mathcal{G}(\Omega)}\prod_{(X,Y)\in E(G)}\varphi(X,Y)
		\end{split}\]
		where $\mathcal{G}(\Omega)$ is the set of non-oriented graphs on $\Omega$ and $E(G)$ the set of edges of $G$. %We make the partition on the connected components of $X_n$. 
		Denoting by $\mathcal{C}(\omega)$ the set of connected graphs on $\omega$,
		\begin{equation}\label{decomposition e^V_n+p}\begin{split}
		&\exp\left(-\mathcal{V}_{n+p}(X_{n},\underline{X}_p)\right)\\
		&=\sum_{\substack{\omega\subset[1,p]}}\Bigg(e^{-\mathcal{V}_{n}(X_{n})}\sum_{\substack{G\in \mathcal{C}(\omega\cup \{X_n\})}}\prod_{(X,Y)\in E(G)}\varphi(X,Y)\sum_{G\in \mathcal{G}([p]\setminus\omega)}\prod_{(X,Y)\in E(G)}\varphi(X,Y)\Bigg)\\
		&=e^{-\mathcal{V}_{n}(X_{n})}\sum_{\substack{\omega\subset[1,p]}}\Bigg(e^{-\mathcal{V}_{p-|\omega|}(\underline{X}_{[p]\setminus\omega})}\sum_{\substack{G\in \mathcal{C}(\omega\cup\{X_n\})}}\prod_{(X,Y)\in E(G)}\varphi(X,Y)\Bigg)\\
		&=:e^{-\mathcal{V}_{n}(X_{n})}\sum_{\substack{\omega\subset[1,p]}}e^{-\mathcal{V}_{p-|\omega|}(\underline{X}_{[p]\setminus\omega})}\psi_{|\omega|}^n(X_n,\underline{X}_\omega)\;.
		\end{split}\end{equation}
		
		Thus, using exchangeability, $\mathbb{E}_\e[g_n]$ is equal to
		\begin{equation}\label{dévelopement de l'esperence}
		\begin{split}
		&\frac{1}{\mathcal{Z}}\sum_{p\geq 0}\sum_{p_1+p_2=p}\frac{\mu^p}{p!}\frac{p!}{p_1!p_2!}\int g_n(Z_n)\psi^n_{p_1}(X_n,\underline{X}_{p_1})e^{-\mathcal{V}_{p_2}(\underline{X}'_{p_2})}\frac{e^{-\mathcal{H}_{n}(Z_{n})}}{(2\pi)^{\frac{nd}{2}}}\!\ud{Z_n}\!\ud{\underline{X}_{p_1}}\!\ud{\underline{X}_{p_2}'}\\
		&=\left(\frac{1}{\mathcal{Z}}\sum_{p\geq0}\frac{\mu^p}{p!}\int e^{-\mathcal{V}_p(\underline{X}_p)}\ud{\underline{X}_p}\right)\left(\sum_{p\geq 0}\frac{\mu^p}{p!}\int g_n(Z_n)\psi^n_{p}(X_n,\underline{X}_{p})\frac{e^{-\mathcal{H}_{n}(Z_{n})}}{(2\pi)^{\frac{nd}{2}}}\ud{Z_n} \ud{\underline{X}_{p}}\right)\\
		&=\sum_{p\geq 0}\frac{\mu^p}{p!}\int g_n(Z_n)\psi^n_{p}(X_n,\underline{X}_{p})\frac{e^{-\mathcal{H}_{n}(Z_{n})}}{(2\pi)^{\frac{nd}{2}}}\ud{Z_n}\ud{\underline{X}_{p}}.
		\end{split}
		\end{equation}
		
		We recall Penrose's tree inequality (see \cite{Penrose,BGSS6, Jansen}), for function $\varphi$ defined in \eqref{eq:def des varphi},
		\begin{equation}
		\left|\sum_{C\in\mathcal{C}(\Omega)}\prod_{(X,Y)\in E(C)}\varphi(X,Y)\right|\leq \sum_{T\in\mathcal{T}(\Omega)}\prod_{(X,Y)\in E(T)}|\varphi(X,Y)|
		\leq \sum_{T\in\mathcal{T}(\Omega)}\prod_{(X,Y)\in E(T)}\ind_{d(X,Y)<\e}
		\end{equation}
		with $\mathcal{T}(\Omega)$ the set of trees (minimally connected graphs) on $\Omega$. Fix  $\tr_{-x_1}{X_n}$ (the relative position between particles $1,\cdots,n$). Integrating a constraint $\varphi(\underline{x}_i,\underline{x}_j)$ provides a factor $\gr{c}_d\e^d$, $\varphi(X_n,\underline{x}_j)$ a factor $n\gr{c}_d\e^d$ (where $\gr{c}_d$ is the volume of a sphere of diameter $1$). As there are (see for example the Section 2 of \cite{BGSS6} or \cite{Jansen})
		\[\frac{(p-1)!}{(d_0-1)!(d_1-1)!\cdots(d_{p}-1)!}\]
		trees with specified vertex degrees $d_0,\cdots,d_{p}$ associated to vertices $X_n,\,\underline{x}_1,\cdots,\,\underline{x}_p$ , we get
		\begin{equation}\label{Borne de l'integrale de psi np}\begin{split}
		\bigg|\int \psi^n_p(&X_n\underline{X}_p)\ud{\underline{X}_p }\bigg|\leq \sum_{\substack{d_1,\cdots,d_{p}\geq 1\\d_0+\cdots+d_{p}=2p}}\frac{(p-1)!}{(d_0-1)!(d_1-1)!\cdots(d_{p}-1)!} n^{d_0}(\gr{c}_{\gr{d}}\e^d)^p\\
		&\leq (p-1)!(\gr{c}_{\gr{d}}\e^d)^p\left(\sum_{d_0\geq 1}\frac{n^{d_0}}{(d_0-1)!}\right)\left(\sum_{d_1\geq 1}\frac{1}{(d_1-1)!}\right)\cdots\left(\sum_{d_{p}\geq 1}\frac{1}{(d_p-1)!}\right)\\
		&\leq (p-1)!\,ne^{n}\big(e\gr{c}_{\gr{d}}\e^d\big)^p.
		\end{split}\end{equation}
		We can integrate on the rest of the parameters using \eqref{borne sur g_n}. Hence, as $\psi_0^n(Z_n)= \frac{e^{-\mathcal{H}_n(Z_n)}}{(2\pi)^{nd/2}}$
		\blue{\begin{align*}
			\left|\mathbb{E}_\e[g_n]|-\int g_n(Z_n)\frac{e^{-\mathcal{H_n}(Z_n)}}{(2\pi)^{nd/2}}\ud{Z_n}\right|& \leq\sum_{p\geq 1}\frac{(p-1)!\,ne^{n}\big(e\gr{c}_{\gr{d}}\mu\e^d\big)^p}{p!}\int |g_n(Z_n)|\frac{e^{-\mathcal{H}_{n}(Z_{n})}}{(2\pi)^{\frac{nd}{2}}}\ud{Z_n}\\
			&\leq \frac{c_0 C^{n}\e}{\mathfrak{d}}\sum_{p\geq0} (C\e/\mathfrak{d})^p.
		\end{align*}}
		For some constant $C$ depending only on the dimension. The series converges for $\e$ small enough as $\mathfrak{d}\gg\e$. This concludes the proof of \eqref{borne espérence}.~~\\
		
		$\bullet~~$We treat now \eqref{quasi covariance}. Recall first that
		\[\mathbb{E}_\e\Big[\mu\hat{g}_n\hat{h}_{m}\Big] = \frac{1}{\mu^{n+m-1}}\mathbb{E}_\e\left[\sum_{\ui_n}g_n(\gr{Z}_{\ui_n})\sum_{\underline{j}_{m}}h_{m}(\gr{Z}_{\underline{j}_{m}})\right]-\mu\mathbb{E}_\e\left[g_n\right]\mathbb{E}_\e\left[h_{m}\right].\]
		
		Let us count the number of ways such that $\ui_n$ and $\underline{j}_{m}$ can intersect on a set of length $l$. We have to choose two sets $A\subset[n]$ and $A'\subset[m]$ of length $l$, and a bijection $\sigma:A\to A'$ such that for all indices $k\in A$, $i_k=j_{\sigma{k}}$ and  that $\ui_{A^c}$ does not intersect $\underline{j}_{(A^c)'}$. Thus, using the symmetry, 
		\[\begin{split}\mathbb{E}_\e\Big[\mu\,\hat{g}_n\hat{h}_{m}\Big] = &\sum_{l=1}^{m}\binom{n}{l}\binom{m}{l}\frac{l!}{\mu^{l-1}}\mathbb{E}_\e\big[g_n\circledast_lh_{m}\big]\\
		&+\mu\left(\mathbb{E}_\e\left[\frac{1}{\mu^{n+m}}\sum_{\underline{i}_{n+m}}g_n(\gr{Z}_{\ui_{n}})h_{m}(\gr{Z}_{\ui_{n+1,n+m}})\right]-\mathbb{E}_\e\left[g_n\right]\mathbb{E}_\e\left[h_m\right]\right).
		\end{split}\]
		
		To estimate the error term in \eqref{quasi covariance}, we need to compute  
		\[\begin{split}
		\mathbb{E}_\e\Bigg[&\frac{1}{\mu^{n+m}}\sum_{\underline{i}_{n+m}}g_n(\gr{Z}_{\ui_{n}})h_{m}(\gr{Z}_{\ui_{n+1,n+m}})\Bigg]\\
		&=\frac{1}{\mathcal{Z}}\sum_{p\geq 0} \frac{\mu^p}{p!}\int g_n(Z_n)h_{m}(Z'_{m})\exp\left(-\mathcal{V}_{n+m+p}(X_{n},X'_{m},\underline{X}_p)\right)M^{\otimes n}\ud{Z_{n}}M^{\otimes m}\ud{Z'_{m}}\ud{\underline{X}_p}.
		\end{split}\]
		
		We denote in the following 
		$\Omega:=\{X_n,X_m',\underline{x}_1,\cdots,\underline{x}_p\}$,
		and we decompose 
		\[\begin{split}
		\exp\left(-\mathcal{V}_{n+m+p}(X_{n},X'_{m},\underline{X}_p)\right)
		&= e^{-\mathcal{V}_{n}(X_{n})}e^{-\mathcal{V}_{m}(X'_{m})}\prod_{\substack{(X,Y)\in\Omega^2\\X\neq Y}}\left(1+\varphi(X,Y)\right)\\
		&=e^{-\mathcal{V}_{n}(X_{n})}e^{-\mathcal{V}_{m}(X'_{m})}\sum_{G\in \mathcal{G}(\Omega)}\prod_{(X,Y)\in E(G)}\varphi(X,Y)
		\end{split}\]
		where
		\[\varphi(X_n,X'_m):=\exp\left(-\alpha\sum_{i= 1}^n\sum_{j=1}^m\mathcal{V}\left(\frac{x_i-x'_j}{\e}\right)\right).\]
		We make a partition depending on the connected components of $X_n$ and $X'_{m}$ in $G$,
		\begin{multline*}
		\exp\left(-\mathcal{V}_{n+m+p}(X_{n},X'_{m},\underline{X}_p)+\mathcal{V}_n(X_n)+\mathcal{V}_m(X'_m))\right) =\sum_{\substack{\omega\subset[1,p]}}\psi^{n,m}_{|\omega|}(X_{n},X_{m}',\underline{X}_\omega)e^{-\mathcal{V}_{p-|\omega|}(\underline{X}_{[p]\setminus\omega})}\\
		+\sum_{\substack{\omega_1,\omega_1\subset[1,p]\\\omega_1\cap\omega_2= \emptyset}}\psi^n_{|\omega_1|}(X_n,\underline{X}_{\omega_1})\psi^{m}_{|\omega_2|}(X'_{m},\underline{X}_{\omega_2})e^{-\mathcal{V}_{p-|\omega_1\cup\omega_2|}(\underline{X}_{[p]\setminus(\omega_1\cup\omega_2)})}.
		\end{multline*}
		where the first line corresponds to $X_n$ and $X'_m$ in the same connected components, and second corresponds to $X_n$ and $X'_m$ in disjoint connected components. In the preceding equation, we denote 
		\[\psi^{n,m}_{|\omega|}(X_{n},X_{m}',\underline{X}_\omega)=\sum_{\substack{G\in \mathcal{C}(\omega\cup\\ \{X_n,X_{m}'\})}}\prod_{(X,Y)\in E(G)}\varphi(X,Y).\]
		
		Permutating the indices and using \eqref{dévelopement de l'esperence}, we obtain the following equality:
		\[\everymath={\displaystyle}\begin{array}{r@{}l@{}r}
		\frac{1}{\mathcal{Z}}\sum_{p\geq 0} &\multicolumn{2}{l}{\frac{\mu^p}{p!}\int g_n(Z_n)h_{m}(Z'_{m})\sum_{\substack{\omega_1,\omega_1\subset[1,p]\\\omega_1\cap\omega_2= \emptyset}}\psi^n_{|\omega_1|}(X_n,\underline{X}_{\omega_1})\psi^{m}_{|\omega_2|}(X'_{m},\underline{X}_{\omega_2})e^{-\mathcal{V}_{|(\omega_1\cup\omega_2)^c|}(\underline{X}_{(\omega_1\cup\omega_2)^c})}}\\[-7pt]
		&&\times \frac{e^{-\mathcal{H}_{n}(Z_{n})}}{(2\pi)^{\frac{nd}{2}}}\ud{Z_n}\frac{e^{-\mathcal{H}_{m}(Z'_{m})}}{(2\pi)^{\frac{md}{2}}}\ud Z'_{m}\ud \underline{X}_p\\[5pt]
		&\multicolumn{2}{l}{=\frac{1}{\mathcal{Z}}\sum_{p\geq 0}\sum_{p_1+p_2+p_3=p} \frac{\mu^p}{p!}\frac{p!}{p_1!p_2!p_3!}\int g_n(Z_n)h_{n'}(Z'_{n'})\psi^{n}_{p_1}(X_{n},\underline{X}_{p_1})\psi^{1}_{p_2}(x_{n+1},\underline{X}'_{p_2})}\\
		&&\times\Big(\frac{e^{-\mathcal{H}_{n}(Z_{n})}}{(2\pi)^{\frac{nd}{2}}}\ud Z_{n}d\underline{X}_{p_1}\Big)\Big(\frac{e^{-\mathcal{H}_{m}(Z'_{m})}}{(2\pi)^{\frac{md}{2}}}\ud Z'_{n'}\ud \underline{X}'_{p_2}\Big)\Big(e^{-\mathcal{V}_{p_3}(\underline{X}''_{p_3})}\ud \underline{X}''_{p_3}\Big)\\
		&\multicolumn{2}{l}{=\mathbb{E}_\e[g_n]\mathbb{E}_\e[h_{n'}],}
		\end{array}\]
		and in the same way
		\[\begin{aligne}{r@{}l@{}r}
		\frac{1}{\mathcal{Z}}&\multicolumn{2}{l}{\sum_{p\geq 0} \frac{\mu^p}{p!}\int g_n(Z_n)h_{m}(Z'_{m})\sum_{\substack{\omega\subset[1,p]}}\psi^{n,m}_{|\omega|}(X_{n},X_{m}',\underline{X}_\omega)e^{-\mathcal{V}_{||\omega^c|}(\underline{X}_{\omega^c})}}\\[-5pt]
		&&\times\frac{e^{-\mathcal{H}_{n}(Z_{n})-\mathcal{H}_{m}(Z'_{m})}}{(2\pi)^{\frac{(n+m)d}{2}}}\ud{Z_n}\ud Z'_{m}\ud \underline{X}_p\\[10pt]
		&\multicolumn{2}{l}{=\frac{1}{\mathcal{Z}}\sum_{p\geq 0}\sum_{p_1+p_2=p} \frac{\mu^p}{p!}\frac{p!}{p_1!p_2!}\int g_n(Z_n)h_{m}(Z'_{m})\psi^{n,m}_{|\omega|}(X_{n},X_{m}',\underline{X}_{p_1})e^{-\mathcal{V}_{p_2}(\underline{X}'_{p_2})}}\\
		&&\times\frac{e^{-\mathcal{H}_{n}(Z_{n})-\mathcal{H}_{m}(Z'_{m})}}{(2\pi)^{\frac{(n+m)d}{2}}}\ud{Z_n}\ud Z'_{m}\ud\underline{X}_{p_1}\ud\underline{X}'_{p_2}\\[13pt]
		&\multicolumn{2}{l}{=\sum_{p_1\geq 0} \frac{\mu^p}{p_1!}\int g_n(Z_n)h_{m}(Z'_{m})\psi^{n,m}_{|\omega|}(X_{n},X_{m}',\underline{X}_{p_1})\frac{e^{-\mathcal{H}_{n}(Z_{n})-\mathcal{H}_{m}(Z'_{m})}}{(2\pi)^{\frac{(n+m)d}{2}}}\ud{Z_n}\ud Z'_{m}\ud\underline{X}_{p_1}\ud\underline{X}'_{p_2}.}
		\end{aligne}\]
		
		We will use again the Penrose tree inequality,
		\begin{equation*}
		\left|\psi^{n,m}_{|\omega|}(X_{n},X_{m}',\underline{X}_{p_1})\right|\leq \sum_{T\in\mathcal{T}(\Omega)}\prod_{(X,Y)\in E(T)}|\varphi(X,Y)|\leq \sum_{T\in\mathcal{T}(\Omega)}\prod_{(X,Y)\in E(T)}\ind_{d(X,Y)<\e}.
		\end{equation*}
		
		First, we fix $\tr_{-x_1}{X_n}$ and $\tr_{-x'_1}{X'_m}$. Integrating a constraint $\varphi(\underline{x}_i,\underline{x}_j)$ provides a factor $\gr{c}_d\e^d$, $\varphi(X_n,\underline{x}_j)$ a factor $n\gr{c}_d\e^d$, $\varphi(X'_m,\underline{x}_j)$ a factor $m\gr{c}_d\e^d$, and $\varphi(X_n,X'_m)$ a factor $nm\gr{c}_d\e^d$. Denoting $d_0,d'_0,d_1\cdots,$ $d_{p}$ the degrees of $X_n,\,X_m',\,\underline{x}_1,\cdots,\,\underline{x}_m$ and $\hat{x}_1:= x_1-x'_1$,
		\begin{equation}\begin{split}
		\bigg|\int \psi^{n,m}_{|\omega|}&(X_{n},X_{m}',\underline{X}_{p_1})d\underline{X}_pd\hat{x}_1\bigg|\\
		&\leq \sum_{\substack{d'_0,d_0,\cdots,d_{p}\geq 1\\d'_0+d_0+\cdots+d_{p}=2p}}\frac{p!}{(d'_0-1)(d_0-1)!\cdots(d_{p}-1)!} n^{d_0}m^{d'_0}(\gr{c}_{\gr{d}}\e^d)^{+1}\\
		&\leq p!\big(\gr{c}_{\gr{d}}\e^d\big)^{p+1}nm\,e^{n+m+p}.
		\end{split}\end{equation}
		We can integrate on the rest of the parameters using \eqref{borne sur g_n} and \eqref{borne sur h_m}, and finally	
		\[\begin{split}
		&\mu\left(\mathbb{E}_\e\left[\frac{1}{\mu^{n+m}}\sum_{\underline{i}_{n+m}}g_n(\gr{Z}_{\ui_{n}})h_{m}(\gr{Z}_{\ui_{n+1,n+m}})\right]-\mathbb{E}_\e\left[g_n\right]\mathbb{E}_\e\left[g\right]\right).\\
		&\leq c_0 c_0' \mu\sum_{p\geq 0} \frac{\mu^p}{p!}p!\big(\gr{c}_{\gr{d}}\e^d\big)^{p+1}nm\,e^{n+m+p} \\
		&\leq \mu\e^dnm(\gr{c}_{\gr{d}}e)^{n+m}c_0 c_0' \sum_{p\geq 0} (e\gr{c}_{\gr{d}}\e)^{p}\\
		&\leq (\e/\mathfrak{d}) C^{n+m+1}\sum_{p\geq 0} (e\gr{c}_{\gr{d}}\e/\mathfrak{d})^{p}
		\end{split}\]
		which converges for $\e$ small enough.
		
		\blue{$\bullet~~$To conclude the proof, we apply the estimation \eqref{borne espérence} to \eqref{quasi covariance}:
		\[\begin{split}\mathbb{E}_\e\Big[\mu\,\hat{g}_n\hat{h}_{m}\Big] = &\sum_{l=1}^{m}\binom{n}{l}\binom{m}{l}\frac{l!}{\mu^{l-1}}\mathbb{E}_\e\big[g_n\circledast_lh_{m}\big]+O(C^{n+m}c_0c'_0\tfrac{\e}{\mathfrak{d}})\\
		=&\sum_{l=1}^{m}\binom{n}{l}\binom{m}{l}\frac{l!}{\mu^{l-1}}\int g_n\circledast_lh_{m}(Z_{n+m-l})\frac{e^{-\mathcal{H}_{n+m-l}(Z_{n+m-l})}}{(2\pi)^{\frac{(n+m-l)d}{2}}}\ud  Z_{n+m-l} \\
		&+ O\left(\frac{\e}{\mathfrak{d}}\left(C^{n+m}c_0c'_0 +\sum_{l=1}^{m}\binom{n}{l}\binom{m}{l}\frac{l!}{\mu^{l-1}}\frac{\mu^{l-1}C^l}{n^l}c_l\right)\right).
		\end{split}\]
		Then,
		\begin{equation*}
		\sum_{l=1}^{m}\binom{n}{l}\binom{m}{l}\frac{l!}{\mu^{l-1}}\frac{\mu^{l-1}C^l}{n^l}c_l
		\leq \sum_{l=1}^{m}\binom{m}{l}\frac{n!}{(n-l)!n^l}C^lc_l\
		\leq (1+C)^m\max_{1\leq l \leq m}c_l
		\end{equation*}
		which concludes the proof.
		
		$\bullet~~$ The inequalities \eqref{borne variance} can be obtained in the same way.}
	\end{proof}

	Note also the following bound in $L^p$ norms of the fluctuation field.
	\begin{prop}
		For any $p\in[2,\infty)$, there exists a constant $C_p>0$ such that
		
		\begin{equation}\label{$L^p$ bound of the fluctuation field}
		\left(\mathbb{E}_\e\left[\zeta^0_\e(g)^p\right]\right)^{1/p}\leq C_p \|g\|_{L^p(M(v)dz)}.
		\end{equation}
	\end{prop}

	The proof can be found in Appendix A of \cite{BGSS1}.
	
	From these estimations, one can deduce the following corollary:
	\begin{corollary}\label{Corollaire utilisant la quasi orthogonalite}
		Let $h_n$ be a test function satisfying the conditions of Proposition \ref{theoreme de quasi orthogonalite}. Then there exists a constant $C>0$ such that
		\begin{equation}
		\begin{split}
		\Bigg|\mathbb{E}_\e\Bigg[\frac{1}{\sqrt{\mu}}\sum_{\ui_n}&h_n(\gr{Z}_{\ui_n}(t_s))\zeta^0_\e(g)\ind_{\Upsilon_{\e}}\Bigg]\Bigg|\leq C^n\mu^{n-1} \mathbb{E}_\e \big[\zeta_\e^0(g)^2\big]^{1/2}\left(c_0+\left(\sup_{1\leq l\leq n}c_l\right)^{1/2}\right).
		\end{split}
		\end{equation}
		
	\end{corollary}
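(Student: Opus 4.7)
The plan is to reduce the left-hand side to a time-zero variance controlled by \eqref{borne variance}, exploiting two standard ingredients: the mean-zero property of $\zeta_\e^0(g)$ (which allows centering of the empirical sum) and the stationarity of the Gibbs measure under the Hamiltonian flow (which transports the evaluation at time $t_s$ back to time $0$).

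Concretely, I would write $\sum_{\ui_n} h_n(\gr{Z}_{\ui_n}(t_s)) = \mu^n\bigl(\hat F^{t_s} + \mathbb{E}_\e[h_n]\bigr)$, where $\hat F^{t_s}$ denotes the centred empirical functional at time $t_s$, and split the integrand accordingly. The constant piece $\mu^{n-1/2}\mathbb{E}_\e[h_n]\,\mathbb{E}_\e[\zeta_\e^0(g)\ind_{\Upsilon_\e}]$ is handled by rewriting $\ind_{\Upsilon_\e}=1-\ind_{\Upsilon^c_\e}$; its first contribution vanishes since $\zeta_\e^0(g)$ is centred, while the complement contribution is controlled via \eqref{borne espérence} (giving $|\mathbb{E}_\e[h_n]|\leq C^n c_0$), Cauchy--Schwarz on $\zeta_\e^0(g)\ind_{\Upsilon^c_\e}$, and the bound $\mathbb{P}_\e(\Upsilon^c_\e)=O(\e^d)$ established in Section \ref{Conditioning}. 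For $\e$ small enough this remainder is dominated by the target estimate.

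The main contribution $\mu^{n-1/2}\mathbb{E}_\e[\hat F^{t_s}\,\zeta_\e^0(g)\,\ind_{\Upsilon_\e}]$ I would bound by Cauchy--Schwarz, using $\ind_{\Upsilon_\e}^2\leq 1$ to absorb the indicator, producing $\mu^{n-1/2}\mathbb{E}_\e[(\hat F^{t_s})^2]^{1/2}\mathbb{E}_\e[\zeta_\e^0(g)^2]^{1/2}$. Invariance of the Gibbs measure identifies $\mathbb{E}_\e[(\hat F^{t_s})^2]$ with the time-zero variance $\mathbb{E}_\e[\hat h_n^2]$, and applying \eqref{borne variance} with $g_n=h_m=h_n$ yields $\mu\,\mathbb{E}_\e[\hat h_n^2]\leq C^{2n}\bigl(\sup_{1\leq l\leq n} c_l + c_0^2\,\e/\mathfrak{d}\bigr)$. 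Taking a square root via $(a+b)^{1/2}\leq a^{1/2}+b^{1/2}$ and collecting the powers of $\mu$ produces precisely the announced bound $C^n\mu^{n-1}\mathbb{E}_\e[\zeta_\e^0(g)^2]^{1/2}\bigl(c_0+(\sup_{1\leq l\leq n} c_l)^{1/2}\bigr)$. No substantial obstacle remains once the $1-\ind_{\Upsilon^c_\e}$ trick has been employed: the only point requiring care is the mild interaction between the indicator and the centring step, and that is exactly what the decomposition is designed to resolve.
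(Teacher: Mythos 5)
Your proposal is correct and follows essentially the same route as the paper's proof: centering the empirical functional, splitting off the $\mathbb{E}_\e[h_n]$ term and handling it via $\ind_{\Upsilon_\e}=1-\ind_{\Upsilon_\e^c}$ together with the mean-zero property of $\zeta_\e^0(g)$ and the smallness of $\mathbb{P}_\e(\Upsilon_\e^c)$, then applying Cauchy--Schwarz, the invariance of the Gibbs measure, and the variance bound \eqref{borne variance} of Theorem \ref{theoreme de quasi orthogonalite}. No meaningful differences.
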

	\begin{proof}
		\blue{We can decompose the left hand side as}
		\[\begin{split}\mathbb{E}_\e\Bigg[\frac{1}{\sqrt{\mu}}&\sum_{\ui_n}h_n(\gr{Z}_{\ui_n}(t_s))\zeta^0_\e(g)\ind_{\Upsilon_{\e}}\Bigg]=\mu^{n-1}\mathbb{E}_\e\Bigg[\mu^{\frac{1}{2}-n}\sum_{\ui_{n}}h_n(\gr{Z}_{\ui_n}(t_s))\zeta^0_\e(g)\ind_{\Upsilon_{\e}}\Bigg]\\
		&~~~=\mu^{n-1}\bigg(\mathbb{E}_\e\Big[\mu^{\frac{1}{2}}\widehat{h_n}(\gr{Z}_\N(t_s))\,\zeta^0_\e(g)\ind_{\Upsilon_{\e}}\Big]+\mathbb{E}_\e\left[h_n\right]\mathbb{E}_\e\Big[\mu^{\frac12}\zeta^0_\e(g)\ind_{\Upsilon_{\e}}\Big]\bigg)\\
		&~~~=\mu^{n-1}\bigg(\mathbb{E}_\e\Big[\mu^{\frac{1}{2}}\widehat{h_n}(\gr{Z}_\N(t_s))\,\zeta^0_\e(g)\ind_{\Upsilon_{\e}}\Big]+\mathbb{E}_\e\left[h_n\right]\mathbb{E}_\e\Big[\zeta^0_\e(g)\mu^{\frac{1}{2}} \left(-\ind_{\Upsilon^c_{\e}})\right)\Big]\bigg)\;.
		\end{split}\]
		Using that $\mathbb{E}_\e[\zeta_\e^0(g)]=0$. Applying the Cauchy-Schwarz inequality, we obtain
		\[\begin{split}
		\Bigg|\mathbb{E}_\e\Bigg[&\mu^{-\frac{1}{2}}\sum_{\ui_n}h_n(\gr{Z}_{\ui_n}(t_s))\zeta^0_\e(g)\ind_{\Upsilon_{\e}}\Bigg]\Bigg|\\
		&\leq\mu^{n-1}\left(\mathbb{E}_\e\left[\mu\left[\widehat{h_n}\right]^2\right]^{\frac{1}{2}}\mathbb{E}_\e\big[\zeta^0_\e(g)^2\big]^{\frac{1}{2}}+\mathbb{E}_\e\left[h_n\right]\mathbb{E}_\e\big[\zeta^0_\e(g)^2\big]^{\frac{1}{2}}\big(\mu\mathbb{P}_\e\big[\Upsilon_{\e}^c\big]\big)^{\frac1 2}\right)\;.
		\end{split}\]
		
		We apply now Proposition \ref{theoreme de quasi orthogonalite}. The bound on $\mathbb{P}_\e\left[\Upsilon_{\e}^c\right]$ given in Section \ref{Conditioning} and the bound on the $L^p$ norm of $\zeta^0_\e(g)$ \eqref{$L^p$ bound of the fluctuation field} lead  to the stated corollary.
	\end{proof}
	
	\section{Clustering estimations without recollision}\label{Clustering estimations}
	The objective of this section is to bound $G^{\text{clust}}_\e(t)$ and $G^{\text{exp}}_\e(t)$, defined by
	\begin{equation*}
	\begin{split}
	G_\e^{\text{clust}}(t) := \mathbb{E}_\e\Big[\zeta^t_\e(h)&\zeta^0(g)\ind_{\Upsilon^c_\e}\Big]-\sum_{\substack{(n_j)_{j\leq K}\\0\leq n_j-n_{j-1}\leq 2^j}}\mathbb{E}_\e\left[\frac{1}{\sqrt{\mu}}\sum_{\ui_{n_K}}\Psi^{0,t}_{\underline{n}_K}[h]\left(\gr{Z}_{\ui_{n_K}}(0)\right)\zeta_\e^0(g)\ind_{\Upsilon^c_\e}\right],
	\end{split}
	\end{equation*}
	\begin{equation*}
	G_\e^{\text{exp}}(t) := \sum_{1\leq k\leq K}\sum_{\substack{(n_j)_{j\leq k-1}\\0\leq n_j-n_{j-1}\leq 2^j}}\sum_{n_k\geq 2^k+n_{k-1}}\mathbb{E}_\e\left[\frac{1}{\sqrt{\mu}}\sum_{\ui_{n_k}}\Psi^{0,k\theta}_{\underline{n}_k}[h]\left(\gr{Z}_{\ui_{n_k}}(t-k\theta)\right)\zeta_\e^0(g)\ind_{\Upsilon_\e}\right].
	\end{equation*}
	
	\begin{prop}\label{prop: Estimation morceau 1 et 2}
		For $\e>0$ small enough,
		
		\begin{equation}\label{Estimation morceau 1}
		\left|G^{\text{exp}}_\e(t)+G^{\text{rec}}_\e(t)\right| \leq C\|g\|_0\|h\|_0\left(\e^{1/3}(Ct)^{2^{t/\theta}}+~\blue{\tfrac{\theta t^2}{\mathfrak{d}^3}}\right)
		\end{equation}
	\end{prop}

	To obtain the stated result, we need first the following bounds on the pseudotrajectory developments without recollisions of type $\Psi_{\underline{n}_k}^{0,k\theta}[h]$:
	\begin{prop}\label{prop:estimation sans reco}
		Fix $k\in\mathbb{N}$, $\underline{n}:=(n_1,\cdots,n_k)\in\mathbb{N}^k$ with $n_1\leq n_2\leq\cdots\leq n_k$. Then, fixing $x_0 = 0$,
		\begin{equation}\label{Estimation sans reco 1}
		\int\sup_{y\in{\mathbb{T}}}\big|{\Psi}_{\underline{n}_k}^{0,k\theta}[h](\tr_yZ_{n_k})\big|\frac{e^{-\mathcal{H}_{n_k}(Z_{n_k})}}{(2\pi)^{\frac{n_kd}{2}}}\ud V_{n_k}\ud X_{2,n_k}\leq \frac{\|h\|_0}{(\mu\mathfrak{d})^{n_k-1}}C^{n_k}\theta^{n_k-n_{k-1}}(k\theta)^{n_{k-1}-1},
		\end{equation}
		and, for $m\in[1,n_k]$,
		\begin{multline}\label{Estimation sans reco 2}
		\begin{split}
		\int\sup_{y\in{\mathbb{T}}}\big|{\Psi}_{\underline{n}_k}^{0,k\theta}[h]\circledast_m{\Psi}_{\underline{n}_k}^{0,k\theta}[h](\tr_yZ_{2n_k-m})\big|\frac{e^{-\mathcal{H}_{2n_k-m}(Z_{2n_k-m})}}{(2\pi)^{\frac{(2n_k-m)d}{2}}}\ud V_{2n_K-m}\ud X_{2,2n_K-m}\\
		\leq \frac{\mu^{m-1}}{n_k^m}\left(\frac{\|h\|_0}{(\mu\mathfrak{d})^{n_k-1}}C^{n_k}\right)^2\theta^{n_k-n_{k-1}}(k\theta)^{n_{k-1}+n_{k}-1}.
		\end{split}
		\end{multline}
	\end{prop}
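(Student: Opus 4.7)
The strategy is the classical \emph{backwards tree} parameterization, adapted to the present semi-tree structure. Since $\Psi^{0,k\theta}_{\underline{n}_k}[h]$ is supported on $\mathcal{R}^{0,k\theta}_{\{1\},(s_k)}$, where the collision graph has no cycle, the $n_k - 1$ collisions form a tree on $[n_k]$ rooted at particle $1$. The first step is to bound $\Psi^{0,k\theta}_{\underline{n}_k}[h]$ by absolute value, losing only the signs $\prod s_k$; the sum over $2^{n_k-1}$ sign configurations is then absorbed into the eventual $C^{n_k}$.

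For \eqref{Estimation sans reco 1}, I would change variables from the initial data $Z_{n_k}$ to the backwards tree coordinates: the root state $z_1(t) = (x_1(t),v_1(t))$ together with, for each of the $n_k - 1$ collision events ordered backwards in time, a triple $(\tau_j, \nu_j, v_{*j})$ (collision time, unit vector on the collision sphere of radius $\e$, incoming velocity of the newly introduced particle). The Jacobian is $\e^{d-1}((v(\tau_j)-v_{*j})\cdot\nu_j)_+$ per collision, since the scattering map $\xi_\alpha$ is measure-preserving on interacting branches ($s_j = +1$) and free on non-interacting ones ($s_j = -1$). By conservation of $\mathcal{H}_{n_k}$ along the flow, the Gibbs weight factorizes into Gaussians in the tree velocities up to potential-energy corrections localized on sets of Lebesgue measure $O(\e^d)$; bounding $|h(z_1(t))| \leq \|h\|_0 M(v_1(t))$ and integrating against $((v-v_*)\cdot\nu)_+\ud\nu$ produces an $O(1)$ per-collision factor. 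The Jacobian $\e^{(d-1)(n_k-1)}$ is converted into $(\mu\mathfrak{d})^{-(n_k-1)}$ via the Boltzmann--Grad relation $\mu\e^{d-1}\mathfrak{d}=1$. Finally, the semi-tree condition confines the $n_i - n_{i-1}$ new collisions of the $i$-th backward slab to a time window of length $\theta$; using the trivial bound $\theta \leq k\theta$ on all but the last slab yields the claimed $\theta^{n_k-n_{k-1}}(k\theta)^{n_{k-1}-1}$, and all combinatorial choices (which indices join in which slab, tree shapes, signs) are uniformly bounded by $C^{n_k}$.

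For \eqref{Estimation sans reco 2}, I would apply the same change of variables to each of the two $\Psi$-factors. The symmetrization in $\circledast_m$ sums over the ways of identifying $m$ initial indices between the two copies; for a fixed identification I parameterize the first tree by its own $n_k-1$ collision triples, and for the second tree the $m$ shared initial data are already fixed, so each shared particle's creation event in the second tree costs an additional $\e^{d-1}$ factor (a codimension $d-1$ constraint matching a prescribed initial position). The combined Jacobian $\e^{(d-1)(2n_k-m-1)}$ combines with the $\circledast_m$-normalization and the Boltzmann--Grad scaling into exactly the stated prefactor $\mu^{m-1}/n_k^m \cdot (\mu\mathfrak{d})^{-2(n_k-1)}$ (times a $C^{2n_k}$ absorbing the combinatorial book-keeping). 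Time integrations are handled as above; applying $\theta\leq k\theta$ outside one of the two last slabs gives the stated $\theta^{n_k-n_{k-1}}(k\theta)^{n_{k-1}+n_k-1}$.

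\textbf{Main obstacle.} The chief technical difficulty is the combinatorial bookkeeping: counting the semi-tree patterns compatible with a given profile $\underline{n}_k$, handling the $\circledast_m$-symmetrization without double-counting identifications, and checking that each shared initial datum in the second tree is paid for by exactly an $\e^{d-1}$ factor. Once this bookkeeping is under control, the remaining analytic estimates (Gaussian velocity integrals, collision-kernel integrability, and the conversion $\mu\e^{d-1}\mathfrak{d}=1$) are standard.
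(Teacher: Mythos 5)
Your overall strategy coincides with the paper's: a clustering-tree change of variables on the initial positions, one factor $\e^{d-1}\big((v-v_*)\cdot\nu\big)_+$ per clustering collision converted into $(\mu\mathfrak{d})^{-1}$ by the Boltzmann--Grad relation, a time-simplex integration in which the first $n_k-n_{k-1}$ clustering times are confined to a window of length $\theta$, and, for the $\circledast_m$ product, only $n_k-m$ fresh clustering constraints extracted from the second copy. Two points in your write-up are not correct as stated and are exactly where the real work lies.

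First, you invoke ``conservation of $\mathcal{H}_{n_k}$ along the flow'' to control the relative velocities $|\ds{v}_{q_i}(\tau_i)-\ds{v}_{\bar q_i}(\tau_i)|$ appearing in the collision kernels at intermediate times. The pseudotrajectory of Definition \ref{def:pseudotrajectory2} is \emph{not} the Hamiltonian flow of $\mathcal{H}_{n_k}$: pairs carrying the sign $s=-1$ ignore each other, so only the cluster Hamiltonian $\mathcal{H}_{\underline\kappa}$ is conserved, and $\mathcal{H}_{n_k}$ itself can increase when particles from different clusters pass through each other. What is true, and what the paper isolates as a separate lemma, is the one-sided bound $\tfrac12|\ds{V}_{n_k}(\tau)|^2\leq\mathcal{H}_{\underline\kappa}(\ds{Z}(\tau))=\mathcal{H}_{\underline\kappa}(Z_{n_k})\leq\mathcal{H}_{n_k}(Z_{n_k})$, which uses $\mathcal{V}\geq0$, the absence of overlaps, and the fact that the cluster partition on a later time slab is coarser than on an earlier one. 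Without this lemma the Gaussian weight of the initial data does not control the velocities at the collision times, and the sum over the $O(n_k^2)$ possible edges $(q_i,\bar q_i)$ per collision cannot be closed into the factor $C^{n_k}$.

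Second, your accounting of the Jacobian in \eqref{Estimation sans reco 2} is muddled: you attribute the extra smallness to ``each shared particle's creation event'' (there are $m$ shared particles), yet the correct count --- which you do state at the end --- is $n_k-m$ factors of $\e^{d-1}$ from the second copy, one per \emph{new} particle. You cannot extract $\e^{d-1}$ from a collision constraint between two particles whose positions have already been integrated out in the first tree's parameterization. The paper's resolution is to build a second clustering tree $T_b$ greedily, keeping only those collisions of the second pseudotrajectory that do not create a cycle with the first tree; this yields exactly $n_k-m$ integrable constraints (each worth $\e^{d-1}t$) and discards the rest, giving $(\mu\mathfrak{d})^{-(2n_k-m-1)}=\mu^{m-1}\mathfrak{d}^{m-1}(\mu\mathfrak{d})^{-2(n_k-1)}$, consistent with the stated prefactor. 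Your proposal reaches the right exponent but does not supply the mechanism that makes the count legitimate.
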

	
	Using Corollary \ref{Corollaire utilisant la quasi orthogonalite} and the previous estimations,
	\[\begin{split}
	\Bigg|\mathbb{E}_\e\Bigg[\mu^{-\frac12}\sum_{\ui_{n_k}}\Psi_{\underline{n}_k}^{0,k\theta}[h]\left(\gr{Z}_{\underline{i}_{n_k}}(t-k\theta)\right)\zeta^0_\e(g)\ind_{\Upsilon_\e}\Bigg]\Bigg|	&\leq \|g\|_0\|h\|_0C^{n_k} \left((\tfrac{\theta}{\mathfrak{d}})^{n_k-n_{k-1}}(\tfrac{k\theta}{\mathfrak{d}})^{n_{k-1}-1}\right.\\
	&\quad\qquad\qquad\qquad+\left.(\tfrac{\theta}{\mathfrak{d}})^{\frac{n_k-n_{k-1}}{2}}(\tfrac{k\theta}{\mathfrak{d}})^{\frac{n_k+n_{k-1}-1}2}\right)\\
	&\leq \|g\|_0\|h\|_0C^{n_k} (\tfrac{\theta}{\mathfrak{d}})^{\frac{n_k-n_{k-1}}{2}}(\tfrac{t}{\mathfrak{d}})^{n_{k}},
	\end{split}\]
	and in the same way,
	\[\begin{split}
	\mathbb{E}_\e\Bigg[\mu^{-\frac12}\sum_{\ui_{n_K}}&  \Psi_{\underline{n}_k}^{0,k\theta}[h]\left(\gr{Z}_{\underline{i}_{n_K}}(0)\right)\zeta^0_\e(g)\ind_{\Upsilon^c_\e}\Bigg]= O\left(\e^{\frac12}\|g\|_0\|h\|_0C^{n_k} (t/\mathfrak{d})^{n_{k}}\right).
	\end{split}\]
	
	Summing over all possible $(n_1,\cdots,n_k)$, we obtain
	\begin{equation*}\begin{split}
	\left|G_\e^{\text{exp}}(t)\right| &\leq \sum_{k=1}^K \sum_{\substack{n_1\leq\cdots\leq n_{k-1}\\n_j-n_{j-1}\leq 2^j}}\sum_{n_k>2^k+n_{k-1}} \|g\|_0\|h\|_0C^{n_k} (\theta/\mathfrak{d})^{\frac{n_k-n_{k-1}}{2}}(t/\mathfrak{d})^{n_{k}}\\
	&\leq \|g\|_0\|h\|_0\sum_{k=1}^K \sum_{\substack{n_1\leq\cdots\leq n_{k-1}\\n_j-n_{j-1}\leq 2^j}}\sum_{n_k>2^k+n_{k-1}} \left(C\tfrac{\theta t^2}{\mathfrak{d}^3}\right)^{\frac{n_k-n_{k-1}}{2}}\\
	&\leq C \|g\|_0\|h\|_0\sum_{k=1}^K 2^{k^2}\left(C\tfrac{\theta t^2}{\mathfrak{d}^3}\right)^{2^{k-1}}\leq C\|g\|_0\|h\|_0\tfrac{\theta t^2}{\mathfrak{d}^3}.
	\end{split}\end{equation*}
	as the series converges for $\theta$ small enough. In the same way 
	\begin{equation*}\begin{split}
	\left|G_\e^{\text{clust}}(t)\right| \leq& \mathbb{P}_{\e}(\Upsilon^c_{\e})^{\frac{1}{4}}\mathbb{E}_\e[\zeta^0_\e(g)^4]^{\frac14}\mathbb{E}_\e[\zeta^0_\e(h)^2]^{\frac12}\\
	&\quad+ \!\!\sum_{\substack{n_1\leq\cdots\leq n_K\\n_j-n_{j-1}\leq 2^j}}\!\!\mathbb{P}_{\e}(\Upsilon^c_{\e})^{\frac{1}{4}}\|g\|_0\|h\|_0C^{n_k} t^{n_{k}}+\|g\|_0\|h\|_0(\tfrac{C\theta}{\mathfrak{d}})^{n_{K}}(\tfrac{\e}{\mathfrak{d}})^\frac{1}{2}\\
	\leq& C\|g\|_0\|h\|_0\e^{\frac{1}{3}}2^{K^2}(Ct)^{2^K}.
	\end{split}\end{equation*}
	This concludes the proof of \eqref{Estimation morceau 1}.

	\begin{proof}[Proof of \eqref{Estimation sans reco 1}]
		We recall that for $t = k\theta$ and that
		\[{\Psi}_{\underline{n}_k}^{0,k\theta}[h] := \frac{1}{(n_k-1)!}\sum_{(s_l)_{l\leq n_k-1}}\prod_{l=1}^{n_k-1}s_l h(\ds{z}_{1}(t,\cdot,\{1\},(s_l)_l))\ind_{\mathcal{R}^{0,t}_{\{q\},(s_l)_l}}\prod_{i=0}^{k-1}\ind_{\mathfrak{n}(i\theta)=n_{n_{k-i}}}.\]
		This gives directly the following bound on $\bar{\Psi}_{\underline{n}_k}^{0,t}[h]$
		\begin{equation}\label{|Phi^0_n|}
		\left|\bar{\Psi}_{\underline{n}_k}^{0,t}[h] \right| \leq \frac{\|h\|_0}{(n_k-1)!}\sum_{(s_l)_{l\leq n_k-1}}\sum_{(s_l)_{l\leq n_k-1}} \ind_{\mathcal{R}^{0,t}_{\{1\},(s_l)_l}}\ind_{\mathfrak{n}(\theta)=n_{k-1}}.
		\end{equation}
		
		As the right-hand side of \eqref{|Phi^0_n|} is invariant under translations, it is sufficient to fix $x_1= 0$ and integrate with respect to $(X_{2,n_k},V_{n_k})$.
		
		We define the \emph{clustering tree} $T^{>}$ as the sequence $(q_i,\bar{q}_i)_{1\leq i \leq n_k-1}$ where the $i$-th collision involves particles $q_i$ and $\bar{q}_i$ (and $q_i<\bar{q}_i$). 
		
		Since in the present section, pseudotrajectories have no recollision, \blue{the collision graph is a tree (a simply connected graph). Hence, we can construct the clustering by forgetting the scattering times associated with each edge but keeping the order of the collisions.} It can be used to parametrize a partition of $\mathcal{R}^{0,t}_{\{q\},(s_l)_l}$.
		
		Let us fix a clustering tree. We perform the following change of variables
		\[X_{2,n_k}\mapsto(\hat{x}_1,\cdots,\hat{x}_{n_k-1}),~\forall i \in [1,n_k-1],~\hat{x}_i:=x_{q_i}-x_{\bar{q}_i}\]
		
		Fix then $\tau_{i+1}$ the time of the $(i+1)$-th collision, as well as the relative positions $\hat{x}_1,\cdots,\hat{x}_{i-1}$. We denote $T_i = \theta$ if $i\leq n_k-n_{k-1}$, $t$ else (at least $n_k-n_{k-1}$ clustering collisions happen before time $\theta$). The $i$-th collision set is defined by
		\[B_{T^>,i}:=\left\{\hat{x}_i\Big|\exists \tau\in(0,T_i\wedge \tau_{i+1}),~|\ds{x}_{q_{i}}(\tau)-\ds{x}_{\bar{q}_i}(\tau)|\leq \e\right\}.\]
		
		Because particles $\ds{x}_{q_{i}}(\tau)$ and $\ds{x}_{\bar{q}_i}(\tau)$ are independent until their first meeting, we can perform the change of variable $\hat{x}_i\mapsto(\tau_i,\eta_i)$ where $\tau_i$ is the first meeting time and 
		\[\eta_i:=\frac{\ds{x}_{q_{i}}(\tau_i)-\ds{x}_{\bar{q}_i}(\tau_i)}{\left|\ds{x}_{q_{i}}(\tau_i)-\ds{x}_{\bar{q}_i}(\tau_i)\right|}.\]
		
		It sends the Lebesgue measure $\!\ud\hat{x}_i$ to the measure $\e^{d-1}((\ds{v}_{q_{i}}(\tau_i)-\ds{v}_{\bar{q}_i}(\tau_i))\cdot\eta_i)_+\ud\eta_i\ud \tau_i$ and 
		\[\int \ind_{B_{T^>,i}}\ud\hat{x}_i \leq {C}{\e^{d-1}}\int _0^{T_i\wedge \tau_{i+1}}\left|\ds{v}_{q_{i}}(\tau_i)-\ds{v}_{\bar{q}_i}(\tau_i)\right|\ud \tau_i.\]
		
		We want now to sum on every possible edge $(q_{i},\bar{q}_i)$. Hence, we need to control
		\[\sum_{(q_{i},\bar{q}_i)}\left|\ds{v}_{q_{i}}(\tau_i)-\ds{v}_{\bar{q}_i}(\tau_i)\right|\leq 2n_k\sum_{k}\left|\ds{v}_{k}(\tau_i)\right|\leq 2n_k\left(n_k\sum_{k}\left|\ds{v}_{k}(\tau_i)\right|^2\right)^{1/2}\leq n_k\left(n_k+|\ds{V}_{n_k}(\tau_i)|^2\right)\]
		
		\begin{lemma}
			Consider a time $\tau\in[0,t]$, collision parameters $(\omega_1,\omega_2,(s_i)_i)$ and an initial position $Z_{n}\in\mathbb{D}^{n}$. Then
			\[\tfrac{1}{2}|\ds{V}(\tau,Z_n,\omega_1,\omega_2,(s_i)_i)|^2\leq \mathcal{H}_n(Z_n),\]
			as there is no overlap between particles.
		\end{lemma}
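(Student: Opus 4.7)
The plan is to combine two ingredients: (i) non-negativity of $\mathcal{V}$, giving $\mathcal{V}_n \geq 0$ and hence the pointwise inequality $\tfrac12 |V_n(\tau)|^2 \leq \mathcal{H}_n(Z_n(\tau))$; and (ii) the conservation of $\mathcal{H}_n$ along the pseudotrajectory, which would yield $\mathcal{H}_n(Z_n(\tau)) = \mathcal{H}_n(Z_n(0)) = \mathcal{H}_n(Z_n)$. Since $\chi \geq 0$ by Assumption \ref{ass: potentiels pour la dérivation de Boltzmann}, part (i) is immediate, so the whole argument reduces to checking that $\tau \mapsto \mathcal{H}_n(\dr{Z}(\tau))$ is constant.

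The pseudotrajectory of Definition \ref{def:pseudotrajectory3} is piecewise Hamiltonian: outside the finitely many binary interaction intervals $[\tau_k, \tau_k^+]$, every particle moves in a straight line. Between such intervals, the no-overlap assumption guarantees that all pairwise distances exceed $\e$, so every term in $\mathcal{V}_n$ vanishes (since $\mathcal{V}$ is supported in the unit ball) and $\mathcal{H}_n$ reduces to the kinetic energy, which is manifestly constant on a free-flight phase. During an active binary interaction between $i$ and $j$ on $[\tau_k,\tau_k^+]$, the two-body Hamiltonian $\tfrac12|v_i|^2+\tfrac12|v_j|^2+\alpha\mathcal{V}((x_i-x_j)/\e)$ is exactly conserved by construction; the no-overlap assumption ensures that all other pairs stay at distance $>\e$, contributing nothing to $\mathcal{V}_n$ and keeping their kinetic energies constant. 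Hence $\mathcal{H}_n$ is constant on each such interval as well.

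It remains to check continuity of $\mathcal{H}_n$ at the transition times. At the start or end of a binary interaction $(\tau_k$ or $\tau_k^+)$, one has $|x_i-x_j|=\e$, so $\mathcal{V}((x_i-x_j)/\e)=\chi(1)/1^s=0$ by Assumption \ref{ass: potentiels pour la dérivation de Boltzmann}; the velocities are continuous across these times in both the interacting and the ignoring case ($s_{\iota(\tau)}=\pm 1$), so $\mathcal{H}_n$ is continuous. Concatenating the piecewise-constant values gives $\mathcal{H}_n(\dr{Z}(\tau))=\mathcal{H}_n(Z_n)$ for every $\tau\in[0,t]$, and combining with (i) yields the lemma.

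There is no genuine obstacle; the only point requiring some care is the bookkeeping across the transition between the two phases $[0,\delta]$ and $[\delta,t]$ of Definition \ref{def:pseudotrajectory3}, where the set of active particles shrinks from $\omega_2$ to $\omega_1$. The particles in $\omega_2\setminus\omega_1$ become virtual (free flight) for $\tau>\delta$, but since they are then non-interacting with everything, their kinetic energies are preserved and contribute additively to both sides, so the argument goes through unchanged.
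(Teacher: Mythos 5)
Your step (i) is fine, but step (ii) --- conservation of the full Hamiltonian $\mathcal{H}_n$ along the pseudotrajectory --- is false, and this is precisely the difficulty the lemma has to circumvent. The pseudotrajectory is not the Hamiltonian flow of $\mathcal{H}_n$: whenever two particles meet with $s_{\iota(\tau)}=-1$ they ignore each other and travel in free flight through the region $|\ds{x}_i-\ds{x}_j|<\e$. On such an interval $(\tau,\tau^+)$ the term $\alpha\mathcal{V}((\ds{x}_i-\ds{x}_j)/\e)$ is strictly positive while the kinetic energy is unchanged, so $\tau'\mapsto\mathcal{H}_n(\dr{Z}(\tau'))$ is not constant there and exceeds its value just before the meeting. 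The ``no overlap'' hypothesis does not rescue you: in Definition \ref{def:pathology} overlaps are only forbidden at the sampling times $\tau\in\delta\mathbb{Z}$, and the $s=-1$ crossings at generic times are exactly the events encoded by the cumulants $\CCirc$, so they do occur; the same problem reappears after time $\delta$ with the virtual particles of $[n]\setminus\omega_2$, which may pass within distance $\e$ of active particles without interacting. Concretely, at such a time $\tau'$ your chain $\tfrac12|\dr{V}(\tau')|^2\le\mathcal{H}_n(\dr{Z}(\tau'))=\mathcal{H}_n(Z_n)$ breaks at the equality.

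The conclusion is still true, and the repair is exactly the device used in the paper: replace $\mathcal{H}_n$ by the reduced Hamiltonian $\mathcal{H}_{\underline{\kappa}}$ that keeps only the potential terms of pairs lying in a common interaction cluster (pairs that actually exchange momentum). Your free-flight/binary-interaction bookkeeping applies verbatim to that reduced energy, which \emph{is} conserved by the pseudodynamics, and one concludes by the sandwich $\tfrac12|\dr{V}(\tau)|^2\le\mathcal{H}_{\underline{\kappa}}(\dr{Z}(\tau))=\mathcal{H}_{\underline{\kappa}}(Z_n)\le\mathcal{H}_n(Z_n)$, the last inequality holding because $\mathcal{H}_n$ contains the same terms plus additional non-negative ones. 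Equivalently, you could track the kinetic energy alone: it is constant on free flight and on $s=-1$ crossings, and can only dip below its incoming value during an $s=1$ interaction, which gives the bound directly. As written, however, your argument rests on a conservation law that the pseudodynamics does not satisfy.
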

		
		\begin{proof}
			We begin by defining the notion of \emph{clusters on a time segment}:
			\noindent\begin{definition}
				Consider two times $0\leq \tau_a<\tau_b\leq t$. We denote $\mathcal{G}$ the collision graph of the pseudotrajectory $\ds{Z}_{n_k}(\cdot,(\omega,(s_i)_i),Z_{n_k})$ on the time interval $[\tau_a,\tau_b]$ and $G$ a graph with edges
				\[\Big\{(q,\bar{q})\in[n]^2,~\exists\tau'\in[\tau_a,\tau_b],~(q,\bar{q})_{1,\tau}\in\mathcal{G}\}.\]
				We take only into account the collisions with interaction. We define $\underline{\kappa}:=(\kappa_1,\cdots,\kappa_{k})$ the \emph{clusters on the segment $[\tau_1,\tau_2]$} the connected components of $G$ (defined in the following of Definition \ref{def:pseudotrajectory2}).
				
				Note that if $\tau_a$ lies between the beginning of the collision implying $s_j$ and the beginning of the collision implying $s_{j+1}$, then $\underline{\kappa}$ only depends on the $(s_i)_{i\leq j}$.
			\end{definition}
			
			We distinguished the cases $\tau>\delta$ and $\tau\leq \delta.$
			
			\begin{itemize}
				\item First, if $\tau\leq \delta$. We consider $(\kappa_1,\cdots,\kappa_{k})$ the cluster on the segment $[0,\delta]$ constructed in the following Definition \ref{def:pseudotrajectory2}. The pseudotrajectory is the Hamiltonian trajectory associated with the energy
				\[\mathcal{H}_{\underline{\kappa}}(Z_n):=\sum_{i=1}^k \Bigg(\sum_{q\in\kappa_i}\frac{|v_q|^2}{2}+\sum_{\substack{q,\bar{q}\in\kappa_i\\q\neq\bar{q}}}\frac{\alpha}{2}\mathcal{V}\left(\frac{x_q-x_{\bar{q}}}{\e}\right)\Bigg).\]
				Hence
				\[\tfrac{1}{2}|\ds{V}(\tau,Z_n,\omega_1,\omega_2,(s_i)_i)|^2\leq \mathcal{H}^{\underline{\kappa}}(\ds{Z}_n(\tau)\leq \mathcal{H}^{\underline{\kappa}}(Z_{n})\leq \mathcal{H}_n(Z_n).\]
				\item If $\tau>\delta$, consider $\underline{\kappa}$ and $\underline{\kappa}'$ the clusters on $[\delta,\tau]$ and on $[0,\delta]$. After time $\delta$, the particles outside $\omega_2$ stop interacting, and before time $\delta$, the couple of particles in $\omega_2$ cannot overlap. Hence, $\underline{\kappa}'$ is a finer partition of $[n]$ than $\underline{\kappa}$ and $\mathcal{H}_{\underline{\kappa}}\leq \mathcal{H}_{\underline{\kappa}'}$. Thus
				\begin{equation}\label{eq:borne energie cinetique}\frac{1}{2}|\ds{V}_{n_k}(\tau)|^2\leq \mathcal{H}_{\underline{\kappa}}(\ds{Z}_{n_k}(\tau))=\mathcal{H}_{\underline{\kappa}}(\ds{Z}_{n_k}(\delta))\leq \mathcal{H}_{\underline{\kappa}'}(\ds{Z}_{n_k}(\delta))=\mathcal{H}_{\underline{\kappa}'}(\ds{Z}_{n_k}(0))\leq\mathcal{H}_n(Z_n).\end{equation}
			\end{itemize}
		\end{proof}
		
		As we suppose that there is no overlap at time $0$, we have $\mathcal{H}_n(Z_n)=\tfrac{|V_n|^2}{2}$. Hence, using the Boltzmann-Grad scaling $\mu\e^{d-1}\mathfrak{d}= 1$,
		\[\begin{split} \sum_{(q_i,\bar{q}_i)_{i}}&\int\ud\hat{x}_{1}\ind_{B_{T^>,1}}\cdots\int\ud\hat{x}_{n_k-1}\ind_{B_{T^>,n_k-1}} \ind_{\mathfrak{n}(\theta)=n_{k-1}}e^{-\mathcal{H}_{n_k}(Z_{n_k})} \\
		\leq& \left(\frac{Cn_k}{\mu\mathfrak{d}}\right)^{n_k-1}\left(n_k + \mathcal{H}_{n_k}(Z_{n_k})\right)^{n_k-1}e^{-\mathcal{H}_{n_k}(Z_{n_k})}\int_0^{T_{n_k-1}}\ud \tau_{n_k-1}\cdots\int_0^{T_{1}\wedge \tau_{2}}\ud \tau_{1}\\
		\leq& \left(\frac{Cn_k}{\mu\mathfrak{d}}\right)^{n_k-1}n_k^{n_k-1}e^{-\frac{\mathcal{H}_{n_k}(Z_{n_k})}{2}}\frac{t^{n_{k-1}-1}}{(n_{k-1}-1)!}\frac{\theta^{n_k-n_{k-1}}}{(n_k-n_{k-1})!}\\
		\leq& \left(\frac{\tilde{C}}{\mu\mathfrak{d}}\right)^{n_k-1}n_k^{n_k-1}e^{-\frac{|V_{n_k}|^2}{4}}t^{n_{k-1}-1}\theta^{n_k-n_{k-1}},\end{split}\]
		We used these two classical inequalities
		\begin{align*}
		\frac{(a+b)^{a+b}}{a!b!}\leq e^{a+b}&\frac{(a+b)!}{a!b!}\leq (2e)^{a+b}\\ 
		\forall A,B>0,~,x\in\mathbb{R}^+,~~\left(A+x\right)^B e^{-\frac{x}{2}} =& B^B\left(\frac{A+x}{B}e^{-\frac{A+x}{2B}}\right)^B e^{\frac{A}{4}}\leq\left(\tfrac{4B}{e}\right)^B e^{\frac{A}{4}}.
		\end{align*}
		
		Finally, we sum on $V_{n_k}$, on the $2^{n_k-1}$ possible $(s_i)_i$ and on the $q\in[1,n_k]$, and we divide by the remaining $(n_k!)$. This gives the expected estimation.
	\end{proof}
	
	\begin{proof}[Proof of \eqref{Estimation sans reco 2}]
		We begin as in the previous paragraph. Using that 
		\[\left|\left\{(\omega,\omega',q, q')\big|~\omega\cup \omega' = [2n_k-m],~|\omega|=|\omega'|=n_k,~q\in \omega,~q'\in \omega'\right\}\right| = \frac{n_k^2(2n_k-m)!}{(n_k-m)!^2m!},\]
		and that the right hand side of \eqref{|Phi^0_n|} is symmetric, one has 
		\begin{multline}
		\Big|{\Psi}_{\underline{n}_k}^{0,t}[h]\circledast_m\bar{\Psi}_{\underline{n}_k}^{0,t}[h](Z_{2n_k-m})\Big|\\
		\leq \frac{\|h\|_0^2}{(n_k!)^2} \frac{(n_k-m)!^2m!}{n_k^2(2n_k-m)!}\sum_{(\omega,\omega',q,q')}
		\sum_{\substack{(s_l)_{l\leq n_k-1}\\(s'_l)_{l\leq n_k-1}}} 	\ind_{\mathcal{R}^{0,t}_{\{q\},(s_l)_l}}(Z_{\omega}) \ind_{\mathfrak{n}(\theta)={n_{k-1}}}(Z_{\omega})\ind_{\mathcal{R}^{0,t}_{\{q'\},(s'_l)_l}}(Z_{\omega'}).
		\end{multline}
		where $\mathfrak{n}(\theta)$ is the number of particles at time $\theta$ in the pseudotrajectory $\ds{Z}(t,\cdot,\{1\},(s_l)_l)$. The right-hand side is invariant under translation. Hence, without loss of generality, we can suppose that $1\notin \omega'$ and fix $x_1= 0$.
		
		We have to consider two pseudotrajectories \[\ds{Z}(\tau):=\ds{Z}(\tau,Z_{\omega},\{q\},(s_l)_l) {\rm~and~} \dr{Z}'(\tau):=\ds{Z}(\tau,Z_{\omega'},\{q'\},(s'_l)_l).\] 
		
		We want to estimate 
		\[\int \ind_{\mathcal{R}^{0,t}_{\{q'\},(s'_l)_l}}(Z_{\omega'})e^{-\frac{1}{2}\mathcal{H}_{2n_k-m}(Z_{2n_k-m})}\ud{Z_{\omega'\setminus\omega}}.\]
		
		Fix $Z_\omega$ and denote $T_a$ the clustering tree of the pseudotrajectory $\ds{Z}(t)$, constructed as in the proof of \eqref{Estimation sans reco 1}. Next, we construct the clustering tree associated with the second pseudotrajectory: let $(q_i,\bar{q}_i)_{i\leq \ell}$ be the edges of the collision graph of $\ds{Z}'(\tau)$, taking temporal order. Set $\bar{T}_0= \emptyset$. Suppose that $\bar{T}_i$ is constructed. Then $\bar{T}_{i+1} := \bar{T}_i\cup\{(q_i,\bar{q}_i)\}$ if the graph $T_a\cup \bar{T}_i\cup\{(q_i,\bar{q}_i)\}$ has no cycle. Else $\bar{T}_{i+1} := \bar{T}_i$. At the $\ell$-step we have constructed an ordered graph $T_b:=\bar{T}_\ell$ with $n_k-m$ edges.
		
		The $T_b$ define a partition of $\{Z_{\omega'\setminus\omega}\in\mathbb{D}^{n_k-m}|Z_{\omega'}\in \mathcal{R}^{0,t}_{\{q'\},(s'_l)_l}\}$ where the coordinates $Z_\omega$ are fixed.		
		
		The rest of the proof is almost identical to the proof of \eqref{Estimation sans reco 1}. Fix the clustering tree $T_b= (q_i,$ $\bar{q_i})_{n_k\leq i\leq 2n_k-m}$, and perform the following change of variables
		\[X_{\omega'\setminus\omega}\mapsto(\hat{x}_{n_k},\cdots,\hat{x}_{2n_k-m-1}),~\forall i \in [n_k,2n_k-m-1],~\hat{x}_i:=x_{q_i}-x_{\bar{q}_i}.\]
		
		Fix $\tau_{i+1}$, the time of the $(i+1)$-th collision, and relative positions $\hat{x}_{n_k},\cdots,\hat{x}_{i-1}$. We define the $i$-th collision set as
		\[B_{T^>,i}:=\left\{\hat{x}_i\Big|\exists \tau\in(0,t\wedge \tau_{i+1}),~|\ds{x}'_{q_{i}}(\tau)-\ds{x}'_{\bar{q}_i}(\tau)|\leq \e\right\}.\]
		
		As in the preceding lemma, we can perform the change of variable $\hat{x}_i\mapsto(\tau_i,\eta_i)$ where $\tau_i$ is the first meeting time and 
		\[\eta_i:=\frac{\ds{x'}_{q_{i}}(\tau_i)-\ds{x'}_{\bar{q}_i}(\tau_i)}{\e}.\]
		We have
		\[\begin{split}
		\sum_{(q_{i},\bar{q}_i)}\int &\ind_{B_{T^>,i}}\ud\hat{x}_i \leq {C}{\e^{d-1}}\int _0^{\tau_{i+1}}\sum_{ (q_{i},\bar{q}_i)\in\omega'^2}\left|\ds{v}'_{q_{i}}(\tau_i)-\ds{v}'_{\bar{q}_i}(\tau_i)\right|\ud \tau_i.
		\end{split}\]
		
		Using the same method as in the proof of \eqref{eq:borne energie cinetique}, we have 
		\begin{align*}
		\sum_{ (q_{i},\bar{q}_i)\in\omega'^2}\Big|\ds{v}'_{q_{i}}(\tau_i)-\ds{v}'_{\bar{q}_i}(\tau_i)\Big|\leq n_k+\left|\ds{V}'_{\omega'}(\tau_i)\right|^2&\leq 2n_k+2\mathcal{H}_{n_k}(Z_{\omega'}).
		\end{align*}

		Using that
		\[\mathcal{H}_{2n_k-m}(Z_{2n_k-m})\leq\tfrac{1}{2}\left( \mathcal{H}_{n_k}(Z_\omega)+\mathcal{H}_{n_k}(Z_{\omega'})\right),\]
		we can apply the same computation as above, 
		\[\begin{split}
		\int\ind_{\mathcal{R}^{0,t}_{\{q'\},(s'_l)_l}}(Z_{\omega'}) e^{-\mathcal{H}_{2n_k-m}}\ud Z_{\omega'\setminus\omega}
		&\leq \sum_{T_b}\int e^{-\frac{\mathcal{H}_{n_k}(Z_\omega)}{2}} \prod_{i=1}^{n_k-m}\ind_{B_{T^>,i}}\ud \hat{x}_i~ e^{-\mathcal{H}_{n_k}(Z_{\omega'})}dV_{\omega'}\\
		&\leq C\left(\frac{C}{\mu\mathfrak{d}}\right)^{n_k-m} t^{n_{k}-m}(2n_k-m)^{n_k-m}.
		\end{split}\]
		
		We can estimate
		\[\int\ind_{\mathcal{R}^{0,t}_{\{q\},(s_l)_l}}(Z_{\omega}) e^{-\frac12\mathcal{H}_{2n_k-m}}\ud X_{\omega\setminus\{1\}}\ud V_{\omega}\]
		as in the proof of \eqref{Estimation sans reco 1}. We get the expected result by summing on  all the possible parameters $(s_i)_i$, $(s'_i)_i$, $q$, $q'$, $\omega$ and $\omega'$.
	\end{proof}
	
	\section{Treatment of the main part}\label{sec:Treatment of the main part}
	The aim of this section \blue{is} the proof of
	\[G^{\rm main}_\e(t)= \int_{\mathbb{D}}h(z)\gr{g}_\alpha(t,z)M(z)dz+O\left( \left(C\tfrac{\theta t}{\mathfrak{d}^2}+\e^{\mathfrak{a}} (\tfrac{Ct}{\mathfrak{d}})^{2^{K+1}}\right)\|h\|_1\|g\|_1\right),\]
	where $\gr{g}_\alpha(t,z)$ is the solution of the linearized Boltzmann equation \eqref{eq:Linearized Boltzmann equation} \blue{and $\mathfrak{a}\in(0,1)$ is some fix constant depending only on the dimensio}n.
	 
	\subsection{Duality formula}
	We recall that
	\[\begin{split}
	G_\e^{\text{main}}(t) ~~~&=\!\!\!\sum_{\substack{(n_j)_{j\leq K}\\0\leq n_j-n_{j-1}\leq 2^j}}\mathbb{E}_\e\left[\frac{1}{\sqrt{\mu}}\sum_{\ui_{n_K}}\Psi^{0,t}_{\underline{n}_K}[h]\left(\gr{Z}_{\ui_{n_K}}(0)\right)\zeta_\e^0(g)\right]\\[-0pt]
	&= \sum_{\substack{n_1\leq\cdots\leq n_K\\n_j-n_{j-1}\leq 2^j}} \mathbb{E}_\e\left[\mu^{n_K}\,\hat{\Psi}^{0,t}_{\underline{n}_K}[h]\,\hat{g}\right]
	\end{split}\]
	where $\Psi^{0,t}_{\underline{n}_K}[h]$ is the development of $h(z_i(t))$ along pseudotrajectories with $n_k$ remaining particles at time $t-k\theta$, and neither recollision nor overlap nor multiple \blue{encounters}.
	
	We denote 
	\blue{\begin{equation}
		g_{n_K}(Z_{n_K}):=\sum_{i=1}^{n_K}g(z_i)
	\end{equation}}
	
	Then, using the equality \eqref{quasi covariance} and $L^1$ estimations on $\Psi^{0,t}_{\underline{n}_K}[h]$ of Section \ref{Clustering estimations}, we have for $h$ and $g$ in $L^\infty$
	\begin{equation*}
	\begin{split}
	&G_\e^{\text{main}}(t) \\
	&= \sum_{\substack{n_1\leq\cdots\leq n_K\\n_j-n_{j-1}\leq 2^j}} \int\mu^{n_K-1}\Psi^{0,t}_{\underline{n}_K}[h]\left(Z_{n_K}\right)g_{n_K}(Z_{n_K})\,\frac{e^{-\mathcal{H}_{n_K}(Z_{n_K})}\ud Z_{n_K}}{(2\pi)^\frac{n_Kd}{2}}+O\left(\tfrac{\e}{\mathfrak{d}} \sum_{\substack{\underline{n}}} (\tfrac{Ct}{\mathfrak{d}})^{n_k}\|h\|_0\|g\|_0\right)\\
	&=\sum_{\substack{\underline{n}_K}} \int\mu^{n_K-1}\Psi^{0,t}_{\underline{n}_K}[h]\left(Z_{n_K}\right)g_{n_K}(Z_{n_K})\,\frac{e^{-\mathcal{H}_{n_K}(Z_{n_K})}\ud Z_{n_K}}{(2\pi)^\frac{n_Kd}{2}}+O\left(\tfrac{\e}{\mathfrak{d}} \left(K2^{K^2} (\tfrac{Ct}{\mathfrak{d}})^{2^{K+1}}\|h\|_0\|g\|_0\right)\right).
	\end{split}
	\end{equation*}
	
	We want to compute the asymptotics of each term in the sum. As we suppose that there is no overlap
	\[\begin{split}&\int\mu_\e^{n_K-1}\Psi^{0,t}_{\underline{n}_K}[h]\left(Z_{n_K}\right)g_{n_K}(Z_{n_K})\frac{e^{-\mathcal{H}_{n_K}(Z_{n_K})}\ud Z_{n_K}}{(2\pi)^\frac{n_Kd}{2}}\\
	&=\frac{\mu^{n_K-1}}{(n_K-1)!}\sum_{\substack{(s_k)_{k}}}\prod_{k=1}^{n_l-1}s_k\int_{\mathcal{R}^{0,t}_{\{1\},(s_k)_k}}\!\!\!\!h(\ds{z}^\e_{1}(t,Z_{n_K},\{1\},(s_k)_k))g_{n_K}(Z_{n_K})\prod_{i=1}^{K}\ind_{\mathfrak{n}(t-i\theta)=n_i}M^{\otimes n_K}\ud Z_{n_K}.\end{split}\]
	where $\mathcal{R}^{0,t}_{\{1\},(s_k)_k}$ is the set of initial parameters such that the pseudotrajectory has no recollision and $\mathfrak{n}(\tau)$ is the number of remaining particles at time $\tau$ (see definition \ref{def: remaining particles}). We had an exponent $\e$ on $\ds{z}^\e_1$ to mark the $\e$-dependence of the pseudotrajectory.
	\medskip
	
	We want to construct the limiting process of the pseudotrajectory $\ds{Z}_n^\e(\tau)$. 
	
	We denote ${T}$ the \emph{clustering tree} as the sequence $(q_i,\bar{q}_i,\bar{s}_i)_{i\leq n_K-1}$ such that the $i$-th collision happens between particles $q_i$ and $\bar{q}_i$ (with $q_i<\bar{q}_i$) and $\bar{s}_i$ is equal to $1$ (respectively $-1$) if the particles interact (respectively do not interact). Fixing the initial velocities $V_{n_K}$, we perform the change of variable
	\begin{equation*}
	X_{n_K}\mapsto(x_1,(\nu_i,\tau_i)_{i\leq n_K-1}),.
	\end{equation*}
	where $\tau_i$ is the first time when particles $q_i$ and $\bar{q}_i$ meet, an
	\[\nu_i=\frac{\ds{x}^\e_{q_i}(\tau_i)-\ds{x}^\e_{\bar{q}_i}(\tau_i)}{\e}.\]
	The  Jacobian of this application is 
	\begin{equation*}
	\ud X_{n_K} \to \prod_{i=1}^{n_{k}-1}\e^{d-1}\left(\left(\ds{v}^\e_{q_i}(\tau_i)-\ds{v}^\e_{\bar{q}_i}(\tau_i)\right)\cdot\nu_i\right)_+\ud\nu_i\ud\tau_i	=:\frac{\Lambda_T(V_{n_K},\nu_{[n_k-1]})}{(\mu\mathfrak{d})^{n_k-1}} \ud\nu_{[n_k-1]}\ud\tau_{[n_K-1]}~\ud x_1
	\end{equation*}
	where we have denoted
	\[\nu_{[n_k-1]} = (\nu_1,\cdots,\nu_{n_k-1}),~~\tau_{[n_k-1]} = (\tau_1,\cdots,\tau_{n_k-1}).\]
	
	The kernel $\Lambda(V_{n_K},\nu_{[n_k-1]})$ only depends on the successive velocities $(\ds{v}^\e_{q_i}(\tau_i),\ds{v}^\e_{\bar{q}_i}(\tau_i))$ which can be deduced from the collision graph, forgetting the exact values of the $\ud\tau_{[n_k-1]}$ (since we have forbidden the pathological pseudotrajectories).
	
	We defined the signature of the collision tree $\sigma(T):=\bar{s}_1\bar{s}_2\cdots\bar{s}_{n_K}$, the set of collision times
	\[\mathfrak{T}_{\underline{n}_K} := \left\{(\tau_i)_{i\leq n_K-1},~\tau_i\leq\tau_{i+1},~\forall k\leq K,~j\in[n_K-n_{K-k},n_K-n_{K-k-1}],~k\theta\leq \tau_j\leq (k+1)\theta\right\}\]
	and for a given family $\tau_{[n_K-1]}$, we define $\mathfrak{G}^\e_{T}(\tau_{[n_K-1]})$ the set of coordinates $(x_1,(\nu_i)_{i\leq n_K-1},V_{n_K})$ such that the pseudotrajectory has no recollision, and for all $j$, $\left(\ds{v}^\e_{q_i}(\tau_i)-\ds{v}^\e_{\bar{q}_i}(\tau_i)\right)\cdot\nu_i$ is positive.
	
	The map 
	\[\begin{aligne}{c}
	\bigsqcup_{(s_k)_k}\{s_k\}\times\left(\mathcal{R}^{0,t}_{\{1\},(s_k)_k}\cap\{\text{no~overlap}\}\cap\bigcap_{j\leq K-1}\{\mathfrak{n}(j\theta) = n_{K-j}\big\}\right)\to\bigsqcup_{T} \{T\}\times\mathfrak{T}_{\underline{n}_K}\times\mathfrak{G}^\e_{T}\\
	(X_{n_K},V_{n_K})\mapsto (x_1,(\nu_i,\tau_i)_{i\leq n_K-1},V_{n_K})
	\end{aligne} \]
	is a diffeomosphism and 
	\begin{multline}
	\int\mu^{n_K-1}\Psi^{0,t}_{\underline{n}_K}[h]\left(Z_{n_K}\right)g_{n_K}(Z_{n_K})  M^{\otimes n_K}\ud Z_{n_K}\\
	=\frac{\mathfrak{d}^{-n_K+1}}{(n_K-1)!}\sum_{T}\sigma(T)\int_{\mathfrak{T}_{\underline{n}_K}\times\mathfrak{G}^\e_{T}}h(\ds{z}^\e_{1}(t,T)g_{n_K}(\ds{Z}^\e_{n_K}(0,T))\\
	\times M^{\otimes n_K}\Lambda_T(V_{n_K},\nu_{[n_k-1]})\ud\nu_{[n_k-1]} \ud\tau_{[n_K-1]}\ud x_1\ud V_{n_K}.
	\end{multline}
	
	\begin{definition}[Pseudotrajectories for punctual particles]
		Fix a collision tree $T:=(q_i,\bar{q}_i,\bar{s}_i)$ and collision parameters $(V_{n_k},\tau_{[n_k-1]},\nu_{[n_k-1]})$. We now define the pseudotrajectories for punctual particles. The velocities $\ds{V}^0_{n_K}(\tau,T)$ follow a jump process: at time $0$, $\ds{V}^0_{n_K}(\tau=0,T)=V_{n_K}$. At time $\tau_i$, if $\bar{s}_i= 1$ the velocities of particles $q_i$, $\bar{q}_i$ jump to $v_{q_i}(\tau_i^+)$, $v_{\bar{q}_i}(\tau_i^+)$ given by $(v_{q_i}(\tau_i^+),v_{\bar{q}_i}(\tau_i^+),\tilde{\nu}_i):=\xi_{\alpha}(v_{q_i}(\tau_i^-),v_{\bar{q}_i}(\tau_i^-),{\nu}_i)$ ($\xi_\alpha$ the scattering map defined in Definition \ref{def: le scatering, sa vie son oeuvre}).
	
		We defined $\mathfrak{G}^0_{T}$ the set of the $(x_1,(\nu_i)_{i\leq n_K-1},V_{n_K})$ such that for all $j$, $\left(\ds{v}^0_{q_i}(\tau_i)-\ds{v}^0_{\bar{q}_i}(\tau_i)\right)\cdot\nu_i$ is positive. Note that $\mathfrak{G}^\e_{T}\subset\mathfrak{G}^0_{T}$.
	\end{definition}

	We have formally the convergence
	\begin{multline*}
	\int\mu^{n_K-1}\Psi^{0,t}_{\underline{n}_K}[h]\left(Z_{n_K}\right)g_{n_K}(Z_{n_K})M^{\otimes n_K}dZ_{n_K}\\
	\underset{\e\to0}{\longrightarrow}\frac{\mathfrak{d}^{-n_K+1}}{(n_K-1)!}\sum_{T}\sigma(T)\int_{\mathfrak{T}_{\underline{n}_K}\times\mathfrak{G}^0_{T}}h(\ds{z}^0_{1}(t,T)g_{n_K}(\ds{Z}^0_{n_K}(0,T))\\
	\times\Lambda_T(V_{n_K},\nu_{[n_k-1]})\ud\nu_{[n_k-1]} \ud\tau_{[n_K-1]}\ud x_1M^{\otimes n_K}\ud V_{n_K}.
	\end{multline*}
	
	In order to have explicit rates of convergence, we decompose the error into two parts:
	\begin{multline}
	\int\mu^{n_K-1}\,\Psi^{0,t}_{\underline{n}_K}[h]g_{n_K}^\e M^{\otimes n_K}\ud Z_{n_K}\\
	=\eqref{eq:R1}+\eqref{eq:R2}\red{+\eqref{eq:R3}}+\frac{\mathfrak{d}^{-n_K+1}}{(n_K-1)!}\sum_{T}\sigma(T)\int_{\mathfrak{T}_{\underline{n}_K}\times\mathfrak{G}^0_{T}}h(\ds{z}^0_{1}(t,T)g_{n_K}(\ds{Z}^0_{n_K}(0,T))\\
	\times\Lambda_T(V_{n_K},\nu_{[n_k-1]})\ud\nu_{[n_k-1]} \ud\tau_{[n_K-1]}\ud x_1M^{\otimes n_K}\ud V_{n_K},
	\end{multline}
	where we define
	\begin{multline}
	\label{eq:R1}
	=\frac{\mathfrak{d}^{-n_K+1}}{(n_K-1)!}\sum_{T}\sigma(T)\int_{\mathfrak{T}_{\underline{n}_K}\times\mathfrak{G}^\e_{T}}\Big(h(\ds{z}^\e_{1}(t,T)g_{n_K}(\ds{Z}^\e_{n_K}(0,T))-h(\ds{z}^0_{1}(t,T)g_{n_K}(\ds{Z}^0_{n_K}(0,T))\Big)
	\\
	\times\Lambda_T(V_{n_K},\nu_{[n_k-1]})\ud\nu_{[n_k-1]} \ud\tau_{[n_K-1]}\ud x_1M^{\otimes n_K}\ud V_{n_K},
	\end{multline}
	\begin{multline}
	\label{eq:R2}
	=-\frac{\mathfrak{d}^{-n_K+1}}{(n_K-1)!}\sum_{T}\sigma(T)\int_{\mathfrak{G}_T^0}h(\ds{z}^\e_{1}(t,T)g_{n_K}(\ds{Z}^\e_{n_K}(0,T))\left(1-\ind_{\mathfrak{G}_T^\e}\right)
	\\
	\times\Lambda_T(V_{n_K},\nu_{[n_k-1]})\ud\nu_{[n_k-1]} \ud\tau_{[n_K-1]}\ud x_1M^{\otimes n_K}\ud V_{n_K},
	\end{multline}
	\red{\begin{multline}
	\label{eq:R3}
	=\frac{\mathfrak{d}^{-n_K+1}}{(n_K-1)!}\sum_{T}\sigma(T)\int_{\mathfrak{T}_{\underline{n}_K}\times\mathfrak{G}^\e_{T}}h(\ds{z}^\e_{1}(t,T)\Big(g^\e_{n_K}(\ds{Z}^\e_{n_K}(0,T))-g_{n_K}(\ds{Z}^\e_{n_K}(0,T))\Big)
	\\
	\times\Lambda_T(V_{n_K},\nu_{[n_k-1]})\ud\nu_{[n_k-1]} \ud\tau_{[n_K-1]}\ud x_1M^{\otimes n_K}\ud V_{n_K}.
	\end{multline}}
	
	The error parts are estimated using the following standard results:
	\begin{lemma}\label{Borne sur la taille des parrametres d'arbre}
		Fix $\bar{n}:=(n_1,\cdots,n_k)$ and denote for $p\in[1,2]$
		\[\Lambda_T^p(V_{n_K},\nu_{[n_k-1]}):=\prod_{i=1}^{n_{k}-1}\left|\ds{v}^0_{q_i}(\tau^-_i)-\ds{v}^0_{\bar{q}_i}(\tau^-_i)\right|^p\]
		For any $\e>0$ sufficiently small, we have
		\begin{multline}
		\frac{\mathfrak{d}^{-n_K+1}}{(n_K-1)!}\sum_{T}\sigma(T)\int_{\mathfrak{T}_{\underline{n}_K}\times\mathfrak{G}^0_{T}}\Lambda_T^p(V_{n_K},\nu_{[n_K-1]}) \ud\tau_{[n_K-1]}\ud\nu_{[n_K-1]}\ud x_1M^{\otimes n_K}\ud V_{n_K}\\
		\leq C^{n_K}t^{n_{K-1}-1}\theta^{n_K-n_{K-1}}.
		\end{multline}
	\end{lemma}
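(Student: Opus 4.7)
The plan is to follow the strategy of the proof of \eqref{Estimation sans reco 1}, adapting it to integrate the Jacobian factor $\Lambda_T^p$ directly rather than via indicator sets $\ind_{B_{T^>,i}}$, since the change of variables $\hat x_i \mapsto (\nu_i,\tau_i)$ is already built in. The key a priori input is that in the point-particle pseudotrajectory $\ds{Z}^0(\cdot,T)$, velocities are piecewise constant and each scattering conserves the two-particle kinetic energy, so $\sum_j |\ds{v}^0_j(\tau)|^2 = |V_{n_K}|^2$ for every $\tau \in [0,t]$. Rather than use the crude pointwise bound $|\ds{v}^0_{q_i}-\ds{v}^0_{\bar q_i}| \leq 2|V_{n_K}|$, I would sum on the pair choice at each collision: by the power-mean inequality, for $p \in [1,2]$,
\[
\sum_{(q,\bar q)\in [n_K]^2} |\ds{v}^0_q(\tau)-\ds{v}^0_{\bar q}(\tau)|^p \leq C\,n_K^{2-p/2}\,|V_{n_K}|^p,
\]
uniformly in $\tau$ by energy conservation. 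Iterating over the $n_K-1$ collisions and counting the $2^{n_K-1}$ sign choices $\bar s_i$, one obtains $\sum_T \Lambda_T^p \leq (Cn_K^{2-p/2})^{n_K-1}|V_{n_K}|^{p(n_K-1)}$.

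The remaining integrations are classical: $\int_{S^{d-1}} d\nu_i \leq c_d$, $\int_{\mathbb{T}^d} dx_1=1$, and a standard Gaussian moment estimate yields $\int |V_{n_K}|^{p(n_K-1)}M^{\otimes n_K}\,dV_{n_K} \leq C^{n_K} n_K^{p(n_K-1)/2}$. The set $\mathfrak{T}_{\underline n_K}$ decomposes into consecutive blocks associated with the intervals $[k\theta,(k+1)\theta]$, the $k$-th block containing $n_{K-k}-n_{K-k-1}$ ordered times (with the convention $n_0:=1$). Separating the block $k=0$ and applying a multinomial bound to the remaining blocks $k\in\{1,\dots,K-1\}$ gives
\[
\mathrm{vol}(\mathfrak{T}_{\underline n_K}) \leq \frac{\theta^{n_K-n_{K-1}}}{(n_K-n_{K-1})!}\cdot \frac{((K-1)\theta)^{n_{K-1}-1}}{(n_{K-1}-1)!} \leq \frac{\theta^{n_K-n_{K-1}}\,t^{n_{K-1}-1}}{(n_K-n_{K-1})!\,(n_{K-1}-1)!},
\]
using $K\theta = t$.

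Assembling these estimates yields an upper bound of the form
\[
\mathfrak{d}^{-(n_K-1)}\cdot C^{n_K}\cdot \frac{n_K^{2(n_K-1)}}{(n_K-1)!\,(n_K-n_{K-1})!\,(n_{K-1}-1)!}\cdot \theta^{n_K-n_{K-1}}\,t^{n_{K-1}-1}.
\]
The delicate point is to absorb the combinatorial factor: using $(n_K-n_{K-1})!\,(n_{K-1}-1)! \geq (n_K-1)!/2^{n_K-1}$ together with $((n_K-1)!)^{-2} \leq (e/(n_K-1))^{2(n_K-1)}$ from Stirling, the fraction $n_K^{2(n_K-1)}/((n_K-1)!\,(n_K-n_{K-1})!\,(n_{K-1}-1)!)$ is controlled by $C^{n_K}$, and the benign factor $\mathfrak{d}^{-(n_K-1)}$ is absorbed into the universal constant as elsewhere in the paper, yielding the stated bound. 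This combinatorial absorption is the main obstacle: the pointwise bound $\Lambda_T^p \leq (2|V_{n_K}|)^{p(n_K-1)}$ alone would produce a factor $n_K^{p(n_K-1)}$ that does not cancel against the available factorials, so the iterated summation on pairs exploiting energy conservation is essential.
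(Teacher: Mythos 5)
Your proposal is correct and follows essentially the same route as the paper: sum over the pair $(q_i,\bar q_i,\bar s_i)$ at each collision and control $\sum_{(q,\bar q)}|\ds{v}^0_q-\ds{v}^0_{\bar q}|^p$ via energy conservation of the point-particle flow, integrate the Gaussian moment, bound the time simplex $\mathfrak{T}_{\underline n_K}$ block by block, and absorb the resulting $n_K^{2(n_K-1)}$ against the factorials by Stirling. The only (cosmetic) difference is that your velocity bound carries $|V_{n_K}|^{p}$ per collision where the paper writes $|V_{n_K}|^{p/2}$ (an apparent typo there), which changes nothing in the final absorption.
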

	\begin{proof}
		Fix first the collision tree $T:=(q_i,\bar{q}_i,\bar{s}_i)_i$. We sum on each $\nu_i$ in the decreasing order:
		\begin{multline}
		\sum_{(q_i,\bar{q}_i,\bar{s}_i)}\int \left|\ds{v}^0_{q_i}(\tau^-_i)-\ds{v}^0_{\bar{q}_i}(\tau^-_i)\right|^p\ud\nu_i \leq C \sum_{(\bar{q}_i,\bar{s}_i)}\left|\ds{v}_{q_i}(\tau_i)-\ds{v}_{q_i}(\tau_i)\right|^p\leq C n_K^{2-\frac{p}{2}}\left|\ds{V}_{n_K}(\tau_i)\right|^{\frac{p}{2}}\\
		\leq C n_K^{2-\frac{p}{2}}\left|V_{n_K}\right|^{\frac{p}{2}}
		\end{multline}
		using the conservation of energy.
		
		Hence, 
		\begin{multline*}
		\sum_{T}\int \Lambda^p_T(V_{n_K},\nu_{[n_k-1]})\ud\nu_{[n_k-1]}M^{\otimes n_K}dV_{n_K}\leq C^{n_K}n_K^{2-\frac{p}{2}}\int\left|V_{n_K}\right|^{\frac{n_Kp}{2}}M^{\otimes n_K}\ud V_{n_K}\\
		\leq {C}_1^{n_K}n_K^{2n_K}\int e^{-\frac{1}{4}|V_{n_K}|^2}\ud V_{n_K}\leq {C}_2^{n_K}n_K^{2n_K}.
		\end{multline*}
		
		Integrating the collision times
		\begin{multline*}\int_{\mathfrak{T}_{\underline{n}_K}}\ud\tau_{[n_K-1]}\leq \prod_{k= 0}^{K-1} \frac{\theta^{n_k-n_{k+1}}}{(n_k-n_{k+1})!}\leq \frac{((K-1)\theta)^{n_{K-1}-1}}{(n_{K-1}-1)!}\frac{\theta^{n_K-n_{K+1}}}{(n_K-n_{K+1})!}\\
		\leq \frac{2^{n_K-1}t^{n_{K-1}-1}\theta^{n_K-n_{K+1}}}{(n_K-1)!}.\end{multline*}
		
		Finally, we multiply the two previous inequalities and $\tfrac{1}{(n_K-1)!}$. Using the Stirling formula, we obtain the expected estimation.
	\end{proof}
	
	\begin{lemma}\label{Borne sur les chevauchements}
		Fix $\bar{n}:=(n_1,\cdots,n_K)$. \blue{There exists a constant $\mathfrak{a}\in(0,1))$, depending only on the dimension, such that} for any $\e>0$ sufficiently small, we have
		\begin{multline}\label{eq:Borne sur les chevauchements}
			\frac{\mathfrak{d}^{-n_K+1}}{(n_K-1)!}\sum_{T}\int_{\mathfrak{T}_{\underline{n}_K}\times\mathfrak{G}^0_{T}}\left|1-\ind_{\mathfrak{G}_T^\e}\right|\Lambda_T(V_{n_K},\nu_{[n_K-1]})\ud\nu_{[n_K-1]}\ud\tau_{[n_K-1]}~\ud x_1M^{\otimes n_K}\ud V_{n_K}\\
		\leq C^{n_K}t^{n_K+10}\e^{\mathfrak{a}}.
		\end{multline}
	\end{lemma}
	
	This is an estimation of the set of parameters leading to a pathology (a recollision, a triple interaction, or an overlap). It is proven in Annex \ref{subsec:estimation reco}. From Lemma \ref{Borne sur les chevauchements} we deduce
	\[|\eqref{eq:R2}|\leq C(Ct)^{n_K}\e^{\mathfrak{a}}\|g\|\,\|h\|.\]

	\begin{lemma}\label{Borne entre la trajectoire limite et la trajectoire d'enskog}
		Fix $\bar{n}:=(n_1,\cdots,n_k)$, $T$, $\e>0$ and $(x_1,(\tau_i,\nu_i)_{i},V_{n_K})\in\mathfrak{G}^{\e}_{T}$. We have
		\blue{\begin{equation}\forall\tau\in[0,t],~
		\big|\ds{X}^\e_{n_K}(\tau)-\ds{X}^0_{n_K}(\tau)\big|\leq 2n_K\mathbb{V}\sum_{i=1}^{n_K-1}\frac{\e\left|\dr{v}_{q_{i}}(\tau^-_{i})-\dr{v}_{q_{i}}(\tau^-_{i})\right|}{\left|(\dr{v}_{q_{i}}(\tau^-_{i})-\dr{v}_{q_{i}}(\tau^-_{i}))\times \nu_i\right|^2}\;.
		\end{equation}}
	\end{lemma}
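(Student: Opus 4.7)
\medskip

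The plan is to track the position gap $\ds{X}^\e - \ds{X}^0$ collision by collision, showing that it opens only during the short time windows in which $\ds{Z}^\e$ is integrating through the two-body potential, and stays frozen at every other time. The argument proceeds by induction on the collision index $i \in \{1,\dots,n_K-1\}$.

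The key structural observation is that by construction of $\ds{Z}^0_{n_K}$ (Definition of the punctual pseudotrajectory), the two pseudotrajectories apply \emph{exactly the same} scattering map $\xi_\alpha$ to \emph{exactly the same} input $(\nu_i, v_{q_i}(\tau_i^-), v_{\bar q_i}(\tau_i^-))$ at every $i$. Since pre-collision velocities depend only on earlier scattering outputs and free propagation preserves velocities, the induction gives $\ds{V}^\e(s) = \ds{V}^0(s)$ at every time $s$ outside the finite interaction windows $[\tau_i,\tau_i^+]$ where $\ds{Z}^\e$ is evolving under the two-body Hamiltonian while $\ds{Z}^0$ has already jumped instantaneously at $\tau_i$. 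Consequently the only times at which the position gap moves are inside these windows.

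Inside the $i$-th window only the two particles $q_i, \bar q_i$ are active; the others undergo identical free transport in both versions. The length $\delta_i := \tau_i^+ - \tau_i$ is bounded by Lemma \ref{lemma:borne temps collision} (with the usual rescaling by $\e$) as $\delta_i \leq C\e/|v_{q_i}(\tau_i^-) - v_{\bar q_i}(\tau_i^-)|$. Because $(x_1,(\tau_i,\nu_i)_i,V_{n_K}) \in \mathfrak{G}^\e_T$ enforces $\|\dr{V}_{n_K}\|_\infty \leq \mathbb{V}$, and two-body energy conservation during the scattering keeps kinetic velocities controlled by the incoming ones, one has $|v^\e(s)|, |v^0(s)| \leq C\mathbb{V}$ throughout the window. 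The extra displacement incurred by each of the two active particles is therefore at most $\int_{\tau_i}^{\tau_i^+} |v^\e(s)-v^0(s)|\,ds \leq 2\mathbb{V}\delta_i$, giving a per-collision position contribution bounded by $C\mathbb{V}\e/|v_{q_i}(\tau_i^-) - v_{\bar q_i}(\tau_i^-)|$; once the window closes the post-collision velocities coincide again and the error is carried forward unchanged by free flight.

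Summing these contributions over $i \leq n_K -1$ delivers the claimed inequality, the prefactor $n_K$ absorbing the passage from per-particle gaps to a vectorial norm on $\mathbb{D}^{n_K}$ (each particle may be involved in several collisions). The one delicate point — and the only place the non-pathology hypothesis $(x_1,(\tau_i,\nu_i)_i,V_{n_K}) \in \mathfrak{G}^\e_T$ is truly used — is the inductive step asserting that the pre-collision data of each $\e$-collision coincides with that of the corresponding $0$-collision, so that the denominator $|v_{q_i}(\tau_i^-) - v_{\bar q_i}(\tau_i^-)|$ is unambiguous. The absence of recollisions, overlaps and multiple interactions makes each $i$-th $\e$-collision a clean binary event whose incoming velocities are determined entirely by earlier scattering outputs, which closes the induction.
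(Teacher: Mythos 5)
Your proof is correct and follows essentially the same route as the paper's: bound the duration of the $i$-th interaction by Lemma \ref{lemma:borne temps collision}, observe that the velocities of $\ds{Z}^\e$ and $\ds{Z}^0$ coincide outside these windows (since both apply the same scattering map $\xi_\alpha$ to the same pre-collisional data, which is well defined precisely because $\mathfrak{G}^\e_T$ excludes recollisions, overlaps and multiple interactions), bound the displacement accumulated in each window by $\mathbb{V}$ times its length, and sum over the $n_K-1$ collisions with the factor $n_K$ accounting for the number of particles. Your write-up is in fact somewhat more explicit than the paper's about the inductive identification of the pre-collisional velocities, but the underlying argument is identical.
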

	\begin{proof}
		Thanks to the estimation of the interaction time \eqref{eq:borne temps collision}, the $i$-th collision lasts at most a time \blue{$\tfrac{\e\left|\dr{v}_{q_{i}}(\tau^-_{i})-\dr{v}_{q_{i}}(\tau^-_{i})\right|}{\left|(\dr{v}_{q_{i}}(\tau^-_{i})-\dr{v}_{q_{i}}(\tau^-_{i}))\times \nu_i\right|^2}$}. Hence, the two trajectories $\ds{X}^\e_{n_K}(\tau)$ and $\ds{X}^0_{n_K}(\tau)$ have coincident velocities for $\tau$ outside the union of the interval \[\blue{\bigcup_{i=1}^{n_K-1}\left[\tau_i,\tau_i+\tfrac{\e\left|\dr{v}_{q_{i}}(\tau^-_{i})-\dr{v}_{q_{i}}(\tau^-_{i})\right|}{\left|(\dr{v}_{q_{i}}(\tau^-_{i})-\dr{v}_{q_{i}}(\tau^-_{i}))\times \nu_i\right|^2}\right]}.\]
		
		During a collision, a particle can cross a distance smaller than $\blue{\tfrac{\e\mathbb{V}\left|\dr{v}_{q_{i}}(\tau^-_{i})-\dr{v}_{q_{i}}(\tau^-_{i})\right|}{\left|(\dr{v}_{q_{i}}(\tau^-_{i})-\dr{v}_{q_{i}}(\tau^-_{i}))\times \nu_i\right|^2}}$ which bounds the error that a collision creates. Hence, after $n_K$ collisions, summing over all the possible particles, we obtain the expected bound.
		
	\end{proof}
	
	\begin{lemma}
		Fix $\bar{n}:=(n_1,\cdots,n_K)$. For any $\e>0$ sufficiently small , we have
		\begin{align}\label{eq: borne R1}
		|\eqref{eq:R1}| \leq C(Ct)^{n_K-1}\mathbb{V}\e^{1/2} \|g\|_1\|h\|_1.
		\end{align}
	\end{lemma}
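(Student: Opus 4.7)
The plan is to control the integrand of \eqref{eq:R1} pointwise via a mean value argument, invoke Lemma~\ref{Borne entre la trajectoire limite et la trajectoire d'enskog} to bound the trajectory discrepancy, and then carry out the resulting integration by mimicking the proof of Lemma~\ref{Borne sur la taille des parrametres d'arbre}.

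First I would write the difference as a telescoping sum
\[h(\ds{z}^\e_1(t))g_{n_K}(\ds{Z}^\e(0)) - h(\ds{z}^0_1(t))g_{n_K}(\ds{Z}^0(0)) = \bigl[h(\ds{z}^\e_1(t)) - h(\ds{z}^0_1(t))\bigr]g_{n_K}(\ds{Z}^\e(0)) + h(\ds{z}^0_1(t))\bigl[g_{n_K}(\ds{Z}^\e(0)) - g_{n_K}(\ds{Z}^0(0))\bigr].\]
On $\mathfrak{G}^\e_T$, the velocities of $\ds{Z}^\e$ and $\ds{Z}^0$ agree outside the finite set of collision times (only positions drift during a collision), so a position-only mean value theorem, combined with $|\nabla_x h(x,v)| \leq \|h\|_1 M(v)$ and the analogous bound for $g_{n_K} = \sum_k g(z_k)$, gives pointwise
\[|\text{integrand}| \leq C n_K\,\|h\|_1\|g\|_1\,M(\ds{v}_1(t))\prod_{k=1}^{n_K}M(\ds{v}_k(0))\Bigl(|\ds{x}^\e_1(t)-\ds{x}^0_1(t)| + \max_k|\ds{x}^\e_k(0)-\ds{x}^0_k(0)|\Bigr).\]

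Next I would apply Lemma~\ref{Borne entre la trajectoire limite et la trajectoire d'enskog} to bound each position discrepancy by $\sum_{i=1}^{n_K-1} 2n_K\mathbb{V}\e/|\ds{v}_{q_i}(\tau_i^-)-\ds{v}_{\bar q_i}(\tau_i^-)|$. Inserting this into the integral, the singular factor $1/|\ds{v}_{q_i}-\ds{v}_{\bar q_i}|$ cancels exactly one velocity factor in
\[\Lambda_T(V_{n_K},\nu_{[n_K-1]}) = \prod_{j=1}^{n_K-1}\bigl((\ds{v}_{q_j}-\ds{v}_{\bar q_j})\cdot\nu_j\bigr)_+ \leq \prod_{j=1}^{n_K-1}|\ds{v}_{q_j}-\ds{v}_{\bar q_j}|,\]
leaving $\prod_{j\neq i}|\ds{v}_{q_j}-\ds{v}_{\bar q_j}|$. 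This cancellation is precisely why the bound requires the norms $\|h\|_1,\|g\|_1$ rather than $\|h\|_0,\|g\|_0$: the mean value theorem is what converts an $\e$-scale position drift into a clean $\e$-smallness with an acceptable singularity.

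Finally, I would run the argument of Lemma~\ref{Borne sur la taille des parrametres d'arbre} with $p=1$ but with one velocity factor removed. Summing over the collision trees $T$ and integrating the $\nu_j$ gives a factor $C^{n_K}n_K^{2n_K}$; the Gaussian integration in $V_{n_K}$ (using $\tfrac12|\ds{V}_{n_K}(\tau)|^2 \leq \mathcal{H}_{n_K}(Z_{n_K})$ as in Section~\ref{Clustering estimations} to absorb the extra Maxwellian weights $M(\ds{v}_1(t))\prod_k M(\ds{v}_k(0))$) is uniformly bounded; the collision times integrate over $\mathfrak{T}_{\underline{n}_K} \subset [0,t]^{n_K-1}$ to at most $t^{n_K-1}/(n_K-1)!$; summing over $i\in[n_K-1]$ costs $n_K$; and the prefactor $\mathfrak{d}^{-n_K+1}/(n_K-1)!$ combined with Stirling absorbs all factorials and $n_K^{O(n_K)}$ losses into $C^{n_K}$. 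This yields $|\eqref{eq:R1}| \leq C(Ct)^{n_K-1}\mathbb{V}\e\,\|g\|_1\|h\|_1$. The main delicate point is the bookkeeping of the combinatorial factors together with the Maxwellian weights; the singular $1/|v-v_*|$ cancellation itself is geometrically clean because on $\mathfrak{G}^\e_T$ we have already excluded all pathological configurations.
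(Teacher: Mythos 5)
Your proof is correct and follows the same skeleton as the paper's: on $\mathfrak{G}^\e_T$ the velocities of the two pseudotrajectories coincide, so only positions drift; the drift is controlled by Lemma~\ref{Borne entre la trajectoire limite et la trajectoire d'enskog}; a Lipschitz bound in the position variable converts this into an $\e$-smallness weighted by $\|h\|_1\|g\|_1$ and the singular factors $1/|\ds{v}_{q_i}(\tau_i^-)-\ds{v}_{\bar q_i}(\tau_i^-)|$. Where you diverge is in the treatment of that singularity. You cancel it pointwise against the $i$-th factor of $\Lambda_T$, using $((v-v_*)\cdot\nu_i)_+/|v-v_*|\leq 1$, and then rerun Lemma~\ref{Borne sur la taille des parrametres d'arbre} with one velocity factor deleted. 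The paper instead keeps the singularity, applies the measure-preserving maps $\Xi_T^i\cdots\Xi_T^1$ to rewrite it as $1/|v_{q_i}-v_{q_i'}|$ in the \emph{initial} velocity variables, and then invokes the integrability of $1/|v_1-v_2|$ against the Gaussian. Both routes close; yours is arguably more elementary since it avoids the change of variables and never needs the integrable-singularity argument, while the paper's version is the one that generalizes to situations where the singular denominator cannot be matched to a cross-section factor (it is reused verbatim in the recollision estimates of the appendix). The cancellation is legitimate here because the denominator produced by Lemma~\ref{Borne entre la trajectoire limite et la trajectoire d'enskog} is exactly the pre-collisional relative velocity appearing in $\Lambda_T$, and on $\mathfrak{G}^\e_T$ the $\e$- and $0$-trajectories share these velocities. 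One small slip: your pointwise bound should carry $\sum_{k}M(\ds{v}_k(0))$ (one Maxwellian per term of $g_{n_K}=\sum_k g(z_k)$), not $\prod_k M(\ds{v}_k(0))$, which is a smaller quantity and hence not a valid upper bound as written; since $M\leq(2\pi)^{-d/2}$ this only costs a harmless factor $C^{n_K}$ and does not affect the conclusion.
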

	\begin{proof}
		We have forbid any recollision, multiple \red{interaction}\blue{encounter}, and overlap. Hence, the velocities of pseudotrajectories of particles of sizes $\e$ and $0$ coincide. Using the inequality $|f(z)-f(z')|\leq( 1\wedge |z|)  \|f\|_1$,
		\blue{\begin{gather*}
		\begin{split}
		\eqref{eq:R1}\leq\frac{\mathfrak{d}^{-n_K+1}}{(n_K-1)!}\sum_{T}\int_{\mathfrak{T}_{\underline{n}_K}\times\mathfrak{G}^\e_{T}}2n_K\mathbb{V}\|h\|_1\|g\|_1\sum_{i=1}^{n_K-1}1\wedge&\tfrac{\e\left|\dr{v}_{q_{i}}(\tau^-_{i})-\dr{v}_{q_{i}}(\tau^-_{i})\right|}{\left|\left(\dr{v}_{q_{i}}(\tau^-_{i})-\dr{v}_{q_{i}}(\tau^-_{i})\right)\times \nu_i\right|^2}\\
		\times\Lambda&(V_{n_K},\nu_{[n_K-1]})\ud\nu_{[n_K-1]} \ud\tau_{[n_K-1]}\ud x_1M^{\otimes n_K}\ud V_{n_K}
		\end{split}\\
		\leq\frac{\left(\tfrac{Ct}{\mathfrak{d}}\right)^{-n_K+1}\mathbb{V}\|h\|_1\|g\|_1}{(n_K!)^2}\sum_{\substack{1\leq i\leq n_K-1\\[3pt]T}}\int_{\mathfrak{G}^\e_{T}}1\wedge\tfrac{\e\left|\dr{v}_{q_{i}}(\tau^-_{i})-\dr{v}_{q_{i}}(\tau^-_{i})\right|}{\left|\left(\dr{v}_{q_{i}}(\tau^-_{i})-\dr{v}_{q_{i}}(\tau^-_{i})\right)\times \nu_i\right|^2} \Lambda_T(V_{n_K},\nu_{[n_K-1]})\ud\nu_{[n_K-1]} M^{\otimes n_K}\ud V_{n_K}.
		\end{gather*}}
		
		We need to bound 
		\blue{\begin{equation}\label{eq:R4}
		\sum_{T}\int_{\mathfrak{G}^\e_{T}}1\wedge\tfrac{\e\left|\dr{v}_{q_{i}}(\tau^-_{i})-\dr{v}_{q_{i}}(\tau^-_{i})\right|}{\left|\left(\dr{v}_{q_{i}}(\tau^-_{i})-\dr{v}_{q_{i}}(\tau^-_{i})\right)\times \nu_i\right|^2} \Lambda_T(V_{n_K},\nu_{[n_K-1]})\ud\nu_{[n_K-1]} M^{\otimes n_K}\ud V_{n_K}
		\end{equation}}
		Note that $\dr{v}^0_q(\tau_i^+)$ does not depend on the $\tau_{[n_K-1]}$, but only on the order of the collisions.
		
		Fix a collision tree $T=(q_i,q'_i,s_i)_i$. We define for $i\in[1,n_K-1]$ the applications $\left(\Xi_T^i\right)_{1\leq i\leq n_K}$ as $\Xi_T^i = \text{id}$ if $i=1$, and
		\begin{equation}\label{eq:def des Xi}
		\Xi_T^i: (V_{n_K},\nu_{[n_K-1]})\mapsto \left\{\begin{split}
		&\Big(v_1,\cdots,\overset{q_{i-1}}{v'_{q_{i-1}}},\cdots,\overset{q'_{i-1}}{v'_{q'_{i-1}}},\cdots,v_{n_K},\nu_1,\cdots,\overset{i-1}{\nu'_{i-1}},\cdots,\nu_{n_K-1}\Big)~{\rm if}~s_i = 1\\
		&\Big(V_{n_K},\nu_1,\cdots,\overset{i-1}{-\nu_{i-1}},\cdots,\nu_{n_K-1}\Big)~{\rm if}~s_i = 1
		\end{split}\right.\end{equation}
		with the new velocities given by the scattering $(v'_{q_{i-1}},v'_{q'_{i-1}},\nu'_{i-1}):=\xi_\alpha((v'_{q_{i-1}},v'_{q'_{i-1}},\nu'_{i-1}))$.
		We have that 
		\[(\dr{V}_{n_K}(\tau_i^-),\nu^i_{[n_K-1]}):=\Xi_T^i\Xi_T^{i-1}\cdots\Xi_T^1 ({V}_{n_K},\nu_{[n_K-1]}).\]
		Using that the Jacobian of the scattering $\xi_\alpha$  is $1$ and the conservation by the scattering of the energy and angular momentum, the Jacobian of the transformation $\Xi_T^i\Xi_T^{i-1}\cdots\Xi_T^1$ is 
		\[\Lambda_T(V_{n_K},\nu_{[n_K-1]}) \ud\nu_{[n_K-1]}\ud x_1M^{\otimes n_K}\ud V_{n_K}\to\Lambda_T^{(i)}(V_{n_K},\nu_{[n_K-1]})\ud\nu_{[n_K-1]} \ud x_1M^{\otimes n_K}\ud V_{n_K}\]
		where we start now the velocity process at time $\tau_i^-$ with $\dr{V}_{n_K}(\tau_i^-) :=V_{n_K}$ and
		\[\Lambda_T^{(i)}(V_{n_K},\nu_{[n_K-1]}):=\prod_{j=1}^{i-1}\left(\left(\ds{v}_{q_j}(\tau_j^+)-\ds{v}_{\bar{q}_j}(\tau_j^+)\right)\cdot\nu_j\right)_+ \prod_{j=i}^{n_{k}-1}\left(\left(\ds{v}_{q_j}(\tau_j^-)-\ds{v}_{\bar{q}_j}(\tau_j^-)\right)\cdot\nu_j\right)_-.\]
		
		\blue{Hence, 
		\[\begin{split}
		\eqref{eq:R4} \leq \sum_{T}\int_{\mathfrak{G}^\e_{T}}1\wedge\frac{\e|v_{q_{i}}-{v}_{q_{i}}|}{\left|\left(v_{q_{i}}-{v}_{q_{i}}\right)\times\nu_i\right|^2}\Lambda_T^{(i)}(V_{n_K},\nu_{[n_K-1]})\ud\nu_{[n_K-1]} M^{\otimes n_K}\ud V_{n_K}.
		\end{split}\]
		Using the usual bound on $\sum_T \Lambda(V_{n_K},\ud\nu_{[n_K-1]})$ that can be adapted to $\Lambda_i$, 	and that for $\vec{e}_1\in\mathbb{S}^{d-1}$,  
		\[\int_{\mathbb{S}^{d-1}} 1\wedge \tfrac{\delta}{|\vec{e}_1\times\sigma|^2}d\sigma \lesssim \int_0^\pi\left( 1\wedge \tfrac{\delta}{|\sin \theta|^2}\right) \sin^{d-2}\theta d\theta \lesssim \delta^{1/2}, \]
		one has
		\begin{multline*}
		\eqref{eq:R4} \leq \sum_{(q,q')}\int(Cn_K|V_{n_K}|^2+1)^{n_K}1\wedge\tfrac{\e|v_{q}-{v}_{q'}|}{\left|\left(v_{q}-{v}_{q'}\right)\times\nu\right|^2} M^{\otimes n_K}\ud V_{n_K}\ud\nu\\
		\leq n_K^2(C'n^2_K)^{n_K}\int\frac{\e^{1/2}e^{-\frac{|V_{n_K}|^2}{4}}\ud{V_{n_K}}}{|v_1-v_2|^{1/2}}\leq\e^{1/2} (C''n^2_K)^{n_K}
		\end{multline*}
		as $\tfrac{1}{|v_1-v_2|^{1/2}}$ is an integrable singularity. This concludes the proof.}
	\end{proof}
	
	Finally, we get for any $h$ and $g$ Lipschitz
	\begin{multline*}
	\int\mu_\e^{n_K-1}\,\Psi^{0,t}_{\underline{n}_K}[h]\,g_{n_K}^\e M^{\otimes n_K}\ud Z_{n_K}	=\frac{\mathfrak{d}^{-n_K+1}}{(n_K-1)!}\sum_{T}\sigma(T)\int_{\mathfrak{T}_{\underline{n}_K}\times\mathfrak{G}^0_{T}}h(\ds{z}^0_{1}(t,T)g_{n_K}(\ds{Z}^0_{n_K}(0,T))\\
	\times\Lambda_T(V_{n_K},\nu_{[n_K-1]})\ud\nu_{[n_K-1]} \ud\tau_{[n_K-1]}~\ud x_1M^{\otimes n_K}\ud V_{n_K}\\
	+O\bigg(\e^{\mathfrak{a}} (\tfrac{Ct}{\mathfrak{d}})^{n_K}\|h\|_1\|g\|_1\bigg). \end{multline*}
	and therefore
	\begin{multline}\label{Estimation morceau 5}
	G_\e^{\text{main}}(t) =\sum_{\substack{n_1\leq\cdots\leq n_K\\n_j-n_{j-1}\leq 2^j}}\frac{\mathfrak{d}^{-n_K+1}}{(n_K-1)!}\sum_{T}\sigma(T)\int_{\mathfrak{T}_{\underline{n}_K}\times\mathfrak{G}^0_{T}}h(\ds{z}^0_{1}(t,T)g_{n_K}(\ds{Z}^0_{n_K}(0,T))\\[-0pt]
	\times\Lambda_T(V_{n_K},\nu_{[n_k-1]})\ud\nu_{[n_k-1]} \ud\tau_{[n_K-1]}~\ud x_1M^{\otimes n_K}\ud V_{n_K}
	+\,O\left(\e^{\mathfrak{a}}  (\tfrac{Ct}{\mathfrak{d}})^{2^{K+1}}\|h\|_1\|g\|_1\right).
	\end{multline}
	
	\subsection{Linearized Boltzmann equation}We identify now the main part of \eqref{Estimation morceau 5}.
	
	Let $\gr{g}_\alpha$ be the solution of the linearized Boltzmann equation 
	\[\begin{split}
	\partial_t\gr{g}_\alpha(t)+v\cdot\nabla_x\gr{g}_\alpha(t) &= \frac{1}{\mathfrak{d}}\mathcal{L}_\alpha \gr{g}_\alpha(t),\\
	\gr{g}_\alpha(t=0) &= g
	\end{split}\]
	where $\mathcal{L}_\alpha $ is the linearized Boltzmann operator associated to the potential $\alpha\mathcal{V}(\cdot)$
	\[\mathcal{L}_\alpha g(v):=\int_{\mathbb{R}^d\times\mathbb{S}^{d-1}} \big(g(v')+g(v_*')-g(v)-g(v_*)\big)M(v_*)\big((v-v_*)\cdot\nu\big)_+\ud\nu\,\ud v_*.\]
	
	This equation can be rewritten in the Duhamel form:
	\[\gr{g}_\alpha(t) = S(t)g+\frac{1}{\mathfrak{d}}\int_0^t S(t-\tau_1)\mathcal{L}_\alpha\gr{g}_\alpha(\tau_1)\ud\tau_1\]
	where $S(\tau)$ is the free transport 
	\[S(\tau)g(x,v)=g(x-tv,v).\]
	
	We iterate this formula, but we still want to cut the cases with too many collisions in a short time interval (as for the particle system). Let's define
	\[Q_{m,n}(\tau)[g] = \frac{1}{\mathfrak{d}^{m-n}}\int_0^\tau \ud\tau_{n}\int_0^{\tau_{n}}\cdots\int_0^{t_{m+2}}\ud\tau_{m+1} S(t-\tau_{n})\mathcal{L}_\alpha S(\tau_{n}-\tau_{n-1})\cdots\mathcal{L}_\alpha S(\tau_{m+1})g,\]
	and for $\underline{n}_k:=(n_1,\cdots,n_k)$ with $1\leq n_1\leq \cdots\leq n_k$,
	\[Q_{\underline{n}_k}(\tau)g=Q_{1,n_1}(\tfrac{\tau}{k})Q_{n_1,n_2}(\tfrac{\tau}{k})\cdots Q_{n_{k-1},n_k}(\tfrac{\tau}{k})[g].\]
	We have
	\begin{equation}\label{decompo g(t)}
	\begin{split}
	\gr{g}(t)&=\sum_{\substack{n_1\leq\cdots\leq n_K\\n_j-n_{j-1}\leq 2^j}} Q_{\underline{n}_k}(t) [g]+\sum_{k=1}^K \sum_{\substack{n_1\leq\cdots\leq n_{k-1}\\n_j-n_{j-1}\leq 2^j}}\sum_{n_k>+n_{k-1}+2^k}Q_{\underline{n}_k}(k\theta) [\gr{g}_\alpha(t-k\theta)].
	\end{split}
	\end{equation}
	
	In a first time, we bound the term of the sum: we have the classical estimates
	\begin{prop}
		There exists a constant $C$ such that for any $g\in L^2(M(v)dz)$, and $\underline{n}:=(n_1,\cdots,n_k)$,
		\begin{equation}
		\big\|Q_{\underline{n}}(k\theta)g \big\|_{L^2(M^2(v)dz)}\leq \big(\tfrac{C(k-1)\theta}{\mathfrak{d}}\big)^{\frac{n_{k-1}}{2}}\big(\tfrac{C\theta}{\mathfrak{d}}\big)^{\frac{n_k-n_{k-1}}{2}}\,\|g\|_{L^2(M(v)dz)}.
		\end{equation}
	\end{prop}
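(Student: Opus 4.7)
The plan is to bound each factor in the composition $Q_{\underline{n}_k}(k\theta) = Q_{1,n_1}(\theta)\circ\cdots\circ Q_{n_{k-1},n_k}(\theta)$ separately in operator norm on $L^2(M(v)\ud z)$, then take the product. Since $M\leq (2\pi)^{-d/2}$, we have $L^2(M\ud z) \hookrightarrow L^2(M^2\ud z)$ continuously, so an $L^2(M\ud z)\to L^2(M\ud z)$ operator bound is sufficient for the stated conclusion.

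For a single factor, the target is
\[
\|Q_{m,n}(\theta)\|_{L^2(M\ud z)\to L^2(M\ud z)} \leq \left(\frac{C\theta}{\mathfrak{d}}\right)^{(n-m)/2}.
\]
Three structural ingredients drive this estimate: (i) the free transport $S(\tau)$ is an isometry of $L^2(M\ud z)$; (ii) $\mathcal{L}_\alpha$ is self-adjoint and non-positive on $L^2(M(v)\ud v)$ (as noted after \eqref{linearized Boltzmann}); (iii) the full semigroup solving \eqref{eq:Linearized Boltzmann equation} is therefore an $L^2$-contraction. Writing $Q_{m,n}(\theta)$ as the $(n-m)$-th term of the Dyson expansion of this semigroup, a naive operator-norm bound combined with time-ordered integration would yield $(C\theta/\mathfrak{d})^{n-m}/(n-m)!$. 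The sharper square-root exponent is extracted by considering $(Q_{m,n})^\ast Q_{m,n}$ and applying Cauchy--Schwarz on the time integrals together with the dissipation identity $\langle f,-\mathcal{L}_\alpha f\rangle\geq 0$; this is the standard $L^2$-type gain that replaces the short-time $L^1$ factorial bound used in Lanford-style proofs, in the same spirit as the quasi-orthogonality arguments of \cite{BGSS1,BGSS2,LeBihan2}.

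For the composition one applies this bound to the terminal factor $Q_{n_{k-1},n_k}(\theta)$, obtaining $(C\theta/\mathfrak{d})^{(n_k-n_{k-1})/2}$ directly. The remaining $k-1$ factors are grouped by rewriting their composition as a single Dyson expansion over total time $(k-1)\theta$ containing $n_{k-1}-1$ collision operators, so the same single-factor argument applied at the larger time scale yields $\big(C(k-1)\theta/\mathfrak{d}\big)^{n_{k-1}/2}$. Multiplying the two estimates produces the announced bound.

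The main obstacle is implementing the square-root gain rigorously, since $\mathcal{L}_\alpha$ is not uniformly bounded on $L^2(M\ud v)$: the loss part carries a collision frequency $\nu_\alpha(v)$ that grows with $|v|$. The resolution is not to treat $\mathcal{L}_\alpha$ by brute operator norm, but rather to exploit its sign structure, absorbing the unbounded loss contribution into the dissipation quadratic form $-\langle f,\mathcal{L}_\alpha f\rangle$ and only using the gain part through its boundedness against the Maxwellian weight. Once this is done, the Cauchy--Schwarz-in-time argument produces the exponent $(n-m)/2$ robustly in $\alpha$.
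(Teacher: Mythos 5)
Your architecture (estimate the last block, group the first $k-1$ blocks into a single Duhamel term over time $(k-1)\theta$, multiply) matches the shape of the stated bound, and the grouping step is sound. The gap is in the two analytic ingredients. First, the opening reduction to an $L^2(M\ud z)\to L^2(M\ud z)$ operator bound discards exactly the mechanism that makes the proposition true, and the reduced statement is false. Indeed, take $g=g(v)$ independent of $x$ and concentrated near $|v|=R$: then $S(s)g=g$, so $Q_{1,2}(\theta)g=\tfrac{\theta}{\mathfrak{d}}\mathcal{L}_\alpha g$, and since the loss part of $\mathcal{L}_\alpha$ is multiplication by the collision frequency $\int((v-v_*)\cdot\nu)_+M(v_*)\ud\nu\ud v_*\sim 1+|v|$, one gets $\|Q_{1,2}(\theta)g\|_{L^2(M\ud z)}\gtrsim \tfrac{\theta R}{\mathfrak{d}}\|g\|_{L^2(M\ud z)}$, which exceeds $(C\theta/\mathfrak{d})^{1/2}\|g\|_{L^2(M\ud z)}$ for $R$ large. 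The change of Gaussian weight from $M$ to $M^2$ in the target norm is not cosmetic: the extra factor $M^{1/2}=e^{-|v|^2/4}$ is precisely what absorbs the polynomial velocity growth $(1+|v|)^{n_k-1}$ accumulated by the $n_k-1$ collision frequencies, and any proof must carry this weight change through rather than dropping it at the outset.

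Second, the $T^*T$-plus-dissipation mechanism you propose for the half power does not close. The identity $\tfrac{\ud}{\ud s}\|U(s)g\|^2=\tfrac{2}{\mathfrak{d}}\langle U(s)g,\mathcal{L}_\alpha U(s)g\rangle$ telescopes only along the full semigroup $U$, whereas inside $Q_{m,n}$ consecutive collision operators are separated by \emph{free} transport; for $g=g(v)$ the Dirichlet form $\langle S(s)g,-\mathcal{L}_\alpha S(s)g\rangle$ is constant in $s$ and can exceed any fixed multiple of $\|g\|_{L^2(M\ud z)}^2$, so $\tfrac{1}{\mathfrak{d}}\int_0^\theta\langle S(s)g,-\mathcal{L}_\alpha S(s)g\rangle\ud s$ is not controlled by $\|g\|^2$. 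The proof the paper actually invokes (Proposition 7.5 of \cite{LeBihan2}, in the spirit of \cite{BGS}) obtains the half power by a Cauchy--Schwarz at the level of the collision kernel itself: each cross-section $b=((v-v_*)\cdot\nu)_+M(v_*)$ is split as $\sqrt{b}\cdot\sqrt{b}$, one copy being integrated over the time simplex, angles and velocities to yield $(C\theta(1+|v|)/\mathfrak{d})^{n_k-1}$ (whose velocity growth is then absorbed by the weight $M^2$), while the other copy carries $|g|^2$ and, via the invariance of $M\otimes M$ under the scattering map $\xi_\alpha$, returns only $C^{n_k}\|g\|^2_{L^2(M\ud z)}$ with no additional smallness --- whence a single factor of $\theta/\mathfrak{d}$ per two collisions in the squared norm. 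You correctly identified the unbounded collision frequency as the obstacle, but the resolution lies in the weighted Cauchy--Schwarz on the kernel and the $M\to M^2$ weight change, not in the dissipation quadratic form.
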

	The proof is the same as the one of Proposition 7.5 of \cite{LeBihan2}.
	
	Because $\gr{g}_\alpha(t)$ is bounded in $L^\infty_tL^2(M(v)dz)$ by $\|g\|_{L^2(M(v)dz)}\leq C\|g\|_0$, we can bound the rest term of \eqref{decompo g(t)} by
	\begin{equation}\label{eq: estimation du dernier reste}
	\begin{split}
		\Bigg| \sum_{k=1}^K \sum_{\substack{n_1\leq\cdots\leq n_{k-1}\\n_j-n_{j-1}\leq 2^j}}&\sum_{n_k>+n_{k-1}+2^k}\int h(z)Q_{\underline{n}_k}(k\theta) [\gr{g}_\alpha(t-k\theta)](z)M(v)\ud z\Bigg|\\
		&\leq \sum_{k=1}^K \sum_{\substack{n_1\leq\cdots\leq n_{k-1}\\n_j-n_{j-1}\leq 2^j}}\sum_{n_k>+n_{k-1}+2^k}\big(\tfrac{C(k-1)\theta}{\mathfrak{d}}\big)^{\frac{n_{k-1}}{2}}\big(\tfrac{C\theta}{\mathfrak{d}}\big)^{\frac{n_k-n_{k-1}}{2}}\,\|g\|_0\|h\|_0\\
		&\leq \sum_{k=1}^K \sum_{\substack{n_1\leq\cdots\leq n_{k-1}\\n_j-n_{j-1}\leq 2^j}}\sum_{n_k>2^k+n_{k-1}} \left(C\tfrac{\theta t}{\mathfrak{d}^2}\right)^{\frac{n_k-n_{k-1}}{2}}\|g\|_0\|h\|_0\\
		&\leq C \sum_{k=1}^K 2^{k^2}\left(C\tfrac{\theta t}{\mathfrak{d}^2}\right)^{2^{k-1}}\|g\|\|h\|\leq C\tfrac{\theta t}{\mathfrak{d}^2}\|g\|_0\|h\|_0.
	\end{split}
	\end{equation}
	The series converges since $\tfrac{\theta t}{\mathfrak{d}^2}<1$.

	The final step is the identification of the main part in \eqref{decompo g(t)}:
	\begin{prop}
		Fix $\underline{n}_K:=(n_1,\cdots,n_K)$ an increasing sequence of integer. Then 
		\begin{multline}
		\int_{\mathbb{D}} h(z)Q_{\underline{n}_k}(t)[g](z) M(v)dz = \frac{\mathfrak{d}^{-n_k+1}}{(n_k-1)!}\sum_{T}\sigma(T)\int_{\mathfrak{T}_{\underline{n}_K}\times\mathfrak{G}^0_{T}}h(\ds{z}^0_{1}(t,T)g_{n_k}(\ds{Z}^0_{n_k}(0,T))		\\
		\times\Lambda_T(V_{n_K},\nu_{[n_k-1]})\ud\nu_{[n_k-1]} \ud\tau_{[n_k-1]}\ud x_1M^{\otimes n_k}\ud V_{n_k}.
		\end{multline}
	\end{prop}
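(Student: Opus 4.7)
The plan is to identify the Duhamel expansion of $Q_{\underline{n}_k}(t)[g]$ paired with $h$ against $M(v)\,dz$ with the sum over collision trees, via an explicit change of variables. The first step is to decompose the linearized Boltzmann operator into contributions indexed by a sign $s\in\{\pm1\}$,
\begin{equation*}
\mathcal{L}_\alpha f(v)=\sum_{s\in\{\pm1\}}s\int_{\mathbb{R}^d\times\mathbb{S}^{d-1}}\bigl(f(v'_s)+f((v_*)'_s)\bigr)M(v_*)\bigl((v-v_*)\cdot\nu\bigr)_+\,d\nu\,dv_*,
\end{equation*}
with $(v'_{+1},(v_*)'_{+1})=\xi_\alpha(v,v_*,\nu)$ (scattering) and $(v'_{-1},(v_*)'_{-1})=(v,v_*)$ (no interaction); the signs $s$ will give rise to the factor $\sigma(T)=\prod_j \bar s_j$ on the right-hand side.

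Inserting this decomposition into each of the $n_k-1$ copies of $\mathcal{L}_\alpha$ in the Duhamel formula for $Q_{\underline{n}_k}(t)[g]$ produces, at the $j$-th intermediate time $\tau_j$, a new integration variable $v^j_*$ weighted by $M(v^j_*)$, an impact parameter $\nu_j\in\mathbb{S}^{d-1}$ weighted by $((v-v^j_*)\cdot\nu_j)_+$, a sign $s_j$, and a binary choice of which outgoing velocity ($v'_{s_j}$ or $(v^j_*)'_{s_j}$) is carried into the next free-transport block. Summed over all these choices, the expansion is indexed by the data of a collision tree $T=(q_j,\bar q_j,\bar s_j)_{j=1}^{n_k-1}$, where $q_j<\bar q_j$ labels the two particles colliding at time $\tau_j$ and $\bar s_j$ encodes the interaction type. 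The time ordering $0<\tau_1<\cdots<\tau_{n_k-1}<t$ together with the sampling condition $\mathfrak{n}(t-j\theta)=n_j$ --- built into $Q_{\underline{n}_k}$ through its definition as a composition $Q_{1,n_1}\circ\cdots\circ Q_{n_{k-1},n_k}$ over the successive windows of length $\theta$ --- cut out exactly the domain $\mathfrak{T}_{\underline{n}_K}$.

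Next I would change variables from $(\tau_j,\nu_j,v^j_*)_{j=1}^{n_k-1}$, together with the starting coordinates $(x_1,v_1)$, to the initial configuration $(X_{n_k},V_{n_k})\in\mathbb{D}^{n_k}$ of the point-particle pseudotrajectory $\ds{Z}^0_{n_k}(\cdot,T)$ introduced at the beginning of Section~\ref{sec:Treatment of the main part}. The initial position $x_j$ of each new particle is reconstructed by reversing the free flow from its first collision time, where its position coincides with its partner's in the $\e=0$ picture, through the subsequent segments. The Jacobian of this change of variables is exactly $\Lambda_T(V_{n_k},\nu_{[n_k-1]})$ (as in the computation preceding Lemma~\ref{Borne sur la taille des parrametres d'arbre}), the product $\prod_j M(v^j_*)$ combines with the original $M(v)$ to give $M^{\otimes n_k}\,dV_{n_k}$, and the positivity constraints $((v_{q_j}-v_{\bar q_j})\cdot\nu_j)_+>0$ together with the non-pathology conditions carve out $\mathfrak{G}^0_T$.

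It remains to reconcile the combinatorial and sign factors. The Duhamel iteration naturally orders the $n_k-1$ new particles by their entry into the tree; identifying this ordered labeling with the unordered labels $\{2,\dots,n_k\}$ of the pseudotrajectory yields the $\frac{1}{(n_k-1)!}$ prefactor, while the $(n_k-1)$ factors of $\frac{1}{\mathfrak d}$ coming from the PDE scaling assemble into $\mathfrak{d}^{-n_k+1}$. The binary branching $f(v'_{s_j})+f((v^j_*)'_{s_j})$ at the \emph{final} iteration, combined with the symmetric role of every particle in the tree, produces the sum $g_{n_k}(\ds{Z}^0_{n_k}(0,T))=\sum_{k=1}^{n_k}g(\ds{z}^0_k(0,T))$: each particle occurs exactly once as the end-of-chain particle paired with $g$ along some sequence of binary choices. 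The main obstacle I expect is precisely this bookkeeping --- verifying that summing over the $2^{n_k-1}$ branching choices together with the freedom in labeling $q_j<\bar q_j$ reproduces the symmetric sum $g_{n_k}$ with exactly the prefactor $\frac{1}{(n_k-1)!}$, without spurious over- or under-counting. Since we work with pointlike particles, pathological configurations are of Lebesgue measure zero on $\mathfrak{G}^0_T$, so the restriction to non-pathological pseudotrajectories is automatic.
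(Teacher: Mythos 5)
Your change of variables $(\tau_j,\nu_j,v_*^j)_j\mapsto (X_{n_k},V_{n_k})$ and the identification of the Jacobian with $\Lambda_T$ are correct, and your overall strategy (match the Duhamel iteration with the tree expansion) is the right one, run in the direction opposite to the paper's (the paper starts from the tree side and reduces to the Duhamel side). However, there is a genuine gap in the step where you claim that "summed over all these choices, the expansion is indexed by the data of a collision tree $T$." The backward Duhamel iteration only generates trees with a \emph{causal chain} structure: each application of $\mathcal{L}_\alpha$ makes the currently followed particle collide with a freshly introduced one, and $g$ is evaluated at the particle followed at time $0$. The right-hand side, by contrast, sums over \emph{all} collision trees $T$ and, through $g_{n_k}=\sum_{q_f}g(\ds{z}^0_{q_f}(0,T))$, over all choices of final particle $q_f$. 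Many of these pairs $(T,q_f)$ have no counterpart in the Duhamel expansion: for instance with $n_k=3$, the tree with collisions $(2,3)$ at $\tau_1$ and $(1,2)$ at $\tau_2>\tau_1$ paired with $q_f=1$ cannot arise from iterating $\mathcal{L}_\alpha$, since particle $1$'s history backward from time $t$ never reaches particle $3$.

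The identity therefore does not hold term by term; it holds only after the spurious trees cancel among themselves, and this cancellation is the heart of the paper's proof. The mechanism is the following: if some particle $\bar q$ with a single collision fails the causality condition (it does not both influence particle $1$ forward in time and get influenced by $q_f$), one pairs $T$ with the tree $\tilde T$ obtained by flipping the interaction sign $\bar s_i$ of that collision. Since that collision cannot alter either $\ds{z}^0_1(t,\cdot)$ or $\ds{z}^0_{q_f}(0,\cdot)$, the integrands coincide while $\sigma(T)=-\sigma(\tilde T)$, so the two contributions cancel exactly. Only the causal trees survive, and these are in bijection (up to the $(n_k-1)!$ relabelings that cancel the prefactor) with the Duhamel terms, the four sign combinations matching the four pieces $g(v')$, $g(v_*')$, $-g(v)$, $-g(v_*)$ of $\mathcal{L}_\alpha$. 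You flag the counting as "bookkeeping," but without this sign cancellation the count cannot come out right: the set of trees on the two sides simply does not match, and no relabeling argument alone can fix that.
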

	\blue{\begin{remark}
			It is the counterpart of the Step 2 of the proof of Proposition IV.1 of \cite{BGSS5}. 
		\end{remark}
		\begin{proof}
		We fix for the moment the collision times $(\tau_i)_i$.
		
		\begin{definition}
			Fix a collision tree $T:=(q_i,\bar{q}_i,\bar{s}_i)_{i\leq n_k-1)}$ and a  final particle $q_f$. We say that a sequence $(i_1,\cdots,i_\ell)$ is \emph{causal} if
			\[i_1<\cdots< i_\ell,~\forall j<\ell,~\{q_{i_{j}},\bar{q}_{i_{j}}\}\cap\{q_{i_{j+1}},\bar{q}_{i_{j+1}}\}\neq\emptyset.\]
			A particle $\bar{q}$ \emph{influences} the particle $1$ (respectively $q_f$) if there exists a causal sequence $(i_1,\cdots,i_\ell)$ such that $\bar{q}\in\{q_{i_1},\bar{q}_{i_1}\}$ and $1\in\{q_{i_\ell},\bar{q}_{i_\ell}\}$ (respectively $q_f\in\{q_{i_1},\bar{q}_{i_1}\}$ and $\bar{q}\in\{q_{i_\ell},\bar{q}_{i_\ell}\}$).
		\end{definition}

		We begin by developing $g_{n_K}$:
		\begin{multline}\label{eq: somme sur tout les arbre}
		\sum_{T}\sigma(T)\int_{\mathbb{G}_T^0}h(\ds{z}^0_{1}(t,T))g_{n_k}(\ds{Z}^0_{n_k}(0,T))\Lambda_T(V_{n_K},\nu_{[n_k-1]})\ud\nu_{[n_k-1]} \ud\tau_{[n_k-1]}\ud x_1M^{\otimes n_k}\ud V_{n_k}\\
		=\sum_{q_f=1}^{n_k}\sum_{T}\sigma(T)\int_{\mathbb{G}_T^0}h(\ds{z}^0_{1}(t,T))g(\ds{z}^0_{q_f}(0,T))\Lambda_T(V_{n_K},\nu_{[n_k-1]})\ud\nu_{[n_k-1]} \ud\tau_{[n_k-1]}\ud x_1M^{\otimes n_k}\ud V_{n_k}.
		\end{multline}
		
		Suppose that there exists a particle $\bar{q}$ that does not influence $1$. If $\bar{q}$ has more than two neighbor particles, one of them is farther from the particle $1$ in the graph $T$ (the graph $T$ is a tree). Then, this new particle does not influence $1$. We deduce that without loss of generality, we can suppose that $q$ has only one neighbor. Then, it has only one collision $\bar{\iota}$.
		
		We now use the application $\Xi_T^i$ defined in \eqref{eq:def des Xi}. We recall that 
		\[\Xi_T^{\bar{\iota}}\Xi_T^{\bar{\iota}-1}\cdots\Xi_T^{1}(V_{n_K},\nu_{[n_K-1]}) = (\tilde{V}_{n_K}=\ds{V}_{n_K}(\tau_{\bar{\iota}}^-),\tilde{\nu}_{[n_K-1]}).\]
		
		In a second time, for a fixed $(V_{n_K},\nu_{[n_K-1]})$, we perform the translation $x_1\mapsto \tilde{x}_{1}:=\ds{x}_1(\tau_{\bar{\iota}})$.
		The Jacobian of $\tau_{\ds{x}_i(\tau_i)}\Xi_T^{\bar{\iota}}\Xi_T^{\bar{\iota}-1}\cdots\Xi_T^{1}$ is 
		\[\Lambda_T(V_{n_K},\nu_{[n_K-1]}) \ud\nu_{[n_K-1]}\ud x_1M^{\otimes n_K}\ud {V}_{n_K}\to\Lambda_T^{(i)}(\tilde{V}_{n_K},\nu_{[n_K-1]}) \ud\nu_{[n_K-1]}\ud \tilde{x}_1M^{\otimes n_K}\ud \tilde{V}_{n_K}.\]
		We start now the velocity process at time $\tau_i^-$ with $\dr{V}_{n_K}(\tau_i^-) :=\tilde{V}_{n_K}$ and
		\[\Lambda_T^{(i)}(\tilde{V}_{n_K},\nu_{[n_K-1]}):=\prod_{j=1}^{i-1}\left(\left(\ds{v}_{q_j}(\tau_j^+)-\ds{v}_{\bar{q}_j}(\tau_j^+)\right)\cdot\nu_j\right)_+ \prod_{j=i}^{n_{k}-1}\left(\left(\ds{v}_{q_j}(\tau_j^-)-\ds{v}_{\bar{q}_j}(\tau_j^-)\right)\cdot\nu_j\right)_-.\]
		
		We pair $T$ with the tree $\tilde{T}$ as 
		\[\tilde{T}:=\left\{\begin{aligne}{cl}
		(q_j,\bar{q}_j,\bar{s}_j) &~\text{for}~j\neq i\\
		(q_j,\bar{q}_j,-\bar{s}_j) &~\text{for}~j= i.
		\end{aligne}\right.\]
		Then $\sigma(T) = -\sigma(\tilde{T})$, and for same $(\tilde{V}_{n_K},\tau_{[n_K-1]},\nu_{[n_K-1]})$, we have $\ds{z}^0_{1}(t,T)=\ds{z}^0_{1}(t,\tilde{T})$ and $\ds{z}^0_{q}(0,T)=\ds{z}^0_{q}(0,\tilde{T})$. We have $\Lambda_T^{(i)}(V_{n_K},\nu_{[n_K-1]})= \Lambda_{\tilde{T}}^{(i)}(V_{n_K},\nu_{[n_K-1]})$. Thus 
		\begin{multline*}
		\sigma(T)\int_{\mathbb{G}_T^0}h(\ds{z}^0_{1}(t,T))g(\ds{z}^0_{q}(0,T))\Lambda(V_{n_K},\nu_{[n_K-1]}) \ud\nu_{[n_K-1]}\ud x_1M^{\otimes n_K}\ud V_{n_K}\\
		=-\sigma(\tilde{T})\int_{\mathbb{G}_{\tilde{T}}^0}h(\ds{z}^0_{1}(t,\tilde{T}))g(\ds{z}^0_{q}(0,\tilde{T}))\Lambda(V_{n_K},\nu_{[n_K-1]})\ud\nu_{[n_K-1]}\ud x_1M^{\otimes n_K}\ud V_{n_K}.
		\end{multline*}
		
		The same strategy can be reproduced if there exists some particle that does not influence the final particle $q_f$.	Hence, it remains only in \eqref{eq: somme sur tout les arbre} the trees such that every particle influences both $1$ and $q_f$. The other terms are exactly compensated. 
		
		For a remaining tree $T=(q_i,\bar{q}_i, \nu_i)_i$ we can prove the following  lemma:
		\begin{lemma}
			For all $i$, the set $\{q_i,\bar{q}_i\}\cap\{q_{i+1},\bar{q}_{i+1}\}$ has exactly one element.
		\end{lemma}
		\begin{proof}
			First, there exists a causal sequence $(q_{i_1},\bar{q}_{i_1}), \cdots, (q_{i_k},\bar{q}_{i_k})$ such that $q_f\in\{q_{i_1},\bar{q}_{i_1}\}$ and $1\in\{q_{i_k},\bar{q}_{i_k}\}$.
			
			Consider $q\notin \bigcup_l \{(q_{i_k},\bar{q}_{i_k})\}$. There exist two causal paths $(q_{j_1},\bar{q}_{j_1}), \cdots, (q_{j_\ell},\bar{q}_{j_\ell})$ from $q_f$ to $q$ and $(q_{j'_1},\bar{q}_{j'_1}), \cdots, (q_{j'_{\ell'}},\bar{q}_{j'_{\ell'}})$ from $q$ to $1$. We define $\ell_0$ such that $\forall l\leq \ell_0$, $(q_{j_{\ell-l+1}},\bar{q}_{j_{\ell-l+1}})=(q_{j'_l},\bar{q}_{j'_l})$. The sequence  $(q_{j'_1},\bar{q}_{j'_1}), \cdots, (q_{j'_{\ell_0}},\bar{q}_{j'_{\ell_0}})$ is both increasing and decreasing, thus $\ell_0 \leq 1$. The sequence  $(q_{j'_1},\bar{q}_{j'_1}), \cdots, (q_{j_{\ell-\ell_0}},\bar{q}_{j_{\ell-\ell_0}}),(q_{j'_{\ell_0+1}},\bar{q}_{j'_{\ell_0+1}}), \cdots, (q_{j'_{\ell'}},\bar{q}_{j'_{\ell'}})$ is a causal path from $q_f$ to $1$. We deduce that $\ell_0 = 1$. 
			
			Finally, for any $l\leq k-1$, $j \in ]i_l,i_{l+1}[$, one $\{q_j,\bar{q}_j\}\cap\{q_{i_l},\bar{q}_{i_l}\}\cap\{q_{i_{l+1}},\bar{q}_{i_{l+1}}\}$. The conclusion follows
		\end{proof}
		Using the lemma, we can construct the sequences $(\tilde{q}_i)_{0\leq i\leq n_K-1}$,  $(\tilde{q}'_i)_{1\leq i\leq n_K-1}$ and  $(\tilde{s}_i)_{1\leq i\leq n_K-1}$ by 
		\[\tilde{q}_0 := q_f,~\tilde{q}_{n_K-1}:=1,~\{\tilde{q}_i\}:=\{q_i,\bar{q}_i\}\cap\{q_{i+1},\bar{q}_{i+1}\},~\{\tilde{q}'_i\}:=\{q_i,\bar{q}_i\}\setminus\{\tilde{q}_i\}\]
		and 
		\[\tilde{s}_i:=\left\{\begin{split}
		&\,1~\text{if}~q_i=q_{i-1}\\
		&-1~\text{else}.
		\end{split}\right.\]
		
		The sequence $(\tilde{q}'_i)_i$ encodes the order in which particles collide. In addition, we can reconstruct $T$ for a given sequence $(\bar{s}_i,\tilde{s}_i,\tilde{q}_i')_i$.
		
		We reorder the particles such that $\tilde{q}'_i =n_K-i$ (there are $(n_K-1)!$ possibilities).
				
		Finally, we have to identify the four possible $(\bar{s}_i,\tilde{s}_i)_i$ with the four parts of $\mathcal{L}_\alpha$: $(1,1)$ with $g(v')$ (we follow  the same particle that is deviated by the collision), $(1,-1)$ with $-g(v)$ (we follow  the same particle that is not deviated by the collision), $(-1,1)$ with $g(v_*')$ and $(-1,-1)$ with $-g(v_*)$. There are $(n_K-1)!$ possible sequences $(\tilde{q}_i')_i$.
		
		We conclude that
		\begin{equation*}\begin{split}
		\frac{1}{(n_K-1)!}\sum_{T}\sigma(T)\int_{\mathbb{G}_T^0}h(&\ds{z}^0_{1}(t,T))g_{n_K}(\ds{Z}^0_{n_K}(0,T))\Lambda(V_{n_K},\nu_{[n_K-1]})\ud\nu_{[n_K-1]}\ud x_1M^{\otimes n_K}\ud V_{n_K}\\
		&=\int_{\mathbb{D}} h(z)S(t-\tau_{n_K-1})\mathcal{L}_\alpha S(\tau_{n_K-1}-\tau_{n_K-2})\cdots\mathcal{L}_\alpha S(\tau_{1})g(z)~M(v)\ud z.
		\end{split}\end{equation*}
		We obtain the expected result by integrating with respect to $(\tau_1,\cdots,\tau_{n_K-1})$.
	\end{proof}}
	
	Combining the preceding proposition and the estimations \eqref{eq: estimation du dernier reste} and \eqref{Estimation morceau 5}, we obtain:
	\begin{equation}\label{Estimation morceau 6}
	G^{\rm main}_\e(t)= \int_{\mathbb{D}}h(z)\gr{g}_\alpha(t,z)M(z)\ud z+O\left(\left(\tfrac{\theta t}{\mathfrak{d}^2}+\e^{\mathfrak{a}} K2^{K^2} (\tfrac{Ct}{\mathfrak{d}})^{2^{K+1}}\right)\|h\|_1\|g\|_1\right).
	\end{equation}

	\section{Estimation of non-pathological recollisions}\label{Estimation of the long range recollisions.}
	
	In the last two sections, we estimate the error terms where the pseudotrajectory can have a recollision. We begin with the case of non-pathological recollision.
	\begin{equation*}
	G^{\text{rec},1}_\e(t)=\sum_{\substack{0\leq k\leq K-1\\1\leq k'\leq K'}}\sum_{\substack{(n_j)_{j\leq k}\\0\leq n_j-n_{j-1}\leq 2^j}}\sum_{n_{k+2}\geq n_{k+2}\geq n_k}\mathbb{E}_\e\left[\frac{1}{\sqrt{\mu}}\sum_{\ui_{n_k}}\Psi^{>,t-t_s}_{\underline{n}_{k+1}}[h]\left(\gr{Z}_{\ui_{n_k}}(t_s)\right)\zeta_\e^0(g)\ind_{\Upsilon_\e}\right]
	\end{equation*}
	where $t-t_s = k\theta+k'\theta'$.
	
	\begin{prop}
		\blue{There exists a constant $\mathfrak{a}\in(0,1))$, depending only on the dimension, such that} for $\e$ small enough, 
		\begin{equation}\label{Estimation morceau 3}\begin{split}
		\left|G_\e^{\text{rec},1}(t)\right|	&\leq  \|g\|\|h\| \e^{{\mathfrak{a}}/2} (C't)^{2^{t/\theta}+d+9}.
		\end{split}\end{equation}
	\end{prop}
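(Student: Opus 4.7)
The plan is to estimate $G_\e^{\text{rec},1}$ by the same quasi-orthogonality machinery used for $G_\e^{\text{exp}}$ and $G_\e^{\text{clust}}$ in Section \ref{Clustering estimations}, extracting an additional smallness factor $\e^{\mathfrak{a}}$ from the constraint that the pseudotrajectory contains at least one non-pathological recollision. First I would fix $k$, $k'$ and $\underline{n}_{k+1}$, and apply Corollary \ref{Corollaire utilisant la quasi orthogonalite} to reduce the expectation to $L^1(M^{\otimes n}e^{-\mathcal{H}_n}\ud{Z_n})$-type bounds on $\Psi^{>,t-t_s}_{\underline{n}_{k+1}}[h]$ and on the self-convolution $\Psi^{>,t-t_s}_{\underline{n}_{k+1}}[h] \circledast_m \Psi^{>,t-t_s}_{\underline{n}_{k+1}}[h]$, in the same form as Proposition \ref{prop:estimation sans reco}.

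Next I would parametrize the pseudotrajectories. Since non-pathological recollisions occur (by the definition of $\mathcal{R}^{>,t-t_s}_{\{1\},\omega,(s_k)_k}$) but $G_1^{[0,\delta]}$ and $G_2^{[\delta,t-t_s]}$ have no cycle, the skeleton of the collision graph is almost a tree: it has exactly $n_{k+1}-1$ tree edges plus one extra edge corresponding to the recollision. For the tree edges I would perform the change of variables $\hat{x}_i=x_{q_i}-x_{\bar q_i}\mapsto(\tau_i,\nu_i)$ exactly as in the proof of \eqref{Estimation sans reco 1}, producing the factor $(C/(\mu\mathfrak{d}))^{n_{k+1}-1}$ together with the time volume $\theta^{n_{k+1}-n_k}((k+1)\theta)^{n_k-1}$. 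The key new ingredient is to handle the extra recollision edge: the constraint that the two involved particles meet a second time defines a set of parameters whose measure is $O(\e^{\mathfrak{a}})$ by the geometric estimates from Annex \ref{sec: Geometric estimates} (analogous to Lemma \ref{Borne sur les chevauchements}).

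The hard part will be precisely this geometric lemma. Because we are on a time interval of length at most $\theta<1/(Ct^2)$ and the velocities are bounded by $\mathbb{V}=|\log \e|$ on $\Upsilon_\e$, the recollision condition reduces, after factoring out the first collision of each of the two colliding particles, to a transversality estimate on a quadratic form in the $\nu$ variables of the two involved collisions. Integrating this constraint in place of the trivial bound used in Section \ref{Clustering estimations} yields the extra factor $\e^{\mathfrak{a}}$; the polynomial blow-up $t^{2d+6}$ in the statement absorbs the negative powers of velocity differences that appear when bounding the transversal direction, analogously to the proof of \eqref{eq: borne R1}. The self-convolution bound is obtained by pairing each collision tree $T_a$ of the first copy with the clustering tree $T_b$ of the second (as in the proof of \eqref{Estimation sans reco 2}) and applying the same geometric gain to whichever copy carries the recollision.

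Finally, I would sum over $m$, over $\omega$ and $(s_k)_k$, and collect the Cauchy--Schwarz bound from the corollary. Summing on $k\in[0,K-1]$, $k'\in[1,K']$, $(n_j)_{j\le k}$ with $n_j-n_{j-1}\le 2^j$ and on $n_{k+1},n_{k+2}$ (which are controlled since the recollision only adds a bounded number of collisions) produces at most $n_{k+1}\le 2^{K+1}$ particles, giving the factor $(C't)^{2^{t/\theta}}$. The additional $(Ct)^{2d+6}$ absorbs the polynomial time factors coming from the recollision constraint, and the remaining $\e^{\mathfrak{a}/2}$ factor (after Cauchy--Schwarz takes a square root of $\e^{\mathfrak{a}}$) produces the stated bound \eqref{Estimation morceau 3}.
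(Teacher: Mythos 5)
Your proposal follows essentially the same route as the paper: reduction via Corollary \ref{Corollaire utilisant la quasi orthogonalite} to $L^1$-type bounds on $\Psi^{>,t-t_s}_{\underline{n}}[h]$ and its self-convolution (Proposition \ref{prop:estimation avec reco}), the clustering-tree change of variables with one extra non-clustering edge, the comparison with the limiting flow to convert the recollision into a geometric constraint of measure $O(\e^{\mathfrak{a}})$ handled by Lemma \ref{lem:estimation des reco} in the Annex, the tree-pairing argument for the $\circledast_m$ term, and the final Cauchy--Schwarz and summation over $k$, $k'$, $\underline{n}$. The only differences are cosmetic (indexing by $\underline{n}_{k+1}$ instead of $\underline{n}_{k+2}$, and a looser description of the geometric lemma, which you correctly identify as the hard step and defer to the Annex).
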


	It is sufficient to prove the two following estimations:
	
	\begin{prop}\label{prop:estimation avec reco}
		Fix $k\in\mathbb{N}$, $\underline{n}:=(n_1,\cdots,n_{k+2})\in\mathbb{N}^k$. Then fixing $x_1=0$ we have
		\begin{multline}
		\label{Estimation avec reco 1}
		\int\sup_{y\in{\mathbb{T}}}\big|{\Psi}^{>,t-t_s}_{\underline{n}_{k+2}}[h](\tr_y Z_{n_{k+2}})\big|\frac{e^{-\mathcal{H}_{n_{k+2}}}}{(2\pi)^{\frac{n_kd}{d}}}\ud V_{n_{k+2}}\ud X_{2,n_{k+2}}\\
		\leq \e^{\mathfrak{a}}\frac{\|h\|_0}{(\mu\mathfrak{d})^{n_{k+2}-1}}C^{n_{k+2}}\delta^2\theta^{(n_{k+2}-n_k-3)_+}t^{n_{k}+9+d} \e^{\mathfrak{a}},
		\end{multline}
		
		\noindent and, for $m\in[1,n_{k+2}]$,
		
		\begin{multline}\label{Estimation avec reco 2}	
		\int\sup_{y\in{\mathbb{T}}}\big|{\Psi}^{>,t-t_s}_{\underline{n}_{k+2}}[h]\circledast_l{\Psi}^{>,t-t_s}_{\underline{n}_{k+2}}[h](\tr_yZ_{2n_{k+2}-m})\big| M^{\otimes (2n_{k+2}-m)}\ud V_{2n_{n+2}-m}\ud X_{2,2n_{k+2}-m}\\
		\leq\frac{\mu^{m-1}}{n_{k+2}^m} \e^{\mathfrak{a}}\left(\frac{\|h\|_0}{(\mu\mathfrak{d})^{n_{k+2}-1}}C^{n_{k+2}}\right)^2\delta^2\theta^{(n_{k+2}-n_k-3)_+}t^{n_{k}+n_{k+2}+9+d}.
		\end{multline}
	\end{prop}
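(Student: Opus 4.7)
\textbf{Proof plan for Proposition \ref{prop:estimation avec reco}.} The plan is to mirror the strategy of Proposition \ref{prop:estimation sans reco}, parameterizing pseudotrajectories via clustering trees and performing the change of variables $\hat{x}_i \mapsto (\tau_i, \eta_i)$, but to extract an additional factor $\e^{\mathfrak{a}}$ from the recollision constraint encoded in $\mathcal{R}^{>,t-t_s}_{\{1\},\omega,(s_k)_k}$. As in the non-recollision case, I would first bound pointwise
\[
|\Psi^{>,t-t_s}_{\underline{n}_{k+2}}[h]|
\leq \frac{\|h\|_0}{(n_{k+2}-1)!}\sum_{\omega,(s_k)_k} \ind_{\mathcal{R}^{>,t-t_s}_{\{1\},\omega,(s_k)_k}}\prod_{i=1}^{k}\ind_{\mathfrak{n}(t_s+i\theta)=n_i},
\]
fix $x_1 = 0$ by translation invariance, and decompose the integration over $\mathcal{R}^{>,t-t_s}$ according to a spanning subtree $T$ of the collision graph together with the designated recollision edge $e_{\rm rec}$ responsible for closing a cycle.

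The next step is the change of variables. The $n_{k+2}-1$ tree edges are handled exactly as in the proof of \eqref{Estimation sans reco 1}: each provides a factor $\e^{d-1}$ and a time integral. The combinatorial contribution from the semi-tree condition gives $\theta^{(n_{k+2}-n_k-3)_+}$ from the last time-slabs (where only $n_{k+2}-n_k$ collisions are allowed in $[t_s,t_s+2\theta]$, with up to 3 collisions absorbed by the recollision geometry) and $t^{n_k-1}$ from the earlier slabs. The extra recollision edge $e_{\rm rec}$ does \emph{not} provide a free time/angle integration: instead, the requirement that two specified particles meet again at prescribed geometry imposes a constraint on the already-integrated parameters, which is precisely the content of the geometric estimates in Annex \ref{sec: Geometric estimates} (analogous to Lemma \ref{Borne sur les chevauchements} but localized to a non-pathological recollision within a time window of length $\delta$). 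This constraint yields the factor $\e^{\mathfrak{a}}\delta^2$, and the enumeration of where $e_{\rm rec}$ can sit together with the velocity integrations accounts for the extra polynomial factor $t^{10+d}$ (a small power of $t$ per additional geometric parameter).

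For the product estimate \eqref{Estimation avec reco 2}, I would follow exactly the coupling strategy of the proof of \eqref{Estimation sans reco 2}: fix the first pseudotrajectory with its spanning tree $T_a$ and recollision edge, build the spanning tree $T_b$ for the second pseudotrajectory from its collision graph by greedily adding edges that do not close a cycle with $T_a \cup T_b$ already present, and integrate in the same way. The recollision in the second pseudotrajectory is optional — if present it contributes an extra $\e^{\mathfrak{a}}$, but the single $\e^{\mathfrak{a}}$ coming from the first pseudotrajectory already suffices. Summing over $\omega, \omega'$ with $|\omega|=|\omega'|=n_{k+2}$ and intersection of size $m$, together with the kinetic energy control $\tfrac{1}{2}|\ds{V}|^2\le\mathcal{H}_{2n_{k+2}-m}$, produces the claimed $\mu^{m-1}/n_{k+2}^m$ prefactor and doubles the exponents appropriately.

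The main obstacle is the second step: isolating the $\e^{\mathfrak{a}}\delta^2$ gain from the recollision while keeping the bookkeeping of the semi-tree condition, of the partition $(n_j)_j$, and of the finite energy bound from $\Upsilon_\e$. The geometric estimate underlying this gain — that the set of relative positions/velocities for which two particles meet twice in a bounded time window has measure $O(\e^{\mathfrak{a}+d-1})$ rather than $O(\e^{d-1})$ — is what Annex \ref{sec: Geometric estimates} provides; plugging it into the clustering-tree estimate in place of the unconstrained angular integration is routine but combinatorially delicate. Once this is done, summing over the $2^{n_{k+2}}$ sign choices, the $(n_{k+2})!$-many permutations of particle labels, and the $O((2e)^{n_{k+2}})$-many tree shapes, and dividing by $(n_{k+2}-1)!$, yields the stated bounds.
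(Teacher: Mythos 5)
Your proposal follows essentially the same route as the paper: decompose over the clustering tree together with the data of the first cycle-closing collision, perform the $\hat{x}_i\mapsto(\tau_i,\eta_i)$ change of variables on the tree edges, transfer the recollision condition to the limiting punctual flow (via Lemma \ref{Borne entre la trajectoire limite et la trajectoire d'enskog}) so that the geometric estimate of Annex \ref{sec: Geometric estimates} yields the $\e^{\mathfrak{a}}$ gain, and then couple two clustering trees for the $\circledast_m$ bound exactly as in the proof of \eqref{Estimation sans reco 2}. The only cosmetic discrepancy is your attribution of the factor $\delta^2$ to a time-localization of the recollision: in the paper it is not a separate geometric gain but simply the rewriting $\e^{1/4}=\delta^2\e^{1/12}$ using $\delta=\e^{1/12}$.
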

	
	Using these estimations and Corollary \ref{Corollaire utilisant la quasi orthogonalite}, 
	\[\begin{split}
	\Bigg|\mathbb{E}_\e\Bigg[&\mu^{-\frac12}\sum_{\ui_{n_{k+2}}}{\Psi}^{>,t-t_s}_{\underline{n}_{k+2}}[h]\left(\gr{Z}_{\underline{i}_{n_{k+2}}}(t_s)\right)\zeta^0_\e(g)\ind_{\Upsilon_\e}\Bigg]\Bigg|\\
	&\leq \|h\|_0 \|g\|_0\frac{C^{n_{k+2}}}{\mathfrak{d}^{n_{k+2}-1}}\left(\e^{\frac{1}{2}+{\mathfrak{a}}}\theta^{(n_{k+2}-n_{k}-3)_+}\delta^2t^{n_k+d+9}+\left(e^{{\mathfrak{a}}}\theta^{(n_{k+2}-n_{k}-3)_+}\delta^2t^{n_k+d+9+m}\right)^{\frac{1}{2}}\right)\\
	&\leq \|g\|_0\|h\|_0 \delta\e^{\frac{{\mathfrak{a}}}{2}}C^{n_{k+2}} (\tfrac{\theta}{\mathfrak{d}})^{\frac{(n_{k+2}-n_{k}-3)_+}{2}}(\tfrac{t}{\mathfrak{d}})^{\frac{n_{k+2}+n_k}{2}+d+9}\\[3pt]
	&\leq \|g\|_0\|h\|_0 \delta\e^{\frac{{\mathfrak{a}}}{2}}(\tfrac{Ct}{\mathfrak{d}})^{n_k+d+9} (\tfrac{Ct\theta}{\mathfrak{d}^2})^{\frac{(n_{k+2}-n_{k}-3)_+}{2}}.
	\end{split}\]
	
	Using that $\tfrac{Ct\theta}{\mathfrak{d}^2} < 1$ and $K'\delta= \theta\leq 1$, we can sum on $k$, $k'$ and $\underline{n}_{k+2}$
	\begin{equation*}\begin{split}
	\left|G_\e^{\text{rec},1}(t)\right|&\leq\sum_{\substack{1\leq k\leq K-1\\1\leq k'\leq K'}} \sum_{\substack{n_1\leq\cdots\leq n_k\\n_j-n_{j-1}\leq 2^j}} \sum_{\substack{n_{k+2}\geq n_{k+1}\geq n_k}}\|g\|_0\|h\|_0 \delta\e^{\frac{{\mathfrak{a}}}{2}}(\tfrac{Ct}{\mathfrak{d}})^{n_k+d+9} (\tfrac{Ct\theta}{\mathfrak{d}^2})^{\frac{(n_{k+2}-n_{k}-2)_+}{2}} \\
	&\leq \|g\|_0\|h\|_0 K'\delta\e^{\frac{{\mathfrak{a}}}{2}} K^{K^2}(\tfrac{Ct}{\mathfrak{d}})^{2^K+d+9}\\
	&\leq  \|g\|_0\|h\|_0 \e^{\frac{\mathfrak{a}}{2}} (\tfrac{C't}{2\mathfrak{d}})^{2^K+d+9}
	\end{split}\end{equation*}
	This concludes the proof of \eqref{Estimation morceau 3}.
	
	\begin{proof}[Proof of \eqref{Estimation avec reco 1}]
		We recall that the pseudotrajectory development takes the form
		\begin{equation*}
		{\Psi}_{\underline{n}_{k+2}}^{>,t-t_s}[h] := \frac{1}{(n_{k+2}-1)!}\sum_{\substack{ 1\in\omega\subset[n_{k+2}]\\|\omega|=n_{k+1}\\(s_i)_{i\leq n_{i+2}-1}}}\!\!\prod_{k=1}^{n_{l}-1}s_k\, h(\ds{z}_{q}(t,\cdot,\{q\},\omega,(s_i)_i))\ind_{\mathcal{R}^{>,t-t_s}_{\{q\},\omega,(s_i)_i}}\prod_{i=1}^{k}\ind_{\mathfrak{n}(t-i\theta)=n_i}.
		\end{equation*}
		
		Here $\mathcal{R}^{>,t-t_s}_{\{q\},\omega,(s_i)_i}$ is the set of initial configurations $Z_{n_{k+2}}$ such that the pseudotrajectory has
		\begin{itemize}
			\item $1$ the final particle at time $t-t_s$,
			\item $\omega$ the set of particles at time $\delta$,
			\item at least one recollision,
			\item no pathological recollision (thanks to the conditioning on $\mathcal{R}^{>,t-t_s}_{\{q\},\omega,(s_i)_i}$).
		\end{itemize}
		
		\begin{lemma}\label{Estimation données initial avec reco}
			There exists a constant ${\mathfrak{a}}\in(0,1)$ such that for any $\underline{n}$, $k'$ and $(s_i)_i$,
			\begin{equation}\label{eq: etimation pseudo avec reco}
			\int \ind_{\mathcal{R}^{>,t-t_s}_{\{q\},\omega,(s_i)_i}} M^{\otimes n_{k+2}}\ud X_{2,n_{k+2}}\ud V_{n_{k+2}}
			\leq C'\left (\frac{C'n_{k+2}}{\mu\mathfrak{d}}\right)^{n_{k+2}-1}\delta^{2}\theta^{(n_{k+2}-n_k-2)_+}t^{n_k+2d+4}\e^{\mathfrak{a}}.
			\end{equation}
		\end{lemma}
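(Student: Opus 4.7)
The strategy mirrors the one used for the no-recollision estimate \eqref{Estimation sans reco 1}, but we must pay an extra geometric price for the recollision while reorganizing the collision times to match the sampling $(n_1,\dots,n_{k+2})$.

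\textit{Step 1: spanning tree plus one recollision edge.} Fix an initial configuration in $\mathcal{R}^{>,t-t_s}_{\{q\},\omega,(s_i)_i}$. By definition its collision graph $\mathcal{G}^{[0,t-t_s]}$ has at least one cycle. Process the edges in chronological order and let $(q_r,\bar q_r)_{\tau_r}$ be the first edge that closes a cycle; removing it yields a spanning tree $T=(q_i,\bar q_i)_{1\le i\le n_{k+2}-1}$ of $\mathcal{G}^{[0,t-t_s]}$. As in the no-recollision proof, the tree indexes a partition of $\mathcal{R}^{>,t-t_s}_{\{q\},\omega,(s_i)_i}$, and on each piece we perform the change of variables $X_{2,n_{k+2}}\mapsto(\hat x_1,\dots,\hat x_{n_{k+2}-1})$ with $\hat x_i:=x_{q_i}-x_{\bar q_i}$, and then $\hat x_i\mapsto(\tau_i,\nu_i)$ with Jacobian $\e^{d-1}\bigl|(\ds v_{q_i}(\tau_i)-\ds v_{\bar q_i}(\tau_i))\cdot\nu_i\bigr|_+$. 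Summing over the choice of edges of $T$ produces the factor $\left(\tfrac{C\,n_{k+2}}{\mu\mathfrak d}\right)^{n_{k+2}-1}(n_{k+2}+|V_{n_{k+2}}|^2)^{n_{k+2}-1}$, exactly as in the proof of \eqref{Estimation sans reco 1}.

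\textit{Step 2: gain from the recollision edge.} Once the tree is parametrized, the positions and velocities $(\ds x_{q_r}(\tau_r),\ds v_{q_r}(\tau_r))$ and $(\ds x_{\bar q_r}(\tau_r),\ds v_{\bar q_r}(\tau_r))$ are determined functions of $(x_1,V_{n_{k+2}},\nu_{[n_{k+2}-1]},\tau_{[n_{k+2}-1]})$. The recollision condition $|\ds x_{q_r}-\ds x_{\bar q_r}|\le\e$ at time $\tau_r$ forces one of the remaining parameters (say one of the $\nu_j$ lying on the unique tree path between $q_r$ and $\bar q_r$) into a set of measure $O(\e^{\mathfrak a})$. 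This is the geometric content that will be established in the Annex \ref{sec: Geometric estimates} (through an adapted version of the standard ``recollision lemma''), and is the main obstacle: one must show that, after freezing all the other scattering parameters, the velocity/deflection parameter that controls whether two particles meet again lies in a set of small measure, uniformly in $(V_{n_{k+2}},\tau_{[n_{k+2}-1]})$, and that the constant picked up is polynomial in $t$ of degree at most $2d+4$ (this is where the $t^{2d+4}$ factor in the stated bound originates).

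\textit{Step 3: time integration against the sampling.} It remains to integrate the collision times $\tau_1<\dots<\tau_{n_{k+2}-1}$ under the constraints imposed by the indicators $\ind_{\mathfrak n(t-i\theta)=n_i}$ and by the semi-tree decomposition: at least $n_{k+2}-n_{k+1}$ of the $\tau_i$ lie in $[0,\delta]$, at least $n_{k+1}-n_k$ lie in $[\delta,\theta]$, and the remaining $n_k-1$ times are spread over $[\theta,t-t_s]$ with the exponential growth constraint. Moreover, since the recollision is created on a time window of length $\delta$ (by the first-cycle choice of $\tau_r$ together with the semi-tree property, which localizes the recollision in the last macroscopic step), two of the $\tau_i$ contribute a $\delta$-factor each, yielding $\delta^2$. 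Crudely bounding the simplex volumes as in \eqref{Estimation sans reco 1} and using Stirling gives $\frac{C^{n_{k+2}}}{(n_{k+2}-1)!}\delta^2\,\theta^{(n_{k+2}-n_k-2)_+}\,t^{n_k+2d+4}$.

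\textit{Step 4: energy control and conclusion.} The kinetic energy bound $\tfrac12|\dr V(\tau)|^2\le\mathcal H_{n_{k+2}}(Z_{n_{k+2}})$ (Lemma valid for admissible configurations via the cluster comparison \eqref{eq:borne energie cinetique}) is used to absorb the velocity prefactor $(n_{k+2}+|V_{n_{k+2}}|^2)^{n_{k+2}-1}$ against half of the Gaussian $e^{-\tfrac12\mathcal H}$, producing only a multiplicative $C^{n_{k+2}} n_{k+2}^{n_{k+2}}$. Collecting the geometric $\e^{\mathfrak a}$ of Step~2, the tree factor of Step~1, and the time integral of Step~3, and summing over the $2^{n_{k+2}-1}$ choices of $(s_i)_i$ yields exactly \eqref{eq: etimation pseudo avec reco}. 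The hard part is Step~2: the other steps are a direct transcription of the no-recollision argument, but the extraction of the $\e^{\mathfrak a}$ factor with only a polynomial $t^{2d+4}$ cost is delicate and relies on the exclusion of the pathological configurations (triple interactions, overlaps, and local pathological recollisions) ensured by the semi-tree condition and the conditioning on $\Upsilon_\e$.
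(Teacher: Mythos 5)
Your Steps 1, 2 and 4 follow the paper's route: chronological extraction of a clustering tree plus the first cycle-closing edge, the change of variables $\hat x_i\mapsto(\tau_i,\nu_i)$, deferral of the geometric smallness to the recollision lemma of the Annex, and absorption of the velocity factors by the Gaussian. The one structural ingredient you pass over silently is the reduction from the $\e$-flow to the limiting flow: the paper first uses Lemma \ref{Borne entre la trajectoire limite et la trajectoire d'enskog} to show that a recollision of the physical pseudotrajectory forces the \emph{limiting} pseudotrajectory to satisfy $|\ds{x}^0_q(\tau_{\rm rec})-\ds{x}^0_{q'}(\tau_{\rm rec})|\leq \e+\sum_i 2n_{k+2}\mathbb{V}\e/|\dr{v}_{q_i}(\tau_i^-)-\dr{v}_{\bar q_i}(\tau_i^-)|$, i.e.\ the inclusion of the recollision set into $\mathfrak{G}^{0}_{T}\setminus\mathfrak{G}^{\e}_{T}$, and only then invokes Lemma \ref{lem:estimation des reco}. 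That is a necessary step, not a cosmetic one, because the geometric lemma is stated for the limiting dynamics.

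The genuine gap is in Step 3: your mechanism for producing the factor $\delta^2$ is wrong. You claim that ``two of the $\tau_i$ contribute a $\delta$-factor each'' because the recollision is localized in the last time window. But the recollision is a constraint on the deflection/velocity parameters, not on the clustering collision times $\tau_i$; the only $\tau_i$ confined to $[0,\delta]$ are the $n_{k+2}-n_{k+1}$ times of the \emph{new} clustering collisions, and the sum defining $G^{\mathrm{rec},1}_\e$ ranges over all $n_{k+2}\geq n_{k+1}\geq n_k$, including $n_{k+2}=n_{k+1}$, in which case your time integration produces no $\delta$ at all (the cycle can be closed by a recollision between particles already present, with no new clustering collision in $[0,\delta]$). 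In the paper the $\delta^2$ does not come from the time integral: the time integral only yields $t^{n_k-1}\theta^{(n_{k+2}-n_k-1)_+}$, and the factor $\delta^2$ is manufactured from the geometric gain by writing $\e^{1/4}=\delta^{2}\,\e^{1/12}$, which works precisely because of the choice $\delta=\e^{1/12}$ (see \eqref{eq:estimation possibilite de reco}). This also explains why the lemma ends with $\e^{\mathfrak{a}}$ for $\mathfrak{a}=1/12$ rather than the full $\e^{1/4}$ of Lemma \ref{lem:estimation des reco}: part of the smallness is spent to create the $\delta^2$ needed later when summing over $k'\leq K'=\theta/\delta$. Without this bookkeeping your final bound cannot be reached in the case $n_{k+2}=n_{k+1}$.
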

		
		\begin{proof}
			We may define the clustering tree $T^>$ as before, % : let $\mathcal{G}$ be the collision graph of $\Zt(\tau)$. We look at the collision in temporal order and add only the clustering collision. 
			by looking at collisions in temporal order and keeping only the clustering collisions.
			However, this will not be sufficient to characterize the initial data.
						
			Let $(\bar q,\bar{q}')$ (with $\bar q<\bar{q}'$) be the first two particles having a non-clustering collision, $\tau_{\rm cycle}$ the time of this collision, and $c\in [1,n_{k+2}-1]$ such that $\tau_{\rm cycle}$ lies between  the times of the $c$-th and the $(c+1)$-th clustering collision. The parameters $\bar{T}:=(T^>,(\bar q,\bar{q}',c))$ provide a partition of the set of initial data.
			
			% Note that the family \[(T^>,(q,\bar{q},c),((s_i,\bar{s}_i)_i,(\kappa_j)_j))\]
			%	construct the collision graph up to time of non clustering collision. 

			We denote 
			\[\mathfrak{T}_{\underline{n}_{k+2}} := \left\{\begin{array}{c}
			(\tau_i)_{i\leq n_{k+2}-1},~\tau_i\leq\tau_{i+1},\\
			\forall j \leq  n_{k+2}-n_{k+1},~\tau_j\leq \delta\\
			\forall j \leq n_{k+2}-n_{k}[,~\tau_j\leq k'\delta\\
			\forall \ell\leq k,~j \leq n_{k+2}-n_{k+2-\ell-1},~\tau_j\leq k'\delta+(\ell+1)\theta
			\end{array}\right\}\]
			
			For a given initial data $Z_n$ and $\bar{T}:=(T^>,(\bar q,\bar{q}',c))$, we define $\tau_i$ as the time of the $i$-th clustering collision and $\nu_i:=(\dr{x}_{q_i}(\tau_i)-\dr{x}_{q_i}(\tau_i'))/\e$. We denote $\mathfrak{T}_{\underline{n}_{k+2}}\times\mathfrak{G}^{>,t-t_s,\bar{T}}_{\{q\},\omega,(s_i)_i}$ the image of the set of initial datum  
			\[\mathcal{R}^{>,t-t_s}_{\{q\},\omega,(s_i)_i}\cap\{T^> \text{ is the collision tree, first collision implies }(\bar{q},\bar{q}') \text{ during  }(\tau_c,\tau_{c+1})\}\]
			by the application $(X_{2,n_{k+2}},V_{n_{k+2}})\to(\tau_{[n_{k+2}-1]},\nu_{[n_{k+2}-1]},V_{n_{k+2}})$.	
			\begin{multline*}
			\int \ind_{\mathcal{R}^{>,t-t_s}_{\{q\},\omega,(s_i)_i}} \frac{e^{-\mathcal{H}_{n_{k+2}}}}{(2\pi)^{\frac{n_{k+2}d}{2}}}\ud X_{2,n_{k+2}}dV_{n_{k+2}}\\
			=\frac{1}{(\mu\mathfrak{d})^{n_{k+2}-1}}\sum_{\bar{T}}\int_{\mathfrak{T}_{\underline{n}_{k+2}}\times\mathfrak{G}^{>,t-t_s,\bar{T}}_{\{q\},\omega,(s_i)_i}}\prod_{i=1}^{n_{k+2}-1} |(\dr{v}^\e_{q_i}(\tau_i)-\dr{v}^\e_{q'_i}(\tau_i))\cdot\nu_i| \ud{\nu_i}\ud{\tau_i} M^{\otimes n_{k+2}}dV_{n_{k+2}}.
			\end{multline*}			
			
		\red{	If the first recollision involves particles $q$ and $q'$ at time $\tau_{\rm rec}\in]\tau,{c},\tau_{c+1}[$, we consider $\omega\subset[n_{k+2}]$ the connected components of $\{q,q'\}$ in the collision graph on time interval $[0,\tau_{\rm rec})$ (it only depends on $c$). As before the first recollision, the pseudotrajectory $\dr{Z}_\omega^\e(\tau)$ and its formal limit $\dr{Z}_\omega(\tau)$ are closed up to a translation (thanks to Lemma \ref{Borne entre la trajectoire limite et la trajectoire d'enskog}): there exists a $y_0\in\mathbb{T}$ such that 
			\[\forall \tau\in[0,\tau_{\rm rec}],~|\dr{X}^0_\omega(\tau)-\tr_{y_0}\dr{X}^\e_\omega(\tau)|\leq \sum_{i=1}^{n_{k+2}-1}\frac{2n_{k+2}\mathbb{V}\e}{\left|\dr{v}_{q_{i}}(\tau^-_{i})-\dr{v}_{q_{i}}(\tau^-_{i})\right|}.\]
			Hence, if there is a recollision, 
			\begin{equation}\label{eq:recollision def alternative1}|\dr{x}^0_q(\tau_{\rm rec})-\dr{x}^0_{q'}(\tau_{\rm rec})|\leq \e+\sum_{i=1}^{n_{k+2}-1}\frac{2n_{k+2}\mathbb{V}\e}{\left|\dr{v}_{q_{i}}(\tau^-_{i})-\dr{v}_{q_{i}}(\tau^-_{i})\right|}.\end{equation}
			We can only study the limiting flow and define a recollision as "there exists a time $\tau_{\rm rec}$ such that \eqref{eq:recollision def alternative1} is verified: we have 
			\[\mathfrak{T}_{\underline{n}_{k+2}}\times\mathfrak{G}^{>,t-t_s,T}_{\{q\},\omega,(s_i)_i}\subset \mathfrak{T}_{\underline{n}_{k+2}}\times\mathfrak{G}^{0}_{T}\setminus\mathfrak{T}_{\underline{n}_{k+2}}\times\mathfrak{G}^{\e}_{T}.\]}
			
			Using Lemma \ref{lem:estimation des reco}, we get
			\begin{equation}\label{eq:estimation possibilite de reco}\begin{split}
			\int \ind_{\mathcal{R}^{>,t-t_s}_{\{q\},\omega,(s_i)_i}}\frac{e^{-\mathcal{H}_{n_{k+2}}}}{(2\pi)^{\frac{n_{k+2}d}{2}}}\ud X_{2,n_{k+2}}dV_{n_{k+2}}	\leq&\frac{(Cn_{k+2})^{n_{k+2}}}{(\mu\mathfrak{d})^{n_{k+2}-1}}t^{n_k-1}\theta^{(n_{k+2}-n_k-1)_+}\e^{1/4}\\
			\leq&\frac{(Cn_{k+2})^{n_{k+2}}}{(\mu\mathfrak{d})^{n_{k+2}-1}}t^{n_k-1}\theta^{(n_{k+2}-n_k-1)_+}\delta^2\e^{1/12}
			\end{split}\end{equation}
			using that $\delta = \e^{1/12}$.
			
			\begin{figure}[h!]
				\centering
				\includegraphics[width=10cm]{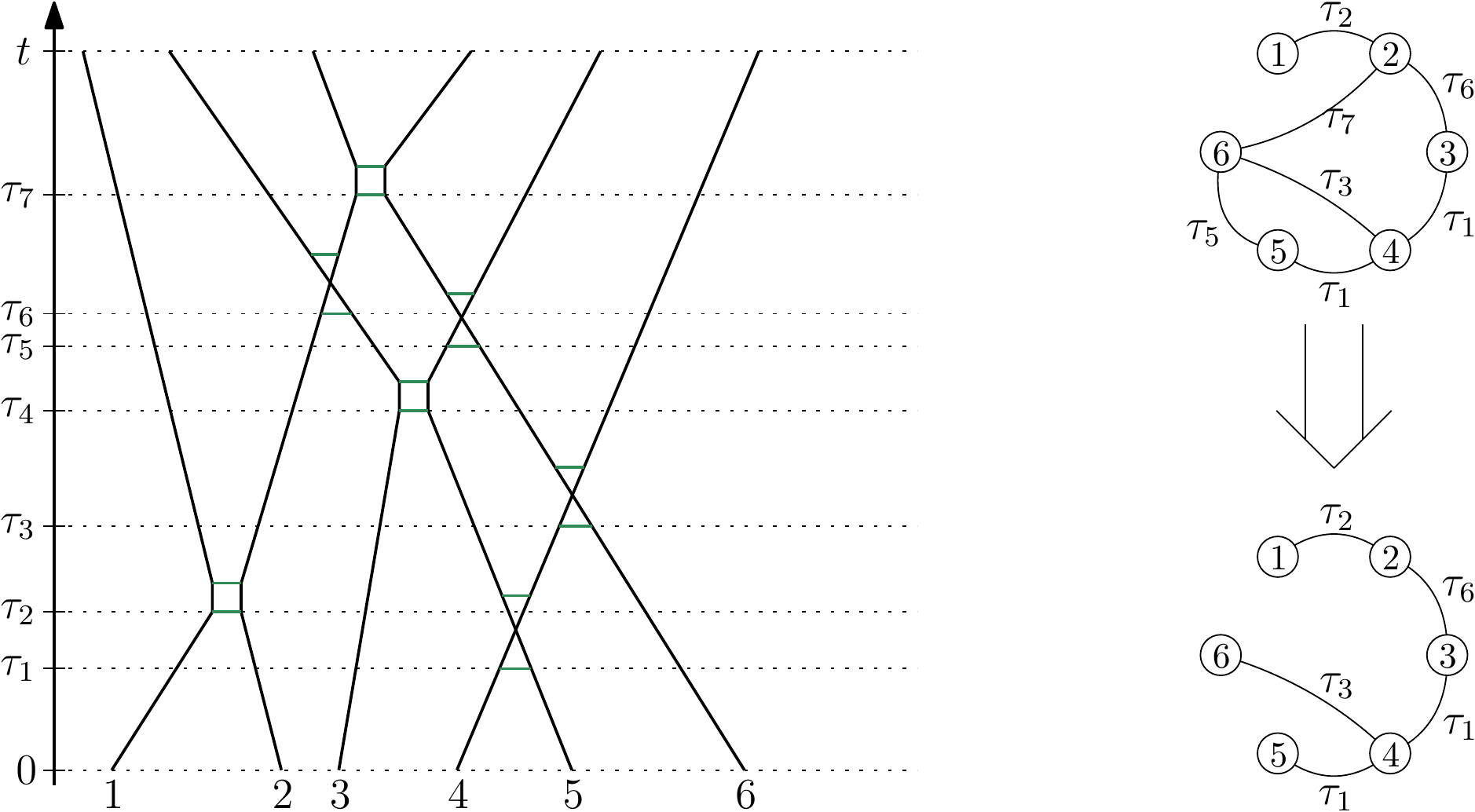}
				\caption{Example of construction of a clustering tree.}
			\end{figure}
		\end{proof}
		
		We obtain the expected result by summing on
		\[(s_i)_{i\leq n_{k+2}-1}\in \{\pm 1\}^{n_{k+2}-1},~\omega\subset[n_{k+2}],~q\in\omega\]
		and dividing by $n_{k+2}!$\,.
	\end{proof}
	
	\begin{proof}[Proof of \eqref{Estimation avec reco 2}]
		We use first the same bound as in the proof of \eqref{Estimation avec reco 1} and of \eqref{Estimation sans reco 2},
		\begin{multline}
		\left|{\Psi}_{\underline{n}_{k+2}}^{>,t-t_s}[h]\circledast_m{\Psi}_{\underline{n}_{k+2}}^{>,t-t_s}[h](Z_{2n_{k+2}-m})\right|\\
		\leq \frac{\|h\|^2}{(n_{k+2}!)^2}\frac{(n_{k+2}-m)!^2m!}{n_{k+2}^2(2n_{k+2}-m)!}\sum_{\substack{\bar{\omega}\cup\bar{\omega}' =[2n_{k+2}-m]\\|\bar{\omega}|=|\bar{\omega}'|=n_{k+2}}} \sum_{\substack{q\in\omega\subset\bar{\omega}\\ |\omega|=n_{k+1}\\(s_i)_{i\leq n_{i+2}-1}}}\sum_{\substack{q'\in\omega'\subset\bar{\omega}'\\ |\omega'|=n_{k+1}\\(s'_i)_{i\leq n_{i+2}-1}}} \!\!\ind_{\mathcal{R}^{>,t-t_s}_{\{q\},\omega,(s_i)_i}}\!\!\!(Z_{\bar{\omega}})~~\\
		\times\ind_{\mathfrak{n}(k'\delta)=n_k}\ind_{\mathcal{R}^{>,t-t_s}_{\{q'\},\omega'			,(s'_i)_i}}\!\!\!(Z_{\bar{\omega}'}).
		\end{multline}
		where $\mathfrak{n}(\theta)$ is the number of particles at time $\theta$ in the pseudotrajectory $\dr{Z}(\tau)$.
		Note that the formula is invariant under translation. We can then set $x_1=0$ and integrate with respect to the other variables. 
		
		Using the same strategy as in the proof of \eqref{Estimation sans reco 2}, we have
		\begin{equation*}
		\int\ind_{\mathcal{R}^{>,t-t_s}_{\{q'\},\omega'			,(s'_i)_i}}\!\!\!(Z_{\bar{\omega}'}) e^{-\frac12\mathcal{H}_{2n_{k+2}-m}}\ud Z_{\bar{\omega}'\setminus\bar{\omega}}\leq C\left(\frac{C}{\mu\mathfrak{d}}\right)^{n_k-m} t^{n_{k+2}-m}(2n_{k+2}-m)^{n_{k+2}-m}.
		\end{equation*}
		
		The sum over the remaining particles is estimated using \eqref{eq:estimation possibilite de reco}
		\begin{multline*}
		\int \ind_{\mathcal{R}^{>,t-t_s}_{\{q'\},\omega'			,(s'_i)_i}}\!\!\!(Z_{\bar{\omega}'})\ind_{\mathcal{R}^{>,t-t_s}_{\{q\},\omega,(s_i)_i}}(Z_\omega) \frac{e^{-\mathcal{H}_{2n_{k+2}-m}}}{(2\pi)^{\frac{(2n_{k+2}-m)d}{2}}}\ud X_{1,2n_{k+2}-m}dV_{2n_{k+2}-m}	\\
		\leq\frac{(C2n_{k+2})^{2n_{k+2}-m}}{(\mu\mathfrak{d})^{2n_{k+2}-m-1}}\delta^2\theta^{(n_{k+2}-n_k-3)_+}t^{n_{k}+n_{k+2}+9+d}\e^{\mathfrak{a}}.
		\end{multline*}
		
		We obtain the expected result by combining the two estimations, summing on the possible parameters $((s_i)_i,\bar{\omega},\omega,q)$ and $((s'_i)_i,\bar{\omega}',\omega',q')$ and then dividing by $(n_{k+2})!^2$.
	\end{proof}
	
	\section{Estimation of the local recollisions}\label{$L^2$ estimation of the local recollision part}
		
	In the present section, we discuss $G_\e^{\text{rec},2}(t)$ defined by
	\begin{multline*}
	G_\e^{\text{rec},2}(t)=\sum_{\substack{0\leq k\leq K-1\\1\leq k'\leq K'}}\sum_{\substack{(n_j)_{j\leq k}\\0\leq n_j-n_{j-1}\leq 2^j}}\sum_{n_{k+2}\geq n_{k+1}\geq n_k}\mathbb{E}_\e\Bigg[\frac{1}{\sqrt{\mu}}\zeta_\e^0(g)\ind_{\Upsilon_\e}\\[-10pt]
	\times\sum_{\ui_{n_{k+2}}}\Phi^{>,\delta}_{n_{k+1},n_{k+2}}\Psi^{,t-t_s-\delta}_{\underline{n}_{k+1}}[h]\left(\gr{Z}_{\ui_{n_{k+2}}}(t_s)\right)\Bigg]
	\end{multline*} 
	for $t_s:=t-k\theta-k'\delta$.

	We will prove the following bound:
	\begin{prop}
	\blue{There exists a constant $\mathfrak{a}\in(0,1))$, depending only on the dimension, such that} for $\e>0$ small enough, we have
	\begin{equation}\label{Estimation morceau 4}
	\Big|G_\e^{\text{rec},2}(t)\Big|\leq C\|h\|_0 \|g\|_0  (C\tfrac{t}{\mathfrak{d}})^{2^{K+1}}\e^{\frac{{\mathfrak{a}}}{2}}.
	\end{equation}
	\end{prop}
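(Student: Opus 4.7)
\medskip

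\noindent\textbf{Proof proposal for \eqref{Estimation morceau 4}.}

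The plan is to mimic the treatment of $G_\e^{\mathrm{rec},1}$ in Section \ref{Estimation of the long range recollisions.}, but with the crucial new ingredient that on the short interval $[t_s,t_s+\delta]$ the conditioning $\ind_{\Upsilon_\e}$ forces the pseudotrajectory to decompose into dynamical clusters of size at most $\gamma$. First, I apply Corollary \ref{Corollaire utilisant la quasi orthogonalite} to the composite functional
\[
F_{n_{k+2}}:=\Phi^{>,\delta}_{n_{k+1},n_{k+2}}\circ \Psi^{0,t-t_s-\delta}_{\underline{n}_{k+1}}[h]
\]
evaluated at time $t_s$. By invariance of the Gibbs measure, this reduces the problem to controlling the $L^1$ quantity $c_0$ (as in \eqref{borne sur g_n}) together with the $\circledast_m$ quantities $c_m$ for $F_{n_{k+2}}$, multiplied by $\ind_{\Upsilon_\e}$. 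The extra factor $\e^{\mathfrak{a}/2}$ will arise from the pathological recollision forced during $[t_s,t_s+\delta]$.

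To estimate $c_0$, I fix $x_1=0$ and write $F_{n_{k+2}}$ as the composition of two pseudotrajectories: a first one on $[t_s,t_s+\delta]$ producing $n_{k+1}$ tagged particles at time $t_s+\delta$ from $n_{k+2}$ initial particles, and a second one on $[t_s+\delta,t]$ for which the no-recollision bound \eqref{Estimation sans reco 1} is already available. On the short interval $[t_s,t_s+\delta]$, the event $\Upsilon_\e$ enforces that the $n_{k+2}$ particles split into disjoint possible clusters $(\kappa_1,\dots,\kappa_p)$ of size at most $\gamma$; only the cluster containing particle $1$ can support the recollision, the others contribute a pure clustering constraint. For each peripheral cluster of size $r\leq \gamma$, the bound \eqref{eq:estimation a la con} yields a factor $C_\gamma\mu^{-r+1}\delta^{r-1}$; for the cluster carrying the pathological recollision I combine the clustering constraint with the geometric recollision estimate \eqref{eq:estimation possibilite de reco} (exactly as in the proof of Lemma \ref{Estimation données initial avec reco}) to extract an additional $\e^{\mathfrak{a}}\delta^{\gamma-1}\mu^{-\gamma+1}$. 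Plugging \eqref{Estimation sans reco 1} for the $[t_s+\delta,t]$ part and using $\mu\e^{d-1}\mathfrak{d}=1$, I obtain
\[
c_0\leq \|h\|_0\,\e^{\mathfrak{a}}\,(\mu\mathfrak{d})^{-n_{k+2}+1} C^{n_{k+2}}\,\theta^{(n_{k+1}-n_k)}(k\theta)^{n_k-1}.
\]

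The $\circledast_m$ norms are treated in the same spirit as in the proof of \eqref{Estimation avec reco 2}: I pair two copies of $F_{n_{k+2}}$, identify $m$ variables, and apply the above cluster+recollision bound to one copy while bounding the other via \eqref{Estimation sans reco 2}; both copies share the $\ind_{\Upsilon_\e}$ but only one of them carries the pathological recollision, so a single $\e^{\mathfrak{a}}$ gain is extracted. This yields $c_m\leq \mu^{m-1}n_{k+2}^{-m}(\text{two factors of } c_0)\cdot(t\theta)^{O(1)}$. Feeding everything into Corollary \ref{Corollaire utilisant la quasi orthogonalite} gives, for each fixed $(k,k',\underline{n}_{k+2})$,
\[
\Big|\mathbb{E}_\e\Big[\mu^{-1/2}\textstyle\sum_{\ui_{n_{k+2}}}F_{n_{k+2}}(\gr{Z}_{\ui_{n_{k+2}}}(t_s))\zeta_\e^0(g)\ind_{\Upsilon_\e}\Big]\Big|\leq \|h\|_0\|g\|_0\,\e^{\mathfrak{a}/2}\,C^{n_{k+2}}\,(\tfrac{Ct}{\mathfrak{d}})^{n_{k+2}+n_k}(\tfrac{C\theta t}{\mathfrak{d}^2})^{(n_{k+2}-n_k-2)_+/2}.
\]

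Finally, I sum over the parameters. Since $n_{k+2}-n_k\leq 2^{k+1}+2^{k+2}\leq 2^{K+2}$ and the cascade condition $n_j-n_{j-1}\leq 2^j$ gives at most $2^{k^2}$ admissible sequences $\underline{n}_k$, while $k'$ and $k$ contribute a factor $K'K$ and $K'\delta=\theta\leq 1$, the series converges under the assumption $\theta t/\mathfrak{d}^2<1$ and the total is dominated by $\|h\|_0\|g\|_0\,\e^{\mathfrak{a}/2}\,K2^{K^2}(Ct/\mathfrak{d})^{2^{K+1}}$, which is precisely \eqref{Estimation morceau 4}. The main obstacle, as above, is the joint $L^2/\text{cluster}$ estimate: keeping track of the single recollision gain when two copies of the pseudotrajectory are coupled via $m$ shared particles, while simultaneously invoking the bounded-cluster conditioning on the same short interval $[t_s,t_s+\delta]$ for both copies.
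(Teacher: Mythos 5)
Your overall architecture is the same as the paper's: symmetrize the composite functional $\Phi^{>,\delta}_{n_{k+1},n_{k+2}}\circ\Psi^{0,t-t_s-\delta}_{\underline{n}_{k+1}}[h]$, feed $L^1$ and $\circledast_m$ bounds into Corollary \ref{Corollaire utilisant la quasi orthogonalite}, use the conditioning $\Upsilon_\e$ to break the short interval $[t_s,t_s+\delta]$ into possible clusters of size at most $\gamma$, extract a single $\e^{\mathfrak{a}}$ from the cluster carrying the pathology via the geometric estimate, and sum over $(k,k',\underline{n}_{k+2})$ exactly as you describe. The final per-term bound and the summation are consistent with \eqref{Estimation morceau 4}.

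There is, however, one step you gloss over which is in fact the crux of this section. The functional $\Phi^{>,\delta}_{n_{k+1},n_{k+2}}$ is \emph{not} an indicator of a pathological pseudotrajectory: it is a signed sum over all partitions $(\lambda_1,\cdots,\lambda_{\gr{l}})$ weighted by the overlap cumulants $\CCirc_{\gr{l}}(Z_{\lambda_1},\cdots,Z_{\lambda_{\gr{l}}})$ and the dynamical-cluster indicators $\DDelta$. As the paper remarks after introducing Penrose's inequality, these cumulants admit no useful $L^\infty$ bound in general (equality in the tree bound can be attained), so treating each peripheral cluster as contributing merely ``a pure clustering constraint'' via \eqref{eq:estimation a la con} implicitly assumes a bound that is false without further work. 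The paper's resolution is precisely the point of the possible-cluster partition $\underline{\rho}$: on $\mathcal{D}^{\underline{\rho}}_\e$ the Penrose trees factorize over the $\rho_i$, each of size at most $\gamma$, so that $|\CCirc_{\gr{l}}|\leq \prod_i|\mathcal{T}([|\underline{\lambda}^i|])|\leq \gamma^{n_{k+2}}$ pointwise, and the sum over partitions $(\lambda_j)_j$ reduces to a sum over the finitely many local parameters $\mathfrak{p}_i\in\mathfrak{P}(\rho_i)$ with $|\mathfrak{P}(\rho_i)|\leq C_\gamma$. Your proposal needs this argument to close; without it the combinatorial sum over partitions and the cumulant weights are uncontrolled. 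A second, smaller inaccuracy: the pathological set $\varpi$ need not lie in the cluster containing particle $1$ --- it sits in some $\rho_{i_0}$ which the paper relabels as $\rho_1$ at the cost of a harmless loss of symmetry; the recollision gain $\e^{\mathfrak{a}}$ (together with the $\delta^{\min\{2,|\rho_1|-1\}}$ factor of \eqref{estimation recollision}) is extracted from that cluster, wherever it is.
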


	In the following, we denote
	\[\begin{split}
		\bar{\Phi}_{\underline{n}_{k+2}}^{k'}(Z_{n_{k+2}}):=\frac{1}{(n_{k+2})!}\sum_{\sigma\in\mathfrak{S}_{n_{k+2}}}\Phi^{>,\delta}_{n_{k+1},n_{k+2}}\Psi^{,t-t_s-\delta}_{\underline{n}_{k+1}}[h]\left(Z_{\sigma[n_{k+2}]}\right)
	\end{split}\]
	
	The aim of this part is to prove the following bound on $\Phi_{\underline{n}_{k+2},p}^{k'}$:
	\begin{prop}
		Fix $n_1\leq\cdots\leq n_{k+2}\leq p$. For $m\in \{1,\cdots p\}$ we have for $x_1=0$ 
		\begin{multline}\label{Estimation reco loc 1}
		\int \sup_{y\in{\mathbb{T}}}\big|\bar{\Phi}_{\underline{n}_{k+2}}^{k'}(\tr_yZ_{n_{k+2}})\big|\frac{e^{-\mathcal{H}_{n_{k+2}}(Z_{n_{k+2}})}}{(2\pi)^{\frac{dn_{k+2}}{2}}}\ud Z_{2,n_{k+2}}\ud v_1\\
		\leq \frac{\|h\|_0}{(\mu\mathfrak{d})^{n_{k+2}-1}} C^{n_{k+2}}\delta^2\e^{\mathfrak{a}} \theta^{(n_{k+2}-n_k-2)_+}t^{n_k-1},
		\end{multline}
		\begin{multline}\label{Estimation reco loc 2}
		\int\sup_{y\in{\mathbb{T}}} \big|\bar{\Phi}_{\underline{n}_{k+2}}^{k'}\circledast_m\bar{\Phi}_{\underline{n}_{k+2}}^{k'}(\tr_yZ_{2n_{k+2}-m})\frac{e^{-\mathcal{H}_{2n_{k+2}-m}}}{(2\pi)^{\frac{d(2n_{k+2}-m)}{2}}}\ud Z_{2,2n_{k+2}-m}\ud v_{1}	\\
		\leq \frac{\mu^{m-1}}{n_{k+2}^m}\left(\frac{\|h\|_0}{(\mu\mathfrak{d})^{n_{k+2}-1}} C^{n_{k+2}}\right)^2\delta^2\e^{\mathfrak{a}} \theta^{(n_{k+2}-n_k-2)_+}t^{n_k-1+n_{k+2}}.
		\end{multline}
	\end{prop}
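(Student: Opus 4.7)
The plan is to prove \eqref{Estimation reco loc 1} and \eqref{Estimation reco loc 2} by splitting the composition $\Phi^{>,\delta}_{n_{k+1},n_{k+2}}\circ\Psi^{0,t-t_s-\delta}_{\underline{n}_{k+1}}[h]$ into a long, non-pathological piece on $[\delta,t-t_s]$ and a short, pathological piece on $[0,\delta]$, and then to estimate each piece with the techniques already developed. The non-pathological piece is handled exactly as in the proof of \eqref{Estimation sans reco 1}: introduce a clustering tree for the $n_{k+1}$ particles $\omega_2$ surviving past time $\delta$, change variables $X_{2,\omega_2}\mapsto (x_1,(\tau_i,\nu_i)_{i\leq n_{k+1}-1})$ from relative positions to collision times and incidence angles, which gives the familiar factor $(\mu\mathfrak{d})^{-(n_{k+1}-1)}\,\theta^{n_{k+1}-n_k}t^{n_k-1}$ after integrating using the kinetic energy bound $\tfrac12|\ds{V}_{n_{k+1}}(\tau)|^2\le \mathcal H_{n_{k+1}}(Z_{n_{k+1}})$. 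All constants in $\#\omega_2$ can be absorbed into $C^{n_{k+2}}$ thanks to Stirling.

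For the short, pathological piece on $[0,\delta]$, I would first use the definition of $\Phi^{>,\delta}_{n_{k+1},n_{k+2}}$: the $n_{k+2}-n_{k+1}$ new particles are forced to pathologically interact (recollision, multiple interaction, or overlap) with $\omega_2$ inside the short window. On $\Upsilon_\e$, these new particles together with the ones in $\omega_2$ that are dynamically connected to them form a possible cluster of size at most $\gamma = 24d$ in the sense of Definition \ref{def:possible cluster}. Consequently, the combinatorics for the choice of interaction history inside the local cluster is bounded by $C^\gamma = O(1)$, and only $O(C^{n_{k+2}-n_{k+1}})$ choices survive when we decide how to attach the new particles to the $\omega_2$-cluster. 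The geometric input is then Lemma \ref{Borne sur les chevauchements} applied on the short interval $[0,\delta]$: the indicator of a pathology yields a smallness $\e^{\mathfrak{a}}$ beyond the usual $(\mu\mathfrak{d})^{-(n_{k+2}-n_{k+1})}\delta^{n_{k+2}-n_{k+1}}$ obtained by integrating the clustering collision constraints (with $\delta^2$ extracted from the first two collisions that must take place inside $[0,\delta]$). Multiplying the bounds for the two pieces and using $(n_{k+2}-n_k-2)_+ \le (n_{k+2}-n_{k+1}) + (n_{k+1}-n_k-2)_+$ produces \eqref{Estimation reco loc 1}.

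For the quasi-orthogonality bound \eqref{Estimation reco loc 2} I would follow the pattern of the proof of \eqref{Estimation sans reco 2}: fix the first configuration $Z_\omega$ on which $\bar{\Phi}_{\underline{n}_{k+2}}^{k'}$ is evaluated, build its clustering tree $T_a$, then incrementally build a clustering tree $T_b$ of the second configuration by adding only those edges that would not close a cycle with $T_a$. The $n_{k+2}-m$ extra particles $Z_{\omega'\setminus\omega}$ are then integrated via the usual change of variables, producing a factor $(C/\mu\mathfrak{d})^{n_{k+2}-m}t^{n_{k+2}-m}$. Since the pathological short-time event has already been used in the first copy of $\bar{\Phi}_{\underline{n}_{k+2}}^{k'}$, the geometric smallness $\e^{\mathfrak{a}}\delta^2$ appears only once; the second copy contributes only its clustering measure. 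Summing over $m$ shared particles and over possible couplings $(\omega,\omega')$ gives the $\mu^{m-1}/n_{k+2}^m$ factor required by Theorem \ref{theoreme de quasi orthogonalite}.

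The main obstacle will be the short-time piece: unlike the setting without recollisions, the local cluster of pathologically interacting particles need not be of size $2$, so one cannot simply reduce everything to a single binary recollision estimate. The conditioning on $\Upsilon_\e$ is exactly what makes the argument go through, by bounding the local cluster size by $\gamma$ (and the local velocities by $\mathbb V=|\log\e|$, which is absorbed into the Gaussian factor). The price is that one must enumerate, for each possible cluster topology of size $\le\gamma$, the ways of realizing a pathology; but since $\gamma$ is a dimensional constant, this enumeration is $O(1)$ and one genuinely gains one factor $\e^{\mathfrak{a}}$ from the geometric constraint that at least two of the short-time edges close a cycle or an overlap.
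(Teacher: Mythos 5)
Your overall strategy is the paper's: isolate the pathological window $[0,\delta]$, use the conditioning to confine the pathology to a possible cluster of size at most $\gamma$ so that the local combinatorics is $O(1)$ and the geometric estimate yields $\delta^{2}\e^{\mathfrak{a}}$, then pay for the remaining particles with clustering-collision constraints, and finally run the $T_a/T_b$ device of \eqref{Estimation sans reco 2} for the $\circledast_m$ bound. Two points need repair, one of them substantive.

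The substantive one is your claim that the long piece on $[\delta,t-t_s]$ can be handled ``exactly as in the proof of \eqref{Estimation sans reco 1}'', i.e.\ with a clustering tree on the individual particles of $\omega_2$. The change of variables $\hat{x}_i\mapsto(\tau_i,\eta_i)$ used there requires that the two objects undergoing their first clustering collision have evolved \emph{independently} up to time $\tau_i$, and it requires a kinetic-energy bound at time $\tau_i$ that can be traded against the Gibbs weight of the \emph{initial} data. After the short window, particles belonging to the same possible cluster are no longer independent, and their individual kinetic energies at time $\tau_i>\delta$ are not controlled by their individual initial energies. The paper resolves this by building the clustering tree on the possible clusters $\rho_1,\dots,\rho_{\gr{r}}$ themselves (identifying all particles of a cluster to one vertex), changing variables in the inter-cluster relative positions $\hat{x}_i=x_{\min\nu_{(i)}}-x_{\min\bar\nu_{(i)}}$ while freezing the intra-cluster coordinates $\tilde X_i$, and propagating the energy bound cluster-wise via $\mathcal{H}_{\underline{\kappa}}\le\mathcal{H}_{\underline{\kappa}'}$; the counting of how many clustering collisions must fall in $[0,2\theta]$ is then done at the cluster level, giving $\theta^{(\gr{r}-n_k)_+}$, and the conversion to $\theta^{(n_{k+2}-n_k-2)_+}\delta^2$ uses $\sum_i(|\rho_i|-1)\ge n_{k+2}-n_k$ together with $\delta\le\theta$. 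Your inequality $(n_{k+2}-n_k-2)_+\le(n_{k+2}-n_{k+1})+(n_{k+1}-n_k-2)_+$ is the right spirit but it presupposes a particle-level factorization of the early collisions that is not available. The second, minor, point: the smallness $\e^{\mathfrak{a}}$ for the local pathology should be quoted from the dedicated estimate \eqref{estimation recollision} on the sets $\mathcal{O}_\varpi$ (proved in Appendix \ref{subsec: derniere appencicite}), not from Lemma \ref{Borne sur les chevauchements}, which is stated in the tree-parameter coordinates of the main part on the full interval; the underlying recollision geometry is the same, but the statement you need integrates a pathology indicator over the internal coordinates of a possible cluster against the Gibbs weight.
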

	
		Using the estimations \eqref{Estimation reco loc 1} and \eqref{Estimation reco loc 2}, one obtains
	\begin{equation*}\everymath={\displaystyle}\begin{array}{r@{}r@{}r}
	\Bigg|\mathbb{E}_\e\Bigg[\mu^{-\frac{1}{2}}\sum_{\ui_{n_{k+2}}}\Phi_{\underline{n}_{k+2}}^{k'}(\gr{Z}_{\ui_{n_{k+2}}}(t_s))\,\zeta^0_\e(g)\ind_{\Upsilon_\e}\Bigg]\Bigg)\Bigg|&\multicolumn{2}{l}{\leq \|g\|_0 \|h\|_0 C^{n_{k+2}}\Big((\tfrac{\delta}{\mathfrak{d}})^2 (\tfrac{\theta}{\mathfrak{d}})^{(n_{k+2}-n_k-2)_+}(\tfrac{t}{\mathfrak{d}})^{n_k-1}\e^{\frac{2{\mathfrak{a}}+1}2}}\\[-8pt]
	&&+\Big((\tfrac{\delta}{\mathfrak{d}})^2 (\tfrac{\theta}{\mathfrak{d}})^{(n_{k+2}-n_k-2)_+}(\tfrac{t}{\mathfrak{d}})^{n_{k+2}+n_k-1}\e^{{\mathfrak{a}}}\Big)^{\frac12}\Big)\\[8pt]
	&\multicolumn{2}{l}{\leq\delta\e^{\frac{{\mathfrak{a}}}{2}}\|h\|_0\|g\|_0 C^{n_{k+2}} (\tfrac{t^2\theta}{\mathfrak{d}^3})^{\frac{(n_{k+2}-n_k-2)_+}{2}}(\tfrac{t}{\mathfrak{d}})^{n_k},}
	\end{array}\end{equation*}
	as $\e^{\frac{1}{2}}/\mathfrak{d}\to0$.
	
	Using that $\tfrac{Ct\theta}{\mathfrak{d}^2}\leq 1$ and $K'\delta= \theta\leq 1$, we can sum on $k$, $k'$ and $\underline{n}_{k+2}$
	\begin{equation*}\begin{split}
	\left|G_\e^{\text{rec},2}(t)\right|&\leq\sum_{\substack{1\leq k\leq K-1\\1\leq k'\leq K'}} \sum_{\substack{n_1\leq\cdots\leq n_k\\n_j-n_{j-1}\leq 2^j}} \sum_{\substack{n_{k+2}\geq n_{k+1}\geq n_k}}\|g\|_0\|h\|_0 \delta\e^{\frac{{\mathfrak{a}}}{2}}(\tfrac{Ct}{\mathfrak{d}})^{n_k} (\tfrac{Ct\theta}{\mathfrak{d}^2})^{\frac{(n_{k+2}-n_{k}-2)_+}{2}} \\
	&\leq \|g\|_0\|h\|_0 K'\delta\e^{\frac{{\mathfrak{a}}}{2}} K^{K^2}(\tfrac{Ct}{\mathfrak{d}})^{2^K}\\
	&\leq  \|g\|_0\|h\|_0 \e^{\frac{\mathfrak{a}}{2}} (\tfrac{C't}{2\mathfrak{d}})^{2^K}
	\end{split}\end{equation*}
	This concludes the proof of \eqref{Estimation morceau 4}.
	
	\begin{proof}[Proof of \eqref{Estimation reco loc 1}]
	We recall that
	\[\begin{split}
	\bar{\Phi}_{\underline{n}_{k+2}}^{k'}(Z_{n_{k+2}}):=\frac{1}{(n_{k+2})!}\sum_{\sigma\in\mathfrak{S}_{n_{k+2}}}\Phi^{>,\delta}_{n_{k+1},n_{k+2}}\Psi^{,t-t_s-\delta}_{\underline{n}_{k+1}}[h]\left(Z_{\sigma[n_{k+2}]}\right)
	\end{split}\]
	
	In $\Psi^{0,t-t_s-\delta}_{\underline{n}_{k+1}}\Phi^{0,\delta}_{n_{k+1},n_{k+2}}[h]\left(Z_{[n_{k+2}]}\right)$ we see three sets of indices:
	\begin{itemize}
		\item $1$ the last particle,
		\item $[1,n_{k+1}]$ the set of particles in "final" tree pseudotrajectory development,
		\item $[n_{k+1}+1,n_{k+2}]$ the particles added in the first time interval.
	\end{itemize}
	Any permutation $\sigma$ that sends $[1,n_{k+1}]$ and $[n_{k+1}+1,n_{k+2}]$ onto themselves stabilizes the function $\Psi^{0,t-t_s-\delta}_{\underline{n}_{k+1}}\Phi^{0,\delta}_{n_{k+1},n_{k+2}}[h]$. Hence, $\bar{\Phi}_{\underline{n}_{k+2},p}^{k'}(Z_p)$ is equal to 
	\[\begin{split}
	\frac{(n_{k+1}-1)!(n_{k+2}-n_{k+1})!}{(n_{k+2})!} \sum_{\substack{\omega_1\sqcup\omega_2=[n_{k+2}]\\|\omega_1|=n_{k+1}\\q_1\in\omega_1}}\Phi^{>,\delta}_{n_{k+1},n_{k+2}}\Psi^{,t-t_s-\delta}_{\underline{n}_{k+1}}[h]\left(z_{q_1},Z_{\omega_1\setminus\{q_1\}},Z_{\omega_2}\right).
	\end{split}\]
	Let us develop $\Psi^{0,t-t_s-\delta}_{\underline{n}_{k+1}}\Phi^{0,\delta}_{n_{k+1},n_{k+2}}[h]$.
	For $(s_i)_i\in\{\pm1\}^{n_{k+1}-1}$, $(\omega_1,\omega_2)$ a partition of $[n_{k+2}]$ and $(\lambda_1,\cdots,\lambda_{\gr{l}})$ a partition of $[n_{k+2}]$ with $\omega_1\subset\lambda_1$, we define the pseudotrajectory $\bar{\ds{Z}}(\tau,Z_{n_{k+2}}, q_1,\omega_1,\omega_2,$ $(s_i)_{i},(\lambda_j)_j)$ by
	\begin{itemize}
		\item for $\tau\leq \delta$, 
		\[\bar{\ds{Z}}(\tau) := \ds{Z}(\tau,Z_{\omega_1},(\lambda_{j})_j)\]
		\item for $\tau>\delta$, the particle of $\omega_3$ are removed and 
		\[\bar{\ds{Z}}_{\omega_1}(\tau) := \ds{Z}(\tau-\delta,\bar{\ds{Z}}_{\omega_1}(\delta),\{q_1\},(s_i)_i).\]
	\end{itemize}
	Then $\bar{\Phi}_{\underline{n}_{k+2}}^{k'}(Z_{n_{k+2}})$ is equal to 
	\begin{multline*} \frac{1}{(n_{k+2})!}~\sum_{\substack{\omega_1\sqcup\omega_2=[n_{k+2}]\\|\omega_1|=n_{k+1}\\q_1\in\omega_1}}\sum_{(s_i)_i}\Bigg(\sum_{{\gr{l}}=1}^n \sum_{\substack{\ \lambda_1\supset\omega_1}}\sum_{\substack{(\lambda_2,\cdots,\lambda_\gr{l})\\ \in\mathcal{P}^{\gr{l}-1}_{ \omega_2\setminus\lambda_1}}}\Bigg)h\left(\bar{\ds{z}}_{\omega_1}(k\theta+k'\delta)\right)\\
	\times\ind_{\substack{\bar{\ds{Z}}(\cdot)\,{\rm has\,a}\\{\rm pathology}\\{\rm on\,}[0,\delta]}}\Bigg(\prod_{i=1}^{n_{k+2}-1}\!\!\!{s}_i~\ind_{\mathcal{R}^{0,k\theta+(k-1)\delta}_{\omega_1,(s_i)_i}}(\bar{\ds{Z}}_{\omega_2}(\delta)) \prod_{i=1}^{k}\ind_{\mathfrak{n}(t-i\theta)=n_i}\Bigg)\\
	\times\Bigg(\CCirc_l(Z_{\lambda_1},\cdots,Z_{\lambda_{\gr{l}}}) \DDelta_{|\lambda_1|}^{[m]}(Z_{\lambda_1})\prod_{i=2}^{\gr{l}}\DDelta_{|\lambda_i|}(Z_{\lambda_i}) \Bigg).
	\end{multline*}
	The functions $\CCirc$ and $\DDelta$ are defined in Definitions \ref{def:DDelta} and \ref{def:CCirc} (in the definition, they are defined on a time interval $[0,t]$; here, they are defined on $[0,\delta]$).
	
	The function $\CCirc_l(Z_{\lambda_1},\cdots,Z_{\lambda_{\gr{l}}})$ can be bounded by the Penrose's tree inequality (see for example \cite{BGSS,Jansen}),
	\[\left|\CCirc_l(Z_{\lambda_1},\cdots,Z_{\lambda_{\gr{l}}})\right|= \left|\sum_{C\in\mathcal{C}([\gr{l}])}\prod_{(i,j)\in E(C)} -\ind_{{\lambda}_i\so {\lambda}_j}\right| \leq \sum_{T\in\mathcal{T}([\gr{l}])}\prod_{(i,j)\in E(T)} \ind_{{\lambda}_i\so {\lambda}_j}.\]
	The set $\mathcal{T}([\gr{l}])$ is the set of minimally connected graphs with vertices $[\gr{l}]$.
	
	We explain now how to take advantage of the pathology of $\bar{\ds{Z}}(\cdot)$.
	
	\begin{definition}\label{def: O_pi}
		For $r\geq 3$, we define  the set $\mathcal{O}_r$ as 
		\begin{multline}\mathcal{O}_r :=\Big\{Z_r\in \mathbb{D}^r,~\exists (\varpi_1,\cdots,\varpi_{\gr{l}}),~{\rm the~collision~graph~of~}\ds{Z}_r(\cdot,Z_z,(\varpi_1,\cdots,\varpi_{\gr{l}}))\text{~on~}[0,\delta]~{\rm is~}\\
		{\rm connected,~and~the~pseudotrajectory~has~a~pathology}\Big\}.\end{multline}
		We recall that a pathology can be an overlap, a multiple \blue{encounter}, or a recollision (see Definition \ref{def:pathology}).
		
		For $r=2$, we define 
		\begin{equation}
		\mathcal{O}_2:=\{|x_1-x_2|\leq \e\}\cup\{|(x_1-x_2)+\delta (v_1-v_2)|\leq\e\}.
		\end{equation}
		
		Finally, for $\varpi\subset[n_{k+2}]$, the set $\mathcal{O}_{\varpi}$ is defined as
		\begin{equation}
		\mathcal{O}_{\varpi}:=\left\{Z_{n_{k+2}}\in\mathbb{D}^{n_{k+2}},~Z_\varpi \in \mathcal{O}_{|\varpi|}\right\}.
		\end{equation}
	\end{definition}
	The $\mathcal{O}_\varpi$ allows to control the recollision condition 
	\[\ind_{\substack{\bar{\ds{Z}}(\cdot)\,{\rm has\,a}\\{\rm pathology}\\{\rm on\,}[0,\delta]}}\leq \sum_{\varpi\subset[n_{k+2}]}\ind_{\mathcal{O}_{\varpi}}. \]

	This leads to the following bound on $\left|\bar{\Phi}_{\underline{n}_{k+2}}^{k'}(Z_{n_{k+2}})\right|$ :
	\begin{multline}\label{borne 1 sur phi l} \frac{\|h\|_0}{(n_{k+2})!}\sum_{\substack{\omega_1\sqcup\omega_2=[n_{k+2}]\\|\omega_1|=n_{k+1}\\q_1\in\omega_1}}\sum_{\substack{\varpi\subset[n_{k+2}]\\ |\varpi|\geq 2}}\sum_{(s_i)_i}\Bigg(\sum_{{\gr{l}}=1}^n \sum_{\substack{\ \lambda_1\supset\omega_1}}\sum_{\substack{(\lambda_2,\cdots,\lambda_\gr{l})\\ \in\mathcal{P}^{\gr{l}-1}_{ \omega_2\setminus\lambda_1}}}\ind_{\mathcal{R}^{0,k\theta+(k-1)\delta}_{\{q_1\},(s_i)_i}}(\bar{\ds{Z}}_{\omega_1}(\delta)) \ind_{\mathfrak{n}(k'\delta)=n_{k}}\\[-10pt]
	\times\ind_{\mathcal{O}_{\varpi}}\sum_{T\in\mathcal{T}([\gr{l}])}\prod_{(i,j)\in E(T)} \ind_{{\lambda}_i\so {\lambda}_j} \DDelta_{|\lambda_1|}^{[n_{k+1}]}(Z_{\lambda_1})\prod_{i=2}^{\gr{l}}\DDelta_{|\lambda_i|}(Z_{\lambda_i}) \Bigg).
	\end{multline}
	
	Note that the right hand-side is invariant under translation. Thus, one can fix $x_1 = 0$ and integrate with respect to the other variables.
	
	We introduce a partition to control the pseudo-trajectory in the time interval $[0,\delta]$.
	
	\begin{definition}[Possible cluster partition]
		Given $Z_{n_{k+2}}\in\mathbb{D}^{n_{k+2}}$, we construct the graph $G$ with vertices $[n_{k+2}]$. The pair $(i,j)$ is an edge of $G$ if and only if there exists $\tilde{\omega}\subset [n_{k+2}]$ and $(\tilde{\lambda}_1,\cdots,\tilde{\lambda}_\ell)$ a partition of $\tilde{\omega}$ such that the collision graph of $\dr{Z}(\cdot,Z_{\tilde{\omega}},\tilde{\lambda}_1,\cdots,\tilde{\lambda}_\ell)$ on time interval $[0,\delta]$ is connected. We introduce $\underline{\rho}:=(\rho_1,\cdots,\rho_{\gr{r}})$ the \emph{possible cluster partition} as the set of the connected components of $G$.
		
		We define $\mathcal{D}^{\underline{\rho}}_\e\subset\mathbb{D}^{n_{k+2}}$ as the set such that $\underline{\rho}$ is the possible cluster partition. The $\big(\mathcal{D}^{\underline{\rho}}_\e)_{\underline{\rho}}$ form a partition of $\mathbb{D}^{n_{k+2}}$. 
	\end{definition}
		
	By definition of the potential cluster, a particle cannot interact with a particle of an other cluster for any time in $[0,\delta]$. Thus the systems $\rho^i$ are isolated on $[0,\delta]$ and all the dynamics in $[0,\delta]$ are encoded inside the $(\rho_i)$. 
	
	The parametrization of the pseudotrajectories is changed to a more adapted one. There exists a $\rho_i$ containing $\varpi$. With a little loss of symmetry, one can suppose that it is $\rho_1$. In the same way, for any $\lambda_j$ with $j\neq 1$ there exists some $\rho_i$ containing $\lambda_j$. For any $\rho_i$
	\begin{itemize}
		\item $\underline{\omega}^i := (\omega_1^i,\omega_2^i)$ the partition of $\rho_i$ defined by $\omega^i_j:=\omega_j\cap\rho_i$, note that the set $\omega^i_1$ cannot be empty,
		\item $\underline{\lambda}^{i}:=\{\lambda^i_1:=\lambda_1\cap\rho_i\}\cup\{\lambda_j \text{~for~} j\geq 2 \text{~with~}\lambda_j\subset \rho_i\}$ a partition of $\rho_i$,
		\item  for $i\geq 1$, $\mathfrak{p}_i:=(\underline{\omega}^i,\underline{\lambda}^i)$,
		\item $\mathfrak{p}_1:=(\underline{\omega}^1,\underline{\lambda}^1,\varpi)$.
	\end{itemize}
	
	The set of possible $\mathfrak{p}_i$ is denoted $\mathfrak{P}(\rho_i)$. Because $\rho_i$ is of size at most $\gamma$, there exists a constant $C_\gamma$ depending only on $\gamma$ such that $|\mathfrak{P}(\rho_i)|\leq C_\gamma$. For a fix partition $\underline{\rho}$, the map $(\underline{\omega},{\varpi},\underline{\lambda})\mapsto (\mathfrak{p}_i)_i$ is onto. 
	
	The possible cluster partition also contains the overlap: if we denote two dynamical clusters $\lambda_j$ and $\lambda_{j'}$ with $j,j'\geq 2$, there exists a $\rho_i$ containing  both, and if $\lambda_j\subset\rho_i$ has an overlap with $\lambda_1$, then $\lambda_j$ has an overlap with $\lambda_1^i$. This last property allows us to rewrite the overlap cumulant: for any $Z_{n_{k+2}}$ in  $\mathcal{D}^{\underline{\rho}}_\e$,
	\begin{equation*}
	\Big|\psi_{\gr{l}}\big(Z_{\lambda_1},\cdots,Z_{\lambda_{\gr{l}}}\big)\Big|\leq\sum_{T\in\mathcal{T}([\gr{l}])}\prod_{(i,j)\in E(T)}\ind_{\lambda_i\so\lambda_j}
	\leq \prod_{i= 1}^{\gr{r}}\sum_{T_i\in\mathcal{T}([|\underline{\lambda^i}|])}\prod_{(j,j')\in E(T_i)}\ind_{\lambda_j^i\so\lambda^i_{j'}}
	\leq\prod_{i= 1}^{\gr{l}}\Big|\mathcal{T}([|\underline{\lambda^i}|])\Big|.
	\end{equation*}
	The right-hand side is bounded using that \[\Big|\mathcal{T}([|\underline{\lambda^i}|])\Big|\leq |\underline{\lambda^i}|^{|\underline{\lambda^i}|-2}\leq |\rho_i|^{|\rho_i|}\]
	(see section 2 of \cite{BGSS}). As the symmetric conditioning imposes that $|\rho_i|\leq \gamma$, the cumulants $\Big|\psi_{\gr{l}}\big(Z_{\lambda_1},\cdots,Z_{\lambda_{\gr{l}}}\big)\Big|$ are smaller than $(\gamma^\gamma)^{ n_{k+2}}$.
	
	We now have the following bound
	\begin{equation}\label{eq:borne utilisant les cluster possible}\big|\bar{\Phi}_{\underline{n}_{k+2},p}^{k'}(Z_p)\big|
	\leq \frac{\gamma^{n_{k+2}}\|h\|_0}{(n_{k+2})!}\sum_{q_1=1}^{n_{k+2}}\sum_{{\gr{r}}=1}^{n_{k+2}}\sum_{\underline{\rho}\in\mathcal{P}^{\gr{r}}_{n_{k+2}}}\sum_{\substack{(s_i)_i\\ \underline{\mathfrak{p}}\in\underset{i}{\prod}\mathfrak{P}(\rho_i)}}~\ind_{\mathcal{R}^{\underline{\rho},\underline{\mathfrak{p}}}_{q_1,(s_i)_i}}(Z_{n_{k+2}})\prod_{i=1}^{\gr{r}}\Delta_{\mathfrak{p}_i}(Z_{\rho_i}),
	\end{equation}
	where we denote
	\begin{gather*}
	\Delta_{\mathfrak{p}_1}(Z_{\rho_1}):=\ind_{\mathcal{O}_{\varpi}}\ind_{\!\!\!\!\tiny\begin{array}{l}Z_{\rho_1}\text{\,form\,a\,possible\,cluster}\end{array}}\!\!,\\
	\forall i\geq 2,~\Delta_{\mathfrak{p}_i}(Z_{\rho_i}):=\ind_{\!\!\!\!\tiny\begin{array}{l}Z_{\rho_i}\text{\,form\,a\,possible\,cluster}\end{array}} ~{\rm and}\\
	\mathcal{R}^{\underline{\rho},\underline{\mathfrak{p}}}_{q_1,(s_i)_i} :=  \Big\{Z_{p}\in\mathcal{D}^{\underline{\rho}}_\e,\,\bar{\ds{Z}}_{\omega_1}(\delta)\in\mathcal{R}^{0,k\theta+(k-1)\delta}_{\{q_1\},(s_i)_i}\Big\}.\end{gather*}
	
	The same method as in \cite{BGSS6} is used to control the condition $\ind_{\mathcal{R}^{\underline{\rho},\underline{\mathfrak{p}}}_{((s_i)_i}}$.
	
	For a pseudotrajectory $\bar{\ds{Z}}_{n_{k+2}}(\tau)$, consider its collision graph $\mathcal{G}_{\omega_1\cup\omega_2}^{[0,t-t_s]}$. Then, we can construct the graph $G$ by identifying in $\mathcal{G}_{\omega_1\cup\omega_2}^{[0,t-t_s]}$ the particles in the same cluster $\rho_i$. Finally we can construct the \emph{clustering trees} $T^>:=(\nu_i,\bar{\nu}_i)_{1\leq i\leq \gr{r}-1}$ where the $i$-th clustering collision in $G$ happens between cluster $\rho_{\nu_i}$ and $\rho_{\bar{\nu_i}}$.
	
	\begin{figure}[h!]
		\centering
		\includegraphics[width=13cm]{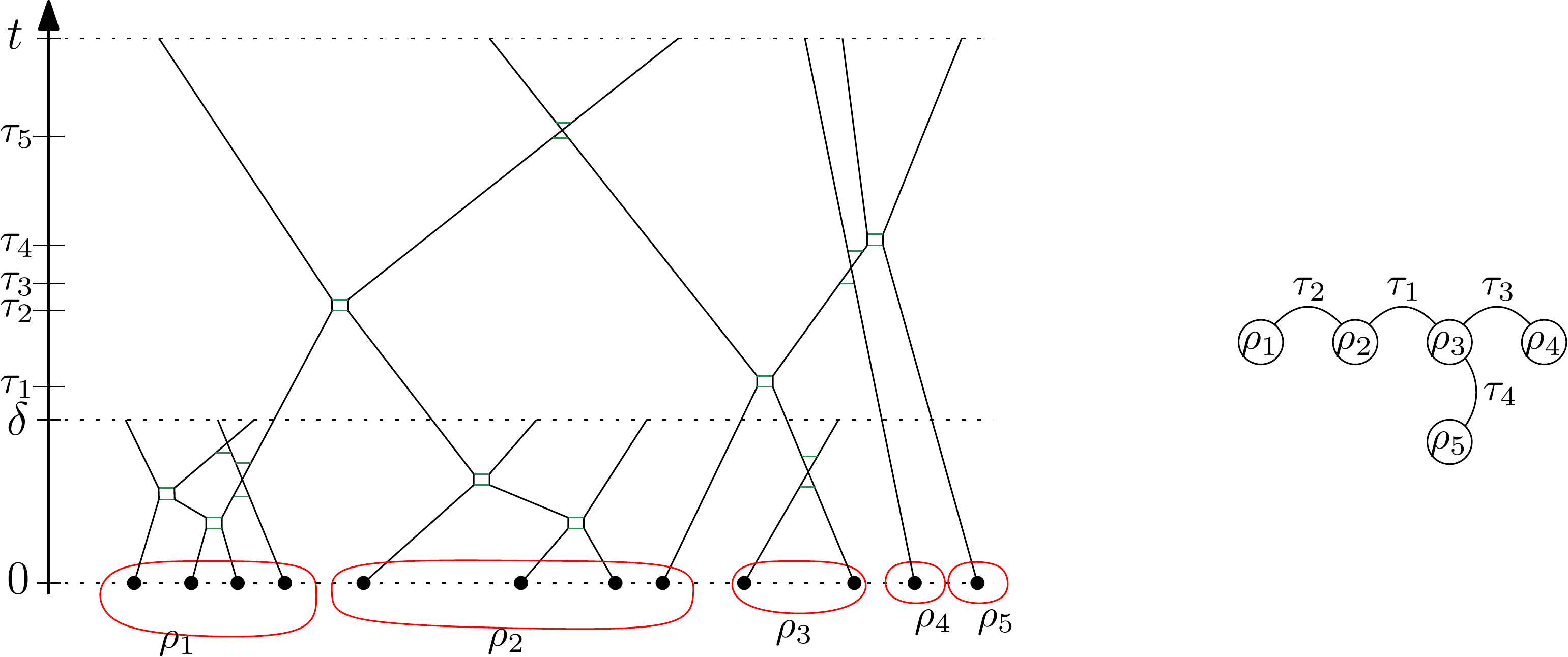}
		\caption{Example of construction of the clustering stets.}
	\end{figure}
	
	We need to count the number of clustering collisions of $T^>$ happening between time $\delta$ and time $k'\delta$. If $\gr{r}>n_k$, all the $\gr{r}-1$ collisions in $T^>$ cannot correspond to the $n_k-1$ collisions of the time interval $[k'\delta,\theta]$. Thus, at least $(\gr{r}-n_k)_+$ collisions happen during $[\delta,k'\delta]\subset[0,2\theta]$.
	
	One needs a different representation of collision graphs. Let $L_0$ be equal to $\{\{1\},\cdots,$ $\{\gr{r}\}\}$. The $L_i$ and $(\nu_{(i)},\bar{\nu}_{(i)})$ are constructed sequentially: suppose that $L_{i-1}=(c_1,\cdots,c_l)$, the $(c_j)$ forming a partition of $[1,r]$. The $i$-th collision happens between cluster $\nu_i\in c_a$ and $\bar{\nu}_i\in c_b$. Then:
	\begin{itemize}
		\item $L_i:=\big(L_{i-1}\setminus\{c_a,c_b\}\big)\cup\{c_a\cup c_b\}$,
		\item $\{\nu_{(i)},\bar{\nu}_{(i)}\}:=\{c_a,c_b\}$ with $\max \nu_{(i)} <\max \bar{\nu}_{(i)}$.
	\end{itemize}
	The $(\nu_{(i)},\bar{\nu}_{(i)})_i$ define a partition of $\mathcal{T}^>([\gr{r}])$ (the set of ordered trees on $[1,\gr{r}]$).
	
	\begin{figure}[h]
		\centering
		\includegraphics[width=13cm]{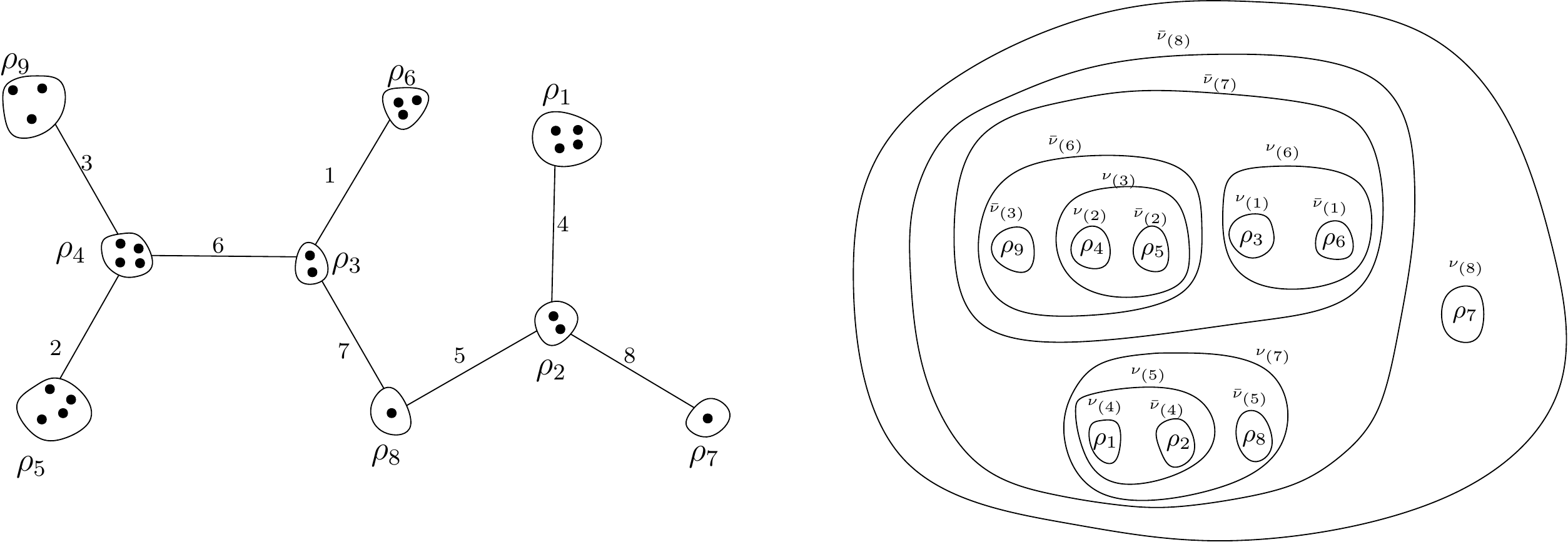}
		\caption{An example of construction of the representation $(\nu_{(i)},\bar{\nu}_{(i)})_i$ from a clustering graph. The graph }
	\end{figure}

	We performed the following change of variables: 
	\[\forall i\in\{1,\cdots,\gr{r}-1\},~\hat{x}_i:=x_{\min\nu_{(i)}}-x_{\min\bar{\nu}_{(i)}},~\tilde{X}_i:=\tr_{-x_{\min \rho_i}}X_{\rho_i},\]
	\[X_{2,n_{k+2}}\mapsto(\hat{x}_1\cdots,\hat{x}_{\gr{r}-1},\tilde{X}_1,\cdots,\tilde{X}_{\gr{r}}).\]
	
	The condition $\mathcal{R}^{\underline{\rho},\underline{\mathfrak{p}}}_{q_1,(s_i)_i} $ is integrated first with respect to $(\hat{x}_1,\cdots,\hat{x}_{\gr{r}-1})$, where relative positions inside a cluster $\tilde{X}_i$ are kept constant. The $(\Delta_{\mathfrak{p}_i})_i$ will be summed later with respect to the $(\tilde{X}_i)_i$.
	
	Fix $\tau_{i+1}$ the time of the $(i+1)$-th clustering collision, and the relative positions $\hat{x}_{i-1},\cdots,\hat{x}_{1}$. We define the $i$-th clustering set 
		\[\begin{aligne}{cc}
		B_i := \bigcup_{\substack{q\in \omega_{2,\nu_{(i)}}\\\bar{q}\in\omega_{1,\bar{\nu}_{(i)}}}} B_i^{q,\bar{q}}\\
		{\rm with}\qquad\omega_{1,\nu_{(i)}}:= \bigcup_{j\in\nu_{(i)}}\omega^j_1,\qquad\omega_{1,\bar{\nu}_{(i)}}:= \bigcup_{\bar{\jmath}\in\bar{\nu}_{(i)}}\omega_2^{\bar{\jmath}},\\
		B_i^{q,\bar{q}}:=\Big\{\hat{x}_i ~ \Big\vert~\exists \tau_i\in[0,\tau_{i+1}\wedge T_i],~|\bar{\ds{x}}_{\bar{q}}(\tau_i)-\bar{\ds{x}}_{\bar{q}}(\tau_i)|=\e\Big\}
		\end{aligne}\]
		and $T_i:=2\theta$ for the $(\gr{r}-n_k)_+$ first collisions, $t$ else. We used that $\omega_1\cup\omega_2$ is the set of particles existing after time $\delta$.
		
		Up to time $\tau_i$ the curves $\bar{\ds{x}}_q$ and $\bar{\ds{x}}_{\bar{q}}$ are independent. Hence, we can perform the change of variables $\hat{x}_i\mapsto (\tau_i,\eta_i)$, where $\tau_i$ is the minimal collision time and
		\[\eta_i := \frac{\bar{\ds{x}}_{\bar{q}}(\tau_i)-\bar{\ds{x}}_{q}(\tau_i)}{|\bar{\ds{x}}_{\bar{q}}(\tau_i)-\bar{\ds{x}}_{{q}}(\tau_i)|}.\]
		The Jacobian of this diffeomorphism is $\e^{d-1}|(\bar{\ds{v}}_{\bar{q}}(\tau_i)-\bar{\ds{v}}_{{q}}(\tau_i))\cdot\eta_i|\ud\tau_i\ud\eta_i$.
		
		As the particles in $\omega_{1,\nu_{(i)}}$ and $\omega_{1,\bar{\nu}_{(i)}}$ are isolated during $[\delta,\tau_i]$, their energies are conserved. The sum of relative velocities can be bounded by
		\[\begin{split}
		\sum_{\substack{q\in\omega_{1,{\nu}_{(i)}}\\\bar{q}\in\omega_{1,\bar{\nu}_{(i)}}}}|\bar{\ds{v}}_{\bar{q}}(\tau_i)-\bar{\ds{v}}_{{q}}(\tau_i)|&\leq |\bar{\ds{V}}_{\omega_{1,{\nu}_{(i)}}}(\tau_i)|\,|\omega_{1,{\nu}_{(i)}}|^{1/2}|\omega_{1,\bar{\nu}_{(i)}}|+|\bar{\ds{V}}_{\omega_{1,\bar{\nu}_{(i)}}}(\tau_i)|\,|\omega_{1,\bar{\nu}_{(i)}}|^{1/2}|\omega_{1,{\nu}_{(i)}}|\\[-15pt]
		& \leq\Big(|\omega_{1,{\nu}_{(i)}}|+|\bar{\ds{V}}_{\omega_{1,{\nu}_{(i)}}}(\tau_i)|^2\Big)\Big(|\omega_{1,\bar{\nu}_{(i)}}|+|\bar{\ds{V}}_{\omega_{1,\bar{\nu}_{(i)}}}(\tau_i)|^2\Big).
		\end{split}\]
		
		Using the same method as in the proof of \ref{eq:borne energie cinetique},
		\[\frac{1}{2}\left|\bar{\ds{V}}_{\omega_{1,{\nu}_{(i)}}}(\tau_i)\right|^2 \leq \mathcal{H}_{|\omega_{1,{\nu}_{(i)}}|}\left(\bar{\ds{Z}}_{\omega_{1,{\nu}_{(i)}}}(\delta)\right)\leq \mathcal{H}_{|\lambda_{1,\bar{\nu}_{(i)}}|}\left(\bar{\ds{Z}}_{\lambda_{1,\bar{\nu}_{(i)}}}(\delta)\right),\]
		where we denote
		\[\lambda_{1,\bar{\nu}_{(i)}}:= \bigcup_{\bar{\jmath}\in\bar{\nu}_{(i)}}\lambda_1^j.\]
		At time $\delta$, the particles in two different clusters cannot interact (by definition of a possible cluster), 
		\[\mathcal{H}_{|\lambda_{1,{\nu}_{(i)}}|}(\bar{\ds{Z}}_{\lambda_{1,{\nu}_{(i)}}}(\delta))= \sum_{j\in\nu_{(i)}}\mathcal{H}_{|\lambda_1^j|}(\bar{\ds{Z}}_{\lambda_1^j}(\delta))= \sum_{j\in\nu_{(i)}}\mathcal{H}_{|\lambda_1^j|}(\bar{\ds{Z}}_{\lambda_1^j}(0))\leq \sum_{j\in\nu_{(i)}}\mathcal{H}_{|\rho_j|}(Z_{\rho_j}).\]
		
		We conclude that 
		\begin{equation}\label{eq:borne energie cinetique2}
		\begin{split}\sum_{\substack{q\in\omega_{1,{\nu}_{(i)}}\\\bar{q}\in\omega_{1,\bar{\nu}_{(i)}}}}|\bar{\ds{v}}_{\bar{q}}(\tau_i)-\bar{\ds{v}}_{{q}}(\tau_i)|&\leq 4\sum_{\substack{\nu_i\in\nu_{(i)}\\\bar{\nu}_{i}\in\bar{\nu}_{(i)}}}\Big(|\rho_{{\nu}_i}|+\mathcal{H}_{|\rho_{{\nu}_i}|}(Z_{\rho_{{\nu}_i}})\Big)\Big(|\rho_{\bar{\nu}_i}|+\mathcal{H}_{|\rho_{\bar{\nu}_i}|}(Z_{\rho_{\bar{\nu}_i}})\Big).\end{split}
		\end{equation}
				
		This gives the following bound on $|B_i|$ (using the Boltzmann-Grad scaling $\mu\mathfrak{d}\e^{d-1}= 1$)
		\[\begin{split}
		|B_i|&\leq \frac{C}{\mu\mathfrak{d}}\int^{t_{i+1}\wedge T_i}_0\ud \tau_i\sum_{q,\bar{q}}|\bar{\ds{v}}_{{q}}(\tau_i)-\bar{\ds{v}}_{\bar{q}}(\tau_i)|\\
		&\leq\frac{C}{\mu\mathfrak{d}}\sum_{\substack{\nu_i\in\nu_{(i)}\\\bar{\nu}_{i}\in\bar{\nu}_{(i)}}}\Big(|\rho_{{\nu}_i}|+\mathcal{H}_{|\rho_{{\nu}_i}|}(Z_{\rho_{{\nu}_i}})\Big)\Big(|\rho_{\bar{\nu}_i}|+\mathcal{H}_{|\rho_{\bar{\nu}_i}|}(Z_{\rho_{\bar{\nu}_i}})\Big)\int^{t_{i+1}\wedge T_i}_0\ud \tau_i.
		\end{split}\]
		
		Permuting the product and the sum, 
		\begin{multline*}
		\sum_{(\nu_{(i)},\bar{\nu}_{(i)})_i}\prod_{i=1}^ {\gr{r}-1}\sum_{\substack{\nu_i\in\nu_{(i)}\\\bar{\nu}_{i}\in\bar{\nu}_{(i)}}}\Big(|\rho_{{\nu}_i}|+\mathcal{H}_{|\rho_{{\nu}_i}|}(Z_{\rho_{{\nu}_i}})\Big)\Big(|\rho_{\bar{\nu}_i}|+\mathcal{H}_{|\rho_{\bar{\nu}_i}|}(Z_{\rho_{\bar{\nu}_i}})\Big)\\[-15pt]
		=\sum_{(\nu_{i},\bar{\nu}_{i})_i}\prod_{i=1}^ {\gr{r}-1}\Big(|\rho_{{\nu}_i}|+\mathcal{H}_{|\rho_{{\nu}_i}|}(Z_{\rho_{{\nu}_i}})\Big)\Big(|\rho_{\bar{\nu}_i}|+\mathcal{H}_{|\rho_{\bar{\nu}_i}|}(Z_{\rho_{\bar{\nu}_i}})\Big).
		\end{multline*}
		
		Using that
		\[\forall a,b\in\mathbb{N},~\frac{(a+b)!}{a!b!}\leq 2^{a+b},\]
		we have 
		\[\begin{split}
		\int_0^t\ud t_{\gr{r}-1}\cdots\int_0^{t_2\wedge T_2}\ud t_1&\leq\frac{t^{n_k\wedge \gr{r}-1}}{(n_k\wedge \gr{r}-1)!}\frac{\theta^{(\gr{r}-n_k)_+}}{((\gr{r}-n_k)_+)!}\leq 2^{n_{k+2}}\frac{t^{n_k\wedge \gr{r}-1}\theta^{(\gr{r}-n_k)_+}}{(\gr{r}-1)!}.
		\end{split}\]
		
		We can now sum up on the clustering collisions:
		\[\begin{split}
		\int&\ind_{\mathcal{R}^{\underline{\rho},\underline{\mathfrak{p}}}_{(s_i)_i} }\ud\hat{x}_1\cdots\ud\hat{x}_{\gr{r}-1}\leq \sum_{(\nu_{(i)},\bar{\nu}_{(i)})}\int \ud\hat{x}'_1 \ind_{B_1} \int \ud\hat{x}'_2\ind_{B_2}\cdots\int \ud\hat{x}_{\gr{r}-1}\ind_{B_{\gr{r}-1}}\\
		&\leq\left(\frac{C}{\mu\mathfrak{d}}\right)^{\gr{r}-1}\int_0^t\ud t_{\gr{r}-1}\cdots\int_0^{t_2\wedge T_2}\ud t_1 \sum_{(\nu_{i},\bar{\nu}_{i})_i}\prod_{i=1}^ {\gr{r}-1}\Big(|\rho_{{\nu}_i}|+\mathcal{H}_{|\rho_{{\nu}_i}|}(Z_{\rho_{{\nu}_i}})\Big)\Big(|\rho_{\bar{\nu}_i}|+\mathcal{H}_{|\rho_{\bar{\nu}_i}|}(Z_{\rho_{\bar{\nu}_i}})\Big)\\
		&\leq\left(\frac{2C}{\mu\mathfrak{d}}\right)^{\gr{r}-1}\frac{t^{n_k\wedge \gr{r}-1}\theta^{(\gr{r}-n_k)_+}}{(\gr{r}-1)!}\sum_{(\nu_{i},\bar{\nu}_{i})_i}\prod_{i=1}^ {\gr{r}-1}\Big(|\rho_{{\nu}_i}|+\mathcal{H}_{|\rho_{{\nu}_i}|}(Z_{\rho_{{\nu}_i}})\Big)\Big(|\rho_{\bar{\nu}_i}|+\mathcal{H}_{|\rho_{\bar{\nu}_i}|}(Z_{\rho_{\bar{\nu}_i}})\Big).
		\end{split}\]
		
		We can forget the order of the edges of $T^> = (\nu_i,\bar{\nu}_i)_i$, which gives a factor $\gr{r}!$. Secondly, denoting $d_i(G)$ the degree of the vertex $i$ in a graph $G$ and  $\mathcal{T}([\gr{r}])$ the set of minimally (not oriented) connected graphs on $[1,\gr{r}]$, we can write the preceding inequality as
		\[\begin{split}
		\int\ind_{\mathcal{R}^{\underline{\rho},\underline{\mathfrak{p}}}_{q_1,(s_i)_i} }\ud\hat{x}_1\cdots\ud\hat{x}_{\gr{r}-1}\leq\left(\frac{2C}{\mu\mathfrak{d}}\right)^{\gr{r}-1}t^{n_k\wedge \gr{r}-1}\theta^{(\gr{r}-n_k)_+}\sum_{T\in\mathcal{T}([\gr{r}])}\prod_{i=1}^r \Big(|\rho_{i}|+\mathcal{H}_{|\rho_i|}(Z_{\rho_i})\Big)^{d_i(T)}.
		\end{split}\]
		
		For $A,B>0$, $x\in\mathbb{R}^+$, there exists a constant $C>0$ such that
		\[\left(A+x\right)^B e^{-\frac{x}{4}} \leq\left(\tfrac{4B}{e}\right)^B e^{\frac{A}{4}}.\]
		We use this inequality and that on $\mathcal{D}^{\underline{\rho}}_{\rho}$, $\mathcal{H}_n(Z_n) = \sum_i \mathcal{H}_{|\rho_i|}(Z_{\rho_i})$ to bound 
		\begin{multline*}		
		\int\ind_{\mathcal{R}^{\underline{\rho},\underline{\mathfrak{p}}}_{(s_i)_i} }e^{-\frac{1}{2}\mathcal{H}_{n_{k+2}}(Z_{n_{k+2}})}\ud\hat{x}_1\cdots\ud\hat{x}_{\gr{r}-1}\\
		\begin{split}&\leq\left(\frac{C}{\mu\mathfrak{d}}\right)^{\gr{r}-1}t^{n_k\wedge r-1}\theta^{(\gr{r}-n_k)_+}\sum_{T\in\mathcal{T}([\gr{r}])}\prod_{i=1}^{\gr{r}} \Big(|\rho_{i}|+\mathcal{H}_{|\rho_i|}(Z_{\rho_i})\Big)^{d_i(T)}e^{-\frac{1}{2}\sum_{i=1}^{\gr{r}}\mathcal{H}_{|\rho_i|}(Z_{\rho_i})}\\
		&\leq \tilde{C}^{n_{k+2}}\frac{t^{n_k\wedge \gr{r}-1}\theta^{(\gr{r}-n_k)_+}}{(\mu\mathfrak{d})^{\gr{r}-1}}\sum_{T\in\mathcal{T}([\gr{r}])}\prod_{i=1}^{\gr{r}} d_i(T)^{d_i(T)}\;.
		\end{split}
		\end{multline*}
		As the sum of the $d_i(T)$ is equal to $2\gr{r}-2$, we have by convexity of $x\mapsto x\log x$
		\[\sum_{i=1}^{\gr{r}} d_i(T)\log{d_i(T)} \leq\gr{r}\frac{\sum_{i=1}^{\gr{r}} d_i(T)}{\gr r}\log\frac{\sum_{i=1}^{\gr{r}} d_i(T)}{\gr r}\leq (2\gr{r}-2) \log 2 \]
		and $|\mathcal{T}([\gr{r}])|$ is equal to $\gr{r}^{\gr{r}-2}$,
		\begin{multline*}		
		\int\ind_{\mathcal{R}^{\underline{\rho},\underline{\mathfrak{p}}}_{(s_i)_i} }e^{-\frac{1}{2}\mathcal{H}_{n_{k+2}}(Z_{n_{k+2}})}\ud\hat{x}_1\cdots\ud\hat{x}_{\gr{r}-1}\\
		\begin{split}&\leq\left(\frac{C}{\mu\mathfrak{d}}\right)^{\gr{r}-1}t^{n_k\wedge r-1}\theta^{(\gr{r}-n_k)_+}\sum_{T\in\mathcal{T}([\gr{r}])}\prod_{i=1}^{\gr{r}} \Big(|\rho_{i}|+\mathcal{H}_{|\rho_i|}(Z_{\rho_i})\Big)^{d_i(T)}e^{-\frac{1}{2}\sum_{i=1}^{\gr{r}}\mathcal{H}_{|\rho_i|}(Z_{\rho_i})}\\
		&\leq \tilde{C}'^{n_{k+2}}\frac{t^{n_k\wedge \gr{r}-1}\theta^{(\gr{r}-n_k)_+}}{(\mu\mathfrak{d})^{\gr{r}-1}} (\gr{r}-1)!\;.
		\end{split}
		\end{multline*}
		
		We can now integrate the condition $\Delta_{\mathfrak{p}_i}(Z_{\rho_i})$. The particles in $Z_{\rho_i}$ have to form a possible cluster. Because clusters are of size at most $\gamma$,
		\begin{align}
		\label{estimation recollision}
		&\int_{{\mathbb{T}}^{|\rho_1|-1}\times(\mathbb{R}^d)^{|\rho_1|}} \Delta_{\mathfrak{p}_1}(Z_{\rho_1})\tfrac{e^{-\frac{1}{2}\mathcal{H}_{|\rho_1|}(Z_{\rho_1})}}{(2\pi)^{d|\rho_1|/2}}\ud\tilde{X}_{i}\ud V_{\rho_1}\leq\frac{C_r\delta^{\max\{2,|\rho_1|-1\}}}{(\mu\mathfrak{d})^{|\rho_1|-1}}\e^{\mathfrak{a}},\\
		&\int_{{\mathbb{T}}^{|\rho_i|-1}\times(\mathbb{R}^d)^{|\rho_i|}}\Delta_{\mathfrak{p}_i}(Z_{\rho_i}) \tfrac{e^{-\frac{1}{2}\mathcal{H}_{|\rho_i|}(Z_{\rho_i})}}{(2\pi)^{d|\rho_i|/2}}\ud\tilde{X}_{i}\ud V_{\rho_i}\leq C_\gamma \left(\frac{\delta}{\mathfrak{d}\mu}\right)^{|\rho_i|-1},
		\end{align}
		The second inequality is a clustering estimation, similar to the ones threatened in the proof of \eqref{Estimation sans reco 1}. In the first inequality, we use recollision estimates as in the proof of \eqref{Estimation avec reco 1}. The proofs are given in Appendix \ref{subsec: derniere appencicite}.
		
		Integrating the $\Delta_{\mathfrak{p}_i}$ leads to
		\begin{multline*}
		\int\ind_{\mathcal{R}^{\underline{\rho},\underline{\mathfrak{p}}}_{(s_i)_i}}(Z_{n_{k+2}})\prod_{i=1}^{\gr{r}}\Delta_{\mathfrak{p}_i}(Z_{\rho_i})\frac{e^{-\mathcal{H}_{n_{k+2}}}}{(2\pi)^{dn_{k+2}/2}}\ud X_{2,n_{k+2}}\ud V_{n_{k+2}}\\
		\leq(\gr{r}-1)! C^{n_{k+2}}\frac{t^{n_k\wedge \gr{r}-1}\theta^{(\gr{r}-n_k)_+}}{(\mu\mathfrak{d})^{\gr{r}-1}} \prod_{i =1}^{\gr{r}}\left(\frac{\delta}{\mathfrak{d}\mu}\right)^{|\rho_i|-1}\frac{\delta^{\max{2,|\rho_1|-1}}}{(\mu\mathfrak{d})^{|\rho_1|-1}}\e^{\mathfrak{a}}.
		\end{multline*}
		
		Any particle removed at time $\delta$ has a clustering collision during $[0,\delta]$. Therefore, $\sum_{i=1}^\gr{r}(|\rho_{i}|-1)$ is bigger than $n_{k+2}-n_k$. In addition, we have chosen $\theta$ bigger than $\delta$ so
		\begin{multline*}
		\int\ind_{\mathcal{R}^{\underline{\rho},\underline{\mathfrak{p}}}_{(s_i)_i}}(Z_{n_{k+2}})\prod_{i=1}^{\gr{r}}\Delta_{\mathfrak{p}_i}(Z_{\rho_i})\frac{e^{-\mathcal{H}_{n_{k+2}}(Z_{n_{k+2}})}}{(2\pi)^{\frac{dn_{k+2}}{2}}}\ud X_{2,n_{k+2}}\ud V_{n_{k+2}}\\
		\leq (\gr{r}-1)!\frac{C^{n_{k+2}}}{(\mu\mathfrak{d})^{n_{k+2}-1}}t^{n_k-1} \theta^{(n_{k+2}-n_k-2)_+}\delta^2\e^{\mathfrak{a}}.
		\end{multline*}
		
		We sum now on the parameters $(s_i)_i$ and $(\mathfrak{p}_i)$. Because the  sizes of the possible clusters are bounded by $\gamma$, the $|\mathfrak{P}(\rho_i)|$ are smaller than some $C_\gamma>0$ depending only on $\gamma$. The  number of collision parameters  $(s_i)_i$ is equal to $2^{n_{k+2}}$ and
		\begin{multline*}\int\big|\bar{\Phi}_{\underline{n}_{k+2}}^{k'}(Z_{n_{k+2}})\big|M^{\otimes n_{k+2}}\ud X_{2,n_{k+2}}\ud V_{n_{k+2}}\\
		\leq \frac{\|h\|(CC_\gamma 4\gamma)^{n_{k+2}}}{n_{k+2}!(\mu\mathfrak{d})^{n_{k+2}-1}}t^{n_k-1}\theta^{(n_{k+2}-n_k-2)_+} \delta^2\e^{\mathfrak{a}}\sum_{r=1}^{n_{k+2}}\sum_{\underline{\rho}\in\mathcal{P}^r_{n_{k+2}}}(\gr{r}-1)!\;.\end{multline*}
		
		The last sums can be bounded by
		\begin{align*}
		\frac{1}{n_{k+2}!}\sum_{r=1}^{n_{k+2}}\sum_{\underline{\rho}\in\mathcal{P}^r_{n_{k+2}}}(\gr{r}-1)! &= 	\frac{1}{n_{k+2}!}\sum_{\gr{r}=1}^{n_{k+2}}\sum_{\substack{k_1+\cdots+k_\gr{r}=n_{k+2}\\k_i\geq 1}} \frac{n_{k+2}!}{k_1!\cdots k_\gr{r}!}\frac{(\gr{r}-1)!}{\gr{r}!}\\
		&\leq\sum_{\gr{r}=1}^{n_{k+2}}\sum_{\substack{k_1+\cdots+k_\gr{r}=n_{k+2}\\k_i\geq 1}} \frac{1}{k_1!\cdots k_\gr{r}!}\leq e^{n_{k+2}}
		\end{align*}
		
		This ends the proof of the first inequality.
	\end{proof}
	
	\begin{proof}[Proof of \eqref{Estimation reco loc 2}.] As the $\bar{\Phi}_{\underline{n}_{k+2}}^{k'}$ are symmetric, it is sufficient to study 
	\begin{equation}\label{eq:PhiPhi}
		\big|\bar{\Phi}_{\underline{n}_{k+2}}^{k'}(Z_{[n_{k+2}]})\bar{\Phi}_{\underline{n}_{k+2}}^{k'}(Z_{[n_{k+2}+1-m,2n_{k+2}-m]})\big|.
	\end{equation} The bound \eqref{borne 1 sur phi l} leads to
	\begin{multline*}
	\eqref{eq:PhiPhi}\leq\frac{\|h\|^2}{(n_{k+2}!)^2} \sum_{\substack{(q_1,\omega_1\omega_2)\\(q_1'\omega'_1\omega'_2)}}\sum_{\varpi\subset[n_{k+2}]}\sum_{\substack{(s_i)_i\\(s'_i)_i}}\sum_{\substack{(\lambda_1,\cdots,\lambda_{\gr{l}})\\(\lambda'_1,\cdots,\lambda'_{\gr{l}})}}\ind_{\mathcal{R}^{0,t_s-\delta}_{\{q_1\},(s_i)_i}}(\bar{\ds{Z}}_{\omega_1}(\delta))\ind_{\mathcal{R}^{0,t_s-\delta}_{\{q_1\},(s'_i)_i}}(\bar{\ds{Z'}}_{\omega'_2}(\delta)) \\[-0pt]
	\times\Bigg(\sum_{T\in\mathcal{T}([\gr{l}])}\prod_{(i,j)\in E(T)} \ind_{{\lambda}_i\so {\lambda}_j} \DDelta_{|\lambda_1|}^{[n_{k+1}]}(Z_{\lambda_1})\prod_{i=2}^{\gr{l}}\DDelta_{|\lambda_i|}(Z_{\lambda_i}) \Bigg)\\
	\times\Bigg(\sum_{T'\in\mathcal{T}([\gr{l}'])}\prod_{(i,j)\in E(T')} \ind_{{\lambda'}_i\so {\lambda'}_j} \DDelta_{|\lambda'_1|}^{[n_{k+1}]}(Z_{\lambda'_1})\prod_{i=2}^{\gr{l}}\DDelta_{|\lambda'_i|}(Z_{\lambda'_i}) \Bigg)\ind_{\mathfrak{n}(k'\delta)=n_{k}}\ind_{\mathcal{O}_\varpi}.
	\end{multline*}
	where we have denoted $\mathcal{T}(\sigma)$ the set of connected and simply connected graphs with vertices $\sigma$, a finite set. The sets $\mathcal{O}_\varpi$ have been defined in Definition \ref{def: O_pi}. In addition, we sum on all parameters such that 
	\begin{itemize}
		\item $q_1\in[n_{k+2}]$ and $q'_1\in[n_{k+2}+1-m,2{n_{k+2}}-m]$,
		\item $\omega_1\sqcup\omega_2 = [n_{k+2}]$, $\omega'_1\sqcup\omega'_2 = [n_{k+2}+1-m,2{n_{k+2}}-m]$, $q_1\in\omega_1$, $q_1\in\omega_1$ and $|\omega_1|=|\omega'_1|= n_{k+1}$,
		\item $\lambda_1\supset \omega_1$, $\lambda'_1\supset \omega'_1$, $(\lambda_2,\cdots,\lambda_{\gr{l}})$ an unordered partition of $[n_{k+2}]\setminus\omega_1$ and $(\lambda'_2,\cdots,\lambda'_{\gr{l}})$ an unordered partition of $[n_{k+2}+1-m,2{n_{k+2}}-m]\setminus\omega'_1$.
	\end{itemize}

	The pseudotrajectory $\bar{Z}(\tau)$ (respectively $\bar{Z}'(\tau)$) begins with coordinates $Z_{[n_{k+2}]}$ (respectively with coordinates $Z_{[n_{k+2}+1-m,2n_{k+2}-m]}$) with parameters $(q_1,\omega_1,\omega_2,(\lambda_1,\cdots,\lambda_{\gr{l}}))$ (respectively $(q'_1,\omega'_1,\omega'_2,(\lambda'_1,\cdots,$ $\lambda'_{\gr{l}'}))$) in the same way than in the proof of \eqref{Estimation reco loc 1}.

	Note that the right hand-side of \eqref{eq:PhiPhi} is invariant under translation. Thus, one can fix $x_1 = 0$ and integrate with respect to the other variables.
	
	For a position $Z_{2n_{k+2}-m}$, we consider $\underline{\rho}:=(\rho_1,\cdots,\rho_{\gr{r}})$ the possible clusters. As in the previous section, with a little loss of symmetry, one can suppose that $\varpi_1\subset\rho_1$. We can then construct the parameters $\mathfrak{p}_1:=(\underline{\omega}^1,\underline{\omega}'{^1},\underline{\lambda}^1,\underline{\lambda}'{^1},\varpi)$,  $\left(\mathfrak{p}_i\right)_{i\geq 2}:=\left((\underline{\omega}^i,\underline{\omega}'{^i},\underline{\lambda}^i,\underline{\lambda}'{^i})\right)_{i\geq 2}$:
	\begin{itemize}
		\item $\underline{\omega}^i:=(\omega^i_1,\omega^i_2)$ is a partition of $\rho_i\cap[n_{k+2}]$ defined by $\omega^i_j:=\omega_j\cap \rho_i$,
		\item $\underline{\omega}'{^i}:=({\omega'}^i_1,{\omega'}^i_3)$ is a partition of $\rho_i\cap[n_{k+2}+1-m,2n_{k+2}+m]$ defined by ${\omega_j'}^i:=\omega'_j\cap \rho_i$,
		\item $\underline{\lambda}^{i}:=\{\lambda^i_1:=\lambda_1\cap\rho_i\}\cup\{\lambda_j \text{~for~} j\geq 2 \text{~with~}\lambda_j\subset \rho_i\}$ a partition of $[n_{k+2}]\cap\rho_i$,
		\item $\underline{\lambda'}^{i}:=\{{\lambda'_1}^i:=\lambda'_1\cap\rho_i\}\cup\{\lambda'_j \text{~for~} j\geq 2 \text{~with~}\lambda'_j\subset \rho_i\}$ a partition of $[n_{k+2}+1-m,2n_{k+2}+m]\cap\rho_i$.
	\end{itemize}

	We denote now $\mathfrak{P}(\rho_i)$ the new set of possible parameters $\mathfrak{p}_i$ (this will not create a conflict of notations with the previous section). Because each cluster $\rho_i$ is of size at most $\gamma$, $|\mathfrak{P}(\rho_i)|$ is bounded by some constant $C_\gamma$ depending only on $\gamma$.  Defining 
	\[\Delta_{\mathfrak{p}_1}(Z_{\rho_1}):=\ind_{\!\!\!\!\tiny\begin{array}{l}Z_{\rho_1}\text{\,form\,a}\\ \text{possible\,cluster}\end{array}}\ind_{\mathcal{O}_\varpi},~~~\forall i\geq 2,~\Delta_{\mathfrak{p}_i}(Z_{\rho_i}):=\ind_{\!\!\!\!\tiny\begin{array}{l}Z_{\rho_i}\text{\,form\,a}\\ \text{possible\,cluster}\end{array}}\]
	\[\mathcal{R}^{\underline{\rho},\underline{\mathfrak{p}}}_{(s_i)_i,(s'_i)_i} :=  \Big\{Z_{2n_{k+2}-m}\in\mathcal{D}^{\underline{\rho}}_\e,\,\bar{\ds{Z}}_{\omega_1\cup\omega_2\cup\omega_3}(\delta)\in\mathcal{R}^{0,k\theta+(k-1)\delta}_{\omega_1,(s_i)_i},\bar{\ds{Z}}'_{n_{k+2}}(\delta)\in\mathcal{R}^{0,k\theta+(k-1)\delta}_{\omega'_1,(s'_i)_i}\Big\},\]
	we have as in the inequality \eqref{eq:borne utilisant les cluster possible}
	\begin{equation*}
	\eqref{eq:PhiPhi}
	\leq \frac{\gamma^{2(\gamma-2)n_{k+2}}\|h\|_0^2}{(n_{k+2}!)^2}\sum_{\gr{r}=1}^{2n_{k+2}-m}\sum_{\underline{\rho}\in\mathcal{P}^r_{p}}\sum_{\substack{(s_i)_i,(s'_i)_i\\[1pt] \underline{\mathfrak{p}}\in\underset{i}{\prod}\mathfrak{P}(\rho_i)}}~\ind_{\mathcal{R}^{\underline{\rho},\underline{\mathfrak{p}}}_{(s_i)_i,(s'_i)_i} }(Z_{2n_{k+2}-m})\prod_{i=1}^{\gr{r}}\Delta_{\mathfrak{p}_i}(Z_{\rho_i}).
	\end{equation*}
	Note that, for at least one $i$, $\underline{\varpi}^i$ is not empty. We are now constructing a clustering tree in order to estimate $\mathcal{R}^{\underline{\rho},\underline{\mathfrak{p}}}_{(s_i)_i,(s'_i)_i} $.
	
	Consider the collision graph associated with the first pseudotrajectory $\mathcal{G}^{[0,t-t_s]}_{\omega_1\cup\omega_2}$  and  the graph associated with the second one $\mathcal{G}^{[0,t-t_s]}_{\omega'_1\cup\omega'_2}$. Merge them and identify vertices in the same cluster $\rho_i$.  Keeping only the first clustering collisions, we obtain the oriented tree $T^>:=(\nu_i,\bar{\nu}_i)_{1\leq i\leq \gr{r}-1}$. Note that these clustering collisions can happen in the first or second pseudotrajectories.
	
	As in the proof of \eqref{Estimation reco loc 1} we have to bound the number of collisions of $T^>$ in the time interval $[0,2\tau]$. There are at most $(n_{k}-1+n_{k+2}-m)$ collisions during $[(k'+1)\delta,t-t_s]$ ($n_k-1$ for the first pseudotrajectory, and we have to connect $n_{k+2}-m$ particles in the second). Thus, there are at least $(\gr{r}-(n_k-1+n_{k+2}-m))_+$ clustering collisions in $[\delta,(k'+1)\delta]\subset[0,2\tau]$.
	
	We explain quickly how to estimate the $i$-th collision. As in the previous paragraph, we construct the modified tree parameters $(\nu_{(i)},\bar{\nu}_{(i)})$ and the change of variables 
	\[\forall i\in\{1,\cdots,\gr{r}-1\},~\hat{x}_i:=x_{\min\nu_{(i)}}-x_{\min\bar{\nu}_{(i)}},~\tilde{X}_i:=\tr_{x_{\min\rho_i}}X_{\rho_i},\]
	\[X_{2,2n_{k+2}-m}\mapsto(\hat{x}_1\cdots,\hat{x}_{\gr{r}-1},\tilde{X}_1,\cdots,\tilde{X}_r),\]
	and we integrate on the $(\hat{x}_i)$. 
	
	The clustering set $B_i$ is defined as follows: fix $t_{i+1}$ the time of the $(i+1)$-th clustering collision, and the relative positions $\hat{x}_{i-1},\cdots,\hat{x}_{1}$. We define the $i$-th clustering set
	\[B_i := \bigcup_{\substack{q\in \bigcup_{j\in\nu_{(i)}}\rho_j\\\overline{q}\in \bigcup_{\bar{\jmath}\in\bar{\nu}_{(i)}}\rho_{\bar{\jmath}}} }\Big(B_{i}^{q,\bar{q}}\cup B_{i}'^{q,\bar{q}}\Big)\]
	with
	\[B_{i}^{q,\bar{q}}:=\Big\{\hat{x}_i ~ \Big\vert~\exists \tau_i\in[0,t_{i+1}\wedge T_i],~|\bar{\ds{x}}_{\bar{q}}(\tau_i)-\bar{\ds{x}}_{{q}}(\tau_i)|=\e\Big\},\]
	where $T_i:=2\theta$ for the $(\gr{r}-n_k)_+$ first collisions, and $t$ else. The set $B_{i}'^{q,q'}$ is defined in the same way for the other pseudotrajectory. We can apply the estimates of the previous paragraph:
	\[\int \ind_{B_i}\ud\hat{x}_i\leq\frac{2C}{\mu\mathfrak{d}}\sum_{\substack{\nu_i\in\nu_{(i)}\\\bar{\nu}_i\in\bar{\nu}_{(i)}}}\Big(|\rho_{\nu_i}|+\mathcal{H}_{|\rho_{\nu_i}|}(Z_{\rho_{\nu_i}})\Big)\Big(|\rho_{\bar{\nu}_i}|+\mathcal{H}_{|\rho_{\bar{\nu}_i}|}(Z_{\rho_{\bar{\nu}_i}})\Big)\int_{0}^{\tau_{i+1}\wedge T_i}\ud \tau_i.\]
	
	In this way, we end up with the same situation as in the estimate of \eqref{Estimation reco loc 1}, and we can apply the same strategy:
	\[\begin{split}
	\int	|\bar{\Phi}_{\underline{n}_{k+2}}^{k'}(Z_{k+2})\bar{\Phi}_{\underline{n}_{k+2}}^{k'}(&Z_{n_{k+2}+1-m,2n_{k+2}-m})\big|\frac{e^{-\mathcal{H}_{2_{k+2}-m}(Z_{2n_{k+2}-m})}}{(2\pi)^{2n_{k+2}-m}}\ud X_{2,2n_{k+2}-m}\ud V_{2n_{k+2}-m}\\
	&\qquad\leq \frac{(2n_{k+2}-m)!\|h\|^2}{(n_{k+2}!)^2(\mu\mathfrak{d})^{2n_{k+2}-m-1}}C^{n_{k+2}}\delta^2\e^{\mathfrak{a}} \tau^{(n_{k+2}-n_k-2)_+}t^{n_k-1+n_{k+2}-m}\\
	&\qquad\leq \frac{\mu^{m-1}}{n_{k+2}^m}\left(\frac{\|h\|}{(\mu\mathfrak{d})^{n_{k+2}-1}} \tilde{C}^{n_{k+2}}\right)^2\delta^2\e^{\mathfrak{a}} \theta^{(n_{k+2}-n_k-2)_+}t^{n_k-1+n_{k+2}}
	\end{split}\]
	which concludes the proof.
	\end{proof}

	\appendix
	
	\section{The linearized Boltzmann operator without cut-off}\label{sec:The Linearized Boltzmann operator without cut-off}
	In this section, we fix the dimension $d=3$. We construct the linearized Boltzmann operator associated with the power law $1/r^s$, $s>1$ and we explain where the scaling $\mathfrak{d}_{s,\alpha} = \alpha^{2/s}$ comes from.
	
	We begin with a change of variables in the definition of the Boltzmann operator $\mathcal{L}_\alpha$. For $(v,v_*,\nu)$, we define \begin{equation}\label{eq:impact parameter}
	\vec{\rho} := \tfrac{\nu\wedge(v-v_*)}{|v-v_*|}\in{\rm span}(v-v_*)^\perp
	\end{equation}
	the \emph{impact parameters}, with the Jacobian
	\[((v-v_*)\cdot\nu)_+\ud\nu\to |v-v_*|\ud\vec{\rho}.\]
	This allows us to redefine the post-collisional velocities $(v',v_*)$ for an interaction potential $\mathcal{U}$
	\begin{equation}
	\left\{\begin{aligne}{c}
	(v',v_*'):=\lim_{t\to\infty}(v_a(t),v_b(t))\\
	\frac{\ud}{\ud t}(x_a,x_b)=(v_a,v_b),~\frac{\ud}{\ud t}(v_a,v_b)=\alpha(-\nabla \mathcal{U}(x_b-x_a),\nabla \mathcal{U}(x_b-x_a))\\[5pt]
	\lim_{t\to-\infty}(v_a(t),v_b(t))=:(v,v_*),~(v_a-v_b)\times(x_a-x_b)=|v-v_*|\vec{\rho}.
	\end{aligne}\right.
	\end{equation}
	With this definition, the scattering map can easily be defined for a not compactly supported decreasing potential.
	
	For $\lambda>0$, we make the change of coordinate 
	\[(t,x_a,x_b,v_a,v_b)\mapsto(\tilde{t},\tilde{x}_a,\tilde{x}_b,v_a,v_b):=(\lambda t,\lambda x_a,\lambda x_b,v_a,v_b)\]
	In the new coordinates, the equations of motion become 
	\[\tfrac{\ud}{\ud \tilde{t}}(\tilde{x}_a,\tilde{x}_b)=(\tilde{v}_a,\tilde{v}_b),~\tfrac{\ud}{\ud \tilde{t}}(\tilde{v}_a,\tilde{v}_b)=\alpha\left(-\nabla\mathcal{U}\left(\tfrac{\tilde{x}_a-\tilde{x}_b}{\lambda}\right),\nabla\mathcal{U}\left(\tfrac{\tilde{x}_a-\tilde{x}_b}{\lambda}\right)\right)\]
	Hence, the post-collisional parameters associated with $(v,v_*,\vec{\rho})$ and potential $\mathcal{U}$ are the same as the ones associated with  $(v,v_*,\lambda\vec{\rho})$ and potential $\mathcal{U}(\cdot/\lambda)$.
	
	Performing the change of variable $\vec{\rho}\to\alpha^{-1/s}\vec{\rho}$ in the collisional operator gives
	\[\alpha^{2/s}\mathcal{L}_{\alpha\mathcal{V}} g =\mathcal{L}_{\mathcal{U}_\alpha} g,~{\rm where}~\mathcal{U}_\alpha(r) := \alpha\mathcal{V}(r\alpha^{1/s})=\frac{f\left(r\alpha^{\frac{1}{s}}\right)}{r^s}.\]
	This new potential converges when $\alpha\to 0$ to $\mathcal{U}^s(r):=1/r^s$. It is natural to guess the convergence of the operators
	\[\frac{1}{\alpha^{2/s}}\mathcal{L}_\alpha\to \mathcal{L}_{\mathcal{U}^s}.\]
	
	Section 3 of \cite{LW} is dedicated to the proof of this claim.
	
	\section{Geometrical estimates}
	
	\label{sec: Geometric estimates}

	\subsection{Estimation the length scattering time}
		\begin{lemma}\label{lemma:borne temps collision}
		Let $\mathcal{V}$ an interaction potential that is radial and supported in a ball of diameter $\e$. 
		
		We consider two particles, $1$ and $2$, with initial coordinates \[(x_a(0),v_a(0))=(0,v_1),~(x_b(0),v_b(0))=(\e\nu,v_2),~\nu\in\mathbb{S}^{d-1},~(v_1-v_2)\cdot\nu<0,\]  following the Hamiltonian dynamics linked to 
		\[H := \tfrac{|v_a|^2+|v_b|^2}{2}+\mathcal{V}(\tfrac{x_a-x_b}{\e}).\]
		
		Then, the scattering time is bound by 
		\begin{equation}\label{eq:borne temps collision}
		\blue{[\tau]:=\inf \{\tau>0~|x_a(\tau)-x_b(\tau)|>\e\} \lesssim  \frac{\e|v_1-v_2|}{|\nu\times (v_1-v_2)|^2}.}
		\end{equation}
	\end{lemma}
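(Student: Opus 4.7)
My plan is to reduce the two-body dynamics to the motion of a single reduced particle in a central force. Introducing the relative coordinate $r(t):=x_a(t)-x_b(t)$, one has $\ddot r=-2\nabla\mathcal V(r)$, and since $\mathcal V$ is radial the motion remains in the plane spanned by $r(0)$ and $\dot r(0)=v_1-v_2$. Both the relative energy $E=\tfrac14|\dot r|^2+\mathcal V(|r|)$ and the angular momentum $L=r\wedge\dot r$ are conserved. Since $|r(0)|=\e$ lies on the boundary of $\supp\mathcal V$, we have $\mathcal V(r(0))=0$ and hence $E=\tfrac14|v_1-v_2|^2$.

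Passing to polar coordinates $(|r|,\theta)$ and eliminating $\dot\theta$ via $|r|^2\dot\theta=L$, the energy integral rearranges to
\[
\dot{|r|}^2=|v_1-v_2|^2-4\mathcal V(|r|)-\frac{L^2}{|r|^2}.
\]
The sign hypothesis $(v_1-v_2)\cdot\nu<0$ fixes the sign of $\dot{|r|}(0)=(r(0)\cdot \dot r(0))/|r(0)|$ and selects the relevant monotone branch of the radial evolution. The first exit time $T$ is then given explicitly as the integral of $d\rho/|\dot{|r|}|$ along that branch, possibly going through a turning radius $|r|_{\min}$ (where the radicand vanishes) before reaching $|r|=\e$ again.

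The core estimate is to bound this integral by $\e/|v_1-v_2|$. The guiding comparison is the free motion case $\mathcal V\equiv 0$, in which the relative trajectory is a straight chord inside the ball $\{|r|\le\e\}$; because that chord has length at most the diameter $\e$ of $\supp\mathcal V$, free motion traverses it in time at most $\e/|v_1-v_2|$. The main technical obstacle is to show that switching $\mathcal V$ back on cannot lengthen the time spent in the ball. In the attractive regime $\mathcal V\le 0$ this follows directly from energy conservation, which gives $|\dot r|\ge |v_1-v_2|$ pointwise, so time equals arc-length divided by at least $|v_1-v_2|$. In the repulsive regime $\mathcal V\ge 0$ the radial speed is slowed, but simultaneously the turning radius $|r|_{\min}$ is pushed outward — concretely, the turning-point identity $|v_1-v_2|^2-4\mathcal V(|r|_{\min})=L^2/|r|_{\min}^2$ shortens the radial excursion by exactly the amount needed.

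The cleanest way I see to combine these two effects is to change variables in the integral from $\rho$ to arc-length $s$ along the true planar trajectory: this produces $T=\int ds/|\dot r(s)|$. One then argues that the total arc-length inside $\{|r|\le\e\}$ is bounded by $\e$ (for repulsive central forces the trajectory is convex and stays on one side of the chord joining entry and exit points, so its length in the ball is no larger than that chord, which is at most $\e$), while energy conservation provides the pointwise lower bound $|\dot r|\ge|v_1-v_2|$ whenever $\mathcal V\le 0$ and a matching estimate after averaging when $\mathcal V\ge 0$. Putting these together yields the claimed $T\le \e/|v_1-v_2|$.
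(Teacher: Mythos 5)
Your reduction to the relative coordinate, the two conserved quantities, and the representation of $T$ as a radial integral with a turning point at $r_{\min}$ all coincide with the paper's setup. The problem is that the decisive estimate is never actually carried out, and the two ingredients you propose to combine both fail in the direction you need. First, in the repulsive regime $\mathcal{V}\ge 0$ (which is the relevant one: Assumption \ref{ass: potentiels pour la dérivation de Boltzmann} makes $\mathcal{V}$ nonnegative and decreasing), energy conservation gives $|\dot r|^2=|v_1-v_2|^2-4\mathcal{V}(r)\le |v_1-v_2|^2$, so the arc-length parametrization yields $T=\int \mathrm{d}s/|\dot r|\ \ge\ (\text{arc length})/|v_1-v_2|$ — a lower bound, not an upper bound. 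The entire content of the lemma is hidden in your phrase ``a matching estimate after averaging when $\mathcal{V}\ge 0$'': you correctly identify that the outward shift of $r_{\min}$ must compensate the slowdown, but you give no mechanism for it. The paper makes this compensation quantitative by the substitution $u^2=\rho^2/r^2+2\mathcal{V}(r)/|v_1-v_2|^2$, which sends the $r$-integral to a fixed $u$-interval ending at the turning-point value $u=1$, and then uses the monotonicity $\mathcal{V}'\le 0$ to bound the Jacobian; that computation is the proof, and your proposal does not contain a substitute for it.

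Second, the geometric lemma you invoke is false: a convex arc lying on one side of the chord joining its endpoints is \emph{at least} as long as that chord (the chord is the shortest path between its endpoints), not at most. Third, your attractive branch relies on the arc length inside the ball being bounded by $\e$, which fails there — with an attractive singular potential a positive-energy relative orbit can wind around the center and spend an arbitrarily long (even infinite) time inside the support, so the lemma as you read it (``radial and compactly supported'') is simply false without the repulsivity/monotonicity hypothesis that the paper's proof uses explicitly. I would recommend restricting to decreasing $\mathcal{V}$ from the start and replacing the arc-length argument by the turning-point change of variables.
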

	\begin{proof}
		The motion equation are written as
		\begin{equation*}
		\begin{split}
		\tfrac{d}{dt}\left(x_a+x_b\right) = \left({v}_a+v_b\right) 	&\quad\tfrac{\ud}{\ud t}\left(v_a+v_b\right)=0\\
		\tfrac{\ud}{\ud t}\left(x_a-x_b\right) = \left(v_a-v_b\right) &\quad 	\tfrac{\ud}{\ud t}\left(v_a-v_b\right)=-\tfrac{2}\e\nabla\mathcal{V}\left(\tfrac{x_a-x_b}{\e}\right).
		\end{split}
		\end{equation*}
		
		Hence, $[\tau]$ does not depend on $v_1+v_2$.
		
		We use the impact parameter $\rho:=\tfrac{|\e \nu\times (v_1-v_2)|}{|v_1-v_2|}$ defined in \eqref{eq:impact parameter}. The time $[\tau]$ is given by (see chapter 8 of \cite{GST})
		\begin{align}
		[\tau] = \frac{2}{|v_1-v_2|}\int_{r_{\min}}^{\e/2} \frac{\ud r}{\sqrt{1-\tfrac{\rho^2}{r^2}-2\frac{\mathcal{V}(r/\e)}{|v_1-v_2|^2} }},
		\end{align}
		with $r_{\min}$ defined by 
		\[1-\tfrac{\rho^2}{r_{\min}^2}-2\frac{\mathcal{V}(r_{\min}/\e)}{|v_1-v_2|^2}=0.\]
		
		We begin by performing the change of variables
		\begin{equation}\label{eq:change of variable}
		u^2:=\frac{\rho^2}{r^2}+2\frac{\mathcal{V}(r/\e)}{|v_1-v_2|^2}
		\end{equation}
		which implies
		\begin{align}		
		[\tau] = \frac{2}{|v_1-v_2|}\int_{\rho/\e}^1 \frac{u\ud u}{\sqrt{1-u^2}}\frac{r}{\frac{\rho^2}{r^2}- \frac{r \mathcal{V}'(r/\e)}{\e|v_1-v_2|^2}}
		\end{align}
		
		\blue{Using that $\mathcal{V}'$ is non positive and that $u\geq\tfrac{\rho}{r}$, $r\leq \e$
		\begin{equation*}		
		[\tau] \leq \frac{2}{|v_1-v_2|}\int_{\rho/\e}^1 \frac{\ud u}{\sqrt{1-u^2}}\frac{u\e^3}{\rho^2} \lesssim \frac{\e^3}{|v_1-v_2|\rho^2}\lesssim  \frac{\e|v_1-v_2|}{|\nu\times (v_1-v_2)|^2}.
		\end{equation*}}
	\end{proof}

	\subsection{Proof of Proposition \ref{prop:estimation avec reco}}\label{subsec:estimation reco}
	
	The goal of this section is proving the following estimations \eqref{eq:Borne sur les chevauchements} and \eqref{eq: etimation pseudo avec reco}:
	
	\begin{lemma}\label{lem:estimation des reco}
	Let $\mathcal{V}$ be an interaction potential that is radial, decreasing, and supported in a ball of radius $\e$. 
	
	There exists a $C>0$ independent of $\mathcal{V}$ such that for $t=n_k\theta$
	\begin{multline}
	\sum_{T=(q_i,\bar{q}_i,s_i)_{i\leq n_k-1}}\int_{\mathfrak{T}_{n_{k}}\times\mathfrak{G}^0_{T}}\left(1-\ind_{\mathfrak{G}^0_{T}}\right)\Lambda_T(V_{n_K},\nu_{[n_k-1]})\ud{\nu}_{[n_k-1]}\ud{\tau_i} M^{\otimes n_{k}}dV_{n_{k}}\\
	+\sum_{\bar{T}}\int_{\mathfrak{T}_{\underline{n}_{k+2}}\times\mathfrak{G}^{>,t-t_s,\bar{T}}_{\{q\},\omega,(s_i)_i}}\prod_{i=1}^{n_{k+2}-1} |(\dr{v}^\e_{q_i}(\tau_i)-\dr{v}^\e_{q'_i}(\tau_i))\cdot\nu_i| \ud{\nu_i}\ud{\tau_i} M^{\otimes n_{k+2}}dV_{n_{k+2}}\\
	\leq C^{n_{K}}(n_{k})^{n_{k}}\theta^{(n_{k}-n_{k-1}-2)_+}t^{n_k}\e^{1/5}.
	\end{multline}
	\end{lemma}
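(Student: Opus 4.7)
The plan is to mimic the change of variables used in the proof of \eqref{Estimation sans reco 1} and extract one extra small factor coming from the pathology. First I would partition $\mathfrak{G}^0_T\setminus\mathfrak{G}^\e_T$ according to the first pathological event along the pseudotrajectory: (i) an \emph{overlap} at $\tau=0$ between two particles $(p,p')$ not belonging to the same edge of the clustering tree $T$; (ii) a \emph{multiple interaction} $|\ds x^0_p(\tau)-\ds x^0_{p'}(\tau)|\leq \e$ and $|\ds x^0_p(\tau)-\ds x^0_{p''}(\tau)|\leq \e$ at a common time $\tau$ with three distinct particles; or (iii) a genuine \emph{recollision}, i.e.\ two particles $(p,p')$ whose relative distance returns below $\e$ at some time strictly later than an earlier encounter. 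Each of these cases will be treated as an additional constraint on top of the $n_k-1$ clustering constraints.

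For fixed $T=(q_i,\bar q_i,s_i)_{i\leq n_k-1}$, I would then perform the familiar change of variables $X_{n_k}\mapsto(x_1,(\nu_i,\tau_i)_{i\leq n_k-1})$, so that the kernel $\Lambda_T$ becomes the product $\prod_i((\ds v^0_{q_i}-\ds v^0_{\bar q_i})\cdot\nu_i)_+$ and the integration is against $\prod_i d\nu_i\,d\tau_i$. In case (i), the overlap fixes $|x_p-x_{p'}|\leq\e$ independently of the tree parameters, producing a gain of order $\e^d$ after integration. In case (ii), simultaneity of two separations of size $\e$ is a codimension-$d$ event: freezing the last scattering parameter $\nu_j$ that controls the offending relative trajectory, the configuration is confined to a tube of volume $O(\e^{d-1})$, and after integration in $\nu_j$ one gains $\e$. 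Case (iii) is the interesting one. Let $j$ be the branching index after which the relative trajectory of $(p,p')$ is a straight line, and let $w=\ds v^0_p(\tau_{\rm rec}^-)-\ds v^0_{p'}(\tau_{\rm rec}^-)$. Using the usual cylinder-intersection argument (see \cite{BGSS}), for fixed $\tau_{\rm rec}$ the set of $\nu_j$ leading to $|\ds x^0_p(\tau_{\rm rec})-\ds x^0_{p'}(\tau_{\rm rec})|\leq\e$ has $(d-1)$-measure at most $C\e/((\tau_{\rm rec}-\tau_j)|w|)$; integrating further in $d\tau_{\rm rec}$ after cutting off the short window $\tau_{\rm rec}-\tau_j\leq\e^{3/4}$ (which itself has trivially small measure) yields a net gain of $\e^{1/4}$ and avoids the logarithmic divergence.

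To conclude, I would assemble these pieces exactly as in the proof of \eqref{Estimation sans reco 1}: the sum over $T$ and over the identity of the pathological pair/triple is absorbed into $C^{n_k}(n_k)^{n_k}$; integrating the non-pathological collision times gives the factor $t^{n_k-n_{k-1}-2}\theta^{(n_k-n_{k-1}-2)_+}$, the ``$-2$'' accounting for the two times spent on the extra geometric constraint (the branching time and the pathology time) that are no longer freely integrable; and the remaining $n_k-2$ free collision times contribute $t^{n_k-2}/(n_k-2)!$, while the Gaussian weight is handled as in \eqref{Estimation sans reco 1} via the conservation of energy along the pseudotrajectory. The main obstacle is case (iii) when $p$ and $p'$ have undergone several scatterings since $\tau_j$: the map $\nu_j\mapsto w$ is then a composition of scattering maps $\xi_\alpha$, and one must check that this composition remains Lipschitz with constants independent of $\alpha$ so that the cylinder-intersection estimate keeps producing a tube of width $O(\e)$ in the $\nu_j$ variable. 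This uniform regularity of the iterated scattering, which follows from the smoothness of $\xi_\alpha$ under Assumption \ref{ass: potentiels pour la dérivation de Boltzmann} together with the velocity bound $|V_{n_k}|\leq\mathbb{V}$ enforced on $\Upsilon_\e$, is the true technical content of the lemma.
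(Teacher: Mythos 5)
Your decomposition into overlaps, multiple interactions and recollisions matches the paper's, and your treatment of the overlap part is essentially the right idea. The recollision part, however, contains a genuine gap, and it is exactly the one you flag at the end but then dismiss too quickly. Your cylinder-intersection argument transfers the smallness of the set of admissible relative velocities $w$ at the recollision back to a smallness of the set of admissible scattering angles $\nu_j$. For this you do not need the map $\nu_j\mapsto w$ to be Lipschitz (that would only bound the \emph{image} of a small set); you need a quantitative \emph{lower} bound on its spreading, so that the \emph{preimage} of a thin cone is itself thin. For hard spheres this holds because the scattering cross-section is bounded above, but for the potentials $\alpha\mathcal{V}$ considered here the deflection angle degenerates as $\alpha\to0$: the image of $\nu_j\mapsto w/|w|$ concentrates near the forward direction, so the preimage of a small cone around that direction can have measure of order one, uniformly in $\e$. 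This is precisely why the paper states that optimal recollision estimates ``use the upper bound of the collision kernel of hard spheres'' and are ``certainly not'' available in the limit $\alpha\to0$, and why it instead adapts the more robust argument of Pulvirenti--Saffirio--Simonella: one first excises, at cost $\e^{1/4}$, the bad set $\mathcal{E}_1$ of grazing or nearly simultaneous collisions, rewrites the recollision condition as a linear relation among the collision times $\tau_{i_j}$ with coefficients $\Delta\ds{v}_j$, and then either gains a small interval in one $\tau_{i_{\ell_1}}$ (when some $\Delta\ds{v}_{\ell_1}\wedge\hat w_{\rm f}$ is large) or converts the constraint into an angular condition on pre-collisional relative velocities, which is integrated directly against the Maxwellian after the measure-preserving changes of variables $\Xi_T^i$ --- never inverting the scattering map.

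A second omission: the domain is the torus, so a recollision can occur through a periodic image, and the recollision condition involves an arbitrary $y_0\in\mathbb{Z}^d$ with $|y_0|\lesssim n_k^2\mathbb{V}t$. The paper must introduce the further good set $\mathcal{E}_2$ to exclude relative velocities nearly aligned with a lattice vector, and sum over the $O((t\mathbb{V})^d)$ admissible $y_0$; this contributes the extra powers of $t$ in the final bound and is entirely absent from your argument. Your case (ii) (multiple interactions) and the bookkeeping of the powers of $\theta$ and $t$ are plausible in outline, but as written the recollision estimate does not close for the soft, $\alpha$-dependent potentials that are the whole point of the lemma.
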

	
	The two estimations can be performed in the same way, and we will only treat the first one. The proof of this lemma is an adaptation of the proof of Lemma 8 of \cite{PSS}. The estimation is not optimal. For example, the factor $\e^{1/4}$ can be replaced in the hard spheres setting by $\e|\log\e|^r$ for some constant $r>1$ (see for example \cite{BGSS1}). However, optimal estimates use the upper bound of the collision kernel of hard spheres. Such bounds are verified for more general potential and certainly not in the limit $\alpha\to0$. The proof of \cite{PSS} (which is adapted from it) is more robust.

	\begin{proof}
		
		We need to avoid 
		\begin{itemize}
			\item an \emph{overlap}: there exists a time $\tau\in(0,t)\cap\delta\mathbb{Z}$ and two particles $q$ and $q'$ such that
			\[\left|\ds{x}_q(\tau)-\ds{x}_{{q'}}(\tau)\right|\leq \e,\]
			\item a \emph{recollision}: there exists a time $\tau\in[0,t]$ and two particles $q$ and $q'$ such that $\tau\notin\{\tau_1,\cdots,\tau_{n_k-1}\}$ and \[\left|\ds{x}_q(\tau)-\ds{x}_{{q'}}(\tau)\right| =\e~{\rm and}~ \left(\ds{x}_q(\tau)-\ds{x}_{{q'}}(\tau)\right)\cdot\left(\ds{v}_q(\tau)-\ds{v}_{{q'}}(\tau)\right)<0.\]
			\end{itemize}
		
		\paragraph{We begin with the estimation of overlap, which is easier.} As the $i$-th collision between particles $(q_i,\bar{q_i})$ can last only a time $\blue{\tfrac{\e\left|\ds{v}_{q_i}(\tau_i^-)-\ds{v}_{\bar{q}_i}(\tau_i^-)\right|}{\left|\left(\ds{v}_{q_i}(\tau_i^-)-\ds{v}_{\bar{q}_i}(\tau_i^-)\times\nu_i\right)\right|^2}}$ (see \eqref{eq:borne temps collision}), there can be an overlap only if there is some $\tau\in\delta\mathbb{Z} \cap[0,t]$ such that $\tau_i$ is in the interval
		\[I_\e(\tau,V_{n_K},\nu_{[n_{k}-1]}):=\left[\tau-\blue{\tfrac{\e\left|\ds{v}_{q_i}(\tau_i^-)-\ds{v}_{\bar{q}_i}(\tau_i^-)\right|}{\left|\left(\ds{v}_{q_i}(\tau_i^-)-\ds{v}_{\bar{q}_i}(\tau_i^-)\times\nu_i\right)\right|^2}},\tau\right]\]
		Hence, the set of parameters leading to an overlap is smaller than
		
		\begin{multline*}
		\sum_{\substack{\tau\in\delta\mathbb{Z} \cap[0,t]\\1\leq i \leq n_{k}-1}}\sum_T\int_{\mathfrak{T}_{n_{k}}\times\mathfrak{G}^0_{T}}\ind_{I_\e(\tau,V_{n_K},\nu_{[n_{k}-1]})}(\tau_i)\Lambda_T(V_{n_K},\nu_{[n_k-1]})\ud{\nu_{[n_k-1]}}\ud{\tau_{[n_k-1]}} M^{\otimes n_{k}}dV_{n_{k}}\\
		\leq  \tfrac{C^{n_{k}}t^{1+n_{k-1}}\theta^{(n_{k}-n_{k-1}-1)_+}}{\delta n_{k}^{n_{k}}} \sum_{\substack{1\leq i \leq n_{k}-1\\[3pt]T}}\int_{\mathfrak{G}^0_{T}} t\wedge\blue{\tfrac{\e\left|\ds{v}_{q_i}(\tau_i^-)-\ds{v}_{\bar{q}_i}(\tau_i^-)\right|}{\left|\left(\ds{v}_{q_i}(\tau_i^-)-\ds{v}_{\bar{q}_i}(\tau_i^-)\right)\times\nu_i\right|^2}}\Lambda_T(V_{n_K},\nu_{[n_k-1]})\ud{\nu_{[n_k-1]}} M^{\otimes n_{k}}dV_{n_{k}}
		\end{multline*}
		We  can apply the same estimates as in Lemma \ref{Borne sur la taille des parrametres d'arbre} and for the terms \eqref{eq:R4}. We conclude that the set of overlap has a measure smaller than \[\frac{\e^{1/2} C^{n_{k}}t^{n_{k-1}+1}\theta^{(n_{k}-n_{k-1}-1)_+}}{\delta n_{k}^{n_{k}}}.\]
		
		\paragraph{We treat the recollision now.} We denote $\mathcal{G}$ the collision graph, and we define $\mathcal{G}^{[0,\tau)}$ the subgraph of $\mathcal{G}$ with edges 
		\[\left\{(q,\bar{q})_{\tau',\sigma}\in E(G),~\tau'<\tau\right\}.\]
		
		If the first recollision involves particles $q$ and $q'$ at time $\tau_{\rm rec}$, we consider $\omega\subset[n_{k+2}]$ the connected components of $\{q,q'\}$ in the collision graph $\mathcal{G}^{[0,\tau_{\rm rec})}$. Before the time $\tau_{\rm rec}$, the pseudotrajectory $\ds{Z}_\omega^\e(\tau)$ and its formal limit $\ds{Z}_\omega(\tau)$ are close up to a translation, and using Lemma \ref{Borne entre la trajectoire limite et la trajectoire d'enskog}, there exists a $y_0\in\mathbb{T}$ such that 
		\[\forall \tau\in[0,\tau_{\rm rec}],~|\ds{X}^0_\omega(\tau)-\tr_{y_0}\ds{X}^\e_\omega(\tau)|\leq 2n_K\mathbb{V}\sum_{i=1}^{n_{k+2}-1}\blue{1\wedge\tfrac{\e\left|\ds{v}_{q_i}(\tau_i^-)-\ds{v}_{\bar{q}_i}(\tau_i^-)\right|}{\left|\left(\ds{v}_{q_i}(\tau_i^-)-\ds{v}_{\bar{q}_i}(\tau_i^-)\right)\times\nu_i\right|^2}}.\]
		Hence, there is a recollision if at time $\tau_{\rm rec}\in[0,t]\setminus\bigcup_i\{\tau_i\}$,
		\begin{equation}\label{eq:overlap def alternative}
		\exists~q,q'\in[n_k-1]~{\rm such~that}~
		|\ds{x}^0_q(\tau_{\rm rec})-\ds{x}^0_{q'}(\tau_{\rm rec})|\leq \e+\sum_{i=1}^{n_{k+2}-1}\blue{1\wedge\e\tfrac{2n_K\mathbb{V}\left|\ds{v}_{q_i}(\tau_i^-)-\ds{v}_{\bar{q}_i}(\tau_i^-)\right|}{\left|\left(\ds{v}_{q_i}(\tau_i^-)-\ds{v}_{\bar{q}_i}(\tau_i^-)\right)\times\nu_i\right|^2}}.
		\end{equation}
		We can study only the limiting flow and defining a contact as "there exists a time $\tau_{\rm rec}$ such that \eqref{eq:overlap def alternative} is verified": we have 
		\[\mathfrak{T}_{\underline{n}_{k}}\times\mathfrak{G}^{0}_{T}\setminus\mathfrak{T}_{\underline{n}_{k}}\times\mathfrak{G}^{\e}_{T}\subset \mathfrak{T}_{\underline{n}_{k}}\times\mathfrak{G}^{0}_{T}\cap\{\text{at~least~one~contact}\}.\]
		
		The first step is to forbid the collisions that last too long. We define $\mathcal{E}_1\subset\mathfrak{T}_{\underline{n}_{k}}\times\mathfrak{G}^{0}_{T}$ as (for some $c_1,c_2\in(0,1)$)
		\begin{multline*}
			\mathcal{E}_1:= \bigg\{\forall i\leq n_{k}-1, \left|\ds{v}_{q_{i}}(\tau^-_{i})-\ds{v}_{q_{i}}(\tau^-_{i})\right| \min\left\{1,(\tau_i-\tau_{i-1}),(\tau_{i+1}-\tau_i)\right\}\geq \frac{\e^{c_1}}{n_{k}^2\mathbb{V}}\\
			\blue{\tfrac{\left|\left(\ds{v}_{q_i}(\tau_i^-)-\ds{v}_{\bar{q}_i}(\tau_i^-)\right)\times\nu_i\right|^2}{\left|\ds{v}_{q_i}(\tau_i^-)-\ds{v}_{\bar{q}_i}(\tau_i^-)\right|}\min\left\{1,(\tau_i-\tau_{i-1}),(\tau_{i+1}-\tau_i)\right\}\geq\frac{\e^{c_2}}{n_{k}^2\mathbb{V}}}\bigg\}.
		\end{multline*}
		In $\mathcal{E}_1$ there is a contact if there exists a time $\tau_{\rm rec}$ such that 
		\begin{equation}|\ds{x}^0_q(\tau_{\rm rec})-\ds{x}^0_{q'}(\tau_{\rm rec})|\leq 3\e^{1-c_2},\end{equation}
		and 
		\blue{\begin{multline*}1-\ind_{\mathcal{E}_1} \leq \sum_{i= 1}^{n_{k}-1} \ind_{\left|\ds{v}_{q_{i}}(\tau^-_{i})-\ds{v}_{q_{i}}(\tau^-_{i})\right|\leq \frac{\e^{c_1}}{n_{k}^2\mathbb{V}}}+\ind_{\frac{\left|\left(\ds{v}_{q_i}(\tau_i^-)-\ds{v}_{\bar{q}_i}(\tau_i^-)\right)\times\nu_i\right|^2}{\left|\ds{v}_{q_i}(\tau_i^-)-\ds{v}_{\bar{q}_i}(\tau_i^-)\right|}\leq\frac{\e^{c_2}}{n_{k}^2\mathbb{V}}}\\
		+\ind_{|\tau_i-\tau_{i-1}|\leq  \max\left( \frac{\e^{c_1}}{n_{k}^2\mathbb{V}\left|\ds{v}_{q_{i}}(\tau^-_{i})-\ds{v}_{q_{i}}(\tau^-_{i})\right|},\frac{\e^{c_2}\left|\ds{v}_{q_i}(\tau_i^-)-\ds{v}_{\bar{q}_i}(\tau_i^-)\right|}{n_{k}^2\mathbb{V}\left|\left(\ds{v}_{q_i}(\tau_i^-)-\ds{v}_{\bar{q}_i}(\tau_i^-)\right)\times\nu_i\right|^2}\right)}\\
		+\ind_{|\tau_{i+1}-\tau_{i}|\leq  \max\left( \frac{\e^{c_1}}{n_{k}^2\mathbb{V}\left|\ds{v}_{q_{i}}(\tau^-_{i})-\ds{v}_{q_{i}}(\tau^-_{i})\right|},\frac{\e^{c_2}\left|\ds{v}_{q_i}(\tau_i^-)-\ds{v}_{\bar{q}_i}(\tau_i^-)\right|}{n_{k}^2\mathbb{V}\left|\left(\ds{v}_{q_i}(\tau_i^-)-\ds{v}_{\bar{q}_i}(\tau_i^-)\right)\times\nu_i\right|^2}\right)}. \end{multline*}}
		
		Now, we place ourselves in $\mathcal{E}_1$ and we fix a collision tree $T$. The first contact happens at time $\tau_{\rm rec}$ between particles $q_{\rm rec}$ and $q'_{\rm rec}$. There exists a collision $i_0$, two disjoint sequences of collisions $(i_j)_{j\leq p}$ and $(i'_j)_{j\leq p'}$ and two sequences of particles $(a_j)_{j\leq p}$, $(a'_j)_{j\leq p'}$ such that
		\begin{itemize}
			\item $\forall j\geq 1$, $i_0<i_j$ and $i_0<i'_j$,
			\item $\forall j\geq 1$,  $a_j\in\{q_{i_j},q'_{i_j}\}\cap\{q_{i_{j-1}},q'_{i_{j-1}}\}$ and $a'_j\in\{q_{i'_j},q'_{i'_j}\}\cap\{q_{i'_{j-1}},q'_{i'_{j-1}}\}$,
			\item if for $j<j'$, $a_j = a_{j'}$. Then for any $i\in[i_j,i_{j'}]$ such that $a_j\in \{q_i,q'_i\}$, we have $j\in\{i_j,i_{j+1},\cdots,i_{j'}\}$, and similarly for the sequences $(i'_j)_{j\leq p'}$, $(a'_j)_{j\leq p'}$,
			\item $a_0= q_{i_0}$, $a'_0= q'_{i_0}$ and $\{a_p,a'_{p'}\}=\{q_{\rm rec},q'_{\rm rec}\}$.
		\end{itemize}
		
		\blue{There is a contact if 
		\begin{multline}\label{eq:equation overlap 1}
		\min_{\substack{s\in[-t,t]{R}\\y_0\in \mathbb{Z}^d}}\left|y_0+\sum_{j=1}^p \ds{v}_{a_j}(]\tau_{i_{j-1}},\tau_{i_{j}}[)(\tau_{i_{j}}-\tau_{i_{j-1}})+\ds{v}_{a_p}(]\tau_{i_p},\tau_{\rm rec}[)(s-\tau_{i_p})\right.\\ \left.-\sum_{j=1}^{p'} \ds{v}_{a'_j}(]\tau_{i'_{j-1}},\tau_{i'_{j}}[)(\tau_{i'_{j}}-\tau_{i'_{j-1}})+\ds{v}_{a'_{p'}}(]\tau_{i'_{p'}},\tau_{\rm rec}[)(s-\tau_{i'_{p'}})\right|\leq \e^{1-c_2}.
		\end{multline}
		In addition, if the $(\tau_i)_i$ and $(\tau'_i)_i$ verify $\tau_{i_0}<\tau_{i_1}<\cdots<\tau_{i_p}<\tau_{\rm rec}$ and $\tau_{i_0}<\tau_{i'_1}<\cdots<\tau_{i'_p}<\tau_{\rm rec}$, there is a contact if 
		\begin{multline}\label{eq:equation overlap 3}
			\min_{\substack{s\in[\tau_{i_0+1},t]{R}\\y_0\in \mathbb{Z}^d}}\left|y_0+\sum_{j=1}^p \ds{v}_{a_j}(]\tau_{i_{j-1}},\tau_{i_{j}}[)(\tau_{i_{j}}-\tau_{i_{j-1}})+\ds{v}_{a_p}(]\tau_{i_p},\tau_{\rm rec}[)(s-\tau_{i_p})\right.\\ \left.-\sum_{j=1}^{p'} \ds{v}_{a'_j}(]\tau_{i'_{j-1}},\tau_{i'_{j}}[)(\tau_{i'_{j}}-\tau_{i'_{j-1}})+\ds{v}_{a'_{p'}}(]\tau_{i'_{p'}},\tau_{\rm rec}[)(s-\tau_{i'_{p'}})\right|\leq \e^{1-c_2}.
		\end{multline}

		\begin{remark}
			Note that we can perform the previous construction if $\tau_{\rm rec}>\tau_{i_0}$. If we study the overlap for particles of size $\e$, it is always the case. But for punctual particles, the pathology can happen before the first collision.  Our proof can be adapted by taking $i_0$ such that $i_0>i_j$ and $i_0>i')_j$, making the change of variable $V_{n_{k}}\mapsto\ds{V}_{n_{k}}(\tau_{i_0}^-)$ and looking at everything backwards.
		\end{remark}
		
		Denoting 
		\begin{align*}
		\Delta \ds{v}_j :=\ds{v}_{a_{j+1}}(]\tau_{i_{j}},\tau_{i_{j+1}}[)- \ds{v}_{a_j}(]\tau_{i_{j-1}},\tau_{i_{j}}[),&~\Delta \ds{v}'_j :=\ds{v}_{a'_{j+1}}(]\tau_{i'_{j}},\tau_{i'_{j+1}}[)- \ds{v}_{a'_j}(]\tau_{i'_{j-1}},\tau_{i'_{j}}[),\\
		~w_0:=\ds{v}_{a_0}(\tau_{i_0}^+)-\ds{v}_{a'_0}(\tau_{i_0}^+),~
		\text{and}&~w_{\rm f}=\ds{v}_{a_p}(]\tau_{i_p},\tau_{\rm rec}[)-\ds{v}_{a'_{p'}}(]\tau_{i_{p'}'},\tau_{\rm rec}[),
		\end{align*}
		the equation \eqref{eq:equation overlap 1} can be written as
		\begin{equation}\label{eq:equation overlap 2}
		\min_{\substack{s\in[-t,t]\\y_0\in \mathbb{Z}^d}}\left|y_0-\tau_{i_0}w_0 +\sum_{j=1}^p \Delta\ds{v}_{j}\tau_{i_{j}}-\sum_{j=1}^{p'} \Delta\ds{v}_{j}'\tau_{i'_{j}}+sw_{\rm f}\right|	\leq \e^{1-c_2}
		\end{equation}
		
		In the following, we denote $\hat{w}_{\rm f}:=\tfrac{{w}_{\rm f}}{|{w}_{\rm f}|}$. We take the cross product with $\hat{w}_{\rm f}$ in \eqref{eq:equation overlap 2}:
		\begin{equation}\label{eq:equation overlap 4}
		\min_{\substack{y_0\in \mathbb{Z}^d}}\left|(y_0-\tau_{i_0}w_0)\times \hat{w}_{\rm f} +\sum_{j=1}^p \Delta\ds{v}_{j}\times \hat{w}_{\rm f}\tau_{i_{j}}-\sum_{j=1}^{p'} \Delta\ds{v}'_{j}\times \hat{w}_{\rm f}\tau_{i'_{j}}\right|	\leq \e^{1-c_2}.
		\end{equation}

		We fix $y_0$ such that the minimum is reached. We distinguish three cases:
		
		\begin{enumerate}
			\item If there exists a $\ell_1\in\mathbb{N}$ such that $|\Delta\ds{v}_{\ell_1}\times \hat{w}_{\rm f}|\geq\tfrac{\e^{c_3}}{n_{k}}$ (for some $c_3\in(c_1,1)$), $\tau_{i_{\ell_1}}$ has to be in an interval of length $\e^{1-c_2-c_3}$. 
		
			\item Else, if $y_0$ is non zero, using that 
			\[w_0 =-\sum_{j=1}^p \Delta\ds{v}_{j}+\sum_{j=1}^{p'} \Delta\ds{v}_{j}'-w_{\rm f}\]
			we have 
			\begin{equation*}|w_0\times \hat{w}_{\rm f}| \leq \sum_{j=1}^p \left|\Delta\ds{v}_{j}\times \hat{w}_{\rm f}\right|+\sum_{j=1}^{p'} \left|\Delta\ds{v}_{j}\times \hat{w}_{\rm f}\right|\leq \e^{c_3}.
			\end{equation*}
			Hence $\left|y_0\times w_f\right|$ has to be smaller than $2t\mathbb{V}\e^{c_3}+\e^{1-c_2}$. Combining with \eqref{eq:equation overlap 4}, we deduce that there exists a collision $j$, a vector $y_0\in\mathbb{Z}^d$ with $|y_0|\leq n_Kt\mathbb{V}$, and a couple of particles $(q,q')$ such that $w_{\rm f} = \left(\ds{v}_{q}(\tau_j^-)-\ds{v}_{q'}(\tau_j^-)\right)$, and then
			\[\left|\left(\ds{v}_{q}(\tau_j^-)-\ds{v}_{q'}(\tau_j^-)\right)\times y_0\right|\leq 2t\mathbb{V}\e^{c_3}+\e^{1-c_2}.\]
			
			\item We treat now the case $y_0= 0$. We begin to show that there exists some $\ell'$ such that $|\Delta\ds{v}_{\ell'}|$ or $|\Delta\ds{v}'_{\ell'}|$ is bigger than $\tfrac{\e^{c_1}}{3n_{k}t}$ (in the following, we suppose that it is $|\Delta\ds{v}_{\ell'}|$).
			\begin{proof}
			We proceed by contradiction. As for all $j\leq p$, we are in one of the three following cases,
			\begin{enumerate}
			 	\item $i_{j-1},i_{j+1}$ are smaller than $i_j$, and
			 	\[\left|\Delta\ds{v}_j\right| = \left|\ds{v}_{q_{i_j}}(\tau_{i_j}^-)-\ds{v}_{q'_{i_j}}(\tau_{i_j}^-)\right|,\]
			 	\item $i_{j-1},i_{j+1}$ are bigger than $i_j$, and
			 	\[\left|\Delta\ds{v}_j\right| = \left|\ds{v}_{q_{i_j}}(\tau_{i_j}^+)-\ds{v}_{q'_{i_j}}(\tau_{i_j}^+)\right|,\]
			 	\item $i_{j-1}<i_j<i_{j+1}$ or $i_{j-1}<i_j<i_{j+1}$.
			 \end{enumerate}
			 As the difference of velocities is conserved by the scattering map and we are in $\mathcal{E}_1$, the cases (a) and (b) are impossible. Using that $i_0<i_1$, we deduce that $\tau_{i_0}<\tau_{i_1}<\cdots <\tau_{i_p}<\tau_{\rm rec}$. In the same way, $\tau_{i'_0}<\tau_{i'_1}<\cdots <\tau_{i'_{p'}}<\tau_{\rm rec}$.
			 
			 Using triangular inequality,
			 \[|w_f-w_0| = \left|\sum_{j=1}^p \Delta\ds{v}_{j}-\sum_{j=1}^{p'} \Delta\ds{v}'_{j}\right|\leq \frac{\e^{c_1}}{3t}.\]
			 Considering the $s\in[\tau_{i_0+1},t]$ such that the minimum of  \eqref{eq:equation overlap 4} is reached, we have 
			 \begin{align*}
			 	|(\tau_{i_0+1}-\tau_{i_0})w_0|&\leq|(\tau_{i_0}-s)w_0|\\
			 	&\leq \e^{1-c_2}+\left(\sum_{j=1}^p \left|\Delta\ds{v}_{j}\right|+\sum_{j=1}^{p'}\left| \Delta\ds{v}'_{j}\right|\right)t+s|w_f-w_0|\\
			 	&\leq \e^{1-c_2}+\frac{2\e^{c_1}}{3},
			 \end{align*}
			 which is impossible since we are in $\mathcal{E}_1$.
		\end{proof}
		As $|\Delta\ds{v}_{\ell'}\times \hat{w}_{\rm f}|\leq\tfrac{\e^{c_3}}{n_{k}}$, (else it would have been treated in Point (1))
		\[\left|\frac{\Delta\ds{v}_{\ell_0}}{|\Delta\ds{v}_{\ell_0}|}\times \hat{w}_{\rm f}\right|\leq\frac{t\e^{c_3}}{n_{k}|\Delta\ds{v}_{\ell_0}|}\leq t\e^{c_3-c_1} ~{\rm and}~\left|w_0\times\hat{w}_f\right|\leq \e^{c_3}.\]
		Finally, we obtain
		\begin{equation}
		\left|\frac{\Delta\ds{v}_{\ell_0}}{|\Delta\ds{v}_{\ell_0}|}\times {w}_{0}\right|\leq \e^{c_3}+t\e^{c_3-c_1}.
		\end{equation}
		\end{enumerate}}
		
		\begin{definition}
			For $\omega\in\mathbb{R}^d$ and $\nu\in\mathbb{S}^{d-1}$, we denote (using $\xi_\alpha$ defined in Definition \ref{def: le scatering, sa vie son oeuvre})
			\[(\tfrac{w'}{2},-\tfrac{w'}{2},\nu')=\xi_\alpha((w/2,-w/2,\nu)).\]
			We define
			\begin{equation}
				\zeta_1(w,\nu):=w,~\zeta_2(w,\nu):=w',~\zeta_3(w,\nu):=\frac{w-w'}{2}~\text{and}~\zeta_4(w,\nu):=\frac{w+w'}{2}.
			\end{equation}
		\end{definition}
		Note that $\Delta\ds{v}_{\ell_0}$ is equal one of the $(\pm\zeta_i(\ds{v}_{q_{\ell_0}}(\tau_{\ell_0}^-)-\ds{v}_{q'_{\ell_0}}(\tau_{\ell_0}^-),\nu_{\ell_0})_{i\leq 4}$.
		
		We conclude that
		\begin{multline}
		\int_{\mathfrak{T}_{n_{k}}}\ud{\tau_{n_k}}\ind_{\rm contact}
		\leq \frac{C^{n_{k}}t^{n_{k-1}}\theta^{(n_{k}-n_{k-1}-1)_+}}{n_{k}^{n_{k}}} \Bigg(\e^{1-c_2-c_3}+ \\
		+\sum_i\ind_{\left|\ds{v}_{q_{i}}(\tau^-_{i})-\ds{v}_{q_{i}}(\tau^-_{i})\right|\leq \e^{c_1}}\tfrac{\e^{c_1}}{\left|\ds{v}_{q_{i}}(\tau^-_{i})-\ds{v}_{q_{i}}(\tau^-_{i})\right|} +\ind_{\frac{\left|\left(\ds{v}_{q_i}(\tau_i^-)-\ds{v}_{\bar{q}_i}(\tau_i^-)\right)\times\nu_i\right|^2}{\left|\ds{v}_{q_i}(\tau_i^-)-\ds{v}_{\bar{q}_i}(\tau_i^-)\right|}\leq\e^{c_2}}\\
		+\sum_{\substack{y_0\in\mathbb{Z}^d\\|y_0|\leq tn_k\mathbb{V}}}\sum_{\substack{i\leq n_k\\ (q,q')}}\ind_{\left|\left(\ds{v}_{q}(\tau_j^-)-\ds{v}_{q'}(\tau_j^-)\right)\times y_0\right|\leq 2t\mathbb{V}\e^{1-c_2}}\\
		+\sum_{i,j}\sum_{\ell= 1}^4\ind_{\left|\frac{\zeta_\ell(\ds{v}_{q_{j}}(\tau_{j}^-)-\ds{v}_{q'_{j}}(\tau_{j}^-),\nu_j)}{|\zeta_\ell(\ds{v}_{q_{j}}(\tau_{j}^-)-\ds{v}_{q'_{j}}(\tau_{j}^-),\nu_j)|}\times (\ds{v}_{q_{i}}(\tau_{i}^+)-\ds{v}_{q'_{i}}(\tau_{i}^+))\right|\leq t\e^{c_3-c_1}}\Bigg).
		\end{multline}
	
		We have to integrate now with respect to $(\nu_{[n_{k}-1]},V_{n_{k}})$. As in the proof of \eqref{eq: borne R1}, we use the applications $\Xi^i_T$ defined by \eqref{eq:def des Xi}.
		
		We recall that 
		\[\Xi_T^{i+1}\Xi_T^{i}\cdots \Xi_T^1(\nu_{[n_{k}-1]},V_{n_{k}}) = (\tilde{\nu}_{[n_{k}-1]},\tilde{V}_{n_k}=\ds{V}_{n_{k}}(\tau_i^{+}))\]
		and that the Jacobian of this application is $1$. We can integrate with respect to $\ds{v}_{q}(\tau^-_{i})-\ds{v}_{q}(\tau^-_{i})$. 
		
		We treat only the last singularity; the other ones can be estimated in the same way. If we remove the edges $(q_j,q'_j)$ from $T$, either $q_j$ or $q_j'$ is not in the connected component of $\{q_i,q'_i\}$. Without loss of generality, we suppose that it is $q_j$. We denote $\omega$ the connected components of $q_j$ in $T\setminus\{(q_j,q'_j,s_j)\}$. Before the collision $j$, the particles of $\omega$ are independent of the other ones, and as before, we can construct an application of Jacobian $1$
		\[\bar{\Xi}:(\nu_{[n_{k}-1]},V_{n_{k}})\mapsto(\bar{\nu},\ds{V}_{\omega^c})(\tau_i^+),\ds{V}_{\omega}(\tau_j^-)).\]
		In addition,
		\[\begin{split}
		|V_{n_{k}}|^2&=\frac{|\ds{V}_{n_{k}}(\tau_i^+)|^2}{2}+\frac{|\ds{V}_{n_{k}}(\tau_j^-)|^2}{2}\\
		&\geq\frac{|\ds{V}_{\omega^c}(\tau_i^+)|^2}{2}+\frac{|\ds{v}_{q_j}(\tau_j^-)|^2+|\ds{v}_{q_j}(\tau_j^-)|^2}{2}+\frac{|\ds{V}_{\omega\setminus \{q_j\}}(\tau_j^-)|^2}{2}\\
		&\geq\frac{|\ds{V}_{\omega^c}(\tau_i^+)|^2}{2}+\frac{|\ds{v}_{q_j}(\tau_j^-)-\ds{v}_{q_j}(\tau_j^-)|^2}{4}+\frac{|\ds{V}_{\omega\setminus \{q_j\}}(\tau_j^-)|^2}{2}, 
		\end{split} \]
		and denoting $w:=\ds{v}_{q_j}(\tau_j^-)-\ds{v}_{q_j}(\tau_j^-)$, we can integrate with respect to the velocities
		\begin{align*}
		\sum_T&\int_{\mathfrak{G}^0_{T}}\ind_{\left|\frac{\zeta_\ell(\ds{v}_{q_{j}}(\tau_{j}^-)-\ds{v}_{q'_{j}}(\tau_{j}^-),\nu_j)}{|\zeta_\ell(\ds{v}_{q_{j}}(\tau_{j}^-)-\ds{v}_{q'_{j}}(\tau_{j}^-),\nu_j)|}\times (\ds{v}_{q_{i}}(\tau_{i}^+)-\ds{v}_{q'_{i}}(\tau_{i}^+))\right|\leq\e^{1/4}}\Lambda_T(V_{n_K},\nu_{[n_K-1]}) M^{\otimes n_{k}}\ud{\nu_{[n_{k}-1]}}\ud V_{n_{k}}\\
		&\leq\int\ind_{\left|\frac{\zeta_\ell(w,\nu_j)}{|\zeta_\ell(w,\nu_j)|}\times ({v}_{q_{i}}-{v}_{q'_{i}})\right|\leq\e^{1/4}}\frac{(C(1+|V_{n_k}|^2))^{n_k}e^{-\frac{|V_{[n_{k}]\setminus\{q_j\}}|^2}{4}-\frac{|w|^2}{8}}}{(2\pi)^{\frac{n_{k}d}{2}}}\ud{\nu_{[n_{k}-1]}}\ud \tilde{V}_{[n_{k}]}\ud{w}\\
		&\leq n_{k}^{2n_k}C^{n_{k}}\e^{c_3-c_1}.
		\end{align*}
		We have first integrate with respect to $(v_{q_i},c_{q_i'})$ and then the $(w,\nu_j)$. Finally, we obtain
		\begin{multline}
		\frac{1}{(\mu\mathfrak{d})^{n_{k}-1}}\sum_T\int_{\mathfrak{T}_{n_{k}}\times\mathfrak{G}^0_{T}}\ind_{\rm reco}\prod_{i=1}^{n_{k}-1} |(\ds{v}_{q_i}(\tau_i)-\ds{v}_{q'_i}(\tau_i))\cdot\nu_i| \ud{\nu_i}\ud{\tau_i} M^{\otimes n_{k}}dV_{n_{k}}\\
		\leq \frac{(n_{k}C)^{n_{k}}t^{n_{k-1}}\theta^{(n_{k}-n_{k-1}-1)_+}}{(\mu\mathfrak{d})^{n_{k}-1}}\left(\e^\frac{c_2}{2}+\e^{c_3-c_1}+\e^{c_1}+\e^{1-c_2}+\e^{1-c_2-c_3}\right).
		\end{multline}
		We choose $c_1 = \tfrac{1}{5}$ and $c_2 =c_3= \tfrac{2}{5}$. This concludes the proof.
	\end{proof}
	
	\subsection{Proof of \eqref{eq:estimation a la con}}
	\begin{lemma}\label{lem:preuve du lemme a la con}
		For $r\leq 2\gamma$,
		\begin{equation}
		\int\ind_{\substack{Z_{r}\text{\,form\,a~~~}\\ {\rm possible\,cluster}}}M^{\otimes r}\ud Z_{r}\leq C_\gamma \mu^{-r+1}\delta^{r-1}.
		\end{equation}
	\end{lemma}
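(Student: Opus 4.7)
The plan is to prove \eqref{eq:estimation a la con} by a standard spanning-tree clustering estimate, adapted to the notion of \emph{possible cluster}, exploiting the uniform size bound $n \leq 2\gamma$ to absorb all combinatorial factors into $C_\gamma$.

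\textbf{Step 1 (Combinatorial reduction).} By definition of a possible cluster, there exist covering sets $\omega_1, \ldots, \omega_p \subset [n]$ and partitions $\underline{\lambda}_i$ of each $\omega_i$ whose merged collision graph $\mathcal{G}$ on $[0, \delta]$ is connected. From $\mathcal{G}$ I extract a spanning tree: a labelled sequence of $n-1$ ordered contacts $(q_j, \bar{q}_j, \tau_j)_{1 \leq j \leq n-1}$ with $\tau_j \in [0,\delta]$, each witnessed by the pseudotrajectory of some $\omega_i \supset \{q_j, \bar{q}_j\}$. Since $n \leq 2\gamma$, the number of admissible combinatorial data (covering, partitions, spanning tree, ordering) is bounded by a constant $C_\gamma$ depending only on $\gamma$ and the dimension.

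\textbf{Step 2 (Change of variables).} For a fixed combinatorial structure, after reordering so that $\tau_1 \leq \cdots \leq \tau_{n-1}$, I perform the change of variables
\[
X_n \longmapsto \bigl(x_1, \hat{x}_1, \ldots, \hat{x}_{n-1}\bigr), \qquad \hat{x}_j := x_{q_j} - x_{\bar{q}_j},
\]
whose Jacobian is $1$. Integration over $x_1 \in \mathbb{T}^d$ yields a factor $1$.

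\textbf{Step 3 (Per-edge geometric bound).} I integrate $\hat{x}_{n-1}, \hat{x}_{n-2}, \ldots, \hat{x}_1$ sequentially. When integrating $\hat{x}_j$, all later relative positions and initial velocities $V_n$ are frozen, which determines the pseudotrajectory up to time $\tau_j^-$. Up to that time, particles $q_j$ and $\bar{q}_j$ evolve as (disjoint) Hamiltonian sub-clusters inside $\omega_i$, hence independently of each other. The standard change of variables $\hat{x}_j \mapsto (\tau_j, \eta_j)$ via first-contact time and contact direction (already used in the proofs of \eqref{Estimation sans reco 1} and \eqref{Estimation avec reco 1}) shows that the admissible set for $\hat{x}_j$ has measure at most
\[
C\,\e^{d-1}\,\delta\,\bigl|\ds{v}_{q_j}(\tau_j^-) - \ds{v}_{\bar{q}_j}(\tau_j^-)\bigr|.
\]
The relative velocity is controlled by $\mathcal{H}_n(Z_n)^{1/2}$ thanks to the cluster-wise energy conservation argument of \eqref{eq:borne energie cinetique} (the partition of $\omega_i$ under $\underline{\lambda}_i$ is finer than the ambient partition).

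\textbf{Step 4 (Assembly).} Taking the product of the $n-1$ per-edge bounds yields, after Gaussian integration in $V_n$ which absorbs the polynomial velocity factors, a bound of order
\[
C_\gamma\,(\e^{d-1}\delta)^{n-1} = C_\gamma\,(\mu\mathfrak{d})^{-(n-1)}\delta^{n-1}
\]
by the Boltzmann--Grad scaling $\mu\e^{d-1}\mathfrak{d} = 1$. Since $n \leq 2\gamma$ is bounded, the factor $\mathfrak{d}^{-(n-1)}$ is absorbed into the constant $C_\gamma$ (which in the regime of the theorem is allowed to depend on $\gamma$ and on the lower bound on $\mathfrak{d}$), giving the stated estimate $C_\gamma\mu^{-(n-1)}\delta^{n-1}$.

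\textbf{Main obstacle.} The conceptually delicate point is Step~1: a single position $Z_n$ can witness its possible-cluster property through many different structures $(\omega_i, \underline{\lambda}_i)$, each inducing a different dynamics. One must organise the enumeration so that the spanning tree still consists of $n-1$ \emph{independent} pairwise contact constraints, each of which can be resolved by the independent-particles change of variables of Step~3. The uniform bound $n\leq 2\gamma$ is what allows both the combinatorial count and the residual dependence on $\mathfrak{d}$ to be absorbed into $C_\gamma$.
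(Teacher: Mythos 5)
Your proposal is correct and follows essentially the same route as the paper: the paper's own proof is a three-line sketch that enumerates the finitely many structures $(\omega_i,\underline{\lambda}_i)$ (finite because $n\leq 2\gamma$), extracts a clustering tree from the merged collision graph, and then invokes the first-contact/tube change of variables of the proof of \eqref{Estimation sans reco 2} --- exactly your Steps 1--3. One caveat on Step 4: the per-edge tube has measure $C\e^{d-1}\delta\,|\ds{v}_{q_j}-\ds{v}_{\bar{q}_j}|$, so the Boltzmann--Grad relation $\mu\e^{d-1}\mathfrak{d}=1$ honestly yields $(\mu\mathfrak{d})^{-(n-1)}\delta^{n-1}$, and your justification for discarding $\mathfrak{d}^{-(n-1)}$ (that $C_\gamma$ may depend on a lower bound for $\mathfrak{d}$) is not valid in the regime of Theorem \ref{thm:Borne quantitativeè}, where $\mathfrak{d}$ is only bounded below by $\log|\log\e|^{-1}\to 0$. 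This discrepancy is, however, already present between the lemma as stated and the paper's own referenced estimate, and the extra factor $\mathfrak{d}^{-(n-1)}$ is harmless in the only place the lemma is used (the bound on $\mathbb{P}_\e(\Upsilon_\e^c)$), since there $\delta/\mathfrak{d}\to 0$.
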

	\begin{proof}
		First choose a family $\omega_1,\cdots,\omega_p$ of subset covering $[r]$, and $(\underline{\lambda}_i)_{i\leq p} = (\lambda_i^1,\cdots,\lambda_i^{l_i})_{i\leq p}$ a family of partitions of $\omega_i$. As $n$ is bounded, there are a finite number of possible $((\omega_i)_i,(\underline{\lambda}_i)_i)$. We construct the graph $\mathcal{G}$ as the merge of the collision graph of $\ds{Z}(\tau,Z_{\omega_i},\underline{\lambda}_i)$ on $[0,\delta]$, and we extract $T$ the clustering tree (there are less than $(2\gamma)^{2\gamma}$ possible clustering trees). We can then adapt the proof of \eqref{Estimation sans reco 2} (where we treated only two pseudotrajectories), and we obtain the expected result.
	\end{proof}

	\subsection{Proof of \eqref{estimation recollision}}\label{subsec: derniere appencicite}
	We recall that $\mathcal{O}_r\subset\mathbb{D}^r $ is the set
	\begin{multline*}\mathcal{O}_r:=\Big\{Z_r\in \mathbb{D}^r,~\exists (\lambda_1,\cdots,\lambda_{\gr{l}}),~\text{the~collision~graph~of~}\ds{Z}_r(\cdot,Z_z,(\lambda_1,\cdots,\lambda_{\gr{l}}))\text{~on~}[0,\delta]~\text{is~}\\
	\text{connected~and~the~pseudotrajectory~has~a~collision~or~a~multiple~interaction}\Big\}.\end{multline*}

	\begin{prop}
		There exists a positive constant $C_r$  depending only on the dimension and the number of particles $r$ such that
		\begin{equation}
		\int_{\mathbb{T}^{r-1}\times B_r(\mathbb{V})} \ind_{\mathcal{O}_r}(Z_r)\frac{e^{-\mathcal{H}_r}}{(2\pi)^{\frac{rd}{2}}} \ud X_{2,r}\ud V_r\leq \frac{C_r}{(\mu\mathfrak{d})^{r-1}}\delta^{2}\theta^{r-2}\e^\mathfrak{1/12},
		\end{equation}
		where $B_r(\mathbb{V})$ is the ball of radius $\mathbb{V}$ in dimension $rd$ (we use that $\delta=\e^{1/12}$).
	\end{prop}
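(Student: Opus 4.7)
The plan is to decompose $\mathcal{O}_n$ according to the partition $\underline{\lambda}$ and the clustering tree realising the pathology, and then combine the standard clustering estimate (on $n-2$ of the clustering collisions) with the pathology estimate of Lemma \ref{lem:estimation des reco} (on the last clustering collision). Since $n$ is fixed, all combinatorial factors are absorbed into the constant $C_n$.

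More precisely, I would first write
\[
\ind_{\mathcal{O}_n}(Z_n)\leq\sum_{(\lambda_1,\cdots,\lambda_\ell)}\ind_{\substack{\text{collision graph of}\\\ds{Z}_n(\cdot,Z_n,\underline\lambda)\text{ on }[0,\delta]\\\text{is connected}}}\ind_{\substack{\ds{Z}_n(\cdot,Z_n,\underline\lambda)\\\text{has a pathology on }[0,\delta]}},
\]
the sum running over partitions of $[n]$, of cardinality bounded by a constant depending only on $n$. For each $\underline{\lambda}$, since the collision graph on $[0,\delta]$ must be connected, I can extract a clustering tree $T=(q_i,\bar q_i)_{1\le i\le n-1}$ as in the proof of \eqref{Estimation sans reco 1} (again at most $n^{n}$ choices), and perform the change of variables
\[
X_{2,n}\longmapsto (\hat x_1,\dots,\hat x_{n-1}),\qquad \hat x_i=x_{q_i}-x_{\bar q_i},
\]
followed by the passage $\hat x_i\mapsto(\tau_i,\eta_i)$ with Jacobian $\e^{d-1}|(\ds v_{q_i}(\tau_i^-)-\ds v_{\bar q_i}(\tau_i^-))\cdot\eta_i|\,\mathrm d\tau_i\,\mathrm d\eta_i$ on each of the clustering meetings.

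Of the $n-1$ clustering collisions, I use $n-2$ in the standard way: integrating $\tau_i\in[0,\delta]$ and $\eta_i\in\mathbb S^{d-1}$ yields, as in \eqref{Estimation sans reco 1}, a factor $(\mu\mathfrak d)^{-(n-2)}\delta^{n-2}$ together with polynomial factors in $|V_n|$ that will be absorbed in the Gaussian weight $M^{\otimes n}$ (bounded explicitly since $|V_n|\le\mathbb V$). The remaining clustering collision is reserved to encode the pathology: following exactly the argument developed in the proof of Lemma \ref{lem:estimation des reco} but on the short time window $[0,\delta]$ (splitting into overlap at the endpoints $0$ or $\delta$, multiple interaction, and recollision, and for each type producing an alignment condition between velocities or impact vectors that restricts the remaining parameters to a set of measure $O(\e^{1/4})$), this last clustering meeting contributes a factor $(\mu\mathfrak d)^{-1}\e^{1/4}$. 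Multiplying, one obtains the first inequality; the second then follows from $\e^{1/4}=\e^{1/12}\cdot\e^{1/6}\le\e^{1/12}\cdot\delta^2\le\e^{1/12}\delta$ (using $\delta=\e^{1/12}$), which yields the conversion of one $\e^{1/4}$ into $\delta\cdot\e^{1/12}$.

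The main obstacle is really the pathology bound on the single collision reserved at the end, since the other ingredients are direct specialisations of estimates already established. In particular, one has to check that the geometric argument of Lemma \ref{lem:estimation des reco} goes through when the time horizon is $\delta$ rather than $t$ and when only a bounded number $n\le 2\gamma$ of particles is involved; this is essentially a localisation of that proof, and since the combinatorial explosion of chains $(a_j),(a'_j)$ is now controlled by the constant $C_n$, the only genuine input is the $\e^{1/4}$-alignment argument that produces small sets in velocity (or angle) variables. Velocities are bounded by $\mathbb V=|\log\e|$ and the integrability $\e^{1/4}\cdot|\log\e|^{C_n}=o(\e^{1/12})$ is trivial, so all error terms stay under control.
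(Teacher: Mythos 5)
Your proposal is correct and follows essentially the same route as the paper: decompose over partitions and clustering trees, spend the tree structure on the standard clustering estimate, and reserve the extra smallness $\e^{1/4}$ for the pathology via the geometric argument of Lemma \ref{lem:estimation des reco} localised to $[0,\delta]$, with the conversion $\e^{1/4}=\delta^2\e^{1/12}$ giving the second inequality. The only points the paper handles slightly differently are the case $n=2$ (treated by a direct computation of the measure of $\mathcal{O}_2$, since there is no clustering tree to exploit) and a separate, simpler bookkeeping for the multiple-interaction pathology, neither of which affects the validity of your argument.
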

	
	\begin{proof}
		For $r=2$, we have 
	\[\int_{\mathbb{D}^2} \ind_{\mathcal{O}_2}(Z_2)\frac{e^{-\mathcal{H}_2}}{(2\pi)^d} \ud x_2\ud V_2 = C \e^d \leq \frac{C\delta^2\e^{1/12}}{\mu\mathfrak{d}}\]
	as $\delta^2\e^{1/12} = \e^{3/12}\leq \e$.

	Fix parameters $(\lambda_1,\cdots\lambda_{\gr{l}})$ and denote $\mathcal{R}_{(\lambda_1,\cdots\lambda_{\gr{l}})}\subset\mathbb{D}^r$ the set of initial configuration such that the pseudotrajectory has a connected collision graph and a pathological collision.
	
	As we suppose that the collision graph is connected, we can construct a clustering tree $T:=(q_i,\bar{q}_i)_{i\leq r-1}$ as in the proof of Proposition \ref{prop:estimation sans reco}. We define $\tau_{\rm path}$ the time of the first pathological collision. The corresponding collision can either create a loop in the collision graph or be a clustering multiple \red{interaction}\blue{encounter}.
	
	The first case can be treated as a recollision, which is already done in the proof of Proposition \ref{prop:estimation sans reco} and in the preceding section, we have
	\begin{equation*}
	\int_{\mathbb{T}^{r-1}\times B_r(\mathbb{V})} \ind_{\mathcal{R}_{(\lambda_1,\cdots\lambda_{\gr{l}})}^{\rm reco}}\frac{e^{-\mathcal{H}_r}}{(2\pi)^{\frac{rd}{2}}} \ud X_{2,r}\ud V_r\leq \frac{C_r\delta^{r-2}}{(\mu\mathfrak{d})^{r-1}}\e^{1/5}\leq \frac{C_r}{(\mu\mathfrak{d})^{r-1}}\delta^{2}\theta^{r-2}\e^\mathfrak{1/12}.
	\end{equation*}
	
	In the second case, there are two clustering collisions $j<\tilde\jmath$ such that $\{q_j,\bar{q}_j\}\cap\{q_{\tilde\jmath},\bar{q}_{\tilde\jmath}\}$ and
	\[\forall \tau\in(\tau_j,\tau_{\tilde\jmath}),~|\ds{x}_{q_j}(\tau)-\ds{x}_{\bar{q}_j}(\tau)|<\e.\]
	Two particles $(q_j,q'_j)$ can stay at a distance shorter than $\e$ on a time interval shorter than \[\blue{1\wedge\frac{\e|v_{q_j}(\tau_j)-v_{\bar{q}_j}(\tau_j)|}{|(v_{q_j}(\tau_j)-v_{\bar{q}_j}(\tau_j))\wedge(x_{q_j}(\tau_j)-x_{\bar{q}_j}(\tau_j))|^2}}\] (whose integral is bounded by $\e^\frac12$). Hence, using the same strategy than in the proof of Proposition \ref{prop:estimation sans reco}:
	\begin{align*}
	\int_{\mathbb{T}^{r-1}\times B_r(\mathbb{V})} \ind_{\mathcal{R}_{(\lambda_1,\cdots\lambda_{\gr{l}})}^{\rm mult}}\frac{e^{-\mathcal{H}_r}}{(2\pi)^{\frac{rd}{2}}}\ud X_{2,r}\ud V_r\leq \frac{C_r\delta^{r-1}}{(\mu\mathfrak{d})^{r-2}}\e^{1/2}\leq\frac{C_r}{(\mu\mathfrak{d})^{r-1}}\delta^{2}\theta^{r-2}\e^\mathfrak{1/12}.
	\end{align*}
	
	Summing on all the possible $(\lambda_1,\cdots\lambda_{\gr{l}})$, we obtain the expected result.
	\end{proof}

	\begin{proof}[Proof of \eqref{estimation recollision}]
		We have now to prove Estimation \eqref{estimation recollision}:
	\[\int_{{\mathbb{T}}^{r-1}\times(\mathbb{R}^d)^{r}} \ind_{\mathcal{O}_\varpi}\ind_{\!\!\!\!\tiny\begin{array}{l}Z_{r}\text{\,form\,a}\\\text{possible\,cluster}\end{array}}\tfrac{e^{-\frac{1}{2}\mathcal{H}_{r}(Z_{r})}}{(2\pi)^{dr/2}}\ud{X}_{2,r}\ud V_{r}\leq\frac{C_r}{(\mu\mathfrak{d})^{r-1}}\delta^{2}\theta^{r-2}\e^\mathfrak{1/12},\]
	Without loss of generality, we suppose that $1\in\varpi$.
	
	Fix the  family $\omega_1,\cdots,\omega_p$ of subset covering $[n]$,
	and $(\underline{\lambda}_i)_{i\leq p} = (\lambda_i^1,\cdots,\lambda_i^{l_i})_{i\leq p}$ a family of partition of $\omega_i$ such that the union of the collision graph associated to parameters $((\lambda_i^j)_j)_i $ is connected.
	
	We begin by fix $Z_\varpi$ and sum the clustering of the particle in $[n]\setminus\varpi$
	\[\int_{{\mathbb{T}}^{n-1}\times(\mathbb{R}^d)^{n}} \ind_{\mathcal{O}_\varpi}\ind_{\!\!\!\!\tiny\begin{array}{l}Z_{r}\text{\,form\,a}\\\text{possible\,cluster}\end{array}}\frac{e^{-\frac{1}{2}\mathcal{H}_{n}(Z_{n})}}{(2\pi)^{dn/2}}\ud{X}_{\varpi}\ud V_{\varpi}
	\leq\ind_{\mathcal{O}_\varpi}\frac{e^{-\frac{1}{2}\mathcal{H}_{|\varpi|}(Z_{\varpi})}}{(2\pi)^{d|\varpi|/2}} \frac{C_n\delta^{n-|\varpi|}}{(\mu\mathfrak{d})^{|\varpi|}}.\]
	Then, we integrate with respect to $\ud{X}_{\varpi\setminus\{1\}}\ud V_\varpi$.
	\end{proof}
	
	{\large\textbf{Acknowledgment}}: The author thanks Sergio Simonella and Raphael Winter for their numerous suggestions as well as for the misprint corrections which helped to improve the paper. He also thanks Laure Saint-Raymond for stimulating and fruitful discussions. The author acknowledges financial support from the European Union (ERC, PASTIS, Grant Agreement n$^\circ$101075879).\footnote{{Views and opinions expressed are however those of the author only and do not necessarily reflect those of the European Union or the European Research Council Executive Agency. Neither the European Union nor the granting authority can be held responsible for them.}}
	
	\bibliographystyle{alpha}
	
\end{document}